\documentclass[11pt]{article}
\usepackage[top=27mm, bottom=27mm, right=30mm,left=30mm]{geometry}

\usepackage[english]{babel}
\usepackage[utf8]{inputenc}
\usepackage[svgnames]{xcolor}
\usepackage[T1]{fontenc}
\usepackage{wrapfig} 

\usepackage{fancyhdr} 
\usepackage{lastpage} 
\usepackage{here}
\usepackage{color}
\usepackage{ragged2e}
\usepackage{array}
\usepackage{indentfirst} 
\usepackage{authblk} 

\usepackage{setspace}
\usepackage{float}
\usepackage[hidelinks]{hyperref} 
\usepackage{amssymb} 
\usepackage{amsmath,amsfonts} 
\usepackage{amsthm}
\usepackage{amstext}

\usepackage{stmaryrd} 
\usepackage{mathtools} 
\usepackage{dsfont} 
\usepackage{mathrsfs}
\usepackage{enumitem} 
\usepackage{subcaption} 
\usepackage{multicol} 
\usepackage{multirow} 

\numberwithin{equation}{section}
\numberwithin{figure}{section}
\counterwithin{table}{section}

\newtheoremstyle{notation}
  {}
  {}
  {}
  {}
  {\bfseries}
  {}
  {7pt}
  {\thmname{#1}\thmnumber{ #2}.\textnormal{\thmnote{ (#3)}}}

\theoremstyle{plain}
\newtheorem{theo}{Theorem}[section]
\newtheorem*{theo*}{Theorem}

\newtheorem{lemme}[theo]{Lemma}
\newtheorem*{lemme*}{Lemma}

\newtheorem*{sublemme*}{Sublemma}
\newtheorem{prop}[theo]{Proposition}
\newtheorem*{prop*}{Proposition}
\newtheorem{propri}[theo]{Property}
\newtheorem{coro}[theo]{Corollary}
\newtheorem{defi}[theo]{Definition}
\newtheorem*{defi*}{Definition}

\theoremstyle{definition}
\newtheorem{rem}[theo]{Remark}

\theoremstyle{notation}
\newtheorem*{nota}{Notation}

\newcommand{\partent}[1]{\left\lfloor #1 \right\rfloor} 

\newcommand{\N}{\mathbb{N}} 
\newcommand{\Z}{\mathbb{Z}} 
\newcommand{\Q}{\mathbb{Q}} 
\newcommand{\R}{\mathbb{R}} 

\newcommand{\dimH}{\textup{dim}_H\!} 

\newcommand\numberthis{\addtocounter{equation}{1}\tag{\theequation}} 

\allowdisplaybreaks 

\newcommand{\DocumentTitle}{Traces of functions in Besov spaces in Gibbs environment}
\newcommand{\DocumentTitleShort}{Traces in inhomogeneous Besov spaces}
\author[1]{Quentin Rible}
\author[1]{St\'ephane Seuret}
\affil[1]{Univ Paris Est Creteil, Univ Gustave Eiffel, CNRS, LAMA UMR8050, F-94010 Creteil, France}
\date{}

\title{\vspace{-1.5cm} \DocumentTitle \vspace{-0.4cm}}

\begin{document}

\setlength{\abovedisplayskip}{5pt}
\setlength{\belowdisplayskip}{5pt}

\maketitle

\begin{abstract}
    This paper investigates the traces of functions belonging to the inhomogeneous Besov spaces $B^{\xi}_{p,q}$, where $\xi$ is a product of capacities defined as power of Gibbs measures. We first establish that the traces of functions in $B^{\xi}_{p,q}$ along affine hyperplanes belong to another inhomogeneous Besov space. Furthermore, we derive an upper bound for the singularity spectrum of the traces of all functions in $B^{\xi}_{\infty,q}$. This bound is then refined for a prevalent set of functions in $B^{\xi}_{\infty,q}$, for which we explicitly compute the singularity spectrum of their traces. Notably, our analysis reveals that the regularity properties of these affine traces are highly sensitive to the choice of the hyperplane along which the trace is taken.
\end{abstract}

\section{Introduction}

In the context of turbulence, the study of local variations in the velocity of turbulent fluids led to the introduction of multifractal descriptions of signals and functions. Turbulent flows are indeed far from being spatially homogeneous, and the pointwise regularity of the velocity may vary significantly from one point to another; this is typical of multifractal functions and measures. A fundamental step was then made by Frisch and Parisi, who uncovered a striking connection between multifractality and the global scaling properties of the velocity \cite{Frisch-Parisi:1985:Multifractal-turbulence}. This connection, now known as the \emph{multifractal formalism}, provided a unifying framework that subsequently stimulated the development of multifractal analysis in a wide range of mathematical settings, including geometric measure theory, real analysis \cite{Chamizo-Ubis:2014:Multifractal_behavior_polynomial_Fourier_Series, Jaffard:1996:Riemann_function_singularities, Seuret-Ubis:2017:Riemann_series}, dynamical systems and ergodic theory, and Diophantine approximation \cite{Feng:2007:Gibbs_properties_conformal_measures, Shmerkin:2005:multifractal_formalism_self-similar_measures, Shmerkin-Solomyak:2016:absolute_continuity_measures}, among others.
In parallel, the development of numerical methods for the estimation of multifractal characteristics led to significant advances in signal and image processing \cite{Abry-Jaffard-Wendt:2009:wavelet_leader_texture_classification, Abry-Jaffard-Roux-Wendt:2009:Wavelet_Bootstrap}, and in other scientific fields \cite{Abry-Ciuciu-Dumeur-Jaffard-Saes:2024:Multifract_analysis_neuroscience, Billat-Jaffard-Nasr-Saes:2023:Marathon-physiological-analysis}.

It is important to emphasise that, in practice, measurements of turbulent velocity fields are typically performed along one-dimensional sets, rather than over full three-dimensional regions of $\R^3$. As a consequence, the multifractal properties of turbulent flows are often inferred from one-dimensional traces, which naturally raises the question of how accurately such traces reflect the multifractal structure of the underlying three-dimensional flow. This problem is in fact quite general and applies to all three-dimensional phenomena for which only partial measurements are available.

Our main objective is to rigorously examine the validity of this inference procedure of multifractality for functions in inhomogeneous Besov spaces $B^{\xi}_{p,q}$. We establish precise relationships between the regularity properties of functions belonging to $B^{\xi}_{p,q}$ and those of its traces restricted to affine subspaces of dimension $d < D$, see \eqref{eq:def_trace} below for the precise definition of traces.

Let us now give now precise definitions. 
Multifractal analysis consists in describing the pointwise behavior of functions $f:\R^D\to\R$, computing the singularity spectrum $\sigma_f$, and, when possible, relating $\sigma_f$ to the scaling function associated with $f$. Given $x_0\in\R^D$, $H\in\R_+$, a locally bounded function $f:\R^D\to\R$ belongs to $\mathcal{C}^{H}(x_0)$ if there exist a polynomial $P_{x_0}$ of degree less than $\partent{H}$, a constant $C>0$, and a neighborhood $V$ of $x_0$ such that
\begin{equation*} 
    \forall x\in V, \quad |f(x)-P_{x_0}(x-x_0)| \leq C\,|x-x_0|^H.
\end{equation*}
The \textit{pointwise H{\"o}lder exponent} of $f\in L^{\infty}_{\operatorname{loc}}(\R^D)$ at $x_0$ is then	
\begin{equation}\label{eq:hold_exp}
    h_f(x_0)=\sup\big\{H\in\R_+ : f\in\mathcal{C}^H(x_0)\big\}.
\end{equation}
The associated \textit{singularity} (or \textit{multifractal}) \textit{spectrum} $\sigma_f$ of $f$ is the mapping
\begin{equation}\label{eq:mf_spect}
    \sigma_f: h \mapsto \dim_H\!\big(E_f(h)\big), \quad \text{where } E_f(h) := \big\{x\in \R^D:h_f(x)=h\big\} .
\end{equation}
Here, $\dim_{H}$ stands for the Hausdorff dimension with the convention $\dimH(\emptyset)=-\infty$ (note that $E_f(h)=\emptyset$ for $h<0$).
The singularity spectrum provides a geometrical description of the distribution of the singularities of $f$. 

Let us emphasise that for large classes of functions $f$ (similarly, of Borel measures), the singularity spectrum of $f$ satisfies a so-called {\em multifractal formalism}, meaning that $\sigma_f$ coincides with the Legendre transform $ \tau_{f}^*$ of a scaling function $\tau_{f}: \R \to \R$ associated with $f$, i.e.
\begin{equation}\label{eq:mf_form}
    \sigma_f = \tau^*_{f}(h) := \inf_{q\in\R} (hq - \tau_{f}(q)).
\end{equation}
The scaling function $\tau_{f}$, defined using wavelets (see below and \cite{Jaffard:2004:Wavelet_techniques, Jaffard:2007:Wavelet_leaders}), describes the statistical scaling properties of $f$. When \eqref{eq:mf_form} holds, one says that the \textit{multifractal formalism} holds for $f$. This relationship is key, since when properly defined, $\tau_f$ has proved to be numerically estimable on real data, see for instance \cite{Jaffard:2007:Wavelet_leaders}. 

Results on the pointwise regularity of functions belonging to classical spaces such as Sobolev $W^{s,p}(\R^D)$ or Besov $B^{s}_{p,q}(\R^D)$ spaces have been obtained in \cite{Aubry-Bastin-Dispa:2007:prevalence_sobolev, Fraysse-Jaffard:2006:Almost_all_function_sobolev,Jaffard:2000:Frisch-Parisi-conjecture, Jaffard:1997:multifractal_formalism_all_functions, Jaffard-Meyer:2000:regularity_critical_besov_spaces}. They are of two types: first, identify a universal upper bound for the singularity spectrum of all functions in a given space, and second, when possible, compute the explicit value for the spectrum of a generic set of functions in the sense of prevalence or Baire categories. 

A remarkable feature of generic functions in $W^{s,p}(\R^D)$ and $B^{s}_{p,q}(\R^D)$ is that their singularity spectrum is invariably affine increasing \cite{Jaffard:2000:Frisch-Parisi-conjecture, Fraysse:2007:prevalent_validity_multifractal_formalism}. This stands in stark contrast to the fact that spectra estimated from real-world data via the multifractal formalism consistently exhibit a strictly concave profile, featuring both an increasing and a decreasing part \cite{Abry-Jaffard-Wendt:2009:wavelet_leader_texture_classification, Abry-Ciuciu-Dumeur-Jaffard-Saes:2024:Multifract_analysis_neuroscience,Billat-Jaffard-Nasr-Saes:2023:Marathon-physiological-analysis}.
Numerous generalisations of these classical Sobolev and Besov spaces have been studied extensively \cite{Ansorena-Blasco:1995:weighted_besov_spaces, Farkas-Leopold:2006:characterisations_generalised_smoothness_Besov, Moura:2007:characterisation_generalised_smoothness_besov, Loosveldt-Nicolay:2019:equivalent_definition_generalised_smoothness_besov, Kreit-Nicolay:2013:characterisation_generalised_Holder-Zygmund_spaces, Diening-Harjulehto-Hasto-Ruzicka:2011:variable_exponent_lebesgue_sobolev, Gaczkowski-Gorka-Pons:2016:variable_exponent_sobolev, Triebel:1983:theory_function_spaces_1, Jaffard:2000:Frisch-Parisi-conjecture, Fraysse:2007:prevalent_validity_multifractal_formalism}; for further details, see also \cite{Rible:2025:Prevalence_inhomogeneous_Besov}. However, in none of these cases do generic functions exhibit a concave singularity spectrum with both increasing and decreasing components.

Motivated by these observations, Barral and Seuret \cite{Barral-Seuret:2023:Besov_Space_Part_2} introduced inhomogeneous Besov spaces $B^{\xi}_{p,q}(\R^D)$, where Baire-typical functions may admit any prescribed admissible continuous concave singularity spectrum. The precise definition of these spaces will be addressed later. The inhomogeneity in these spaces is governed by a Hölder capacity $\xi$, which can be interpreted as a measure with strongly location-dependent mass distribution. This framework naturally captures the multifractal behavior observed in empirical data.

This paper deals with the multifractal analysis of traces of functions in inhomogeneous Besov spaces $B^{\xi}_{p,q}(\R^D)$ on $d$-dimensional affine subspaces of $\R^D$, when $\xi$ is the product of Gibbs measures on $\R^D$.
Let $0 < d < D$ be two fixed integers and $d':= D-d$. For $a\in\R^{d'}$, we denote $\mathcal{C}_a:=\{(x,a) :x\in\R^{d} \}$ the horizontal $d$-dimensional affine subspace of $\R^D$ passing by $(0^d,a)$.
The trace of a function $f$ on $\mathcal{C}_a$ is defined by
\begin{equation}\label{eq:def_trace}
    f_a := f\vert_{\mathcal{C}_a} : 
    \begin{array}{ccc}
        \R^d & \longrightarrow & \R \\
        x & \longmapsto & f(x,a).
    \end{array}
\end{equation}

This notion has been considered for functions in standard Besov spaces by Aubry-Maman-Seuret and Jaffard \cite{Aubry-Maman-Seuret:2013:Traces_besov_results, Maman:2013:Genericity_Thesis, Jaffard:1995:traces_negative_dim, Caetano-Haroske:2016:traces_besov_fractal} and weighted Besov spaces by Besoy-Haroske-Triebel \cite{Besoy-Haroske-Triebel:2022:traces_weighted_fct_spaces} for example.
Universal and generic results in the sense of prevalence and Baire categories on the pointwise regularity of traces of functions belonging to classical spaces have been obtained by Aubry, Maman and Seuret \cite{Aubry-Maman-Seuret:2013:Traces_besov_results, Maman:2013:Genericity_Thesis}.
This article presents two main results: first, for any function $f$ in an inhomogeneous Besov space $B^{\xi}_{p,q}(\R^D)$, we establish that all its traces $f_a$ belong to another specific inhomogeneous Besov space, which we characterise explicitly, while simultaneously demonstrating that most traces exhibit higher regularity than anticipated (Theorem \ref{theo:up_bound_spect_all_fct}). Second, we prove that for a prevalent set of functions in $B^{\xi}_{p,q}(\R^D)$, the multifractal properties of the traces $f_a$ can be determined (Theorem \ref{theo:prev_spect_inh_besov_inf_q}). Notably, unlike in standard Besov spaces—where a unique singularity spectrum is typically observed for most traces $f_a$—inhomogeneous spaces admit a richer structure: depending on the choice of $a$ (within large parameter sets that we describe), the traces may display distinct multifractal behaviors.

The present article describes two kinds of result: first, for any function $f$ in an inhomogeneous Besov spaces, we show that all its traces $f_a$ belong to an inhomogeneous Besov spaces that we identify, and simultaneously that most traces are more regular than expected (Theorem \ref{theo:up_bound_spect_all_fct}
). Second, we prove that for a prevalent set of functions, the multifractal properties of the traces can be explicitly determined (Theorem \ref{theo:prev_spect_inh_besov_inf_q}). In particular, contrarily to standard Besov spaces where there is only one well-identified singularity spectrum for most of the traces $f_a$, in inhomogeneous spaces, depending of the choice of $a$ (in large sets of parameters that we describe), the traces may have distinct multifractal behaviors.

Let us now give precise statements. Let $\mathcal{B}(\R^D)$ stand for the Borel sets in $\R^D$.
\begin{defi}\label{defi:hold_capa}
    Let $D\geq1$.
    The set of H{\"o}lder set functions on $\R^D$ is 
    \begin{equation*}
        \mathcal{H}(\R^D) 
        = \big\{ \xi:\mathcal{B}(\R^D)\to~\R_+ \cup \{+\infty\} : \exists C, s > 0,~\forall E,F \in \mathcal{B}(\R^D),
            \xi(E) \leq C |E|^s \big\}.
    \end{equation*} 
    The set of H{\"o}lder capacities is then
    \begin{equation*}
        \mathcal{C}(\R^D) 
        = \big\{\xi:\mathcal{H}(\R^D)\to~\R_+ \cup \{+\infty\} : \forall E,F \in \mathcal{B}(\R^D), E\subset F \Rightarrow \xi(E)\leq\xi(F) \big\}.
    \end{equation*} 
\end{defi}

When $\xi(E) \leq C |E|^s $ for all $E$, $\xi$ is said to be $s$-H\"older. The topological support $\operatorname{supp}(\nu)$ of $\nu \in \mathcal{H}(\R^D)$ is the set of points $x\in \R^D$ for which $\nu(B(x,r)) > 0$ for every $r > 0$. A capacity $\nu$ is fully supported on $A$ when $\operatorname{supp}(\nu)=A$.
We mainly look at capacities supported on $[0,1]^D$, but our results can be extended to $\R^D$.

For $\xi\in\mathcal{B}(\R^D)$, $s>0$, and $E \in\mathcal{B}(\R^D)$, define the set functions $\xi^{s}$, $\xi^{(+s)}$ and $\xi^{(-s)}$ by
\begin{equation*}
    \xi^{s}(E)=\xi(E)^s ~~\text{and}~~\xi^{(+s)}(E)=\xi(E)|E|^s,
\end{equation*}
and when $\xi$ is $s_0$-H\"older, set for all $s\in(0,s_0)$, 
$
    \xi^{(-s)}(E)=
    \begin{cases}
        0               &\text{if}~|E|=0, \\
        \xi(E)|E|^{-s} &\text{if}~0<|E|<\infty.
    \end{cases}
$
Observe that if $\xi\in\mathcal{C}(\R^D)$, then $\xi^{s}$ and $\xi^{(+s)}$ are still H{\"o}lder capacities.

Recall that $\xi\in\mathcal{C}(\R^D)$ is doubling if for some $C_{\xi}\geq 1$,
for every $x\in \R^D$ and $r>0$,  
\begin{equation}\label{eq:doub_meas_ball}
    \xi(B(x,2r))\leq C_{\xi} \ \xi(B(x,r)).
\end{equation}

\begin{rem}
    A doubling $s$-H{\"o}lder capacity $\xi\in \mathcal{C}([0,1]^d)$ satisfies, for $C>0$, for all $r>0$,
    \begin{equation}\label{eq:doubling_induce_P1}
        C^{-1} r^{\log_2(C_{\xi})} \leq \xi(B(x,r)) \leq C r^{s}.
    \end{equation}
\end{rem}

The multifractal properties of a set function are based on local dimensions. The convention $0^q=0$ is adopted.
\begin{defi}\label{defi:multi_frac_quantity_capacity}
    Let $\xi\in\mathcal{H}(\R^D)$. The \textit{lower local dimension} of $\xi$ at $x\in \operatorname{supp}(\xi)$ is 
    \begin{equation}\label{eq:h_mu_ball}
        \underline{h}_{\xi}(x) = \liminf_{r \to 0^+} \frac{\log \xi(B(x,r))}{\log(r)}.
    \end{equation}
    The \textit{singularity spectrum} of $\xi$ is given by
    \begin{equation*}
        \sigma_{\xi}\ : \ h\in\R \mapsto \dim_H \underline{E}_{\xi}(h) \quad \text{where} \quad \underline{E}_{\xi}(h)=\{x\in\operatorname{supp}(\xi) \ : \ \underline{h}_{\xi}(x)=h\}.
    \end{equation*}
 
    For every integer $j\geq 1$, let $\Lambda_j^D$ be the set of dyadic cubes of side length $2^{-j}$ in $[0,1]^D$. 
    
    The scaling function of $\xi\in\mathcal{H}(\R^D)$ with $\operatorname{supp}(\xi)\cap[0,1]^D \neq \emptyset$ is defined by
    \begin{equation*}
        \tau_\xi(q)=\liminf_{j\to+\infty} \frac{1}{-j}\log_2 \sum_{\lambda\in\Lambda_j^D} .\xi(\lambda)^q,
    \end{equation*}
  
\end{defi}
\noindent The minimal and maximal local dimension and the dimension of $\xi$ are defined by
\begin{align}
    h_{\xi}^{\min} &= \min\{\underline{h}_{\xi}(x) : x\in \operatorname{supp}(\xi)\}, \quad 
    h_{\xi}^{\max} = \max\{\underline{h}_{\xi}(x) : x\in \operatorname{supp}(\xi)\}, \label{eq:def_exp_extreme}\\
    \dim(\xi) &= \inf\{ \dimH(E) : \xi(E) =1 \}. \label{eq:def_dim_meas}
\end{align}

This paper considers capacities $\mu$ and $\nu$ defined as powers of Gibbs measures. Let $\mathcal{L}^D$ stand for the $D$-dimensional Lebesgue measure. Let us recall how they are defined, see \cite{Pesin-Weiss:1997:multi-frac-anal-gibbs-measures}.
\begin{defi}\label{defi:Gibbs_measure}
    Let $\varphi: \R^D \to \R$ be a $\Z^D$-periodic H{\"o}lder continuous function (called a potential). The sequence of measures defined on $\R^D$ whose densities are given by
    \begin{equation*}
        \nu_n(dx) = \frac{\exp(S_n \varphi(x))}{\int_{[0,1]^D} \exp(S_n \varphi(t)) \mathcal{L}^D (dt)}\mathcal{L}^D(dx), \quad \text{where } S_n \varphi(x) = \sum_{k=0}^{n-1} \varphi(2^k x),
    \end{equation*}
    converges weakly to the $\Z^D$-invariant Gibbs measure $\nu_{\varphi}$ associated with $\varphi$.
\end{defi}
These measures have been extensively studied \cite{Jaerisch-Sumi:2020:multi-frac-formalism-gibbs-measures,Parry-pollicott:1990:zeta_function,Patzschke:1997:self-conformal-measures,Pesin-Weiss:1997:multi-frac-anal-gibbs-measures,Pesin:1997:dimension_theory,Seuret:2018:Inhomogeneous-random-covering-Markov-Shifts}. We recall in Section \ref{sec:capacity_and_product_capa} their main multifractal properties. For the moment, we need the following ones: Given a fixed potential $\varphi: \R^D \to \R$:
\begin{enumerate}
    \item The Gibbs measure $\nu_{\varphi}$ is doubling, and $\operatorname{supp}(\nu_{\varphi})=\R^D$.
    \item For every $h\geq0$, $ 
        \sigma_{\nu_{\varphi}}(h)=\tau_{\nu_{\varphi}}^*(h) := \inf_{r\in \R}(rh-\tau_{\nu_{\varphi}}(r)). $
    \item For every $r\in\R$, setting
    \begin{equation}\label{eq:h_nu_r_def}
        h^{r}_{\nu_{\varphi}} = \tau'_{\nu_{\varphi}}(r),
    \end{equation}
    one has $\sigma_{\nu_{\varphi}}(h^{r}_{\nu_{\varphi}}) = r h^{r}_{\nu_{\varphi}} - \tau_{\nu_{\varphi}}(r)$. Also, $h^{\min}_{\nu_{\varphi}}=h^{+\infty}_{\nu_{\varphi}}$ and $h^{\max}_{\nu_{\varphi}}=h^{-\infty}_{\nu_{\varphi}}$.
    \item For every $r\in\R$, the Gibbs measure $\nu_{r\varphi}$ is called the auxiliary measure associated with $\varphi$ and $r$ (the auxiliary measure is associated with the potential $r\varphi$), and enjoys multifractal properties similar to those of $\nu_{\varphi}$. Then 
    \begin{equation}\label{eq:nur}
    \nu_{r\varphi}(\underline{E}_{\nu_{\varphi}}(h^{r}_{\nu_{\varphi}}))=1 \mbox{, \ \ \ i.e. for $\nu_{r\varphi}$-a.e. $x$, \ \ $\underline{h}_{\nu_{\varphi}}(x)=h^{r}_{\nu_{\varphi}}$}.
    \end{equation}
    
\end{enumerate}

Item (2) is referred to as the multifractal formalism for measures, and is analog to the formalism \eqref{eq:mf_form} for functions.
Item (3) follows from \eqref{eq:h_nu_r_def} and simple considerations on the Legendre transform. Item (4) states that the auxiliary measure $\nu_{r\varphi}$ is concentrated on those points $x$ whose local dimension for $\nu_{\varphi}$ equals $h^{r}_{\nu_{\varphi}}$, see Theorem \ref{theo:auxiliary_measure} for details on auxiliary measures. Since $\nu_{\varphi}$ is $\Z^D$-periodic, we identify $\nu_{\varphi}$ with its restriction to $[0,1]^D$.  
\begin{defi}
    The set of Gibbs capacities $\mathscr{E}_D\subset \mathcal{C}(\R^D)$ is 
    \begin{equation*}
        \mathscr{E}_D = \{\nu^s : \nu \text{ a Gibbs measure}, s>0\}.
    \end{equation*}
\end{defi}

To state our results, we recall the definition of inhomogeneous Besov spaces.
Write $L^D=\{0,1\}^D\setminus\{0^{D}\}$, and consider the family $\{\phi,\psi_\lambda:=\psi^l(2^j \cdot -k)\}_{\lambda=(j,k,l) \in \Z\times \Z^D\times L^D }$ generated by a collection of wavelets $(\phi,\psi^l)_{l\in L^D}$ (see Section \ref{sec:mr_wlt_ana} for details), such that every $f\in L^2(\R^D)$ can be written
\begin{equation*}
    f \overset{L^2}{=} \sum_{\lambda \in \Z\times \Z^D\times L^D} c_{\lambda} \psi_{\lambda} \quad \text{ with }\quad c_{\lambda}=2^{Dj} \int_{\R^D} f(x)\psi_{\lambda} (x)dx.
\end{equation*}
For $k=(k_1,\ldots,k_D)\in\Z^D$, $j\in\Z$, we often identify $\lambda=(j,k,l)$ with the dyadic cube $\prod_{i=1}^{D} [k_i 2^{-j}, (k_i+1) 2^{-j})$, and in particular we write $\xi(\lambda)$ for $\xi(\prod_{i=1}^{D} [k_i 2^{-j}, (k_i+1) 2^{-j}))$.

For any $f\in L^p(\R^D)$, $1\leq p \leq \infty$, $f$ can be written
\begin{equation}\label{eq:wlt_dec_on_R}
    f = \sum_{k\in\Z^D} \beta(k) \phi(\cdot-k) + \sum_{j\geq 0} \sum _{\lambda\in \{j\}\times\Z^D\times L^D} c_{\lambda} \psi_{\lambda}, \quad\textnormal{with}\quad \beta(k)=\int_{\R^D} f(x)\phi(x-k)dx.
\end{equation}
Without loss of generality, we consider functions supported in $[0,1]^D$ and write
\begin{equation}\label{eq:wlt_dec_on_01}
    f =  \beta(0) \phi + \sum_{j\geq 0} \sum _{\lambda\in \Lambda^D_j \times L^D} c_{\lambda} \psi_{\lambda}.
\end{equation}

Indeed, any function $f\in L^2(\R^{D})$ can be written as a sum $f = g+\sum_{\ell\in \Z^D}f_\ell(\cdot +\ell)$, where $g$ is a function as regular as the wavelets and $f_\ell$ can be written as \eqref{eq:wlt_dec_on_01}.
Since we are interested only in local and multifractal properties, and using that the Hausdorff dimension of a countable union is the supremum of the dimension of all sets, the singularity spectrum of a locally bounded function $f\in L^2(\R^{D})$ is obtained as the supremum of those of the $f_\ell$.

\smallskip

\textbf{So, from now on, we will work with functions defined on $[0,1]^D$ written as \eqref{eq:wlt_dec_on_01} and with a set function $\xi$ with $\operatorname{supp}(\xi)=[0,1]^D$.}
By a slight abuse of notation, we write $\mathcal{H}([0,1]^D) = \{\xi \in \mathcal{H}(\R^D):\operatorname{supp}(\xi)=[0,1]^D\}$ and $\mathcal{C}([0,1]^D) =\mathcal{H}([0,1]^D) \cap \mathcal{C}(\R^D)$.

Barral and Seuret introduced the following inhomogeneous Besov spaces.

\begin{defi}\label{defi:inh_besov_wlt}
    Let $\xi\in\mathcal{H}([0,1]^D)$ satisfying that for some $C>0$ and $0<s_1<s_2$,
    \begin{equation}\label{eq:P1_ball}
        \mbox{for all $x\in [0,1]^D$,~~for all $r>0$, } ~~~C^{-1} r^{s_2} \leq \xi(B(x,r)) \leq C r^{s_1}.
    \end{equation}
    Let $p\in[1,+\infty]$, consider an integer $n \geq \lfloor {s_2+ {D}/{p}}+1\rfloor$ and wavelets $(\phi,\psi^l)_{l\in L^{D}} \in C^n(\R^d)$ compactly supported, having at least $n$ vanishing moments (see Section \ref{sec:mr_wlt_ana}). A function $f$ written as \eqref{eq:wlt_dec_on_01} belongs to $B^{\xi}_{p,q}([0,1]^d)$ if and only if $f\in L^p([0,1]^d)$ and $|f|_{\xi,p,q}<+\infty$, where 
    \begin{equation}\label{eq:snorm_inh_besov}
        |f|_{\xi,p,q}=\big\|(\varepsilon^{\xi,p}_{j})_{j\in\N}\big\|_{\ell^q(\N)}, \quad \text{with } \quad \varepsilon^{\xi,p}_{j}=\Big\|\Big( \frac{c_{\lambda}}{\xi(\lambda)} \Big)_{\lambda\in \Lambda^D_j \times L^D}\Big\|_{\ell^p(\Lambda^D_j \times L^D)}.
    \end{equation}
    
    Then define 
    \begin{equation}\label{eq:besov_tilde}
        \widetilde{B}^{\xi}_{p,q}([0,1]^D) = \bigcap_{0 < \varepsilon < \min(1,s_1)} B^{\xi^{(-\varepsilon)}}_{p,q}([0,1]^D).
    \end{equation}
\end{defi}
Formula \eqref{eq:snorm_inh_besov} implies that $|c_\lambda | \leq C \,\xi(\lambda)$, for every cube $\lambda$. Recalling that the H\"older spaces $C^\alpha([0,1]^{D})$ are characterised by the fact that $|c_\lambda | \leq C\,|\lambda|^\alpha$ for every $\lambda$, the spaces $ B^{\xi}_{p,q}([0,1]^{D})$ can be understood as an inhomogeneous generalisation of $C^\alpha([0,1]^{D})$, since the value of $\xi(\lambda)$ varies a lot depending on $\lambda$. This interpretation justifies the term environment for $\xi$.

\smallskip

Taking $\xi = (\mathcal{L}^D)^{\frac{s}{D}-\frac{1}{p}}$ with $s > D/p$, it implies that $B^{\xi}_{p,q}([0,1]^D) = B^{s}_{p,q}([0,1]^D)$. 
Also, when $p=q=\infty$, $B^{\xi,\psi}_{\infty,\infty}([0,1]^{D})$ is the space of those functions $f$ for which 
\begin{equation*}
    C=\sup_{\lambda\in \Z\times \Z^{D}\times L^{D}} \frac{|c_{\lambda}|}{\xi(\lambda)} <+\infty.
\end{equation*}

It is proved in \cite{Barral-Seuret:2023:Besov_Space_Part_2} that the definition of $B^{\xi}_{p,q}([0,1]^D)$ and $\widetilde{B}^{\xi}_{p,q}([0,1]^D)$ is independent of the choice of $\psi$ as soon as $\xi$ is a doubling, and that the norms $\|\cdot\|_{L^p}+|\cdot|_{\xi,p,q}$ are equivalent for any choice of $\psi$. Since this will be our situation, we omit the dependence on $\psi$ in the notations.

\begin{rem}\label{rem:incl_besov_xi_eps}
    By construction, the family of Banach spaces $\big\{ B^{\xi^{(-\varepsilon)}}_{p,q}([0,1]^D) \big\}_{0<\varepsilon<\min(1,s_1)}$ satisfies that, for $0<\varepsilon<\varepsilon'<\min(1,s_1)$, $B^{\xi^{(-\varepsilon)}}_{p,q}([0,1]^D) \hookrightarrow B^{\xi^{(-\varepsilon')}}_{p,q}([0,1]^D)$.
    Hence, in \eqref{eq:besov_tilde}, the intersection can be taken on any interval of the form $(0,\varepsilon)$ with $\varepsilon <\min(1,s_1)$.
    
    Also, $\widetilde{B}^{\xi}_{p,q}([0,1]^d)$ is a Fréchet space \cite{Barral-Seuret:2023:Besov_Space_Part_2}, and a translation-invariant complete metric inducing the same topology as $\big\{ \|\cdot\|_{B^{\xi^{(-\varepsilon)}}_{p,q}} := \|\cdot\|_{L^p}+|\cdot|_{\xi^{(-\varepsilon)},p,q} \big\}_{0<\varepsilon<\min(1,s_1)}$ is
    \begin{equation}\label{eq:metric_B_xi_tilde}
        \forall f,g \in \widetilde{B}^{\xi}_{p,q}([0,1]^d), \quad \mathscr{D}(f,g)=\sum_{n >\max(1,s_1^{-1})} 2^{-n} \frac{\|f-g\|_{B^{\xi^{(-1/n)}}_{p,q}}}{1+\|f-g\|_{B^{\xi^{(-1/n)}}_{p,q}}}.
    \end{equation}
\end{rem}

\noindent We recall the multifractal properties of prevalent functions in $\widetilde{B}^{\xi}_{\infty,q}([0,1]^D)$ \cite{Rible:2025:Prevalence_inhomogeneous_Besov}. We will come back soon to the notion of prevalence, for the moment the reader shall have in mind that this is a natural substitute for "almost every element" in infinite dimensional spaces. 

\begin{defi}\label{defi:SMF}
    The capacity $\xi\in\mathcal{C}([0,1]^D)$ is said to satisfy the strong multifractal formalism (SMF) if for every $h\geq 0$, $\sigma_\xi(h)=\tau_\xi^*(h)=\dim_H E_{\xi}(h)$, where $E_{\xi}(h)$ is the set of points $x$ where the local dimension \eqref{eq:h_mu_ball} is a limit and equals $h$.
\end{defi}

\begin{theo}[\cite{Rible:2025:Prevalence_inhomogeneous_Besov}]\label{theo:Rible_prevalence} 
    If $\xi\in \mathcal{C}([0,1]^D)$ is doubling and satisfies the SMF, then there exists a prevalent set of functions $f\in\widetilde{B}^{\xi}_{\infty,q}([0,1]^D)$ such that $\sigma_f=\tau_\xi^*=\sigma_\xi$.
\end{theo}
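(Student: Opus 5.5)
The proof follows the classical two-sided scheme for prevalence results in function spaces: a universal upper bound valid for every $f\in\widetilde{B}^{\xi}_{\infty,q}([0,1]^D)$, then a lower bound on a prevalent set, obtained by constructing a saturating function and a finite-dimensional probe. The environment $\xi$ replaces the homogeneous weights $2^{-js}$.

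\textbf{Upper bound.} Let $f\in\widetilde{B}^{\xi}_{\infty,q}$. For every $\varepsilon>0$ we have $|c_\lambda|\le C_\varepsilon\,\xi(\lambda)|\lambda|^{-\varepsilon}$. Because $\xi$ is doubling, the wavelet leaders satisfy $d_\lambda\le C\,\xi(3\lambda)|\lambda|^{-\varepsilon}$. The wavelet-leader characterisation of pointwise regularity then gives $h_f(x)\ge \underline{h}_\xi(x)$ at every $x$.

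Next I control $\dim_H\{h_f\le h\}$. I cover the set $\{x:\underline{h}_\xi(x)\le h\}$ by dyadic cubes $\lambda$ with $\xi(\lambda)\ge|\lambda|^{h+\varepsilon}$. A standard counting via $\tau_\xi$ (Markov's inequality on $\sum_\lambda\xi(\lambda)^r$ for $r\ge0$) shows that for $h\le h$ at the maximum of $\tau_\xi^*$,
\[
\dim_H\{x: h_f(x)\le h\}\le \tau_\xi^*(h).
\]
For $h$ on the decreasing side of $\tau_\xi^*$, I use the pointwise inequality $h_f(x)\ge\underline{h}_\xi(x)$ together with $\sigma_\xi=\tau_\xi^*$ (SMF) applied to level sets of $\underline{h}_\xi$ above $h$. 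Hence $\sigma_f\le\tau_\xi^*$ everywhere.

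\textbf{Saturating function and probe.} Define
\[
g=\sum_j\sum_{\lambda\in\Lambda_j^D} j^{-2}\,\xi(\lambda)\,\psi_\lambda .
\]
The factor $j^{-2}$ ensures $g\in\widetilde{B}^{\xi}_{\infty,q}$ for every $q$, since the $|\lambda|^{-\varepsilon}$ gain absorbs the polynomial factor. As the probe I take the finite-dimensional space $V$ spanned by finitely many shifted or modulated versions $g_1,\dots,g_N$ of $g$, chosen so that for every $\lambda$ at least one $g_i$ has a coefficient of size $\gtrsim \xi(\lambda)j^{-2}$ on a wavelet near $\lambda$ with a controlled sign pattern. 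This adapts the Fraysse--Jaffard probe construction.

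A Fubini-type argument in the parameter of $V$ then shows: for each fixed $f$, for Lebesgue-almost every $(\alpha_1,\dots,\alpha_N)$, the function $f+\sum\alpha_i g_i$ has leaders $d_\lambda\ge|\lambda|^{\varepsilon}\xi(\lambda)$ for infinitely many $j$ along every $x$. The key input is that the set of $\alpha$ cancelling a given large coefficient has small measure, and a Borel--Cantelli argument over scales handles the rest. Consequently $h_f(x)\le\overline{h}_\xi(x)$ for all $x$, where $\overline{h}_\xi$ denotes the upper local dimension. On $E_\xi(h)$ the local dimension is a limit, so $h_f(x)=h$ there, and the SMF gives
\[
\sigma_f(h)\ge\dim_H E_\xi(h)=\tau_\xi^*(h).
\]
Finally, the exceptional set is shy: it is Borel, and its translates meet $V$ in a Lebesgue-null set. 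A countable intersection over $\varepsilon=1/n$ preserves prevalence.

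\textbf{Main obstacle.} I expect the hardest step to be the prevalence lower bound, namely proving $h_f(x)\le\underline{h}_\xi(x)$ uniformly for all $x$ in $E_\xi(h)$ simultaneously, rather than almost everywhere. Sign cancellations between $f$ and the probe must be excluded at infinitely many scales near every point. My first attempt would be the leader-based argument: it is enough that, in each dyadic cube $\lambda$ of generation $j$, one of the $3^D$ neighbouring children at generation $j+1$ keeps a large coefficient. I would estimate the probability of failure in $\alpha$ by $C j^{-2}$ per cube, which may need the $j^{-2}$ factor replaced by a slower factor such as $j^{-1/q}$ in order to remain in $\widetilde{B}^{\xi}_{\infty,q}$.
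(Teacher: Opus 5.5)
The paper does not prove this theorem; it quotes it from \cite{Rible:2025:Prevalence_inhomogeneous_Besov}. Its trace analogue, Theorem \ref{theo:prev_spect_inh_besov_inf_q}, is proved with the architecture you outline: the universal inequality $h_f\ge\underline{h}_\xi$, a saturating function, a finite-dimensional probe, the prevalent inequality $h_f\le\overline{h}_\xi$, and then the SMF. Measured against that argument, your proposal has two genuine gaps.

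\textbf{The upper bound past the maximum of $\tau^*_\xi$ is not universal.} Your derivation of it runs in the wrong direction. The inequality $h_f\ge\underline{h}_\xi$ only gives $E_f(h)\subset\{x:\underline{h}_\xi(x)\le h\}$. That set has dimension $D$ once $h\ge\tau'_\xi(0^+)$; level sets of $\underline{h}_\xi$ \emph{above} $h$ never enter.

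No bound better than $D$ holds for all $f$ there. Take $H\in(s_2,n)$ and the function with coefficients $c_\lambda=2^{-jH}$. By \eqref{eq:P1_ball} it lies in $\widetilde{B}^{\xi}_{\infty,q}$, and it satisfies $h_f\equiv H$, so $\sigma_f(H)=D$. But $\tau^*_\xi(H)=\sigma_\xi(H)=-\infty$ by the SMF, since no point has local dimension $H>s_2$. This is why Theorem \ref{theo:up_bound_spect_all_fct} gives only the trivial bound on the decreasing side.

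The decreasing part must come from the prevalent inequality $h_f\le\overline{h}_\xi$. It gives $E_f(h)\subset\{\overline{h}_\xi\ge h\}$, whose Hausdorff dimension is at most $\tau^*_\xi(h)$ for $h\ge\tau'_\xi(0^-)$ by item \ref{item:dim_E_geq} of Proposition \ref{prop:prop_capa_SMF}. This is exactly how the proof of Theorem \ref{theo:prev_spect_inh_besov_inf_q} proceeds. So this gap is repairable from your own second half, but $\sigma_f\le\tau^*_\xi$ is a prevalent statement, not a universal one.

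\textbf{The probe step, as described, cannot work.} Consider a coefficient on which a probe function has size $\asymp j^{-2}\xi(\lambda)$. The set of parameters in a bounded box for which it falls below your threshold $2^{-j\varepsilon}\xi(\lambda)$ has measure $\asymp j^{2}2^{-j\varepsilon}$, not $Cj^{-2}$. Since you want the conclusion at every $x$, these sets must be summed over the $\asymp 2^{Dj}$ cubes of generation $j$. Any one-scale estimate therefore diverges, and $Cj^{-2}$ per cube would diverge too.

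Shifts, modulations or sign patterns of $g$ do not supply the needed transversality. Looking at children of a cube is also the wrong direction: two points of the same cube share ancestors, not children.

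The mechanism that works is the one in Definition \ref{defi:saturating_func_on_multi_dim_subspace} and Lemma \ref{lemma:B_a_0_lebesgue_measure}. Split $g=\sum_{i=1}^N g^{(i)}$, where $g^{(i)}$ keeps the coefficients of $g$ only on generations $j\equiv i\pmod N$. Suppose $\alpha$ and $\alpha'$ both fail at points of a common cube $\lambda_{j_0}$. Those points share their ancestors at the $N$ generations just above $j_0$, and each of these generations carries exactly one active probe direction. Comparing the two coefficients there gives $|\alpha_i-\alpha'_i|\lesssim j_0^{2}2^{-j_0\varepsilon}$ for every $i$.

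So the failure set attached to $\lambda_{j_0}$ has volume $\lesssim (j_0^{2}2^{-j_0\varepsilon})^N$. Summing over the $2^{Dj_0}$ cubes tends to $0$ only if $N\varepsilon>D$. The probe dimension must therefore grow like $D/\varepsilon$, with a separate probe for each $\varepsilon=1/n$ before taking the countable intersection. This dimension count is the missing idea.

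Two smaller points. Your worry about the factor $j^{-2}$ is unfounded: in $\widetilde{B}^{\xi}_{\infty,q}$ the weight $\xi^{(-\varepsilon)}$ already provides $2^{-j\varepsilon}$, so even $\sum_\lambda\xi(\lambda)\psi_\lambda$ belongs to the space. Also, ``the exceptional set is Borel'' needs an argument, for instance closedness via projection along the compact factor $[0,1]^D$, or analyticity as in Proposition \ref{prop:universally_measurable_O}.
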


Our objective is to study the multifractal properties of the traces of $f\in\widetilde{B}^{\xi}_{p,q}(\R^D)$ when $\xi$ is a product of Gibbs measures. More precisely, if $D=d+d'$, for $\mu_{\varphi} \in \mathscr{E}_d$ (resp. $\nu_{\widetilde{\varphi}} \in\mathscr{E}_{d'}$) associated to a potential $\varphi:\R^d\to\R$, (resp. $\widetilde{\varphi}:\R^{d'}\to\R$), we consider the capacity on $\R^D$
\begin{equation*}
    \xi= \mu_{\varphi} \times \nu_{\widetilde{\varphi}}
\end{equation*}
i.e. for $A\in \mathcal{B}([0,1]^d)$, $B\in \mathcal{B}([0,1]^{d'})$, $\xi(A \times B) := \mu_{\varphi}(A) \cdot \nu_{\widetilde{\varphi}}(B)$. We omit the mention to $\varphi$ and $\widetilde{\varphi}$, and write simply $\mu$, $\nu$, and also $\nu_r$ for the auxiliary measures associated with $\nu$.

\begin{figure} 
    \centering
    \includegraphics[page=3,width=.65\textwidth]{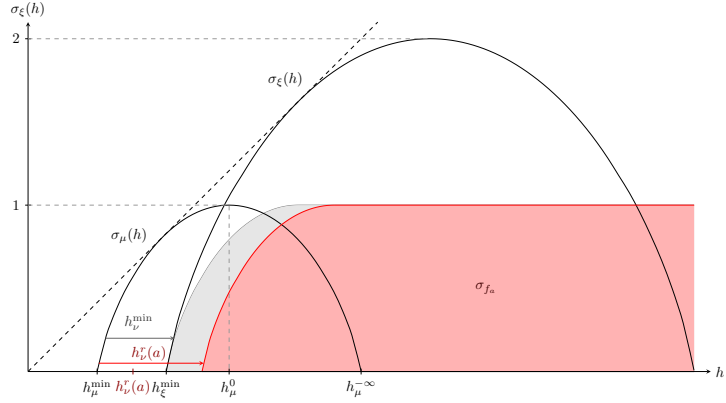}\vspace{-2mm}
    \caption{Upper-bound for $\sigma_{f_a}(h)$ for all $a\in[0,1]^{d'}$ in grey, for $\nu_r$-almost all $a\in[0,1]^{d'}$ in red.}
    \label{fig:up_bound_spect_trace}
\end{figure}

Theorem \ref{theo:Rible_prevalence} applies to $\xi=\mu\times\nu$: there is a prevalent set of functions $f\in\widetilde{B}^{\xi}_{p,q}([0,1]^D)$ for which $\sigma_f=\tau_\xi^*$. The spectrum of $\xi$ is determined in terms of $\mu$ and $\nu$, see Section \ref{sec:product_capa}.
\begin{prop}\label{prop:spect_prod_xi}
    Let $\mu\in\mathscr{E}_d$ and $\nu\in\mathscr{E}_{d'}$ with $D=d+d'$. Let $\xi=\mu\times \nu$be the product capacity on $[0,1]^D$.
    The product capacity $\xi$ satisfies the SMF and verifies $\tau_\xi=\tau_\mu+\tau_\nu$.
\end{prop}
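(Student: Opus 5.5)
The identity $\tau_\xi=\tau_\mu+\tau_\nu$ comes from factorising the partition sum; the SMF comes from combining the Gibbs structure of the two factors, via auxiliary product measures.

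\emph{Scaling function.} Write $\mu=\mu_\varphi^{s}$ and $\nu=\nu_{\widetilde\varphi}^{t}$. A dyadic cube $\lambda\in\Lambda_j^D$ is exactly a product $\lambda_1\times\lambda_2$ with $\lambda_1\in\Lambda_j^d$ and $\lambda_2\in\Lambda_j^{d'}$. Hence
\begin{equation*}
\sum_{\lambda\in\Lambda_j^D}\xi(\lambda)^q=\Big(\sum_{\lambda_1\in\Lambda^d_j}\mu(\lambda_1)^q\Big)\Big(\sum_{\lambda_2\in\Lambda^{d'}_j}\nu(\lambda_2)^q\Big).
\end{equation*}
For a power of a Gibbs measure, $\tau_\mu(q)=\tau_{\mu_\varphi}(sq)$. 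For Gibbs measures the $\liminf$ defining $\tau$ is a genuine limit: it equals the pressure of $sq\varphi$ up to normalisation, by the bounded-distortion property $\mu_\varphi(\lambda)\asymp \exp(S_j\varphi(x)-jP(\varphi))$ for $x\in\lambda$. Since both limits exist, taking $-\frac1j\log_2$ of the product gives $\tau_\xi=\tau_\mu+\tau_\nu$, and this $\tau_\xi$ is real-analytic and concave.

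\emph{Upper bound.} The standard covering argument gives $\dim_H\{x:\underline h_\xi(x)=h\}\le\tau_\xi^*(h)$ for any capacity with this scaling function. One counts dyadic cubes with $\xi(\lambda)\approx 2^{-jh}$ via the partition sum. Passing between balls and cubes is harmless because $\xi$ is doubling, as a product of doubling capacities.

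\emph{Lower bound.} Fix $h=\tau_\xi'(q)$, with $q\in\R$ arbitrary. Then $h=h_\mu+h_\nu$, where $h_\mu=\tau_\mu'(q)$ and $h_\nu=\tau_\nu'(q)$. Consider the product of the auxiliary Gibbs measures $m_q=\mu_{q}\times\nu_{q}$, associated with the potentials $qs\varphi$ and $qt\widetilde\varphi$. By \eqref{eq:nur}, $\mu_q$-a.e. point has local dimension of $\mu$ equal to $h_\mu$, and this is actually a limit for Gibbs measures; similarly for $\nu_q$ and $h_\nu$. For $x=(x_1,x_2)$ one has
\begin{equation*}
\xi(B_\infty(x,r))=\mu(B(x_1,r))\,\nu(B(x_2,r)),
\end{equation*}
using sup-norm balls, which is harmless by doubling. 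So for $m_q$-a.e. $x$, the local dimension of $\xi$ exists and equals $h$. Thus $m_q(E_\xi(h))=1$.

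It remains to show $\dim m_q\ge\tau_\xi^*(h)$. The local dimension of $m_q$ at $x$ equals the sum of the local dimensions of $\mu_q$ at $x_1$ and $\nu_q$ at $x_2$, which are a.e. limits equal to $qh_\mu-\tau_\mu(q)$ and $qh_\nu-\tau_\nu(q)$ respectively. The sum is $qh-\tau_\xi(q)=\tau_\xi^*(h)$. The mass distribution principle then gives $\dim_H E_\xi(h)\ge\tau_\xi^*(h)$.

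\emph{Endpoints.} The values $h$ at the endpoints $h_\xi^{\min},h_\xi^{\max}$ (the limits $q\to\pm\infty$) need separate handling. In the degenerate case where $\tau_\xi$ is affine, the spectrum is a single point. I expect these endpoints to be the main obstacle. The approach I would try is the classical one for Gibbs measures: concatenate generic points of auxiliary measures along a sequence $q_n\to\pm\infty$, or invoke the known SMF for each factor at its endpoints and use $\dim_H(A\times B)\ge\dim_H A+\dim_H B$. The product inequality suffices here, since $\tau^*_\xi(h)=\max_{h_1+h_2=h}\big(\tau_\mu^*(h_1)+\tau_\nu^*(h_2)\big)$ and the maximiser has $\tau_\mu'=\tau_\nu'$ at a common $q$. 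Finally, $E_\xi(h)\subset\underline E_\xi(h)$, which combined with the upper bound yields $\sigma_\xi=\tau_\xi^*=\dim_H E_\xi$, that is, the SMF.
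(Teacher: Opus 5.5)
Your proof is correct. It differs from the paper's mainly in the lower bound.

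The paper proceeds as follows:
\begin{itemize}
\item For the lower bound, it notes that $E_\mu(\alpha)\times E_\nu(h-\alpha)\subset E_\xi(h)$, since the limits add. It then applies $\dim_H(A\times B)\ge\dim_H A+\dim_H B$ together with the SMF of $\mu$ and $\nu$ at $\alpha=h^r_\mu$, obtaining $\sigma_\xi(h^r_\mu+h^r_\nu)\ge\tau^*_\mu(h^r_\mu)+\tau^*_\nu(h^r_\nu)=\tau^*_\xi(\tau'_\xi(r))$.
\item For the existence of the limits defining $\tau$, it uses the quasi-Bernoulli property and Fekete's lemma rather than your bounded-distortion and pressure argument.
\item For the upper bound, it quotes $\sigma_\xi\le\tau^*_\xi$, as you do.
\end{itemize}

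Your interior argument, using $m_q=\mu_q\times\nu_q$ and the mass distribution principle, has two advantages. It is more self-contained, since it uses only the defining estimate of the auxiliary measures rather than the SMF of the factors as a black box. It also shows that $m_q(\lambda)\asymp\xi(\lambda)^q|\lambda|^{-\tau_\xi(q)}$, so $m_q$ is an auxiliary measure for $\xi$ carried by $E_\xi(\tau'_\xi(q))$.

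However, at the endpoints you fall back on the product inequality and the SMF of the factors anyway. That single argument, which is the paper's, already covers every $h$ at once, so your mass-distribution step is optional.

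At the endpoints, the inequality you actually need is $\tau^*_\mu(h^{\min}_\mu)+\tau^*_\nu(h^{\min}_\nu)\ge\tau^*_\xi(h^{\min}_\xi)$, and likewise for the maxima. This is the nontrivial direction of the sup-convolution identity you quote. It follows by letting $q\to\pm\infty$ in $\tau^*_\xi(\tau'_\xi(q))=\tau^*_\mu(h^q_\mu)+\tau^*_\nu(h^q_\nu)$, using that a concave upper semicontinuous function is continuous on its closed domain. On this point you are more careful than the paper, which handles $r=\pm\infty$ only implicitly when it asserts that $h^r_\mu+h^r_\nu$ sweeps the whole range of local dimensions.
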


For standard Besov spaces, the trace operator $f\mapsto f_a$ maps $B^{s}_{p,q}(\R^D)$ to $B^{s-d'/p}_{p,q}(\R^d)$, reflecting a loss of regularity when taking traces. However, for $\mathcal{L}^{d'}$-a.e. $a$, $f_a \in B^{s}_{p,q}(\R^d)$.
Our first result shows that the same phenomenon occurs for inhomogeneous spaces.
\begin{theo}\label{theo:incl_trace}
    Let $p,q\in[1,+\infty]$. Let $\mu\in\mathscr{E}_d$ and $\nu\in\mathscr{E}_{d'}$ with $D=d+d'$. Let $\xi$ be the product capacity on $[0,1]^D$ defined by $\xi = \mu \times \nu$.
    
    For all $f\in \widetilde{B}^{\xi}_{p,q}([0,1]^{D})$ and all $r\in\R$, for $\nu_{r}$-almost all $a\in[0,1]^{d'}$, one has 
    \begin{equation*}
        f_a \in \widetilde{B}^{\mu^{(+\Gamma_{\nu,r})}}_{p,1}([0,1]^d) \quad \text{with} \ \Gamma^p_{\nu,r} = h^{r}_{\nu} + {\dim(\nu_{r})}/{p}.
    \end{equation*}
\end{theo}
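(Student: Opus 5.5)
Our plan is to work entirely on wavelet coefficients. We use a tensor-product wavelet basis on $\R^D=\R^d\times\R^{d'}$, built from one-dimensional compactly supported wavelets of regularity and vanishing moments at least $n$. Fix $a\in[0,1]^{d'}$. For a cube $\lambda=(j,k,l)$ with $k=(k^1,k^2)\in\Z^d\times\Z^{d'}$ and $l=(l^1,l^2)$, we have
\begin{equation*}
\psi_\lambda(x,a)=\psi^{l^1}(2^jx-k^1)\,\psi^{l^2}(2^ja-k^2),
\end{equation*}
where the factor in $a$ is either a one-variable wavelet or a scaling function in $\R^{d'}$. Restricting the expansion of $f$ to $\mathcal{C}_a$ therefore writes $f_a$ as a sum of $d$-dimensional functions $\psi^{l^1}(2^j\cdot-k^1)$ (wavelets or scaling functions at scale $j$). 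The coefficients are
\begin{equation*}
c^{a}_{(j,k^1,l^1)}=\sum_{k^2,l^2} c_{(j,k,l)}\,\psi^{l^2}(2^ja-k^2).
\end{equation*}
Only $O(1)$ values of $k^2$ contribute, namely those with $2^{-j}k^2$ at distance $\lesssim 2^{-j}$ from $a$. Terms with $l^1=0$ (a scaling function in $x$) are reorganised by the standard argument of Aubry–Maman–Seuret. Since $\phi(2^j\cdot)$ equals a sum of wavelets at coarser scales plus a scaling function, a lacunary sum over scales $j'\le j$ appears. This sum is harmless because of the geometric decay of $\mu^{(+\Gamma)}$ given by \eqref{eq:doubling_induce_P1}, and it explains why we only claim $q=1$ up to $\varepsilon$-losses, which is fine in $\widetilde B$. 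The conclusion then reduces to the bound
\begin{equation*}
\sum_j \Big\| \big(c^a_{(j,k^1,l^1)}/\mu^{(+\Gamma-\varepsilon)}(\lambda^1)\big)_{k^1,l^1}\Big\|_{\ell^p}<\infty .
\end{equation*}

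Next we use the product structure. Since $\xi(\lambda)=\mu(\lambda^1)\nu(\lambda^2)$, we get
\begin{equation*}
\frac{|c^a_{\lambda^1}|}{\mu(\lambda^1)} \lesssim \sum_{k^2:\ \lambda^2\ni a \text{ or neighbour}} \frac{|c_\lambda|}{\xi^{(-\varepsilon)}(\lambda)}\, \nu(\lambda^2)\,2^{-j\varepsilon}.
\end{equation*}
The key input is the behaviour of $\nu$ near $\nu_r$-typical points. By \eqref{eq:nur}, and since $\nu$ is doubling, for $\nu_r$-a.e. $a$ and every $\eta>0$ we have $\nu(\lambda_j(a)\text{ and neighbours})\le 2^{-j(h^r_\nu-\eta)}$ for $j$ large. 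This gives the factor $2^{-jh^r_\nu}$ in $\Gamma$, which settles $p=\infty$ directly for every such $a$.

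For finite $p$, the extra gain $\dim(\nu_r)/p$ must come from averaging in $a$. This is the main step. We integrate in $a$ against $\nu_r$ and set
\begin{equation*}
I_j=\int \sum_{k^1,l^1}\Big(\frac{|c^a_{\lambda^1}|}{\mu(\lambda^1)}\Big)^p 2^{jp(h^r_\nu+\dim(\nu_r)/p-2\eta)}\,d\nu_r(a).
\end{equation*}
For fixed $j$, $a\mapsto c^a$ depends only on the cubes $\lambda^2$ near $a$, so
\begin{equation*}
I_j\lesssim \sum_{\lambda}\Big(\frac{|c_\lambda|}{\xi(\lambda)}\Big)^p\nu(\lambda^2)^p\,\nu_r(3\lambda^2)\,2^{jp(\Gamma-2\eta)}.
\end{equation*}
We need, uniformly in $\lambda^2$, that $\nu(\lambda^2)^p\nu_r(\lambda^2)\lesssim 2^{-jp(h^r_\nu+\dim(\nu_r)/p)+jp\eta}$. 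This is false for arbitrary cubes, so we restrict to the good cubes. Let $G_j$ be the set of $\lambda^2$ with $\nu(\lambda^2)\le 2^{-j(h^r_\nu-\eta)}$ and $\nu_r(\lambda^2)\le 2^{-j(\dim\nu_r-\eta)}$. For Gibbs measures the auxiliary measure $\nu_r$ is exact dimensional with $\dim(\nu_r)=\sigma_\nu(h^r_\nu)$. By Egorov's theorem, for any $\delta>0$ there is a set $A_\delta$ with $\nu_r(A_\delta)>1-\delta$ on which, for $j\ge J_\delta$, the cube of $a$ and its neighbours lie in $G_j$ (doubling handles the neighbours). We then restrict the integral to $A_\delta$. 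This yields $I_j\lesssim 2^{3jp\eta}(\varepsilon^{\xi^{(-\varepsilon)},p}_j)^p 2^{-jp\varepsilon}$. Choosing $3\eta<\varepsilon/2$, the sequence $I_j^{1/p}$ is summable since $(\varepsilon_j)$ lies in $\ell^q\subset\ell^\infty$ and the factor $2^{-j\varepsilon/2}$ absorbs the rest.

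To conclude, monotone convergence gives $\int_{A_\delta}\sum_j (\cdots)\,d\nu_r<\infty$ after using Minkowski's inequality or the bound $\sum_j 2^{-j\varepsilon/4}(\cdot)$. Hence the $\ell^1$-in-$j$ seminorm of $f_a$ with respect to $\mu^{(+\Gamma-3\eta)}$ is finite for $\nu_r$-a.e. $a\in A_\delta$. We then let $\delta\to0$, and intersect over countably many $\eta=1/n$ using Remark \ref{rem:incl_besov_xi_eps}. The $L^p$ membership of $f_a$ follows from $f$ being continuous, since its coefficients decay like $\xi(\lambda)\le C2^{-js_1}$. The main obstacle is the uniform good-cube estimate, which combines exact dimensionality of $\nu_r$ with the local dimension of $\nu$ along the same cubes. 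It also requires checking that the finitely many overlapping $k^2$ and the scaling-function terms do not break the doubling comparisons.
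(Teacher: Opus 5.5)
Your strategy is the paper's. You reorganise the scaling-function terms of the trace (the paper's Proposition \ref{prop:G_a_belonging_besov}), so that it suffices to bound all coefficients $c^a_{(j,k^1,l^1)}$ with $l^1\in\{0,1\}^d$. You factor $\xi(\lambda)=\mu(\lambda^1)\nu(\lambda^2)$. You restrict $a$ to a set of $\nu_r$-measure close to $1$ on which $\nu$ and $\nu_r$ of all cubes near $a$ are controlled; your Egorov argument replaces Proposition \ref{prop:bound_meas_dyadics}. You integrate against $\nu_r$ so that $\nu(\lambda^2)^p\nu_r(\lambda^2)\approx 2^{-jp\Gamma^p_{\nu,r}}$ appears. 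Finally you pass to $\nu_r$-a.e. $a$: integrating the sum in $j$ replaces the paper's Markov--Borel--Cantelli step, and your pointwise treatment of $p=\infty$ is fine.

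However, the summability step fails as written, because the $\varepsilon$-correction has the wrong sign. Since $\xi^{(-\varepsilon)}(\lambda)=\xi(\lambda)|\lambda|^{-\varepsilon}\geq\xi(\lambda)$, knowing $f\in B^{\xi^{(-\varepsilon)}}_{p,q}$ only gives $|c_\lambda|/\xi(\lambda)\lesssim 2^{+j\varepsilon}\,|c_\lambda|/\xi^{(-\varepsilon)}(\lambda)$. Working in $\widetilde B^{\xi}$ costs a factor $2^{j\varepsilon}$; it does not supply one. So your bound on $|c^a_{\lambda^1}|/\mu(\lambda^1)$ must carry $2^{j\varepsilon}$. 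The claim that, with $3\eta<\varepsilon/2$, ``the factor $2^{-j\varepsilon/2}$ absorbs the rest'' is then false, since no decaying factor exists. Separately, the margin $2\eta$ in $I_j$ is too small. Your good-cube bounds give $\nu(\lambda^2)^p\nu_r(\lambda^2)\,2^{jp(\Gamma^p_{\nu,r}-2\eta)}\lesssim 2^{-j(p-1)\eta}$, which does not decay when $p=1$.

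The repair is to reverse the order in which the parameters are chosen.
\begin{itemize}
\item Fix a deficit $\theta>0$ and aim at $\mu^{(+(\Gamma^p_{\nu,r}-\theta))}$.
\item Then choose $\eta,\varepsilon>0$ with $(p+1)\eta+p\varepsilon<p\theta$; for $p=\infty$, in your pointwise argument, take $\eta+\varepsilon<\theta$. This is legitimate because $f\in B^{\xi^{(-\varepsilon)}}_{p,q}$ for every small $\varepsilon$.
\item Your computation then gives $I_j\lesssim 2^{-j(p\theta-(p+1)\eta-p\varepsilon)}\big(\varepsilon^{\xi^{(-\varepsilon)},p}_j\big)^p$. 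This is summable, also after taking $p$-th roots.
\item Intersecting over $\theta=1/n$ yields the $\widetilde B$ conclusion.
\end{itemize}
This is exactly the paper's bookkeeping, with $\eta=\varepsilon=1/m$ and $\Gamma<\Gamma^p_{\nu,r}-(2p+2)/(pm)$ as in \eqref{eq:condition_m}.

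On one point your sketch is more robust than the paper's argument. For the reorganisation of the scaling-function terms you invoke the decay $\mu^{(+\Gamma)}(\lambda_{j,k})/\mu^{(+\Gamma)}(\lambda_{J,K})\lesssim 2^{-(j-J)\Gamma}$ for fine cubes near a coarse one. This decay comes from the factor $|\lambda|^{\Gamma}$ together with monotonicity and doubling of $\mu$, not from \eqref{eq:doubling_induce_P1}. It handles $p=\infty$, where the factor $2^{-d(j-J)/p}$ used in the paper's proof of Proposition \ref{prop:G_a_belonging_besov} equals $1$.
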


By convention $\alpha/{\infty}=0$. Observe that $f_a$ belongs to a space whose regularity depends on the local regularity of $\nu$ at $a$, since by \eqref{eq:nur}, $\nu_r$-a.e. points $a$ are such that $h_{\nu}(a)=h_\nu^r$. 

For the next results, \textbf{we focus on the case $p=\infty$.} First, an upper-bound for the singularity spectrum of traces of all functions $f\in \widetilde{B}^{\xi}_{\infty,q}([0,1]^{D})$ is obtained.
\begin{theo}\label{theo:up_bound_spect_all_fct}
    For all $f\in \widetilde{B}^{\xi}_{\infty,q}$ and $a\in[0,1]^{d'}$, one has for every $h\in\R_+$
    \begin{equation}\label{eq:theo_up_b_spect:all_a}
        \sigma_{f_a}(h) \leq 
        \begin{cases}
            \sigma_{\mu}(h-h^{\min}_{\nu}) &\text{if } h\leq h^{0}_{\mu} +h^{\min}_{\nu}, \\
           \hspace{3mm} d &\text{if } h> h^{0}_{\mu} +h^{\min}_{\nu}.
        \end{cases}
    \end{equation}

\noindent For all $f\in \widetilde{B}^{\xi}_{\infty,q}$, for every $r\in\R$ and $\nu_r$-almost all $a\in[0,1]^{d'}$, one has for every $h\in\R_+$
    \begin{equation}\label{eq:theo_up_b_spect:almost_a}
        \sigma_{f_a}(h) \leq 
        \begin{cases}
            \sigma_{\mu}(h-h^{r}_{\nu}) &\text{if } h\leq h^{0}_{\mu} +h^{r}_{\nu}, \\
           \hspace{3mm}      d &\text{if } h>h^{0}_{\mu}+h^{r}_{\nu}.
        \end{cases}
    \end{equation}
\end{theo}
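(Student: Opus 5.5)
We prove both bounds with the standard wavelet-leader route. First we bound the wavelet coefficients of the trace by those of $f$. Then we turn that bound into a lower bound on the pointwise Hölder exponent of $f_a$. Finally we bound the dimension of the level sets by covering arguments for $\mu$.

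\textbf{Step 1 (coefficients along the trace).} Since $f\in\widetilde{B}^{\xi}_{\infty,q}$, for every $\varepsilon>0$ we have $|c_\lambda|\le C_\varepsilon\,\xi(\lambda)2^{j\varepsilon}$. For $\lambda=\lambda_1\times\lambda_2$ with $\lambda_1\in\Lambda^d_j$ and $\lambda_2\in\Lambda^{d'}_j$, this reads $|c_\lambda|\le C_\varepsilon\,\mu(\lambda_1)\nu(\lambda_2)2^{j\varepsilon}$. Only cubes whose support (enlarged by the wavelet support, i.e. $3\lambda_2$) meets $\mathcal{C}_a$ contribute to $f_a$, and there are boundedly many such $\lambda_2$ per scale. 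For these, doubling gives $\nu(\lambda_2)\le C\,\nu(B(a,C2^{-j}))$.

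We do not expand $f_a$ in a $d$-dimensional wavelet basis. Instead we estimate pointwise regularity directly from the restricted sum $f_a(x)=\sum_\lambda c_\lambda\psi_\lambda(x,a)$. The standard argument (split at scale $2^{-J}\sim|x-x_0|$; Taylor for low frequencies using vanishing-moment-free smoothness of $\psi$; direct bound for high frequencies) shows the following. If the restricted coefficients near $(x_0,a)$ satisfy $\sup_{\lambda\subset 3\lambda_j(x_0,a)}|c_\lambda|\le 2^{-jH}$, then $h_{f_a}(x_0)\ge H$, provided $H$ is non-integer or we lose $\varepsilon$. This is the usual leader lower bound, valid since restriction preserves the upper estimate.

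\textbf{Step 2 (exponent lower bound).} Leaders of $\xi$ near $(x_0,a)$ are comparable to $\mu(B(x_0,C2^{-j}))\,\nu(B(a,C2^{-j}))$, by monotonicity and doubling. Hence
\[
h_{f_a}(x_0)\ \ge\ \underline{h}_\mu(x_0)+\underline{h}_\nu(a).
\]
For arbitrary $a$, this gives $h_{f_a}(x_0)\ge \underline{h}_\mu(x_0)+h^{\min}_\nu$. For $\nu_r$-a.e. $a$, \eqref{eq:nur} gives $\underline{h}_\nu(a)=h^r_\nu$, so $h_{f_a}(x_0)\ge \underline{h}_\mu(x_0)+h^r_\nu$. (Strictly, for $\nu_r$-typical $a$ the local dimension is a limit, so the $\liminf$ along scales adds correctly; we check that the sum of $\liminf$'s is still a valid lower bound, which it is since we need only a lower bound.) Write $\gamma$ for $h^{\min}_\nu$ or $h^r_\nu$.

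\textbf{Step 3 (dimension).} We get $E_{f_a}(h)\subset\{x:\underline{h}_\mu(x)\le h-\gamma\}$. The Gibbs measure $\mu$ satisfies the multifractal formalism with concave $\tau_\mu^*$ peaking at $h^0_\mu$. The standard covering argument therefore gives, for $\alpha\le h^0_\mu$,
\[
\dim_H\{x:\underline{h}_\mu(x)\le\alpha\}\le\tau_\mu^*(\alpha)=\sigma_\mu(\alpha).
\]
To see this, cover by dyadic cubes with $\mu(\lambda)\ge2^{-j(\alpha+\delta)}$ at infinitely many scales. Count them via $\sum_\lambda\mu(\lambda)^q\approx2^{-j\tau_\mu(q)}$ with $q\ge0$, giving dimension at most $q\alpha-\tau_\mu(q)$, and take the inf over $q\ge0$. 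This yields the first case with $\alpha=h-\gamma$. For $h>h^0_\mu+\gamma$ the trivial bound $d$ holds. If $h-\gamma<h^{\min}_\mu$ the set is empty, consistent with $\sigma_\mu=-\infty$.

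\textbf{Main obstacle.} The delicate point is Step 1: making rigorous that $h_{f_a}$ is bounded below by the $D$-dimensional leader decay restricted to the hyperplane. The polynomial part must be handled for $H$ above the wavelet regularity, which is fine since $n$ exceeds $s_2$. Uniformity in $\varepsilon$ is handled by taking $\varepsilon\to0$ along the Fréchet family. Compared with this, the lower bound on $\nu(B(a,r))$ is only needed as an upper bound on coefficients, and that comes directly from the definition of $\underline{h}_\nu$.
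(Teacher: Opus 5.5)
Your proof is correct, but Steps 1--2 take a genuinely different route from the paper. The paper first shows that the trace itself lies in an inhomogeneous Besov space on $[0,1]^d$. For every $a$ this is Proposition~\ref{prop:inclusion_trace}, via the uniform bound \eqref{eq:bound_hmin_hmax}. For $\nu_r$-a.e.\ $a$ it is Theorem~\ref{theo:incl_trace}, via the Markov/Borel--Cantelli argument of Proposition~\ref{prop:belong_trace_inter}. This requires re-expanding the scaling-function part $G_a$ of the restricted sum in the $d$-dimensional wavelet basis (Proposition~\ref{prop:G_a_belonging_besov}). Lemmas~\ref{lemma:maj_exp_besov_inf_inf_all_a} and~\ref{lemma:maj_exp_besov_inf_inf_alm_all_a} then apply the $d$-dimensional leader characterisation to the coefficients $c_{\lambda^d}(a)$ of $f_a$.

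You bypass all of this. Bounding the $D$-dimensional leaders of $f$ at $(x_0,a)$ and using the converse part of Proposition~\ref{prop:equi_hold_space_wlt_coef} in $\R^D$ gives $f\in C^{H-\eta}((x_0,a))$. This is legitimate because $f$ is uniformly H\"older by \eqref{eq:embedding_besov_xi_product_measure} and the exponents involved are at most $s_2<n$. Restricting a pointwise H\"older estimate to $\mathcal{C}_a$ is immediate. So $h_{f_a}(x_0)\ge h_f((x_0,a))\ge \underline{h}_\mu(x_0)+\underline{h}_\nu(a)$ for \emph{every} $a$, and both parts follow at once from $\underline{h}_\nu\ge h^{\min}_\nu$ and \eqref{eq:nur}.

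Your route is shorter and slightly stronger. The exceptional set $\{a:\underline{h}_\nu(a)\neq h^r_\nu\}$ is independent of both $f$ and $x$, and no probabilistic argument is needed. What the paper's route buys is information about the trace as an element of a function space, namely $f_a\in\widetilde{B}^{\mu^{(+\Gamma^p_{\nu,r})}}_{p,1}$, which also holds for $p<\infty$. It also gives explicit control of the $d$-dimensional coefficients $c_{\lambda^d}(a)$, which is the framework reused in the prevalence part (the sets $\mathcal{A}^{p,\gamma,M}_{n,m}$). Your Step 3 coincides with the paper, which simply cites item~\ref{item:dim_E_leq} of Proposition~\ref{prop:prop_capa_SMF}; you reprove it by the standard covering argument.

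One point to make explicit in Step 2: ``monotonicity and doubling'' do not by themselves bound the leader. The quantity $\sup_{j'\ge j}\sup_{\lambda'\subset 3\lambda_j}\xi(\lambda')2^{j'\varepsilon}$ carries the growing factor $2^{j'\varepsilon}$. You need a decay $\xi(\lambda')\le C\,2^{-(j'-j)\delta}\,\xi(3\lambda_j)$ for $\lambda'\subset 3\lambda_j$ of generation $j'$. This holds by reverse doubling (valid for doubling measures on $\R^D$), or by the quasi-Bernoulli property combined with the H\"older bound. Then take $\varepsilon<\delta$, which is harmless since $\varepsilon\to 0$.
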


These upper bounds can be sharpened and turned into equalities for prevalent sets of functions in $\widetilde{B}^{\xi}_{\infty,q}$. As quickly mentioned above, prevalence theory is used to supersede the notion of Lebesgue measure for Borel sets in any real or complex topological vector space $E$ endowed with its Borel $\sigma$-algebra $\mathcal{B}(E)$. Christensen \cite{Christensen:1972:sets_Haar_measure_zero} and Hunt, Sauer and York \cite{Hunt-Sauer-Yorke:1992:prevalence} proposed this notion independently.
In our article, all Borel measures $\vartheta$ on $(E, \mathcal{B}(E))$ will be automatically completed, i.e. for every set $A\subset E$, one sets $\vartheta(A) := \vartheta(B)$ if $B \in \mathcal{B}(E)$ and the symmetric difference $A\Delta B$ is included in some $D\in\mathcal{B}(E)$ with $\vartheta(D) = 0$. 
\begin{defi}
    A set $A\subset E$ is universally measurable if it is measurable for any (completed) Borel measure on $E$.
    A universally measurable set $A\subset E$ is Haar-null if there exists a finite completed measure $\vartheta$ that is positive on some compact subset $K$ of $E$ and such that 
    \begin{equation}
        \text{for every } x\in E, \quad \vartheta(A+x)=0.
    \end{equation}
    The measure $\vartheta$ used to show that some subset is Haar-null is called \textit{transverse}. 
    More generally, a set that is included in a Haar-null universally measurable set is also called Haar-null. 
    
    Finally, the complement in $E$ of a Haar-null subset $S$, denoted $S^{\complement}$, is called prevalent. 
\end{defi}

Hunt pioneered the use of prevalence in function spaces \cite{Hunt:1994:prevalence_continuous_functions}. More recently, Bayart and Heurteaux \cite{Bayart-Heurteaux:2013:Hausdorff_Dimension_Graphs_Prevalent_Continuous_Functions} and Balka, Darji and Elekes \cite{Balka-Darji-Elekes:2016:dimension_graph_continous_maps} studied the dimension of images of continuous function, Fraysse, Jaffard and Kahane \cite{Fraysse-Jaffard-Kahane:2005:propriete_analyse,Fraysse-Jaffard:2006:Almost_all_function_sobolev} and then Aubry, Maman and Seuret \cite{Aubry-Maman-Seuret:2013:Traces_besov_results} studied, respectively, the prevalent regularity of functions and the trace function $f_a$ of function $f$ in Besov spaces. Olsen studied the multifractal and $L^q$-dimension of prevalent measure on a compact of $\R^d$ \cite{Olsen:2010:Lq_dimension_prevalent_measure,Olsen:2010:multifractal_dimension_prevalent_measure}.

Our theorem regarding the prevalent multifractal behavior in $\widetilde{B}^{\xi}_{\infty,q}([0,1]^D)$ is the following. It depends on the existence of a wavelet satisfying a property that we call $(R)$, we come back to this after the statement.

\begin{theo}\label{theo:prev_spect_inh_besov_inf_q}
    Let $\mu\in\mathscr{E}_d$, $\nu\in\mathscr{E}_{d'}$. Consider the product measure $\xi=\mu \times \nu \in \mathcal{C}([0,1]^D)$, with $D=d+d'$.
    Call $C,s_1,s_2$ the real numbers such that \eqref{eq:P1_ball} is satisfied.
    
    Let $n \geq \partent{s_2+\frac{D}{p}}+1$. 
    Assume that there exists a compactly supported wavelet $\psi\in C^n(\R)$ having (at least) $n$ vanishing moments and satisfying the property $(R)$. 
    
    Let $(r_n)_{n\geq \N}$ be a countable sequence of real numbers.  
    There exists a prevalent set of function $f\in \widetilde{B}^{\xi}_{\infty,q}([0,1]^D)$ such that for every $n\in\N$, for $\nu_{r_n}$-almost all $a\in(0,1]^{d'}$, 
    \begin{equation*}
        \sigma_{f_a}(h) = \begin{cases}
            \sigma_{\mu}(h-h^{r_n}_{\nu}) & \text{if } h\in \operatorname{supp}(\sigma_{\mu})+h^{r_n}_{\nu} \\
            -\infty & \text{else}.
        \end{cases}
    \end{equation*}
\end{theo}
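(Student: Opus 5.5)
Since a countable intersection of prevalent sets is prevalent, it suffices to fix one $r=r_n$ and build a prevalent set $\mathcal{P}_r\subset\widetilde{B}^{\xi}_{\infty,q}([0,1]^D)$ such that for $\nu_r$-almost all $a$ the trace spectrum is the shifted spectrum of $\mu$; then take $\bigcap_n \mathcal{P}_{r_n}$. For fixed $r$, the upper bound $\sigma_{f_a}(h)\le \sigma_\mu(h-h^r_\nu)$ for $h\le h^0_\mu+h^r_\nu$ is given by \eqref{eq:theo_up_b_spect:almost_a} of Theorem \ref{theo:up_bound_spect_all_fct}, valid for \emph{all} $f$. So three things remain for prevalent $f$ and $\nu_r$-a.e.\ $a$: (i) the lower bound $\sigma_{f_a}(h)\ge\sigma_\mu(h-h^r_\nu)$ on $\operatorname{supp}(\sigma_\mu)+h^r_\nu$; (ii) the improvement of the crude bound $d$ to $\sigma_\mu(h-h^r_\nu)$ for $h>h^0_\mu+h^r_\nu$; (iii) $E_{f_a}(h)=\emptyset$ outside $\operatorname{supp}(\sigma_\mu)+h^r_\nu$. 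For (ii)--(iii) I will show that for prevalent $f$, $h_{f_a}(x)=\underline h_\mu(x)+h^r_\nu$ for \emph{every} $x\in[0,1]^d$ (for good $a$). Then $E_{f_a}(h)=\underline E_\mu(h-h^r_\nu)$ and all three claims follow at once from the multifractal formalism for $\mu$ (items (1)--(4) for Gibbs measures, powered by $s$).

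\textbf{Saturating function and transverse measure.} Following the scheme of Theorem \ref{theo:Rible_prevalence} and Aubry--Maman--Seuret, I will take $g=\sum_j\sum_\lambda \xi^{(-\varepsilon_j)}(\lambda)\psi_\lambda$ with $\varepsilon_j\downarrow 0$ slowly, so that $g\in\widetilde B^\xi_{\infty,q}$. The transverse measure is the law of $\sum_{i}\theta_i g_i$, where $(\theta_i)$ is uniform on $[0,1]^N$ or on a compact in a finite-dimensional span. Alternatively, I will use the randomised version $\sum \theta_\lambda \xi^{(-\varepsilon_j)}(\lambda)\psi_\lambda$ with i.i.d.\ uniform $\theta_\lambda$, which is supported on a compact set of the Fréchet space. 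By Fubini over $(f,a)$, it is enough to prove that for every fixed $f$ and $\nu_r$-a.e.\ $a$, almost surely $h_{(f+G)_a}(x)=\underline h_\mu(x)+h^r_\nu$ for all $x$.

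\textbf{Upper bound on $h_{f_a}(x)$ (the hard step).} The trace $f_a=\sum c_\lambda\psi_\lambda(\cdot,a)$ is \emph{not} a wavelet expansion on $\R^d$, so I cannot read $h_{f_a}$ off wavelet leaders directly. This is where property $(R)$ of $\psi$ should enter. I expect $(R)$ to guarantee that for a tensor wavelet $\psi(x)\psi(y)$, the restriction $y\mapsto\psi(2^jy-k')$ at $y=a$ is bounded below by a constant on a positive proportion of scales, say when $2^ja \bmod 1$ lies in a fixed interval. For $\nu_r$-typical $a$, ergodicity of $\nu_r$ under $\times2$ ensures that this happens for a positive density of $j$. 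On those scales, the trace restricted to the dyadic cube containing $x$ contributes oscillation of size $\gtrsim\theta_\lambda\,\mu(\lambda_j(x))\nu(\lambda_j(a))2^{j\varepsilon_j}$. I will then detect these by integrating $f_a$ against a $d$-dimensional test wavelet (a $2$-microlocal/wavelet-leader argument). The randomness of $\theta_\lambda$ prevents cancellation with $f$: given $f$, the probability that the relevant coefficient of $f+G$ is small is small, and Borel--Cantelli applies along a sparse sequence of scales.

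Combining this with $\nu(\lambda_j(a))\approx 2^{-jh^r_\nu}$ for $\nu_r$-a.e.\ $a$ by \eqref{eq:nur}, and using that Gibbs measures have a limit local dimension along subsequences, I obtain $h_{f_a}(x)\le\underline h_\mu(x)+h^r_\nu$ for all $x$ simultaneously. Uniformity in $x$ comes from countably many dyadic cubes per scale and a union bound, which is the step I expect to be delicate: the events must be controlled with probability summable over the $2^{jd}$ cubes. The reverse inequality $h_{f_a}(x)\ge \underline h_\mu(x)+h^r_\nu$ for all $x$ is the standard estimate from $|c_\lambda|\le C\xi^{(-\varepsilon)}(\lambda)$, summing the restrictions and using the doubling property and the lower bound $\nu(\lambda_j(a))\gtrsim 2^{-j(h^r_\nu+\varepsilon)}$ for large $j$. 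This bound is already implicit in the proof of Theorem \ref{theo:up_bound_spect_all_fct}.
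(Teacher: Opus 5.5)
Your reduction is sound and matches the paper: fix one $r$, intersect countably many prevalent sets, and squeeze $h_{f_a}$ between two quantities determined by $\mu$. In fact the paper needs less than you aim for. It only proves $h_{f_a}(x)\le \overline{h}_\mu(x)+h^r_\nu$ (Proposition~\ref{prop:prevalent_set_in_B_xi_inf_q}) and concludes via items (ii)--(iii) of Proposition~\ref{prop:prop_capa_SMF} together with $E_\mu(h-h^r_\nu)\subset E_{f_a}(h)$, so your target with $\underline{h}_\mu$ is stronger than necessary.

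\textbf{Gap 1: the union bound over cubes.} The step you call hard carries essentially all of the theorem, and it is not supplied. The obstruction you flag is real as you have set things up. If each dyadic cube of generation $j$ is tested by a bounded number of perturbation coefficients, the probability that they are all smaller than expected by a factor $2^{-j\eta}$ is of order $2^{-j\eta}$. Then $2^{jd}\,2^{-j\eta}$ is not summable for the small $\eta$ a sharp exponent requires. The missing idea is a dimension count. The paper uses a probe space of dimension $d_1=p(d+1)$, splitting $\mathscr{G}_q$ into functions $\mathscr{G}^{(i)}$ carried by disjoint blocks of $N(d')$ scales modulo $d_1N(d')$. Suppose $f^\beta_a$ and $f^{\widetilde\beta}_a$ are both $C^\gamma$ at points of a cube $\lambda_0$ of generation $j_0$ where $\mu(\lambda_j(x))\ge 2^{-j(\gamma-h^r_\nu-1/p)}$ on the window of scales. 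Dividing the $C^\gamma$ bound by the explicit perturbation coefficient gives $|\beta_i-\widetilde\beta_i|\lesssim j_0^{2/q}2^{-j_0(1/p-1/m)}$ for every $i$. Hence the bad parameters attached to $\lambda_0$ have volume about $2^{-j_0d_1(1/p-1/m)}$, which beats the $2^{dj_0}$ cubes (Lemma~\ref{lemma:B_a_0_lebesgue_measure}). A randomized version would likewise need of order $d/\eta$ independent coefficients per window.

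\textbf{Gap 2: explicit, non-vanishing trace coefficients.} This count needs trace coefficients of the perturbation that are explicit and non-vanishing in every window, and your construction does not provide them. Take $g=\sum\xi^{(-\varepsilon_j)}(\lambda)\psi_\lambda$ over all $l\in L^D$ and all $k'$. Its trace contains the scaling-function part (the terms with $l=0^d$), whose projections feed every coarser $d$-dimensional wavelet coefficient. Moreover, several $\lambda^{d'}$ contribute at $a$ at each scale. So nothing in a finite-dimensional probe prevents cancellation; with i.i.d.\ coefficients you would need a conditioning argument you do not give.

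The paper avoids this by design. In \eqref{eq:coef_saturating_func}, $g_\lambda\neq 0$ only when $l\neq 0^d$, $l'=1^{d'}$ and $k'\equiv p_j \pmod{K_\psi}$. Exactly one term then survives, and the trace coefficients are given by \eqref{eq:definition_trace_func_saturation}. Property (R) enters only through Theorem~\ref{theo:low_bound_1per_wlt}: for \emph{every} $a$, every block of $N(d')$ consecutive scales contains some $j$ with $|G^{d'}_j(a)|\ge\alpha^{d'}$. This is what places one usable scale in each of the $d_1$ windows. Your reading of (R) as an ergodic positive-density statement for $\nu_r$-typical $a$ is not what (R) says, and positive density alone does not give this window-by-window control.

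\textbf{Further issues.}
\begin{itemize}
\item Measurability: ``Fubini over $(f,a)$'' presupposes that the exceptional sets are measurable in a non-separable Fr\'echet space. The paper must first establish analyticity via Choquet's theorem (Proposition~\ref{prop:universally_measurable_O}) before the transverse-measure argument applies.
\item A minor slip: the lower bound $h_{f_a}(x)\ge\underline h_\mu(x)+h^r_\nu$ uses the upper bound $\nu(\lambda)\le 2^{-j(h^r_\nu-1/m)}$, not the lower one.
\end{itemize}
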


\begin{figure} 
    \centering
    \includegraphics[page=4,width=.65\textwidth]{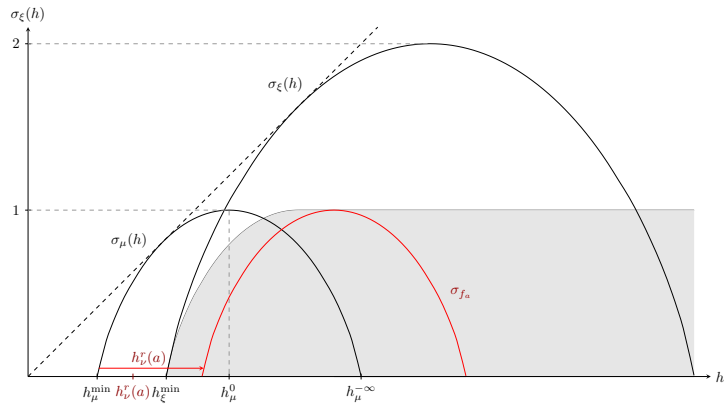}\vspace{-2mm}
    \caption{Spectrum $\sigma_{f_a}(h)$ for almost all $f$ and $\nu_r$-almost all $a\in[0,1]^{d'}$.}
    \label{fig:spect_trace_prev}
\end{figure}
Recalling that for $r=0$ the auxiliary measure $\nu_r$ is the Lebesgue measure $\mathcal{L}^{d'}$, Theorem \ref{theo:prev_spect_inh_besov_inf_q}
 gives that for $\mathcal{L}^{d'}$-almost every $a$, the singularity spectrum of $f_ a$ is $\sigma(\cdot-h_\nu^0)$. This is exactly similar to the result of \cite{Aubry-Maman-Seuret:2013:Traces_besov_results} for standard Besov spaces. Here, in addition to what happens in standard Besov spaces, the traces do not all exhibit the same multifractal properties, and we are able to compute the singularity spectrum of many other traces: $r$ being fixed, for $\nu_r$-almost every $a$, the singularity spectrum $\sigma_{f_a}$ of the trace $f_a$ coincides with that of $\mu$ shifted by $h^{r}_{\nu}$.
This is somehow expected since the environment $\xi$ is not spatially homogeneous, but it remains surprising nonetheless to be able to precisely describe this variety of regularity.
Our work follows in the tradition of standard theorems in geometric measure theory, such as those of Marstrand--Mattila \cite{Mattila99},
where one studies the dimension of the slices of a set $E \subset \R^D$ of dimension $\alpha$ along affine subsets of dimension \emph{strictly less than} $D$.
In most of these results, there is a single almost-sure behavior, and the set of exceptional behaviors is often small, or even of dimension $0$.
Here, by focusing on traces, we undertake a comparable analysis, but the inhomogeneity of the initial function implies that the regularity of the traces is highly dependent on the specific trace considered.

It is not entirely satisfactory that the result is proved only for a countable sequence $(r_n)_{n\in \N}$, that can be dense in $\R$. We conjecture that Theorem \ref{theo:prev_spect_inh_besov_inf_q} holds for every $r\in \R$ simultaneously on a prevalent set. Similarly, it is a natural question to investigate the situation where $p<\infty$.

Theorem \ref{theo:prev_spect_inh_besov_inf_q} depends on the existence of wavelets enjoying the following property.
\begin{defi}\label{defi:prop_R}
    A function $\psi:\R\to\R $ satisfies the property $(R)$ if:
    \begin{enumerate}[label=$(R_{\arabic*})$]
        \item $\psi\in\mathcal{C}^1(\R)$ and there exists $K_{\psi}\in\N^*$ such that $\overline{\operatorname{supp}(\psi)} = [0,K_{\psi}]$.
        \item The cardinality of the set $\mathcal{Z}(0,0):=\psi^{-1}(\{0\})\bigcap[0,K_{\psi}]$ of zeros of $\psi$ is finite.
        \item\label{item:prop_R:S} For all $x\in[0,1]$, $S(x) := \sum_{p=0}^{K_{\psi}-1} |\psi(x+p)| >0$.
    \end{enumerate}
\end{defi}

This property holds for Daubechies' wavelets up to order 45; see \cite{Rible-Seuret-1}. Since db45 $\in C^{10}(\R)$, Theorem~\ref{theo:prev_spect_inh_besov_inf_q} applies whenever the local dimensions of $\xi$ do not exceed 10. Proving that broad wavelet classes, including Daubechies', satisfy (R) is a relevant question, which we intend to address in future work. Property~(R) will be crucial for estimating the pointwise regularity of prevalent traces, especially when proving Theorem \ref{theo:low_bound_1per_wlt} and the lower bounds for pointwise Hölder exponents derived therein.

\medskip
 
Section \ref{sec:capacity_and_product_capa} provides the necessary material to work with capacities that are product of Gibbs measures, as well as the notions related to wavelet decompositions. 
Section \ref{sec:traces_in_besov} contains the proof of Theorem \ref{theo:incl_trace} and compares it to those in standard Besov spaces.
In Section \ref{sec:upper_bound_spectrum}, we prove Theorem \ref{theo:up_bound_spect_all_fct}. , the upper bounds for the singularity spectrum of functions in $\widetilde{B}^{\xi}_{\infty,q}([0,1]^D)$. In Section \ref{sec:sat_fct}, we introduce and study the regularity properties of a saturating function in $\widetilde{B}^{\xi}_{\infty,q}([0,1]^D)$. Intuitively, such a function has the largest possible wavelet coefficients in $\widetilde{B}^{\xi}_{\infty,q}([0,1]^D)$. This function is the cornerstone of the rest of the paper.
Finally, Theorem \ref{theo:prev_spect_inh_besov_inf_q} is proved in section \ref{sec:proof_prev_spect}. We construct a prevalent set on which the multifractal behavior of functions is controlled by finding suitable measures and probe sets based on the analysis of the saturating functions. One of the delicate issues consists in establishing the analyticity of the sets involved in our proofs.

\section{Capacities, wavelets and multifractal formalism}\label{sec:capacity_and_product_capa}

\subsection{Capacities and multifractal formalism}\label{subsec:capacity_and_formalism}

For $j\in \N$, recall that $\Lambda^D_j$ is the set of dyadic cube of side length $2^{-j}$ on $[0,1]^D$, i.e. the dyadic cube $\prod_{i=1}^{D} [k_i 2^{-j}, (k_i+1) 2^{-j})$ where $k=(k_1,\ldots,k_D)\in\{0,\ldots,2^j-1\}^D$. We denote $\Lambda^D=\bigcup_{j\in\N} \Lambda^D_j$ the set of dyadic cubes in $[0,1]^D$.

For $j\in\N$, $\lambda\in \Lambda^D_j$ and $N\in\N^*$, we denote $N\lambda$ the cube with the same centre as $\lambda$ and radius equal to $N\cdot 2^{-j-1}$ in $(\R^D,\|\cdot\|_{\infty})$. For instance, $3\lambda$ is the union of those $\lambda'\in \Lambda^D_j$ such that $\partial\lambda \cap \partial \lambda' \neq \emptyset$ where $\partial \lambda$ is the boundary of the cube $\lambda$.

For $x\in\R^d$, denote $\lambda^D_j(x)$ the dyadic cubes of side length $2^{-j}$ containing $x$. 

For any Borel set $A\subset\R^D$ and capacity $\nu$ with $\nu(A)>0$, we will denote $\nu_{\vert_{A}} = \nu(.\cap A)$.

In addition to the lower and limit local dimension and already defined in Definition \ref{defi:multi_frac_quantity_capacity}, one also needs the upper local dimension of set functions.

\begin{defi}\label{defi:local_dimension_iso_set_and_spectrum}
    Let $\nu\in\mathcal{H}([0,1]^D)$. For $x\in \operatorname{supp}(\nu)$, the lower and upper local dimension of $\nu$ at $x$ are respectively defined by
    \begin{equation*}
        \underline{h}_{\nu}(x)= \liminf_{r \to 0^+} \frac{\log \nu(B(x,r))}{\log(r)} \quad \text{and} \quad \overline{h}_{\nu}(x)= \limsup_{r \to 0^+} \frac{\log \nu(B(x,r))}{\log(r)}.
    \end{equation*}
    Whenever $\underline{h}_{\nu}(x) = \overline{h}_{\nu}(x)$, the common limit is called $h_{\nu}(x)$. Then, for $h\in\R$, 
    \begin{equation*}
    \begin{array}{c}
        \underline{E}_{\nu}(h)=\{x\in\operatorname{supp}(\nu) \ : \ \underline{h}_{\nu}(x)=h\}, \quad \overline{E}_{\nu}(h)=\{x\in\operatorname{supp}(\nu) \ : \ \overline{h}_{\nu}(x)=h\} \\ 
        \text{and} \quad E_{\nu}(h) = \underline{E}_{\nu}(h)\cap \overline{E}_{\nu}(\alpha).
    \end{array}
    \end{equation*}
\end{defi}

\begin{rem}\label{rem:different_local_dimension_def_ball_dyadic}
    Note that the local dimensions are sometimes defined as 
    \begin{align*}
        &\underline{h}_{\nu}(x)= \liminf_{j\to +\infty} \frac{\log_2 \nu(\lambda_j(x))}{-j} \quad \text{and} \quad \overline{h}_{\nu}(x)= \limsup_{j\to +\infty} \frac{\log_2 \nu(\lambda_j(x))}{-j}, \\
        \text{or}\quad &\underline{h}_{\nu}(x)= \liminf_{j\to +\infty} \frac{\log_2 \nu(3\lambda_j(x))}{-j} \quad \text{and} \quad \overline{h}_{\nu}(x)= \limsup_{j\to +\infty} \frac{\log_2 \nu(3\lambda_j(x))}{-j}.
    \end{align*}
    However, this paper only considers doubling capacities for which all previous notions of local dimensions, level sets, singularity spectrum and scaling function do not depend on whether dyadic cubes or centred balls are considered.
\end{rem}
Let us introduce the notation for $\varepsilon>0$, $h\in\R$ and $I=[a,b]$ an interval
\begin{equation}\label{eq:definition_nota_plus_minus_set}
    h \pm \varepsilon = [h-\varepsilon, h+\varepsilon] \quad \text{and} \quad I \pm \varepsilon = [a-\varepsilon,b+\varepsilon].
\end{equation}
\begin{defi}\label{defi:large_deviation_spectra}
    For $\nu\in\mathcal{H}([0,1]^{D})$, $I\subset \R$ and $j\in\N^{*}$, set
    \begin{equation*}
        \mathcal{D}_{\nu}(j,I) = \Big\{ \lambda \in \Lambda^D_j \ : \ \frac{\log_2 \nu(\lambda)}{-j}\in I \Big\}.
    \end{equation*}
\end{defi}

\noindent By Remark \ref{rem:different_local_dimension_def_ball_dyadic} for the almost doubling capacities, the level sets $E_{\nu}(h)$ can be rewritten as 
\begin{equation}\label{eq:proof:E_nu_of_h_is_borel}
    E_{\nu}(h) = \bigcap_{m\in\N^*} \bigcup_{J\in\N} \bigcap_{j\geq J} \bigcup_{\lambda \in \mathcal{D}_{\nu}(j,h \pm 1/m)} \lambda
\end{equation}
which are Borel sets as, for all $m\in\N^*$,
\begin{equation*}
     \mathcal{D}_{\nu}(j,h \pm 1/m) = \Big\{ \lambda \in \Lambda^D_j \ : \ \frac{\log_2 \nu(\lambda)}{-j}\in h \pm 1/m \Big\}
\end{equation*}
is a finite set of dyadic cubes at generation $j$.
 
A classical result on capacities states that for every $\nu\in\mathbb{C}([0,1]^D)$, one always has $\sigma_{\nu}(h) \leq \tau_{\nu}^*(h)$ (see \cite{Brown_Michon_Peyriere:1992:Multifractal-analysis-measures, Jaffard:2000:Frisch-Parisi-conjecture, LevyVehel-Vojak:1998:Choquet-capacities}). 
Proposition \ref{prop:prop_capa_SMF} can be deduced from \cite{Barral:2015:inverse-problems, Barral-Ben_Nasr-Peyriere:2003:multifract-formalism, Brown_Michon_Peyriere:1992:Multifractal-analysis-measures, LevyVehel-Vojak:1998:Choquet-capacities, Olsen:1995:Multifractal-formalism-measure, Barral-Seuret:2023:Besov_Space_Part_2}.

\begin{prop}\label{prop:prop_capa_SMF}
    Let $\nu\in\mathcal{C}([0,1]^D)$ with $supp(\nu) \neq \emptyset$, such that $\nu$ obeys the SMF. Then:
    \begin{enumerate}[label=$({\roman*})$]
        \item\label{item:sigma_is_tau_star} For every $h\in\R^{+}$, one has $\sigma_{\nu}(h) = \dimH\big(\underline{E}_{\nu}(h)\big) = \dimH\big(\overline{E}_{\nu}(h)\big) = \tau_{\nu}^{*}(h)$.
        \item\label{item:dim_E_leq} For every $h\leq \tau'_{\nu}(0^{+})$, $\dim_H \underline{E}^{\leq}_{\mu}(h) = \tau_{\mu}^{*}(h)$.
        \item\label{item:dim_E_geq} For every $h\geq \tau'_{\nu}(0^{-})$, $\dim_H \overline{E}^{\geq}_{\mu}(h) = \tau_{\mu}^{*}(h)$.
        \item\label{item:sigma_is_d} For every $\tau'_{\nu}(0^{+}) \leq h \leq \tau'_{\mu}(0^{-})$, $\sigma_{\mu}(h) = D$.
        \item\label{item:bound_hmin_hmax} Denote $\operatorname{dom}(\tau^{*}_{\nu})=\{\alpha\in\R~:~\tau^{*}_{\nu}(\alpha)\geq0\}$. If $\operatorname{dom}(\tau^{*}_{\nu})$ is compact, then $\operatorname{dom}(\tau^{*}_{\nu})=[h_{\nu}^{+\infty},h_{\nu}^{-\infty}]$ and there exists a positive decreasing sequence $(\varepsilon_j)_{j\geq 0}$ tending to 0 when $j$ tends to infinity, such that for all $j\in\N$ and $\lambda\in\Lambda^D_j$,
        \begin{equation}\label{eq:bound_hmin_hmax}
            h_{\nu}^{+\infty} -\eta_j \leq \frac{\log_2 \nu(\lambda)}{-j} \leq h_{\nu}^{-\infty} + \eta_j.
        \end{equation}
    \end{enumerate}
\end{prop}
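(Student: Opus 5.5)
The statement is Proposition \ref{prop:prop_capa_SMF}, a collection of classical facts for a capacity obeying the SMF. I would prove each item from the general inequality $\sigma_\nu\le\tau_\nu^*$, together with standard upper bounds on level sets obtained by covering arguments with the dyadic grids $\Lambda^D_j$.

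\emph{Item (i).} Since $E_\nu(h)\subset\underline{E}_\nu(h)$ and $E_\nu(h)\subset\overline{E}_\nu(h)$, the SMF gives
\[
\dimH \underline{E}_\nu(h)\ \ge\ \dimH E_\nu(h)\ =\ \tau_\nu^*(h),
\]
and the same lower bound holds for $\overline{E}_\nu(h)$. For the matching upper bounds I would use the covering argument. Fix $q\ge 0$ when $h\le\tau'_\nu(0^+)$, and $q\le0$ in the other regime. Cover the set by the cubes $\lambda\in\mathcal{D}_\nu(j,h\pm\varepsilon)$ for $j\ge J$. For $\overline{E}$ this uses infinitely many $j$, which is enough for the Hausdorff measure. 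Counting gives
\[
\#\mathcal{D}_\nu(j,h\pm\varepsilon)\le 2^{j(hq-\tau_\nu(q)+2|q|\varepsilon)}.
\]
The one subtle point is that $\tau_\nu$ is a liminf, so this count only holds along a subsequence of $j$. For the liminf-type set $\underline{E}_\nu(h)$, infinitely many good scales are enough. For $\overline{E}_\nu(h)$ I would use instead that, for the SMF capacities at hand (Gibbs powers and their products, Prop.~\ref{prop:spect_prod_xi}), the liminf is a limit; otherwise I would cite \cite{Brown_Michon_Peyriere:1992:Multifractal-analysis-measures,LevyVehel-Vojak:1998:Choquet-capacities}. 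Optimising in $q$ then yields $\le\tau_\nu^*(h)$.

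\emph{Items (ii)–(iv).} For (ii), the set $\underline{E}^{\le}(h)$ is defined through $\underline{h}_\nu(x)\le h$. Its dimension is at least $\tau^*(h)$ because it contains $\underline{E}(h)$. For the upper bound, cover by cubes with $\nu(\lambda)\ge 2^{-j(h+\varepsilon)}$ and use $q\ge0$. This gives $\tau^*$ at $h$, since for $h\le\tau'(0^+)$ the infimum over $q\ge0$ equals the full Legendre transform by concavity. Item (iii) is symmetric, with $q\le0$. For (iv), $\tau_\nu(0)=-D$, so $\tau^*_\nu(h)=D$ exactly on $[\tau'(0^+),\tau'(0^-)]$ by concavity, and (i) concludes.

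\emph{Item (v).} This is the main obstacle, namely uniform bounds on all cubes rather than pointwise ones. When the domain is compact, $\tau_\nu$ has asymptotic slopes $h^{\pm\infty}$. Take $q$ large and note
\[
\max_{\lambda\in\Lambda_j}\nu(\lambda)^q\ \le\ \sum_{\lambda\in\Lambda_j}\nu(\lambda)^q\ \le\ 2^{-j(\tau(q)-\delta)}
\]
for $j\ge J(q,\delta)$. This requires $\tau$ to be a limit, or the uniform control given by the doubling property. It follows that $\nu(\lambda)\le 2^{-j(\tau(q)-\delta)/q}$, and $\tau(q)/q\to h^{+\infty}$. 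A diagonal choice $q=q_j\to\infty$ then produces $\eta_j\to0$. Negative $q$ gives the other inequality, using $\nu(\lambda)>0$ from full support. Finally, $\operatorname{dom}=[h^{+\infty},h^{-\infty}]$ follows from the Legendre duality of concave functions.
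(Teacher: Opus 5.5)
The paper gives no proof of this proposition; it only refers to the classical literature. Your dyadic-covering plus Legendre-transform argument is the standard proof from those sources, so the overall route is right. However, the ``one subtle point'' you single out is backwards, and several hedges in your write-up rest on it.

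Write $S_j(q)=\sum_{\lambda\in\Lambda^D_j}\nu(\lambda)^q$. The definition $\tau_\nu(q)=\liminf_{j}\frac{-1}{j}\log_2 S_j(q)$ gives, for every $\varepsilon>0$, an index $J$ such that $S_j(q)\le 2^{-j(\tau_\nu(q)-\varepsilon)}$ for \emph{every} $j\ge J$. Only the reverse inequality $S_j(q)\ge 2^{-j(\tau_\nu(q)+\varepsilon)}$ is restricted to a subsequence, and you never use it. Consequences:
\begin{itemize}
\item The counts you need hold at all large scales: cubes with $\nu(\lambda)\ge 2^{-j(h+\varepsilon)}$ when $q\ge 0$, and cubes with $0<\nu(\lambda)\le 2^{-j(h-\varepsilon)}$ when $q\le 0$.
\item In (v), $\max_\lambda\nu(\lambda)^q\le S_j(q)\le 2^{-j(\tau_\nu(q)-\delta)}$ holds for all large $j$, with no extra hypothesis.
\item Your fallbacks (the liminf being a limit for Gibbs powers and their products, doubling control, or a citation) are unnecessary. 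The argument applies to every capacity in the class.
\end{itemize}

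Taken at face value, your premise would break the argument rather than merely weaken it. For $x\in\underline{E}_\nu(h)$ or $x\in\underline{E}^{\le}_\nu(h)$, the scales at which $\nu(\lambda_j(x))\ge 2^{-j(h+\varepsilon)}$ depend on $x$ and need not meet a fixed subsequence. So ``infinitely many good scales are enough'' is false if counts are only available along a subsequence. Under your premise, the upper bound for $\underline{E}_\nu(h)$ in (i) and the upper bound in (ii) would be unproved. A subsequence is harmless only for single-scale covers of sets where the relevant inequality holds at all large $j$, e.g.\ $\overline{E}^{\le}_\nu(h)$ with $q\ge 0$.

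With the correct reading, the cover $\bigcup_{j\ge J}$ works uniformly for $\underline{E}_\nu(h)$, $\overline{E}_\nu(h)$, $\underline{E}^{\le}_\nu(h)$ and $\overline{E}^{\ge}_\nu(h)$. Also, in (i) the equality $\dim_H\underline{E}_\nu(h)=\sigma_\nu(h)=\tau^*_\nu(h)$ is literally part of the SMF definition, so only $\overline{E}_\nu(h)$ needs the covering argument.

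Two smaller points.
\begin{itemize}
\item In (iv), $\tau_\nu(0)=-D$ uses that every cube of $\Lambda^D_j$ has positive mass. This comes from the convention $\operatorname{supp}(\nu)=[0,1]^D$ built into $\mathcal{C}([0,1]^D)$, and you should say so. With only $\operatorname{supp}(\nu)\neq\emptyset$, (iv) is false: take length measure on a segment in $[0,1]^2$, which obeys the SMF.
\item In (v), Legendre duality alone only shows that $\{\tau^*_\nu>-\infty\}$ is an interval with endpoints $h^{+\infty}_\nu$ and $h^{-\infty}_\nu$. To get $\tau^*_\nu\ge 0$ on the whole closed interval, add that under the SMF $\tau^*_\nu=\sigma_\nu$ takes no value in $(-\infty,0)$. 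Upper semicontinuity of $\tau^*_\nu$ then handles the endpoints.
\end{itemize}
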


\begin{rem}\label{rem:power_meas_prop}
    \begin{enumerate}[label=(\arabic*)]
        \item Direct computations show that for any $s>0$, $\nu\in\mathcal{C}([0,1]^D)$, $t\in\R$, $ \tau_{\nu^s}(t)= \tau_{\nu}(st)$, and by Legendre transform, for $h\in\R$, $\tau^*_{\nu^s}(h)= \tau^*_{\nu}\big( {h}/{s}\big).$
        \item Let $\nu \in \mathcal{C}([0,1]^D)$ satisfying \eqref{eq:P1_ball}.
        One can check that the set functions $\nu^s$, $\nu^{(+s)}$ and, when $s$ is small enough, $\nu^{(-s)}$, satisfy the same property with different exponents, and
        \begin{itemize}
            \item $\underline{E}_{\nu^s}(\alpha) = \underline{E}_{\nu}\big( {\alpha}/{s} \big)$, $  
              \overline{E}_{\nu^s}(\alpha) = \overline{E}_{\nu}\big( {\alpha}/{s} \big), $ and $ E_{\nu^s}(\alpha) = E_{\nu}\big( {\alpha}/{s} \big),$
            \item the singularity spectrum of $\nu^s$ is $h\mapsto \sigma_{\nu} \big( {h}/{s} \big)$.
        \end{itemize}
    \end{enumerate}
\end{rem}

\subsection{Gibbs measure and related capacity}

\noindent We gather classical properties satisfied by the Gibbs capacities $\mathscr{E}_D$ (see \cite{Jaerisch-Sumi:2020:multi-frac-formalism-gibbs-measures, Patzschke:1997:self-conformal-measures, Parry-pollicott:1990:zeta_function,Pesin-Weiss:1997:multi-frac-anal-gibbs-measures, Pesin:1997:dimension_theory, Seuret:2018:Inhomogeneous-random-covering-Markov-Shifts}).

\begin{propri}\label{propri:Gibbs_meas_prop}
    Let $\nu$ be a Gibbs measure. Then for every $s>0$:
    \begin{enumerate}[label=(\roman*)]
        \item The capacity $\nu^s\in\mathscr{E}_D$ is doubling \eqref{eq:doubling_induce_P1} and satisfies the SMF of Definition \ref{defi:SMF}.
        \item The dimension of $\nu^s$ \eqref{eq:def_dim_meas} equals $\tau_{\nu}'(1)=\frac{1}{s}\tau_{\nu^s}'(1/s)$. Hence, $\dim(\nu^s)=\dim(\nu)$.
        \item\label{item:gibbs_tau_analytic} The mapping $\tau_{\nu^s}$ is strictly concave, real-analytic.
        \item\label{item:gibbs_tau_star_analytic} The mapping $\tau_{\nu^s}^{*}$ has $\operatorname{dom}(\tau^{*}_{\nu^s})=[h_{\nu^s}^{+\infty},h_{\nu^s}^{-\infty}]\subset \R^{*}_{+}$ and is strictly concave, real-analytic on $(h_{\nu}^{+\infty},h_{\nu}^{-\infty})$.
        \item The capacity $\nu^s$ satisfies a quasi-Bernoulli property: there is a constant $C\geq1$ such that for every $j_1,j_2\in\N$, for every dyadic cube $\lambda_{j_1,k_1}^D \in\Lambda^D$ and $\lambda_{j_2,k_2}^D \in\Lambda^D$,  
        \begin{equation}\label{eq:quasi_bernoulli}
            \frac{1}{C}~\mu(\lambda_{j_1,k_1}^D)~\mu(\lambda_{j_2,k_2}^D) \leq \mu(\lambda_{j_1+j_2,(2^{j_2}k_1+k_2)}^D) \leq C~\mu(\lambda_{j_1,k_1}^D)~\mu(\lambda_{j_2,k_2}^D)
        \end{equation}
    \end{enumerate}
\end{propri}
In \eqref{eq:quasi_bernoulli}, the cube $\lambda_{j_1+j_2,(2^{j_2}k_1+k_2)}^D$ is the image of $\lambda_{j_2,k_2}^D$ by the canonical affine mapping that sends $[0,1]^D$ to $\lambda_{j_1,k_1}^D$. This quasi-Bernoulli property means that $\nu^s$ is almost multiplicative.
  
\begin{figure}
    \centering
    \includegraphics[scale=0.8]{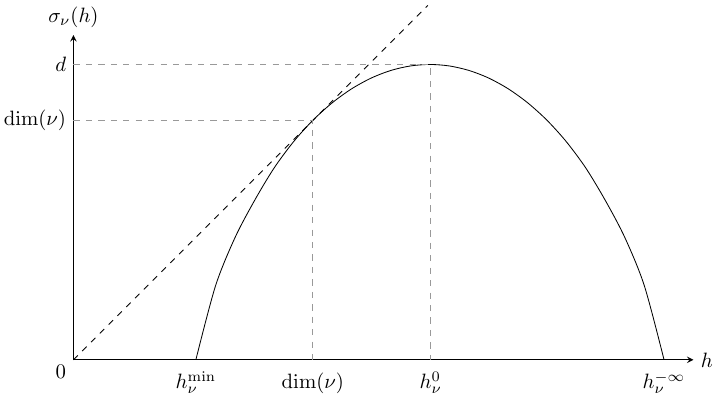}\vspace{-2mm}
    \caption{Spectrum $\sigma_{\nu}$ for a Gibbs measure $\nu$.}
    \label{fig:spectrum_nu}
\end{figure}

Figure \ref{fig:spectrum_nu} illustrates the typical shape of $\sigma_{\nu}$ when $\nu$ is a Gibbs measure. 
Observe that the smallest local dimension of $\nu$ is $h_{\nu}^{\min} = h^{+\infty}_{\nu}$ and the largest one is $h_{\nu}^{\max} = h^{-\infty}_{\nu}$.
 Using item \ref{item:gibbs_tau_analytic}, one has that $h^{0}_{\nu}=\tau_{\nu}'(0^+)=\tau_{\nu}'(0^-)$ is the unique maximum of the spectrum.

As explained earlier, auxiliary measures of Gibbs measures play a key role in our analysis. Let us regroup some important results about them, see \cite{Frisch-Parisi:1985:Multifractal-turbulence, Halsey:1987:fractal-measure-singularities, Brown_Michon_Peyriere:1992:Multifractal-analysis-measures, Olsen:1995:Multifractal-formalism-measure, Heurteaux:2007:Dimension-measure}.
    
\begin{theo}[Auxiliary measure]\label{theo:auxiliary_measure}
    Let $\nu$ be a Gibbs measure associated to a potential $\varphi$ on $\R^D$ and $r\in\R$. There exists a probability measure $\nu_r$, called \textnormal{auxiliary measure}, satisfying for a constant $C\geq1$ and for all cubes $I \in \Lambda^D$ 
    \begin{equation}\label{eq:theo:axiliary_measure}
        \frac{1}{C}\ \nu(I)^r |I|^{-\tau_{\nu}(r)} \leq \nu_r(I) \leq C\ \nu(I)^r |I|^{-\tau_{\nu}(r)}.
    \end{equation}
    The measure $\nu_r$ has the following properties (recall the definition \eqref{eq:h_nu_r_def} of $h^r_\nu$):  
        \begin{enumerate}[label=(\roman*)]
        \item\label{theo:auxiliary_measure:nu_r_gibbs_smf} $\nu_r$ is a Gibbs measure associated to the potential $r\varphi$ and satisfies the SMF,
        \item\label{theo:auxiliary_measure:nu_r_measure_1} One has $h_{\nu_r} = \sigma_\nu(r^r_\nu)$ and $\nu_r\big( E_{\nu_r}(h_{\nu_r} ) \big)= \nu(E_{\nu}(h_\nu^r))=1$, 
        \item\label{theo:auxiliary_measure:dim_nu_r} $\sigma_{\nu}(h^{r}_{\nu})=\dim(\nu_r)=rh^{r}_{\nu}-\tau_{\nu}(r) \leq h^{r}_{\nu}$.
    \end{enumerate}
\end{theo}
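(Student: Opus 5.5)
The plan is to build $\nu_r$ directly as the Gibbs measure of the potential $r\varphi$ from Definition \ref{defi:Gibbs_measure}, and to read off all properties from the Gibbs (bounded distortion) estimate together with Birkhoff's ergodic theorem for the map $T:x\mapsto 2x \bmod \Z^D$. The starting point is the classical Gibbs property for a Hölder potential $\varphi$ with topological pressure $P(\varphi)$. There is $C\ge 1$ such that for every $j$, every $I\in\Lambda^D_j$ and every $x\in I$,
\[
C^{-1}\le \nu_\varphi(I)\,\exp\big(-S_j\varphi(x)+jP(\varphi)\big)\le C .
\]
This comes from bounded distortion of $S_j\varphi$ on dyadic cubes, using the Hölder continuity of $\varphi$. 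I take it as known from the references cited for Property \ref{propri:Gibbs_meas_prop}.

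\emph{Step 1 (the estimate \eqref{eq:theo:axiliary_measure}).} Apply the Gibbs property to both $\varphi$ and $r\varphi$ on the same cube $I\in\Lambda^D_j$ with the same point $x$. The terms $rS_j\varphi(x)$ cancel, and we get
\[
\nu_{r\varphi}(I)\asymp \nu_\varphi(I)^r\, 2^{-j\,(P(r\varphi)-rP(\varphi))/\log 2},
\]
with constant $C^{1+|r|}$. Summing $\nu_\varphi(I)^r$ over $\Lambda_j^D$ and using $\sum_I\nu_{r\varphi}(I)=1$ gives $\sum_I\nu(I)^r\asymp 2^{j(P(r\varphi)-rP(\varphi))/\log 2}$. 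Hence the liminf defining $\tau_\nu(r)$ is a genuine limit, and $\tau_\nu(r)=(rP(\varphi)-P(r\varphi))/\log 2$. Substituting back, with $|I|\asymp 2^{-j}$, gives \eqref{eq:theo:axiliary_measure}, and item \ref{theo:auxiliary_measure:nu_r_gibbs_smf} holds by construction. Property \ref{propri:Gibbs_meas_prop} gives the SMF, and analyticity of the pressure gives differentiability of $\tau_\nu$.

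\emph{Step 2 (typical local dimensions).} The measure $\nu_r$ is $T$-invariant and ergodic, being an equilibrium state. Birkhoff's theorem gives $S_j\varphi(x)/j\to\int\varphi\,d\nu_r$ for $\nu_r$-a.e. $x$. By the Gibbs property, $\log_2\nu(\lambda_j(x))/(-j)$ converges $\nu_r$-a.s. to $(P(\varphi)-\int\varphi\,d\nu_r)/\log 2$. Since the derivative of pressure is $P'(r\varphi)=\int\varphi\,d\nu_{r\varphi}$, this limit equals $\tau_\nu'(r)=h^r_\nu$. Remark \ref{rem:different_local_dimension_def_ball_dyadic} (doubling) lets us pass from dyadic cubes to balls. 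This yields $\nu_r(E_\nu(h^r_\nu))=1$. Then \eqref{eq:theo:axiliary_measure} gives, at the same points, $\log\nu_r(\lambda_j(x))/\log 2^{-j}\to rh^r_\nu-\tau_\nu(r)$. So $\nu_r$ is exact dimensional, which is item \ref{theo:auxiliary_measure:nu_r_measure_1} (read as $\nu_r(E_\nu(h^r_\nu))=1$ and $h_{\nu_r}=rh^r_\nu-\tau_\nu(r)$ $\nu_r$-a.e.).

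\emph{Step 3 (dimension identities).} Exact dimensionality and the mass distribution principle / Billingsley lemma give $\dim(\nu_r)=rh^r_\nu-\tau_\nu(r)$. Since $\nu_r$ is carried by $E_\nu(h^r_\nu)$, we get $\sigma_\nu(h^r_\nu)\ge rh^r_\nu-\tau_\nu(r)$. The reverse inequality is the general bound $\sigma_\nu\le\tau_\nu^*$, with the infimum attained at $r$ by concavity and $\tau_\nu'(r)=h^r_\nu$. Finally, $\tau_\nu(1)=0$ because $\nu$ is a probability measure, so $\tau^*_\nu(h)\le h\cdot1-\tau_\nu(1)=h$, which gives $\le h^r_\nu$.

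The main obstacle is Step 2, specifically the identification $\int\varphi\,d\nu_{r\varphi}=\frac{d}{dr}P(r\varphi)$ and the ergodicity of $\nu_r$. Both are standard thermodynamic formalism facts for Hölder potentials under the expanding map $T$, and I would cite them rather than reprove them.
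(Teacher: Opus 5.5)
Your proof is correct. Note that the paper does not prove this theorem: it quotes it from the multifractal literature (Brown--Michon--Peyri\`ere, Olsen, Heurteaux), where the argument is measure-theoretic rather than dynamical.

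In that approach, for a quasi-Bernoulli measure, $\nu_r$ is obtained as a weak limit of the normalized weights $\nu(I)^r/\sum_{J\in\Lambda^D_j}\nu(J)^r$. This gives \eqref{eq:theo:axiliary_measure} directly from \eqref{eq:quasi_bernoulli}. Concentration on $E_\nu(h^r_\nu)$ then comes from a large-deviation bound plus Borel--Cantelli instead of Birkhoff: for small $t>0$,
\[
\nu_r\big(\{x:\nu(\lambda_j(x))\ge 2^{-j(h^r_\nu-\varepsilon)}\}\big)\le\sum_{I\in\Lambda^D_j}\nu_r(I)\,\nu(I)^t\,2^{jt(h^r_\nu-\varepsilon)}\lesssim 2^{-j(\tau_\nu(r+t)-\tau_\nu(r)-t(h^r_\nu-\varepsilon))},
\]
and the exponent is positive by differentiability of $\tau_\nu$ at $r$. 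The other tail is handled symmetrically with $t<0$.

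The two routes trade off as follows:
\begin{itemize}
\item The literature's argument needs neither invariance nor ergodicity, and it works for any quasi-Bernoulli measure whose $\tau_\nu$ is differentiable at $r$. However, it needs that differentiability, and the identification of $\nu_r$ with the Gibbs measure of $r\varphi$, as separate inputs.
\item Your thermodynamic route (Gibbs bound, Birkhoff, $\frac{d}{dr}P(r\varphi)=\int\varphi\,d\nu_{r\varphi}$) is tied to H\"older potentials for $x\mapsto 2x$. In exchange it delivers item (i), the formula $\tau_\nu(r)=(rP(\varphi)-P(r\varphi))/\log 2$, and the analyticity of $\tau_\nu$ all at once.
\end{itemize}
Your Step 3 (Billingsley for $\dim(\nu_r)$, the bound $\sigma_\nu\le\tau^*_\nu$, and $\tau_\nu(1)=0$) is essentially the same in both approaches.

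One consistency point. The weak limit in Definition \ref{defi:Gibbs_measure} of the normalized densities $e^{S_n(r\varphi)}$ is the eigenmeasure of the dual transfer operator. In general it is not $T$-invariant; it is only equivalent, with density bounded above and below, to the equilibrium state of $r\varphi$. So you cannot both take $\nu_r$ from that definition and call it an equilibrium state. Apply Birkhoff's theorem and the pressure-derivative identity to the equilibrium state, then transfer the almost-sure conclusion to $\nu_r$ by equivalence. Your Step 1 is unaffected, since both measures satisfy the same Gibbs bound.
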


\begin{rem}
    If we consider a capacity $\nu$ satisfying the SMF, one sees from the definition that $E_{\nu}(h^{r}_{\nu}) = E_{\nu_r}(\sigma_{\nu}(h^{r}_{\nu}))$. Indeed, using \eqref{eq:theo:axiliary_measure}, 
    \begin{equation*}
        h_{\nu_r}(x) = \lim_{j\to+\infty} \frac{\log_2 \nu_r(\lambda_j(x))}{-j} = r \ \lim_{j\to+\infty}\frac{\log_2 \nu(\lambda_j(x))}{-j} - \tau_{\nu}(r) = rh_{\nu}^r-\tau_\nu(r) = \sigma_{\nu}(h^r_{\nu}).
    \end{equation*}
\end{rem}

\begin{rem}
    Consider a Gibbs measure $\nu$ and $\mu:=\nu^s \in \mathscr{E}_d$ with $s>0$. Then using Remark \ref{rem:power_meas_prop}, the \textnormal{auxiliary measure} $\mu_r=\nu_{rs}$ satisfying \eqref{eq:theo:axiliary_measure} and the properties of Theorem \ref{theo:auxiliary_measure}:
    \begin{enumerate}[label=(\roman*)]
        \item $\mu_r=\nu_{rs}$ is a Gibbs measure and satisfies the SMF,
        \item $\mu_r\big( E_{\mu_r}(\sigma_{\mu}(\tau_{\mu}'(r))) \big)=\nu_{rs}\big( E_{\nu_{rs}}(\sigma_{\nu}(\tau_{\nu}'(rs))) \big)=1$,
        \item $\dim(\mu_r)=\dim(\nu_{rs})=\sigma_{\nu}\big( \tau_{\nu}'(rs) \big)=\sigma_{\mu} \big(\tau_{\mu}'(r) \big)$.
    \end{enumerate}
\end{rem}

The following result introduces quantities important for what follows. It states that, up to a restriction onto some set $A^{\nu_r}_{n,m}$ of large $\nu_r$-measure, the $\nu$-mass and $\nu_r$-mass of the dyadic cubes intersecting the neighborhood of $A^{\nu_r}_{n,m}$ can be very precisely controlled.
\begin{prop}\label{prop:bound_meas_dyadics}
    Let $\nu\in \mathscr{E}_{D}$, $n,m\in\N^*$ and $r\in\R$. 
    For any fixed constant $K \in\N^*$, there exist an integer $J^{\nu_r}_{n,m}\in\N$ and a set $A^{\nu_r}_{n,m}$ satisfying the following properties :
    \begin{enumerate}
        \item $\nu_r(A^{\nu_r}_{n,m}) \geq 1- {1}/{n}$,
        \item for every $a \in A^{\nu_r}_{n,m}$, for every $j \geq J^{\nu_r}_{n,m}$ and for $\lambda^{D}\in\Lambda_j^{D}$ such that $a \in K \lambda^{D}$,
        \begin{align}
            2^{-j(h^{r}_{\nu}+1/m)} \leq \nu&\big(\lambda^{D} \big) \leq 2^{-j(h^{r}_{\nu}-1/m)}, \label{eq:bound_meas_alm_all_dyad} \\
            2^{-j(\dim(\nu_r)+1/m)} \leq \nu_r&\big(\lambda^{D} \big) \leq 2^{-j(\dim(\nu_r)-1/m)}. \label{eq:bound_auxmeas_alm_all_dyad}
        \end{align}
    \end{enumerate}
\end{prop}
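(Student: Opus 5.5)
The plan is to get the set $A^{\nu_r}_{n,m}$ from Egorov's theorem applied to the almost sure convergence of local dimensions of $\nu$ and $\nu_r$. Then we pass from the cube containing $a$ to all cubes whose $K$-dilation contains $a$ by the doubling property. The cube-level bounds then follow by absorbing constants into $1/m$ for $j$ large.

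\textbf{Step 1: almost sure limits.} By Theorem \ref{theo:auxiliary_measure} and \eqref{eq:nur}, $\nu_r$-almost every $a$ satisfies
\[
\lim_{j\to\infty}\frac{\log_2\nu(\lambda_j(a))}{-j}=h^r_\nu .
\]
For the auxiliary measure itself, \eqref{eq:theo:axiliary_measure} gives $\log_2\nu_r(\lambda_j(a)) = r\log_2\nu(\lambda_j(a)) + j\tau_\nu(r) + O(1)$. Hence, for the same points,
\[
\frac{\log_2\nu_r(\lambda_j(a))}{-j}\to rh^r_\nu-\tau_\nu(r)=\dim(\nu_r),
\]
the last equality being item \ref{theo:auxiliary_measure:dim_nu_r}. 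When $\nu=\eta^s$ is a power of a Gibbs measure, the same holds through Remark \ref{rem:power_meas_prop} and the subsequent remark identifying $\nu_r$ with $\eta_{rs}$.

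\textbf{Step 2: uniformity via Egorov.} The functions $a\mapsto \log_2\nu(\lambda_j(a))/(-j)$ are Borel, being constant on dyadic cubes. Egorov's theorem applied to $\nu_r$ therefore gives a set $A$ with $\nu_r(A)\ge 1-1/n$ and an integer $J_0$ such that for all $a\in A$ and $j\ge J_0$ both ratios lie within $1/(2m)$ of their limits.

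\textbf{Step 3: neighbouring cubes.} Let $\lambda\in\Lambda^D_j$ with $a\in K\lambda$. Then $\lambda$ and $\lambda_j(a)$ are both contained in a cube of side $(K+2)2^{-j}$ centred at either of them. Applying the doubling property \eqref{eq:doub_meas_ball} about $\lceil\log_2(K+2)\rceil+1$ times yields
\[
C_K^{-1}\nu(\lambda_j(a))\le \nu(\lambda)\le C_K\nu(\lambda_j(a)),
\]
with $C_K$ depending only on $K$ and $C_\nu$. Property \ref{propri:Gibbs_meas_prop}(i) gives doubling for $\nu$, and Theorem \ref{theo:auxiliary_measure}\ref{theo:auxiliary_measure:nu_r_gibbs_smf} gives it for $\nu_r$, which is a Gibbs measure; so the same comparison holds for $\nu_r$ with its own constant.

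\textbf{Step 4: conclusion.} Choose $J^{\nu_r}_{n,m}\ge J_0$ large enough that $\log_2 C_K\le j/(2m)$ for all $j\ge J^{\nu_r}_{n,m}$. Combining Steps 2 and 3 then gives \eqref{eq:bound_meas_alm_all_dyad} and \eqref{eq:bound_auxmeas_alm_all_dyad} with $A^{\nu_r}_{n,m}=A$.

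The main obstacle is minor: in Step 3 one must check that the comparison constant depends only on $K$, so that it is absorbed by the $2^{\pm j/(2m)}$ slack, and that doubling holds for $\nu_r$ and not only for $\nu$. Both points are handled by the Gibbs property of $\nu_r$.
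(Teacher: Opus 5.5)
Your proof is correct and follows essentially the same route as the paper. The paper builds the large set directly as $\bigcap_{j\ge J}\bigcup_{\lambda\in\mathcal{D}_\nu(j,h^r_\nu\pm 1/\widetilde{m})}\lambda$, using $\nu_r(E_\nu(h^r_\nu))=1$ and continuity of measure (this is Egorov's argument for a constant limit), then passes to neighbouring cubes by doubling and absorbs the constants by taking $\widetilde{m}=2m$. The one small difference is that you obtain the $\nu_r$ bounds from almost sure convergence of the $\nu_r$-ratios plus doubling of $\nu_r$, whereas the paper transfers the $\nu$-bounds cube by cube through \eqref{eq:theo:axiliary_measure}; your version never has to track the factor $|r|$ that multiplies the $1/\widetilde{m}$ error in that transfer.
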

\begin{proof} 
   Consider $\nu\in\mathscr{E}_{D}$ and its associated auxiliary measure $\nu_r$. Using Definition \ref{defi:large_deviation_spectra}, for all $\widetilde{m}\in\N^*$, $\mathcal{D}_{\nu}(j,h \pm 1/\widetilde{m}) = \Big\{ \lambda\subset [0,1]^{D}, \lambda \in \Lambda^D_j : \frac{\log_2 \nu(\lambda)}{-j}\in h \pm 1/\widetilde{m} \Big\}$ is a finite set of dyadic cubes at generation $j$. Applying \eqref{eq:proof:E_nu_of_h_is_borel}, $\nu$ and $\nu_r$ being doubling, one has
    \begin{equation*}
        E_{\nu}(h) = \bigcap_{\widetilde{m}\in\N^*} \bigcup_{J\in\N} \bigcap_{j\geq J} \bigcup_{\lambda \in \mathcal{D}_{\nu}(j,h \pm 1/\widetilde{m})} \lambda.
    \end{equation*}
    and from Theorem \ref{theo:auxiliary_measure}, $\nu_r\big( E_{\nu_r}(\sigma_{\nu}(h^{r}_{\nu})) \big)=\nu_r\big( E_{\nu}(h^{r}_{\nu}) \big)=1$. 
    One deduces that for every $\widetilde{m}\in\N^*$, 
    \begin{equation*}
        \nu_r\Big ( \bigcup_{J\in\N} \bigcap_{j\geq J} \bigcup_{\lambda \in \mathcal{D}_{\nu}(j,h^{r}_{\nu} \pm 1/\widetilde{m})} \lambda \Big)=1.
    \end{equation*}
    The fact that $\big (\bigcap_{j\geq J} \bigcup_{\lambda \in \mathcal{D}_{\nu}(j,h^{r}_{\nu} \pm 1/\widetilde{m})} \lambda \big)_{J\geq 1}$ is an increasing sequence of Borel sets implies that $\lim_{J\to+\infty} \nu_r\big(\bigcap_{j\geq J} \bigcup_{\lambda \in \mathcal{D}_{\nu}(j,h^{r}_{\nu} \pm 1/\widetilde{m})} \lambda \big)=1$, so there is $J_{n,\widetilde{m}}\in\N$ such that the Borel set 
    \begin{equation}\label{eq:proof:def_A_n_m_nu_r}
        A^{\nu_r}_{n,\widetilde{m}}:= \bigcap_{j\geq J_{n,\widetilde{m}}} \bigcup_{\lambda \in \mathcal{D}_{\nu}(j,h^{r}_{\nu} \pm 1/\widetilde{m})} \lambda
    \end{equation}
    satisfies $\nu_r(A^{\nu_r}_{n,\widetilde{m}}) \geq 1- {1}/{n}$. Using item \ref{theo:auxiliary_measure:dim_nu_r} of Theorem \ref{theo:auxiliary_measure}, $\dim(\nu_r) = \sigma_{\nu}(h^{r}_{\nu}) \leq h^{r}_{\nu}$. 
    From \eqref{eq:proof:def_A_n_m_nu_r}, for $n, \widetilde{m} \in\N^*$, there is an integer $J_{n,\widetilde{m}}$ such that for every $a\in A^{\nu_r}_{n,\widetilde{m}}$ and $j\geq J_{n,\widetilde{m}}$,
    \begin{equation}\label{eq:bound_meas_proof} 
        2^{-j(h^{r}_{\nu}+1/\widetilde{m})} \leq \nu\big(\lambda_j^{D}(a) \big) \leq 2^{-j(h^{r}_{\nu}-1/\widetilde{m})}
    \end{equation}
    where we recall that $\lambda_j^{D}(a)$ is the dyadic cube $\lambda_j^{D}\in\Lambda_j^{D}$ such that $a \in \lambda_j^{D}$.
    
 Fix $K \in\N^*$.
   For $\lambda_j^{D}\in\Lambda_j^{D}$ such that $a \in K \lambda_j^{D}$, lcall$x_{j,a}$ the centre of $\lambda_j^{D}(a)\in \Lambda_j^{D}$ and $\widetilde{x}$ the center of $\lambda_j^{D}$. From the definition of $K \lambda^{D}$, we have that $\|a-\widetilde{x}\|_{\infty} \leq 2^{-j-1+\log_2(K)}$, so
    \begin{align*}
        \lambda_j^{D} \subset B\big(x_{j,a},2^{-j+\partent{\log_2(K)}+1} \big) \ \mbox{ and } \ 
        \lambda_j^{D}(a) \subset B\big(\widetilde{x},2^{-j+\partent{\log_2(K)}+1} \big).
    \end{align*}
    Using the doubling property \eqref{eq:doub_meas_ball} of $\nu$, one gets
    \begin{equation*}
        \frac{1}{C_{\nu}^{\partent{\log_2(K)}+1}} \nu(\lambda_j^{D}(a)) \leq \nu(\lambda_j^{D}) \leq C_{\nu}^{\partent{\log_2(K)}+1} \nu(\lambda_j^{D}(a))
    \end{equation*}
    and the same result holds for $\nu_r$ as it is also doubling. 
    Hence, one has from \eqref{eq:bound_meas_proof}
    \begin{equation*}
        2^{-j(h^{r}_{\nu}+1/\widetilde{m_j})} \leq \nu\big(\lambda_j^{D} \big) \leq 2^{-j(h^{r}_{\nu}-1/\widetilde{m_j})}
    \end{equation*}
    with $ \frac{1}{\widetilde{m_j}} = \frac{1}{\widetilde{m}}+\frac{(\partent{\log_2(K)}+2)~\log_2(C_{\nu})}{j}$. Moreover, by \eqref{eq:theo:axiliary_measure}, for every $r\in\R$, there is $C_r\geq1$ such that
    \begin{equation*}
        \frac{1}{C_r}~\nu\big(\lambda_j^{D} \big)^r\big|\lambda_j^{D}\big|^{-\tau_{\nu}(r)} \leq~\nu_r\big(\lambda_j^{D}\big) \leq C_r~\nu\big(\lambda_j^{D} \big)^r\big|\lambda_j^{D}\big|^{-\tau_{\nu}(r)}.
    \end{equation*}
    One deduces that, for every $\lambda_j^{D}\in\Lambda_j^{D}$ such that $a \in K \lambda_j^{D}$,
    \begin{align*}
        2^{-j (rh^{r}_{\nu}-\tau_{\nu}(r) + 1/\widetilde{m_j}+\frac{\log_2(C_r)}{j} )} \leq~&\nu_r (\lambda_j^{D} ) \leq 2^{-j (rh^{r}_{\nu}-\tau_{\nu}(r)-1/\widetilde{m_j}-\frac{\log_2(C_r)}{j} )}\\
        2^{-j (\dim(\nu_r) + 1/\widetilde{m_j}+\frac{\log_2(C_r)}{j} )} \leq~&\nu_r (\lambda_j^{D} ) \leq 2^{-j (\dim(\nu_r) - 1/\widetilde{m_j}-\frac{\log_2(C_r)}{j} )}. \numberthis \label{eq:bound_auxmeas_proof}
    \end{align*}
    One concludes that, for $ \frac{1}{m_j} = \frac{1}{\widetilde{m}}+\frac{(\partent{\log_2(K)}+2)~\log_2(C_{\nu})}{j} + \frac{\log_2(C_r)}{j}$, by \eqref{eq:bound_meas_proof} and \eqref{eq:bound_auxmeas_proof},
    \begin{align*}
        2^{-j(h^{r}_{\nu}+1/m_j)} \leq~\nu&\big(\lambda_j^{D} \big) \leq 2^{-j(h^{r}_{\nu}-1/m_j)} \\
        2^{-j \big(\dim(\nu_r) + 1/m_j \big)} \leq~\nu_r&\big(\lambda_j^{D} \big) \leq 2^{-j \big(\dim(\nu_r) - 1/m_j \big)}.
    \end{align*}
    For $m\in\N^*$, there exists an integer $J_m\in\N$ large enough such that for all $j\geq J_m$, 
    \begin{equation*}
        \frac{(\partent{\log_2(K)}+2)~\log_2(C_{\nu})}{j} + \frac{\log_2(C_r)}{j} \leq \frac{1}{2m}
    \end{equation*}
    and considering $J^{\nu_r}_{n,m} := \max(J_{n,2m},J_m)$ (with $\widetilde{m}=2m$), one sees that $1/\widetilde{m_j}\leq 1/m$ and $1/m_j \leq 1/m$, inducing the second item.
\end{proof}
 
Proposition \ref{prop:bound_meas_dyadics} reads as: if $K \lambda^{d'}\cap A^{\nu_r}_{n,m} \neq \emptyset$, then \eqref{eq:bound_meas_alm_all_dyad} and \eqref{eq:bound_auxmeas_alm_all_dyad} hold true.

Also, $m$ being fixed, one sees that $(A^{\nu_r}_{n,m})_{n\in\N^*}$ is an increasing sequence.

\subsection{Product capacity on \texorpdfstring{$[0,1]^D$}{[0,1] hat D}}\label{sec:product_capa}

Fix $D= d+d'$. Let $\varphi: \R^d \to \R$ and $\widetilde{\varphi}: \R^{d'} \to \R$ be respectively a $\Z^d$-invariant and $\Z^{d'}$-invariant real valued H{\"o}lder function. Then $\varphi + \widetilde{\varphi}: (x^d,x^{d'}) \in \R^d \times \R^{d'} \mapsto \varphi(x^d)+\widetilde{\varphi}(x^{d'}) \in\R$ is a $\Z^D$-invariant real valued H{\"o}lder function. Next lemma is immediate.
\begin{lemme}
    Let $\mu$ and $\nu$ be Gibbs measures respectively on $[0,1]^d$ associated to $\varphi$ and on $[0,1]^{d'}$ associated to $\widetilde{\varphi}$ as above. The product measure $\xi=\mu \times \nu$ is a Gibbs measure on $[0,1]^{D}$ associated to the H{\"o}lder continuous potential $\varphi + \widetilde{\varphi}$. 
\end{lemme}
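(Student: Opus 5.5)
The plan is to verify Definition~\ref{defi:Gibbs_measure} directly for the potential $\Phi:=\varphi+\widetilde{\varphi}$. The key observation is that the dyadic dynamics $x\mapsto 2x$ on $\R^D$ acts coordinatewise. Writing $x=(x^d,x^{d'})$, we have $2^k x=(2^k x^d,2^k x^{d'})$, hence
\begin{equation*}
S_n\Phi(x)=\sum_{k=0}^{n-1}\big(\varphi(2^kx^d)+\widetilde{\varphi}(2^kx^{d'})\big)=S_n\varphi(x^d)+S_n\widetilde{\varphi}(x^{d'}).
\end{equation*}
Exponentiating gives $e^{S_n\Phi(x)}=e^{S_n\varphi(x^d)}\,e^{S_n\widetilde{\varphi}(x^{d'})}$. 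Since $\mathcal{L}^D=\mathcal{L}^d\otimes\mathcal{L}^{d'}$ on $[0,1]^D=[0,1]^d\times[0,1]^{d'}$, Fubini shows that the normalising integral factorises into the product of the two normalising integrals. Therefore the approximating measure satisfies $\xi_n:=\nu^{(D)}_n=\mu_n\otimes\nu_n$, where $\mu_n$ and $\nu_n$ are the approximating measures of Definition~\ref{defi:Gibbs_measure} for $\varphi$ and $\widetilde{\varphi}$. Before this, I would note the hypotheses on $\Phi$: it is $\Z^D$-periodic because each summand is periodic in its own block of variables, and it is H\"older because $|\Phi(x)-\Phi(y)|\le C(|x^d-y^d|^{\alpha}+|x^{d'}-y^{d'}|^{\beta})\le C'|x-y|^{\min(\alpha,\beta)}$ for $|x-y|\le1$, with periodicity handling large distances.

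It remains to pass to the limit. By Definition~\ref{defi:Gibbs_measure}, $\mu_n\to\mu$ and $\nu_n\to\nu$ weakly, and these are probability measures on the tori (or on $[0,1]^d$, $[0,1]^{d'}$ after identification). A standard fact then gives $\mu_n\otimes\nu_n\to\mu\otimes\nu$ weakly. To see it, first test against products $g(x^d)h(x^{d'})$ of continuous functions, for which the integral factorises and converges. Then use Stone--Weierstrass: finite sums of such products are dense in the continuous functions on the compact product, and the measures all have mass one. By uniqueness of weak limits, the Gibbs measure $\nu_\Phi$ equals $\mu\times\nu=\xi$.

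The only step requiring care is the weak convergence of the products, and this is routine on compact spaces, where tightness is automatic. Working on $\R^D$ instead of the torus, I would reduce to the torus via $\Z^D$-periodicity, as the paper already does by identifying $\nu_\varphi$ with its restriction to $[0,1]^D$. The statement itself is about Gibbs measures (the case $s=1$). If it is needed later for powers $\mu^{s}\times\nu^{t}$, this argument does not directly give a Gibbs capacity unless $s=t$, so I would only claim the case covered by the lemma.
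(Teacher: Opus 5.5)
Your proof is correct and is the direct verification the paper has in mind; the paper gives no argument beyond calling the lemma immediate. The content is exactly your factorisation $S_n(\varphi+\widetilde{\varphi})(x)=S_n\varphi(x^d)+S_n\widetilde{\varphi}(x^{d'})$, which gives $\xi_n=\mu_n\otimes\nu_n$, followed by weak convergence of products of probability measures on the torus, and your caveat about $\mu^{s}\times\nu^{t}$ with $s\neq t$ is well taken since the paper later works with $\mu\in\mathscr{E}_d$, $\nu\in\mathscr{E}_{d'}$.
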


The H{\"o}lder index of $\xi$ is deduced from that of $\mu$ and $\nu$.

\begin{lemme}\label{lemma:B_xi_s-holder}
    Let $\mu\in\mathscr{E}_{d}$ on $[0,1]^d$ be $s_{\mu}$-H{\"o}lder and $\nu\in\mathscr{E}_{d'}$ on $[0,1]^{d'}$ be $s_{\nu}$-H{\"o}lder with $s_{\mu}+s_{\nu} > 0$.
    Then $\xi=\mu \times \nu$ is $(s_{\mu}+s_{\nu})$-H{\"o}lder.
\end{lemme}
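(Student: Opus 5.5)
The proof reduces the product capacity to a bound on product sets, then a diameter inequality, then a covering argument for arbitrary Borel sets. The Hölder inequalities on the factors give, for $A\in\mathcal{B}([0,1]^d)$ and $B\in\mathcal{B}([0,1]^{d'})$,
\begin{equation*}
    \xi(A\times B)=\mu(A)\,\nu(B)\leq C_\mu C_\nu |A|^{s_\mu}|B|^{s_\nu}.
\end{equation*}

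\textbf{Step 1: diameter inequality.} With the $\|\cdot\|_\infty$ norm on $\R^D=\R^d\times\R^{d'}$, one has $|A\times B|=\max(|A|,|B|)$. Hence $|A|\leq|A\times B|$ and $|B|\leq |A\times B|$.

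\textbf{Step 2: products of sets.} The exponents $s_\mu,s_\nu$ coming from Gibbs capacities are positive, since Gibbs capacities satisfy \eqref{eq:doubling_induce_P1} with a positive exponent. Combining with Step 1,
\begin{equation*}
    |A|^{s_\mu}|B|^{s_\nu}\leq |A\times B|^{s_\mu+s_\nu}.
\end{equation*}
If the Euclidean norm is used instead, the two diameters are comparable up to a factor $\sqrt D$, which only changes the constant.

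\textbf{Step 3: arbitrary Borel sets.} Let $E\in\mathcal{B}([0,1]^D)$, and let $\pi_1,\pi_2$ be its coordinate projections onto $\R^d$ and $\R^{d'}$. Then $E\subset \overline{\pi_1(E)}\times\overline{\pi_2(E)}$. Taking closures avoids the measurability issue for projections of Borel sets, and does not change diameters. Each projection has diameter at most $|E|$. By monotonicity of the product capacity, as a measure,
\begin{equation*}
    \xi(E)\leq \mu(\overline{\pi_1 E})\,\nu(\overline{\pi_2 E})\leq C_\mu C_\nu |E|^{s_\mu+s_\nu}.
\end{equation*}
This proves that $\xi$ is $(s_\mu+s_\nu)$-Hölder.

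\textbf{Main subtlety.} The only delicate point is the extension from product sets to arbitrary sets, which Step 3 handles through projections and closures, together with the positivity of the exponents. If one of the exponents were allowed to be $\leq 0$, Step 2 would still hold once $|E|\leq 1$, since $[0,1]^D$ is bounded. It is then enough to check that the hypothesis $s_\mu+s_\nu>0$ keeps the combined exponent positive.
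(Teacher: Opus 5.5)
Your argument is correct and is essentially the paper's: both enclose $E$ in a product set whose factors have diameter controlled by $|E|$ (the paper uses $B_{d}(x,|E|)\times B_{d'}(x',|E|)$, you use $\overline{\pi_1(E)}\times\overline{\pi_2(E)}$), and then conclude with monotonicity of $\xi$ and the H\"older bounds for $\mu$ and $\nu$. The one error is your closing side remark. For $s_\nu<0$ the inequality $|B|^{s_\nu}\leq|A\times B|^{s_\nu}$ reverses (let $|B|\to 0$ with $|A|$ fixed), so Step 2 fails there; that case would instead need $\nu(\cdot)\leq\nu([0,1]^{d'})$ together with $|E|^{s_\mu}\leq C|E|^{s_\mu+s_\nu}$ for bounded $E$. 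This is moot anyway, since $s$-H\"older set functions have $s>0$ by the definition of $\mathcal{H}(\R^D)$.
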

\begin{proof}
    Since, $\mu$ is $s_{\mu}$-H{\"o}lder, there is $C_{\mu}>0$ such that $\mu(E^{d})\leq C_{\mu} |E^{d}|^{s_{\mu}}$ for all $E^{d}\in\mathcal{B}([0,1]^d)$. Similarly, $\nu$ is $s_{\nu}$-H{\"o}lder, and $\nu(E^{d'})\leq C_{\nu} |E^{d'}|^{s_{\nu}}$ for all $E^{d'}\in\mathcal{B}([0,1]^{d'})$ and some $C_\nu>0$.

    Let $E^{D}\in\mathcal{B}([0,1]^{D})$. There exist a $x\in\R^d$ and $x'\in\R^{d'}$ such that $E^{D} \subset B_{d}(x,|E^{D}|) \times B_{d'}(x',|E^{D}|)$.
    One obtains that for some constant $C>0$, 
    \begin{align*}
        \xi(E^{D}) 
        &\leq \xi( B_{d}(x,|E^{D}|) \times B_{d'}(x',|E^{D}|)) 
        = \mu(B_{d}(x,|E^{D}|)) \cdot \nu(B_{d'}(x',|E^{D}|))\\
        &\leq C~|E^{D}|^{s_{\mu}} \cdot |E^{D}|^{s_{\nu}},
    \end{align*}
    proving that $\xi$ is $(s_{\mu}+s_{\nu})$-H{\"o}lder.
\end{proof}

From the previous lemma and Remark \ref{rem:incl_inh_besov}, one sees that, for $s:=s_{\mu}+s_{\nu}$,
\begin{equation}\label{eq:embedding_besov_xi_product_measure}
    B^{\xi}_{p,q}([0,1]^D) \hookrightarrow B^{s+\frac{D}{p}}_{p,q}([0,1]^D) \hookrightarrow B^{s+\frac{D}{p}-\frac{D}{p}}_{\infty,q}([0,1]^D) = C^{s}([0,1]^D).
\end{equation}

\begin{lemme}
    Let $\mu$ and $\nu$ be doubling capacities in $\mathcal{C}([0,1]^{d})$ and $\mathcal{C}([0,1]^{d'})$, respectively.
    The product capacity $\xi=\mu \times \nu$ is doubling.
\end{lemme}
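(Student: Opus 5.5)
The plan is to reduce the doubling inequality \eqref{eq:doub_meas_ball} for $\xi$ to the doubling inequalities for $\mu$ and $\nu$, working in the sup-norm $\|\cdot\|_\infty$ on $\R^D = \R^d\times\R^{d'}$. In that norm a ball splits as a product, $B_D(z,r)=B_d(x,r)\times B_{d'}(x',r)$ for $z=(x,x')$. Then directly
\begin{equation*}
    \xi(B_D(z,2r)) = \mu(B_d(x,2r))\,\nu(B_{d'}(x',2r)) \leq C_\mu C_\nu\, \mu(B_d(x,r))\,\nu(B_{d'}(x',r)) = C_\mu C_\nu\, \xi(B_D(z,r)).
\end{equation*}
So $\xi$ is doubling with constant $C_\xi = C_\mu C_\nu$.

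If the paper's balls are Euclidean rather than sup-norm, I will compare the two families. The inclusions
\begin{equation*}
    B^{\infty}(z,r/\sqrt{D}) \subset B^{2}(z,r)\subset B^{\infty}(z,r)
\end{equation*}
hold, and monotonicity of $\xi$ as a capacity transfers doubling from one family to the other at the cost of a constant. Concretely, choose $k$ with $2^k\geq\sqrt{D}$ and iterate the sup-norm estimate $k+1$ times. This gives $\xi(B^2(z,2r)) \leq \xi(B^\infty(z,2r)) \leq (C_\mu C_\nu)^{k+1}\,\xi(B^\infty(z,r/\sqrt D)) \leq (C_\mu C_\nu)^{k+1}\,\xi(B^2(z,r))$. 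The same comparison also handles the case where $\mu$ and $\nu$ are only known to be doubling for Euclidean balls in their own factors.

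The only minor point is that $\xi$ is defined on products of Borel sets, while balls in the sup-norm are exactly such products, so no extension issue arises. If needed, $\xi$ can be viewed as the product measure when $\mu,\nu$ are measures, or as a capacity via the rectangle formula. I expect no real obstacle beyond this bookkeeping of norms.
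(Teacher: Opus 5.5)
Your proposal is correct and is essentially the paper's argument: both prove doubling for $\xi$ by comparing $D$-dimensional balls with products of balls in $\R^d$ and $\R^{d'}$ and then applying the doubling inequalities of $\mu$ and $\nu$ factor by factor. The paper handles a general ball directly through the inclusions $B_d(x^d,r/2)\times B_{d'}(x^{d'},r/2)\subset B_D(x^D,r)$ and $B_D(x^D,2r)\subset B_d(x^d,2r)\times B_{d'}(x^{d'},2r)$, which gives the constant $C_\mu^2C_\nu^2$; you instead use exact product balls in the sup-norm (constant $C_\mu C_\nu$) and add a separate $\sqrt{D}$ comparison for Euclidean balls.
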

\begin{proof}
    Let us prove \eqref{eq:doub_meas_ball} for $\xi$. For $x^D=(x^d,x^{d'})\in (0,1)^d \times (0,1)^{d'} = (0,1)^D$, for all $r>0$,
    \begin{equation*}
         B_{D}(x^D,2 r) \subset B_{d}(x^d,2 r) \times B_{d'}(x^{d'},2 r) ~~\text{and}~~ B_{d}(x^d,r/2) \times B_{d'}(x^{d'},r/2) \subset B_{D}(x^D,r).
    \end{equation*}
    So, using \eqref{eq:doub_meas_ball} for $\mu$ and $\nu$ (with the constant $C_{\mu}$ and $C_{\nu}$ respectively),
    \begin{align*}
        \xi(B_{D}(x^D,2 r)) 
        &\leq \mu(B_{d}(x^d,2 r)) \cdot \nu(B_{d'}(x^{d'},2 r))\\
        &\leq C_{\mu}^2 C_{\nu}^2~\mu(B_{d}(x^d,r/2)) \cdot \nu(B_{d'}(x^{d'},r/2)),
    \end{align*}
    hence $\xi \big(B_{D}(x^D,2 r)\big) \leq C_{\xi}~\xi (B_{D}(x^D,r) )$ and the lemma is proved.
\end{proof}

\begin{prop}
    Let $\mu\in\mathscr{E}_{d}$ and $\nu\in\mathscr{E}_{d'}$.
    The capacity $\xi=\mu \times \nu$ verifies the SMF and
    \begin{equation}\label{eq:spect_xi}
        \text{for all}~r\in\R, \quad \sigma_{\xi} \big(h_{\mu}^{r} + h_{\nu}^{r} \big)= \sigma_{\mu} \big(h_{\mu}^{r}\big) + \sigma_{\nu} \big(h_{\nu}^{r} \big).
    \end{equation}
\end{prop}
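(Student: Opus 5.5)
The plan is to reduce everything to the observation that $\xi$ is, up to a power, a Gibbs-type capacity whose scaling function splits additively. First compute $\tau_\xi$. For a dyadic cube $\lambda\in\Lambda^D_j$, write $\lambda=\lambda^d\times\lambda^{d'}$ with $\lambda^d\in\Lambda^d_j$ and $\lambda^{d'}\in\Lambda^{d'}_j$. Then $\xi(\lambda)=\mu(\lambda^d)\nu(\lambda^{d'})$, so the partition sum factorises:
\begin{equation*}
\sum_{\lambda\in\Lambda^D_j}\xi(\lambda)^q=\Big(\sum_{\lambda^d\in\Lambda^d_j}\mu(\lambda^d)^q\Big)\Big(\sum_{\lambda^{d'}\in\Lambda^{d'}_j}\nu(\lambda^{d'})^q\Big).
\end{equation*}
For Gibbs capacities the $\liminf$ defining $\tau_\mu$ and $\tau_\nu$ is actually a limit (standard, via the quasi-Bernoulli property \eqref{eq:quasi_bernoulli} and sub/super-multiplicativity of the sums). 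Hence $\tau_\xi=\tau_\mu+\tau_\nu$, and $\tau_\xi$ is real-analytic and strictly concave by Property \ref{propri:Gibbs_meas_prop}\ref{item:gibbs_tau_analytic}. Differentiating gives $\tau_\xi'(r)=h^r_\mu+h^r_\nu$, and therefore
\begin{equation*}
\tau_\xi^*(h^r_\mu+h^r_\nu)=r(h^r_\mu+h^r_\nu)-\tau_\mu(r)-\tau_\nu(r)=\sigma_\mu(h^r_\mu)+\sigma_\nu(h^r_\nu),
\end{equation*}
using item (3) for $\mu$ and $\nu$. So \eqref{eq:spect_xi} follows once the SMF is established for $\xi$.

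For the SMF, the cleanest route is to check that $\xi$ itself lies in $\mathscr{E}_D$ and then invoke Property \ref{propri:Gibbs_meas_prop}(i). If $\mu=\mu_0^{s}$ and $\nu=\nu_0^{t}$ with $\mu_0,\nu_0$ Gibbs measures and $s=t$, then $\xi=(\mu_0\times\nu_0)^s$. By the preceding lemma, $\mu_0\times\nu_0$ is Gibbs with potential $\varphi+\widetilde\varphi$, so $\xi\in\mathscr{E}_D$ and we are done. When $s\neq t$, write $\mu_0^s\times\nu_0^t=(\mu_0\times\nu_0^{t/s})^s$; but $\nu_0^{t/s}$ is not a measure, so this does not apply, and a direct argument is needed.

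The direct argument, which also covers the general case, goes through the auxiliary measures. Fix $r$ and consider the product measure $\mu_r\times\nu_r$, a Gibbs measure by the lemma. By \eqref{eq:theo:axiliary_measure} for both factors,
\begin{equation*}
(\mu_r\times\nu_r)(\lambda)\asymp \xi(\lambda)^r|\lambda|^{-\tau_\xi(r)},
\end{equation*}
so it plays the role of the auxiliary measure of $\xi$. By Theorem \ref{theo:auxiliary_measure}\ref{theo:auxiliary_measure:nu_r_measure_1} applied to each factor, $(\mu_r\times\nu_r)$-a.e. point $(x,y)$ has $\mu$-local dimension $h^r_\mu$ at $x$ and $\nu$-local dimension $h^r_\nu$ at $y$, both as limits. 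By the doubling property and Remark \ref{rem:different_local_dimension_def_ball_dyadic}, $h_\xi(x,y)=h^r_\mu+h^r_\nu$ as a limit, so $E_\xi(h^r_\mu+h^r_\nu)$ has full $(\mu_r\times\nu_r)$-measure. Moreover $\mu_r\times\nu_r$ has exact local dimension $\tau_\xi^*(h^r_\mu+h^r_\nu)$ on this set. The mass distribution principle then gives $\dim_H E_\xi(h)\ge\tau_\xi^*(h)$ for $h=\tau_\xi'(r)$, $r\in\R$, that is, for $h\in(h^{\min}_\xi,h^{\max}_\xi)$. The reverse inequality $\sigma_\xi\le\tau_\xi^*$ is the classical bound quoted before Proposition \ref{prop:prop_capa_SMF}, and $E_\xi\subset\underline{E}_\xi$.

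The main obstacle is the endpoints $h^{\min}_\xi=h^{+\infty}_\mu+h^{+\infty}_\nu$ and $h^{\max}_\xi=h^{-\infty}_\mu+h^{-\infty}_\nu$, where no auxiliary measure exists. There I would take products of the level sets $E_\mu(h^{\pm\infty}_\mu)\times E_\nu(h^{\pm\infty}_\nu)$ and use $\dim_H(A\times B)\ge\dim_H A+\dim_H B$ to get the lower bound. The upper bound is again automatic from $\sigma_\xi\le\tau_\xi^*$, and outside $[h^{\min}_\xi,h^{\max}_\xi]$ both sides equal $-\infty$.
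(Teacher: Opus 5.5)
Your argument is correct. Its upper-bound half is exactly the paper's: you factorise the partition sums, use that the Gibbs scaling functions are genuine limits to get $\tau_\xi=\tau_\mu+\tau_\nu$, evaluate $\tau_\xi^*$ at $\tau_\xi'(r)=h^r_\mu+h^r_\nu$, and invoke $\sigma_\xi\le\tau_\xi^*$.

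Where you differ is the lower bound. The paper gets it for all $h$ at once: it uses $E_\mu(\alpha)\times E_\nu(h-\alpha)\subset E_\xi(h)$, the product inequality $\dim_H(A\times B)\ge\dim_H A+\dim_H B$ and the SMF of the two factors, then chooses $\alpha=h^r_\mu$. You instead exhibit $\mu_r\times\nu_r$ as an auxiliary measure for $\xi$: it satisfies $(\mu_r\times\nu_r)(\lambda)\asymp\xi(\lambda)^r|\lambda|^{-\tau_\xi(r)}$ and is carried by $E_\xi(\tau_\xi'(r))$, and you conclude with the mass distribution principle. Your route gives extra structure, namely an explicit Gibbs auxiliary measure for $\xi$ with $\dim(\mu_r\times\nu_r)=\tau_\xi^*(\tau_\xi'(r))$, and it avoids the product inequality in the interior. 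However, you fall back on the product-of-level-sets argument at the endpoints anyway, and that argument alone already handles every $h$. So the auxiliary-measure step is not needed for the statement itself.

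Your observation that $\mu_0^s\times\nu_0^t$ is not obviously in $\mathscr{E}_D$ when $s\neq t$ is right, but neither proof needs $\xi\in\mathscr{E}_D$.

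The one line you leave implicit (as does the paper) is the endpoint identity $\tau_\xi^*(h^{\pm\infty}_\mu+h^{\pm\infty}_\nu)=\tau_\mu^*(h^{\pm\infty}_\mu)+\tau_\nu^*(h^{\pm\infty}_\nu)$. You need it to match the upper bound at $h^{\min}_\xi$ and $h^{\max}_\xi$. It holds because, at $+\infty$, both $q\mapsto qh^{+\infty}_\mu-\tau_\mu(q)$ and $q\mapsto qh^{+\infty}_\nu-\tau_\nu(q)$ are nonincreasing, since $\tau_\mu'\ge h^{+\infty}_\mu$ and $\tau_\nu'\ge h^{+\infty}_\nu$. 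Hence the infimum of their sum is the sum of their limits as $q\to+\infty$, and the case $-\infty$ is symmetric.
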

\begin{proof}
    We begin with the lower bound $\sigma_\xi\big(h_{\mu}^{r} + h_{\nu}^{r} \big)\geq \sigma_{\mu} \big(h_{\mu}^{r}\big) + \sigma_{\nu} \big(h_{\nu}^{r}\big)$. Observe that for every $0\leq \alpha \leq h$, one has $E_{\mu}(\alpha) \times E_{\nu}(h-\alpha) \subset E_{\xi}(h) \subset \underline{E}_{\xi}(h)$. Indeed, for $x^D=(x^d,x^{d'})\in[0,1]^D$, one has $h_{\mu}(x^d) + h_{\nu}(x^{d'}) = h_{\xi}(x^D)$ using that
    \begin{equation*}
        \lim_{j\to +\infty} \frac{\log \mu(\lambda^d_j(x^d))}{\log2^{-j}} + \frac{\log \nu(\lambda^{d'}_j(x^{d'}))}{\log 2^{-j}} 
        = \lim_{j\to +\infty} \frac{\log \xi(\lambda^D_j(x^D))}{\log2^{-j}} 
    \end{equation*}
    since all the limits exist. Hence $\bigcup_{0 \leq \alpha \leq h} E_{\mu}(\alpha) \times E_{\nu}(h-\alpha) \subset E_{\xi}(h)$, from which we deduce
    \begin{align*}
        \sigma_{\xi}(h) 
        &= \dimH\big(\underline{E}_{\xi}(h)\big)
        \geq \max_{0 \leq \alpha \leq h} \dimH\big(E_{\mu}(\alpha) \times E_{\nu}(h-\alpha)\big) \\
        &\geq \max_{0 \leq \alpha \leq h} \dimH\big(E_{\mu}(\alpha)\big) + \dimH\big(E_{\nu}(h-\alpha)\big) = \max_{0 \leq \alpha \leq h} \sigma_{\mu}(\alpha) + \sigma_{\nu}(h-\alpha),
    \end{align*}
    the last equality coming from the fact that $\mu$ and $\nu$ satisfy the SMF. Taking $h=h_{\mu}^{r} + h_{\nu}^{r}$ for some $r\in\R$ and $\alpha = h_{\mu}^{r}$, we get $\sigma_{\xi} \big(h_{\mu}^{r} + h_{\nu}^{r} \big) \geq \sigma_{\mu} \big(h_{\mu}^{r}\big) + \sigma_{\nu} \big(h_{\nu}^{r} \big)$.
    
    Observe also that when $r$ ranges in $[-\infty,+\infty]$, $h_{\mu}^{r} + h_{\nu}^{r}$ covers the whole interval $[h_{\mu}^{+\infty}+h_{\nu}^{+\infty}, h_{\mu}^{-\infty}+h_{\nu}^{+\infty}]$, which contains all possible values of the local dimensions of $\xi$, hence formula \eqref{eq:spect_xi} describes entirely the values of $\sigma_\xi$.
    
    We finish by the converse inequality. It is known that for every $h\in \R$, $\sigma_{\xi}(h) \leq \tau_\xi^*(h)$. Since $\mu \in \mathscr{E}_{d}$ and $\nu \in \mathscr{E}_{d'}$ satisfy \eqref{eq:quasi_bernoulli}, it is standard that the scaling function of $\mu$ and $\nu$ is a limit by Fekete's Subadditive Lemma, as used in \cite{Brown_Michon_Peyriere:1992:Multifractal-analysis-measures}.
    As a Cartesian product of such objects, $\xi$ is quasi-Bernoulli as well, and $\tau_\xi(q)=\lim_{j\to+\infty} \frac{1}{-j} \log_2 \sum_{I\in\Lambda^D _j} \xi(I)^q$.
    One then computes that
    \begin{align*}
        \tau_\xi(q)
        &= \lim_{j\to+\infty} \frac{\log \sum_{\lambda^d \times \lambda^{d'}\in\Lambda^{D}_j} \xi(\lambda^d \times \lambda^{d'})^q }{\log 2^{-j}} \\
        &= \lim_{j\to+\infty} \frac{\log \sum_{\lambda^d \in\Lambda^{d}_j} \mu(\lambda^d)^q \sum_{\lambda^{d'} \in\Lambda^{d'}_j}\nu(\lambda^{d'})^q }{\log 2^{-j}} \\
        &= \lim_{j\to+\infty} \frac{\log \sum_{\lambda^d \in\Lambda^{d}_j} \mu(\lambda^d)^q }{\log(2^{-j})}+ \lim_{j\to+\infty} \frac{\log \sum_{\lambda^{d'} \in\Lambda^{d'}_j}\nu(\lambda^{d'})^q }{\log 2^{-j}} = \tau_\mu(q) + \tau_\nu(q).
    \end{align*}
     
    Observe then that, for a fixed $r\in\R$, $\tau_{\xi}'(r) = \tau'_\mu(r) + \tau'_\nu(r) = h_{\mu}^{r} + h_{\nu}^{r}$. Also, one knows by the properties of the Legendre transform of concave functions that the infinimum $\inf_{t\in\R} (t \tau_{\xi}'(r) - \tau_{\xi}(t))$ is reached at $t=r$, hence, using the SMF for $\mu$ and $\nu$,
    \begin{align*}
        \tau_{\xi}^{*}(\tau_{\xi}'(r)) 
        &= r \tau_{\xi}'(r) - \tau_{\xi}(r) 
        = r h_{\mu}^{r} - \tau_{\mu}(r) + h_{\nu}^{r} - \tau_{\nu}(r) 
        = \tau_{\mu}^{*} (h_{\mu}^{r} ) + \tau_{\nu}^{*}\big(h_{\nu}^{r}\big) \\
        &= \sigma_{\mu} (h_{\mu}^{r} ) + \sigma_{\nu}\big(h_{\nu}\big).
    \end{align*}
    One deduces that $\sigma_{\xi} (h_{\mu}^{r} + h_{\nu}^{r} ) \leq \tau_{\xi}^* (h_{\mu}^{r} + h_{\nu}^{r} ) = \sigma_{\mu} (h_{\mu}^{r} ) + \sigma_{\nu} (h_{\nu} )$, hence the result.
\end{proof}
    
\subsection{Multi-resolution wavelet analysis}\label{sec:mr_wlt_ana}

Let $D= d+d'\in\N$ and $1\leq p, q \leq +\infty$. A mapping $\psi:\R^D\to \R$ has $n+1$ vanishing moments when for every multi-index $\alpha\in\N^D$ such that $\sum_{i=1}^{D} \alpha_i \leq n$, $\displaystyle \int_{\R^D} x_1^{\alpha_1}\ldots x_D^{\alpha_D} \psi(x)dx=0$. 

For an arbitrary integer $N\geq 1$, one can construct compactly supported functions $\psi^{0} \in C^{N}(\R)$ (called the scaling function) and $\psi^{1} \in C^{N}(\R)$ (called the mother wavelet), with $\psi^{1}$ having at least $N+1$ vanishing moments (see \cite{Daubechies:1988:compactly_supported_wavelets, Daubechies-Lagarias:1991:regularity_multiresolution_analysis}) and such that the set of function $\psi^{1}_{j,k}:x\mapsto \psi^{1}(2^j x-k)$ for $j\in\Z$ and $k\in\Z$ form an orthogonal basis of $L^2(\R)$. In this case, the wavelet is said to be $N$-regular.
We set $0^D := (0,\ldots,0)$, $1^D := (1,\ldots,1)$, $L^D = \{0,1\}^D\backslash 0^D$.

\noindent 
An orthogonal basis of $L^2(\R^D)$ is obtained by tensorisation as follows (see \cite[Chapters 2-3]{Meyer:1990:Ondelette_operateur} for a general construction):
for $\lambda^D = (j,k,l)\in \Z \times \Z^D \times \{0,1\}^D$, define the tensorised wavelet $\psi_{\lambda^D} (x_1,...,x_D)= \prod_{i=1}^{D} \psi_{j,k_i}^{l_i}(x_i)$ with $k=(k_1,\ldots,k_D)$ and $l=(l_1,\ldots,l_D)\in\{0,1\}^D$.

Similar notations are used with $d$ and $d'$ (e.g. $L^d = \{0,1\}^d\backslash 0^d$).

For $\lambda^d = (j,k,l) \in \Lambda^{d}_j\times \{0,1\}^{d}$ and $\lambda^{d'} = (j,k',l') \in \Lambda^{d'}_j\times \{0,1\}^{d'}$, we will write $\lambda^D = \lambda^d\times\lambda^{d'}$ for $\lambda^D = (j,(k,k'),(l,l')) \in \Lambda^{D}_j\times \{0,1\}^{D}$. We will also alternatively use $c_{\lambda^{d},\lambda^{d'}} = c_{\lambda^{D}}$ depending on the need to make the notation simpler.

Thus, with these notations, and recalling \eqref{eq:wlt_dec_on_01}, we will consider functions written as
\begin{equation*}
    f = c_{0,0^D,0^D} \psi_{0,0^D,0^D}~+ \sum_{\lambda^D \in \Lambda^D \times L^D} c_{\lambda^D} \psi_{\lambda^D} 
    = c_{0,0^D,0^D} \psi_{0,0^D,0^D}~+ \sum_{j\geq 0} \sum_{\lambda^D_j \in \Lambda^D_j \times L^D} c_{\lambda^D_j} \psi_{\lambda^D_j}
\end{equation*}
 
\begin{prop}\label{prop:equi_hold_space_wlt_coef} \cite{Jaffard:1989:Holder_exponent_wavelet_coef,Jaffard:2007:Wavelet_leaders}
    Suppose that $\gamma>0$ and $N=\partent{\gamma+1}$. Consider a wavelet $\psi \in C^{N}(\R)$ having at least $N+1$-regular. Let $f:[0,1]^D \to \R$ be a locally bounded function with wavelet coefficients $\{c_{\lambda}\}_{\lambda\in\Lambda^D\times L^D}$, and let $x\in[0,1]^D$.

    If $f\in C^{\gamma}(x)$, then there exists a constant $>0$ such that for all $\lambda=(j,k,l)\in\Lambda^D\times L^D$,
    \begin{equation}\label{eq:equi_hold_space_wlt_coef}
        |c_{\lambda}| \leq M\ 2^{-j\gamma} \big( 1+|2^j x-k| \big)^{\gamma}.
    \end{equation}
    Conversely, if \eqref{eq:equi_hold_space_wlt_coef} holds true and if $f\in \bigcup_{\varepsilon>0} C^{\varepsilon}([0,1]^D)$, then $f\in C^{\gamma-\eta}(x)$, $\forall$ $\eta>0$.
\end{prop}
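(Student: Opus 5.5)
The plan follows Jaffard's classical argument, and the two implications are handled separately. Throughout, $\psi$ is compactly supported, say $\operatorname{supp}\psi_\lambda\subset 2^{-j}(k+[-K,K]^D)$, and has $N+1$ vanishing moments.

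\emph{Direct implication.} Assume $f\in C^{\gamma}(x)$, so there is a polynomial $P_x$ of degree less than $\partent{\gamma}\le N$ with $|f(y)-P_x(y-x)|\le C|y-x|^\gamma$ on a neighbourhood $V$ of $x$.

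1. Since $\psi_\lambda$ annihilates polynomials of degree $\le N$, write
\[
c_\lambda = 2^{Dj}\int \big(f(y)-P_x(y-x)\big)\psi_\lambda(y)\,dy .
\]

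2. Suppose $\operatorname{supp}\psi_\lambda\subset V$. On that support, $|y-x|\le |x-k2^{-j}|+K2^{-j}\le C2^{-j}(1+|2^jx-k|)$.

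3. The factor $2^{Dj}$ cancels the volume of the support. This gives $|c_\lambda|\le M2^{-j\gamma}(1+|2^jx-k|)^\gamma$ for these $\lambda$.

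4. For the remaining $\lambda$, whose support meets $V^\complement$, we have $|2^{-j}k-x|\ge c>0$ once $j$ is large. Hence $(1+|2^jx-k|)^\gamma\ge c2^{j\gamma}$. Local boundedness of $f$ gives $|c_\lambda|\le C\|f\|_{L^\infty}$, so the bound holds after enlarging $M$.

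5. Finitely many small $j$ are absorbed into the constant in the same way.

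\emph{Converse.} Assume the coefficient bound holds and $f\in C^{\varepsilon}([0,1]^D)$ for some $\varepsilon>0$. The target is $f\in C^{\gamma-\eta}(x)$ for every $\eta>0$.

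1. Decompose $f=\sum_j f_j$, where $f_j=\sum_{\lambda\text{ at scale } j}c_\lambda\psi_\lambda$. The term at scale $0$ is smooth.

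2. Take $y$ with $2^{-J-1}\le|y-x|<2^{-J}$. Split $f(y)-P(y-x)$ into the low scales $j\le J$ and the high scales $j>J$.

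3. Define $P$ as the sum over $j$ of the Taylor polynomials of degree $\partent{\gamma}$ of $f_j$ at $x$. This sum converges: for $|\alpha|<\gamma$, the bounds below give $\|\partial^\alpha f_j\|$ near $x$ of order $2^{j(|\alpha|-\gamma)}$, which is summable.

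4. \emph{Low scales.} Apply Taylor's formula to $f_j$ at order $N$. The remainder is bounded by $|y-x|^{N}\sup_{B(x,|y-x|)}|\partial^{N}f_j|$. Near $x$, only the boundedly many $\lambda$ with $|2^jx-k|\le C(1+2^j|y-x|)\le C'$ contribute, so $|\partial^Nf_j|\lesssim 2^{jN}2^{-j\gamma}$. Summing over $j\le J$ gives $|y-x|^N2^{J(N-\gamma)}\approx|y-x|^\gamma$, using $N>\gamma$.

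5. \emph{High scales.} The difficulty is that for $j>J$, the points of $\operatorname{supp}\psi_\lambda$ near $y$ satisfy $|2^jx-k|\approx 2^j|y-x|$. The coefficient bound then only gives $|c_\lambda|\lesssim|y-x|^\gamma$ per scale, and summing over $j$ diverges. The uniform regularity fixes this. The bound $|c_\lambda|\le C2^{-j\varepsilon}$ holds everywhere, for the $\lambda$ supported near $y$, by the direct implication applied uniformly. Interpolate $\min(2^{-j\varepsilon},\,|y-x|^\gamma)$ as $2^{-j\varepsilon\theta}|y-x|^{\gamma(1-\theta)}$. Summing over $j>J$ gives $|y-x|^{\gamma(1-\theta)+\varepsilon\theta}$. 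Choosing $\theta$ small yields exponent at least $\gamma-\eta$.

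6. The Taylor-polynomial parts of the high scales contribute $\sum_{j>J}2^{j(|\alpha|-\gamma)}|y-x|^{|\alpha|}\lesssim|y-x|^\gamma$.

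7. Combining steps 4–6 gives $|f(y)-P(y-x)|\le C|y-x|^{\gamma-\eta}$.

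The main obstacle is the high-scale estimate in step 5. This is exactly where the $\eta$ loss, and the need for a global $C^\varepsilon$ hypothesis, come from.
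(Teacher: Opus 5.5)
The paper gives no proof of this proposition; it is quoted from Jaffard's work. Your argument is the standard one from there: vanishing moments plus localisation of $\operatorname{supp}\psi_\lambda$ for the direct part, and for the converse the decomposition $f=\sum_j f_j$, an order-$N$ Taylor estimate on the scales $j\le J$, and coefficient bounds on the scales $j>J$. I checked the steps and they are sound.

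The only place you deviate from the classical proof is the high-scale sum. The classical argument splits $j>J$ at the scale $j_1\approx \gamma J/\varepsilon$ where $2^{-j_1\varepsilon}\approx |y-x|^{\gamma}$. It uses $|c_\lambda|\lesssim|y-x|^{\gamma}$ on the $O(J)$ scales $J<j\le j_1$ and $|c_\lambda|\lesssim 2^{-j\varepsilon}$ beyond. This yields the sharper estimate $|f(y)-P(y-x)|\le C|y-x|^{\gamma}\log(2/|y-x|)$. Your interpolation $\min(a,b)\le a^{1-\theta}b^{\theta}$ with fixed $\theta$ gives the $\eta$-loss of the statement in one line, with a constant of order $1/\theta$. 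Letting $\theta\approx 1/J$ depend on the scale recovers the logarithmic bound, so the two routes are essentially equivalent.

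Two bookkeeping points need fixing.

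\begin{itemize}
\item If $\gamma\in\N$, Taylor polynomials of degree $\partent{\gamma}$ cannot be summed over $j$, because the terms with $|\alpha|=\gamma$ are only $O(1)$. You can use degree $<\gamma$ throughout; the low-scale remainder then costs a factor $J$, which is harmless here. Alternatively, first replace $\gamma$ by a non-integer $\gamma'<\gamma$. This is legitimate because $2^{-j}(1+|2^jx-k|)=2^{-j}+|x-k2^{-j}|$ stays bounded for the relevant $(j,k)$, so the hypothesis with exponent $\gamma$ implies the one with $\gamma'$.
\item State explicitly that $f\in C^{\varepsilon}$ gives $\|f_j\|_{\infty}\lesssim 2^{-j\varepsilon}$. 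Hence $\sum_j f_j$ converges uniformly to $f$, which justifies your pointwise splitting of $f(y)-P(y-x)$.
\end{itemize}
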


\begin{rem}\label{rem:incl_inh_besov}\cite[Remark 2.17]{Barral-Seuret:2023:Besov_Space_Part_2} 
    Let $\xi\in\mathcal{C}([0,1]^D)$ be $s$-H{\"o}lder with $s>0$, the embedding $B^{\xi}_{p,q}([0,1]^D) \hookrightarrow B^{s+\frac{D}{p}}_{p,q}([0,1]^D)$ holds.
    When $\xi$ satisfies \eqref{eq:P1_ball} for $C>0$ and $0<s_1<s_2$, one has $B^{s_2+\frac{D}{p}}_{p,q}([0,1]^D) \hookrightarrow B^{\xi}_{p,q}(\R^D) \hookrightarrow B^{s_1+\frac{D}{p}}_{p,q}([0,1]^D)$.
    Also, when $\alpha \geq \beta$, $B^{\xi^{(+\alpha)}}_{p,q}([0,1]^D) \hookrightarrow B^{\xi^{(+\beta)}}_{p,q}([0,1]^D)$ and $\widetilde{B}^{\xi^{(+\alpha)}}_{p,q}([0,1]^D) \hookrightarrow \widetilde{B}^{\xi^{(+\beta)}}_{p,q}([0,1]^D)$.
\end{rem}

\section{Traces and inhomogeneous Besov spaces}\label{sec:traces_in_besov}

\subsection{Wavelet characterisation of traces}
Observe that, by Lemma \ref{lemma:B_xi_s-holder}, every function in this paper is continuous, which implies that the trace $f_a$ in \eqref{eq:def_trace} is well-defined for every $a\in \R^{d'}$. Observe that for every $a\in\R^{d'}$, the trace $f_{a}:x\in\R^d \mapsto f(x,a) \in \R$ can be explicitly written in terms of wavelet expansion \eqref{eq:wlt_dec_on_01} as
\begin{equation}\label{eq:dec_trace}
    f_{a}(x) = c_{0,0^D,0^D} \psi_{0,0^d,0^d}(x)\psi_{0,0^{d'},0^{d'}}(a) ~+ \sum_{\lambda^{D}:=\lambda^{d} \times \lambda^{d'}\in \Lambda^D\times L^D} c_{\lambda^D} \psi_{\lambda^d}(x) \psi_{\lambda^{d'}}(a).
\end{equation}
We then decompose $f_a$ as $f_{a} = G_{a} + F_a$ where $G_{a}(x) = \sum_{\lambda^d\in \Lambda^d\times 0^d} d^G_{\lambda^d}(a) \psi_{\lambda^d}(x)$ with

\begin{equation}\label{eq:dec_trace_G_a}
    d^G_{\lambda_j^d}(a) =
    \begin{cases}
        \displaystyle \sum_{\lambda^{d'}\in\Lambda^{d'}_0 \times \{0,1\}^{d'}} c_{\lambda^d,\lambda^{d'}} \psi_{\lambda^{d'}}(a), & \text{for } j=0,\\
        \displaystyle \sum_{\lambda^{d'}\in\Lambda^{d'}_j\times \mathbf{L^{d'}}} c_{\lambda^d,\lambda^{d'}} \psi_{\lambda^{d'}}(a), &\text{for } j\geq 1,
    \end{cases}
\end{equation}
and $F_a(x) = \sum_{j\geq 0} \sum_{\lambda_j^d\in \Lambda_j^d\times L^d} d^F_{\lambda_j ^d}(a) \psi_{\lambda_j^d}(x)$ with
\begin{equation}\label{eq:dec_trace_F_a}
    d^F_{\lambda_j^d}(a) = \sum_{ {\lambda^{d'}\in\Lambda^{d'}_j\times \mathbf{\{0,1\}^{d'}} }} c_{\lambda_j^d,\lambda_j ^{d'}} \psi_{\lambda^{d'}_j}(a).
\end{equation}

Formula \eqref{eq:dec_trace_F_a} yields a wavelet decomposition of $F_a$ as the wavelets $\big(\psi_{\lambda^d}\big)_{\lambda^d\in \Lambda^d\times L^d}$ form a wavelet basis of $L^2([0,1]^d)$ when completed by the scaling function $\psi_{0,0^d,0^d}$. 
On the contrary, the decomposition $G_a$ in \eqref{eq:dec_trace_G_a} only involves the scaling functions $\psi_{j,k,0^d}$ and not wavelets.

So, to find out whether $f_a$ belongs to some inhomogeneous space $B^\mu_{p,q}([0,1]^d)$, one cannot directly use the definition \eqref{eq:snorm_inh_besov}, which involves only wavelet coefficients computed with $\psi_{\lambda^{d}}(a)$ for $\lambda^d\in \Lambda^d\times L^d$.
Fortunately, Jaffard proved the following result for standard Besov space:
\begin{prop} \label{prop:jaff_trace}\textnormal{\cite[Proposition 1]{Jaffard:1995:traces_negative_dim}}
    Let $s>0$ and a function $ g = \sum_{\lambda^d\in \Lambda^d\times 0^d} d_{\lambda^d} \psi_{\lambda^d}$.
    If $\sup_{j\geq 1} 2^{j(ps-d)} \sum_{\lambda^d\in \Lambda_j^d\times 0^d} \big|d_{\lambda^d}\big|^p < + \infty$, then $g\in B^{s}_{p,\infty}([0,1]^d)$.
\end{prop}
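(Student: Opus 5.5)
The plan is to compute the wavelet coefficients of $g$ directly and to check the Besov condition level by level. I use the paper's normalisation: $\psi_\lambda=\psi(2^j\cdot-k)$ is $L^\infty$-normalised and $c_\lambda=2^{dj}\int g\,\psi_\lambda$. With this normalisation, $g\in B^s_{p,\infty}([0,1]^d)$ means that $\sup_{j'}2^{j'(s-d/p)}\|(c_{\lambda'})_{\lambda'\in\Lambda^d_{j'}\times L^d}\|_{\ell^p}<\infty$, together with $g\in L^p$.

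Write $g=\sum_{j\ge0}g_j$ with $g_j=\sum_{k}d_{j,k}\psi_{j,k,0^d}$. Set $A:=\sup_j 2^{j(ps-d)}\sum_k|d_{j,k}|^p$. The hypothesis gives $\|g_j\|_{L^p}\lesssim (2^{-jd}\sum_k|d_{j,k}|^p)^{1/p}\lesssim A^{1/p}2^{-js}$. The series therefore converges in $L^p$, and $g\in L^p$. For a wavelet $\psi_{\lambda'}$ with $\lambda'=(j',k',l')$, $l'\in L^d$, we get $c_{\lambda'}=\sum_j\sum_k d_{j,k}\,\gamma_{(j,k),\lambda'}$, where $\gamma_{(j,k),\lambda'}:=2^{dj'}\int\psi_{j,k,0^d}\psi_{\lambda'}$. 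This quantity vanishes unless the two (compact) supports meet. I would first establish two kernel bounds:
\begin{itemize}
\item For $j\le j'$: $|\gamma_{(j,k),\lambda'}|\le C\,2^{-N(j'-j)}$. To see this, Taylor-expand the $C^N$ function $\psi_{j,k,0^d}$ at the centre of $\operatorname{supp}\psi_{\lambda'}$ and use the $N$ vanishing moments of $\psi_{\lambda'}$. Here $N>s$ is the regularity of the wavelets.
\item For $j>j'$: $|\gamma_{(j,k),\lambda'}|\le C\,2^{-d(j-j')}$. This is just the size of the support of $\psi_{j,k,0^d}$ times $\|\psi\|_\infty$.
\end{itemize}

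Next, fix $j'$ and bound the $\ell^p$ norm of the contribution $c^{(j)}_{\lambda'}:=\sum_k d_{j,k}\gamma_{(j,k),\lambda'}$ coming from each level $j$ separately. Then use Minkowski's inequality in $\ell^p$ to sum over $j$.

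In the case $j>j'$, each $\lambda'$ meets $O(2^{d(j-j')})$ indices $k$, and each $k$ meets $O(1)$ indices $\lambda'$. By Hölder's inequality,
\[
\sum_{\lambda'}|c^{(j)}_{\lambda'}|^p\lesssim 2^{-dp(j-j')}\,2^{d(j-j')(p-1)}\sum_k|d_{j,k}|^p\lesssim A\,2^{-d(j-j')}\,2^{-j(ps-d)} .
\]
Hence $2^{j'(s-d/p)}\|c^{(j)}\|_{\ell^p}\lesssim A^{1/p}2^{-(j-j')s}$.

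In the case $j\le j'$, each $\lambda'$ meets $O(1)$ indices $k$, and each $k$ meets $O(2^{d(j'-j)})$ indices $\lambda'$. This gives
\[
\sum_{\lambda'}|c^{(j)}_{\lambda'}|^p\lesssim 2^{-Np(j'-j)}2^{d(j'-j)}\sum_k|d_{j,k}|^p .
\]
Hence $2^{j'(s-d/p)}\|c^{(j)}\|_{\ell^p}\lesssim A^{1/p}2^{-(j'-j)(N-s)}$.

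Both bounds are geometric in $|j-j'|$, because $s>0$ and $N>s$. Summing over $j$ then gives a bound uniform in $j'$, and this is exactly $g\in B^s_{p,\infty}$. The case $p=\infty$ is identical, with suprema in place of sums.

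I expect the main obstacle to be the first kernel bound, for $j\le j'$. It requires the scaling function $\psi^{0}$ to be $C^N$ with $N$ strictly larger than $s$, and it requires control of the Taylor remainder uniformly over the finitely many overlap configurations. Everything else is bookkeeping of support overlaps, which is controlled by compact support. A secondary point is that Minkowski's inequality must be applied to a series of $\ell^p$ sequences rather than pointwise. This step is harmless because the decay is geometric.
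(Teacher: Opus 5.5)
Your proof is correct. The paper does not prove this statement itself; it cites Jaffard. It does, however, prove the inhomogeneous extension, Proposition~\ref{prop:G_a_belonging_besov}, by the same scheme. Your case $j>j'$ (support counting with $\#S\lesssim 2^{d(j-j')}$, H\"older's inequality, bounded overlap, then summation over $j$) is exactly that argument.

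The genuine difference is your case $j\le j'$: the paper never estimates these terms, because they vanish identically. The tensorised scaling functions $\psi_{(j,k,0^d)}$ lie in the level-$j$ approximation space $V_j\subset V_{j'}$. Each $\psi_{(j',k',l')}$ with $l'\in L^d$ lies in the detail space $W_{j'}$, which is orthogonal to $V_{j'}$. Hence $\gamma_{(j,k),\lambda'}=0$ whenever $j\le j'$, and only finer scaling functions contribute to $c_{\lambda'}$. So the step you singled out as the main obstacle (Taylor expansion, vanishing moments, $\psi^{0}\in C^N$ with $N>s$) is unnecessary in this orthonormal multiresolution setting.

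What your version buys in exchange is robustness. It never uses the nesting of the multiresolution spaces, so it would still apply if the $\psi_{(j,k,0^d)}$ were replaced by arbitrary compactly supported $C^N$ bumps at scale $2^{-j}$.

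Your argument also covers a wider range of $s$. Because you keep the exact scale factors, the $j>j'$ contribution decays like $2^{-(j-j')s}$, so your proof works for every $s>0$, as in the statement. The paper's proof, specialised to $\kappa=(\mathcal{L}^d)^{s/d-1/p}$, bounds $\kappa(\lambda^d_{j,k})/\kappa(\lambda^d_{J,K})$ by a constant using monotonicity and doubling. It also uses a positive H\"older exponent of $\kappa$ for the $L^p$ bound. It therefore recovers the statement only for $s>d/p$.
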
 
Proposition \ref{prop:jaff_trace} is extended to inhomogeneous Besov spaces.  
\begin{prop}\label{prop:G_a_belonging_besov}
    Let $1\leq p,q \leq \infty$, $\kappa\in \mathcal{C}([0,1]^d)$ a doubling capacity and a function $g = \sum_{\lambda^d\in \Lambda^d\times 0^d} d_{\lambda^d} \psi_{\lambda^d}$.If
    \begin{equation}\label{eq:bound_coef_scal_fct}
        \big\|(\widetilde{\varepsilon}^{\kappa,p}_{j})_{j\in\N}\big\|_{\ell^q(\N)} < \infty, \quad \text{where} \quad 
        \widetilde{\varepsilon}^{\kappa,p}_{j} = \bigg\| \bigg(\frac{d_{\lambda_j^d}}{\kappa(\lambda_j^d)} \bigg)_{\lambda_j^d \in \Lambda_j^d\times 0^d}\bigg\|_{\ell^p(\Lambda_j^d\times 0^d)},
    \end{equation}
    then $g\in B^{\kappa}_{p,q}([0,1]^d)$.
\end{prop}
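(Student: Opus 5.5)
We follow Jaffard's strategy for Proposition \ref{prop:jaff_trace}: the scaling functions at scale $j$ are expanded on the wavelet basis, and the resulting coefficients are controlled through the doubling property of $\kappa$. For $j\geq 1$ and $\lambda_j^d=(j,k,0^d)$, the function $\psi_{\lambda_j^d}=\phi(2^j\cdot-k)$ (tensorised scaling function) lies in $V_j$. We write it as
\[
\psi_{\lambda_j^d}=\psi_{0,0^d,0^d}\text{-part}+\sum_{0\le j'<j}\sum_{\lambda'\in\Lambda^d_{j'}\times L^d} e_{\lambda_j^d,\lambda'}\,\psi_{\lambda'} .
\]
Hence the wavelet coefficients of $g$ at scale $j'$ are
\[
c_{\lambda'}=\sum_{j>j'}\sum_{\lambda_j^d} d_{\lambda_j^d}\, e_{\lambda_j^d,\lambda'},
\]
and only scales $j>j'$ contribute.

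We first estimate the kernel. With the $L^\infty$ normalisation used in the paper, $e_{\lambda_j^d,\lambda'}=2^{dj'}\int \phi(2^jx-k)\psi_{\lambda'}(x)\,dx$. The $n$ vanishing moments of $\psi$ are not usable here, since the smooth function is $\psi_{\lambda'}$, not $\phi(2^j\cdot-k)$. Instead we use the smoothness of $\psi_{\lambda'}$ at scale $2^{-j'}$ together with the fact that $\int\phi\neq0$. This gives $|e_{\lambda_j^d,\lambda'}|\le C\,2^{-d(j-j')}$, and $e_{\lambda_j^d,\lambda'}=0$ unless $\lambda_j^d\subset N\lambda'$ for a fixed $N$ depending on the supports. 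For fixed $\lambda'$, the number of such $\lambda_j^d$ is at most $C2^{d(j-j')}$.

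Next we bring in the capacity. If $\lambda_j^d\subset N\lambda'$, then $\kappa(\lambda_j^d)\le \kappa(N\lambda')\le C\kappa(\lambda')$ by doubling. This gives
\[
\frac{|c_{\lambda'}|}{\kappa(\lambda')}\le C\sum_{j>j'}2^{-d(j-j')}\sum_{\lambda_j^d\subset N\lambda'}\frac{|d_{\lambda_j^d}|}{\kappa(\lambda_j^d)}\cdot\frac{\kappa(\lambda_j^d)}{\kappa(\lambda')}.
\]
This is the main obstacle. With $p<\infty$, the bound $\kappa(\lambda_j^d)/\kappa(\lambda')\le C$ alone yields only
\[
\sum_{\lambda_j^d\subset N\lambda'}|\cdot|\le (C2^{d(j-j')})^{1-1/p}\Big(\sum|\cdot|^p\Big)^{1/p}
\]
by Hölder, so the $\ell^p$ norm over $\lambda'$ at scale $j'$ is bounded by $C\sum_{j>j'}2^{-d(j-j')/p}\,\widetilde\varepsilon_j$. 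Each $\lambda_j^d$ belongs to boundedly many $N\lambda'$, which lets us take this $\ell^p$ norm. For $p=\infty$ this loses nothing, because then $\sum 2^{-d(j-j')}\cdot 2^{d(j-j')}$ is not summable. We therefore need genuine geometric decay, and we extract it from the capacity. Since $\kappa$ is doubling and supported on $[0,1]^d$, a standard consequence of doubling (reverse doubling on connected support) is $\kappa(\lambda_j^d)\le C2^{-\delta(j-j')}\kappa(\lambda')$ for some $\delta>0$. This gives the factor $2^{-\delta(j-j')}$ in every case. So we obtain
\[
\varepsilon^{\kappa,p}_{j'}\le C\sum_{j>j'}2^{-\delta(j-j')}\,\widetilde\varepsilon^{\kappa,p}_j .
\]
If reverse doubling should fail, the fallback is the $2^{-d(j-j')/p}$ factor for $p<\infty$.

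Finally, the operator $(\widetilde\varepsilon_j)_j\mapsto(\sum_{j>j'}2^{-\delta(j-j')}\widetilde\varepsilon_j)_{j'}$ is a discrete convolution with an $\ell^1$ kernel, so Young's inequality makes it bounded on $\ell^q(\N)$. Hence $|g|_{\kappa,p,q}<\infty$. The scale-zero coefficient and $\|g\|_{L^p}$ are handled in the same way: $\|\psi_{\lambda^d_j}\|_{L^p}\sim2^{-dj/p}$, and $\kappa(\lambda_j^d)\le C2^{-\delta j}$, which makes the series for $g$ absolutely convergent in $L^p$. Therefore $g\in B^\kappa_{p,q}([0,1]^d)$.
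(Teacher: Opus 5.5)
For $p<\infty$ your argument is the paper's. It uses the kernel bound $|e_{\lambda_j^d,\lambda'}|\le C2^{-d(j-j')}$, which is just $\|\psi_{\lambda'}\|_{L^\infty}\|\phi(2^j\cdot-k)\|_{L^1}$; smoothness of $\psi_{\lambda'}$ and $\int\phi\neq0$ play no role in it. It then uses the support counting, doubling to bound $\kappa(\lambda_j^d)/\kappa(\lambda')$, H\"older in $k$, bounded overlap, and a convolution in $j$ with kernel $2^{-d(j-j')/p}$, which is your ``fallback''. For $\|g\|_{L^p}$ you do not need reverse doubling: $\kappa\in\mathcal{C}([0,1]^d)$ is $s$-H\"older, and that is what the paper uses. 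This case is fine.

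The gap is the case $p=\infty$. You rightly observe that the H\"older step then only gives the factor $2^{-d(j-j')}\cdot 2^{d(j-j')}=1$. The paper's proof fails at exactly this point too: its factor $2^{-\frac{d}{p}(j-J)}$ becomes $1$ and its final sums over $j>J$ diverge.

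Your remedy, $\kappa(\lambda_j^d)\le C2^{-\delta(j-j')}\kappa(\lambda')$, follows from doubling only for \emph{measures}. The standard proof finds in an annulus a ball carrying a fixed fraction of the mass and subtracts it, which uses additivity. An element of $\mathcal{C}([0,1]^d)$ is merely monotone, and then the claim is false. Take $\kappa(E)=f(\operatorname{diam}(E\cap[0,1]^d))$ with $f$ nondecreasing, $f(2r)\le Cf(r)$, $f(r)\le Cr^s$, but $f$ constant on $[2^{-J_i-L_i},2^{-J_i}]$ with $L_i\to\infty$. This is compatible with both constraints if $f$ decays like $r^t$, $t>s$, between the flat ranges.

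In fact the case $p=q=\infty$ of the statement itself then fails. Set $d_{j,k}=\kappa(\lambda^d_{j,k})\operatorname{sgn}\psi_{(J_i,K_i,l)}(k2^{-j})$ for $J_i+j_0\le j\le J_i+L_i$ and $k2^{-j}\in\operatorname{supp}\psi_{(J_i,K_i,l)}$, with the cubes $\lambda^d_{J_i,K_i}$ chosen far apart, and $d_{j,k}=0$ otherwise. Then $\widetilde{\varepsilon}^{\kappa,\infty}_j\le 1$. A Riemann-sum computation shows that each such scale contributes $\gtrsim\kappa(\lambda^d_{J_i,K_i})$ to $c_{J_i,K_i,l}$; here $\int\phi=1$ and the smoothness of $\psi$ do matter. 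Hence $|c_{J_i,K_i,l}|\gtrsim L_i\,\kappa(\lambda^d_{J_i,K_i})$, which is unbounded relative to $\kappa(\lambda^d_{J_i,K_i})$.

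So no argument can close this case: the decay you invoke has to be added as a hypothesis. It does hold where the paper applies the proposition. For $\kappa=\mu^{(+\Gamma)}$ with $\mu$ a power of a Gibbs measure, $\mu$ is reverse doubling, and for $\Gamma>0$ the factor $|\lambda|^{\Gamma}$ alone already gives $\kappa(\lambda_j^d)\le C2^{-\Gamma(j-j')}\kappa(\lambda')$. Under that extra hypothesis your convolution argument, with Young's inequality on $\ell^q$, is complete for every $p$.
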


To prove that $f_a\in B^{\kappa}_{p,\infty}([0,1]^d)$, using \eqref{eq:dec_trace}, it is enough to check that  
\begin{equation}\label{eq:snorm_inh_besov_mod}
    |f_a|_{\kappa,p,q}=\big\|(\varepsilon^{\kappa,p}_{j})_{j\in\N}\big\|_{\ell^q(\N)} < \infty, ~\text{where}~ \varepsilon^{\kappa,p}_{j}=\Big\|\Big( \frac{c_{\lambda}}{\kappa(\lambda)} \Big)_{\lambda\in \Lambda^d_j \times {\{0,1\}^d}}\Big\|_{\ell^p(\Lambda^d_j \times \mathbf{\{0,1\}^d})} \hspace{-4mm}.
\end{equation}
The difference between \eqref{eq:snorm_inh_besov_mod} and the original characterisation \eqref{eq:snorm_inh_besov} is that the sum is now taken on all cubes $\Lambda^d\times \{0,1\}^d$. We keep this characterisation throughout the Section \ref{sec:traces_in_besov}.
 
\begin{proof}
    The idea is to decompose $\psi_{j,k,0^d}$ on the wavelets $\psi_{J,K,l}$ for $l\in L^d$ such that $j>J$.

    First, let us show that $g\in L^p([0,1]^d)$. Remark that, as $\kappa\in \mathcal{C}([0,1]^d)$, $\kappa$ is $s$-H{\"o}lder for some $s>0$. Thus, for some constant $K>0$, for every $\lambda_j^d\in \Lambda_j^d\times \{0,1\}^d$, $\kappa(\lambda_j^d)\leq K 2^{-js}$. Hence,
    \begin{equation}\label{eq:lp_norm_g_upper_bound}
        \|g\|_{L^p}
        \leq \sum_{j\geq 0} \bigg\|\sum_{\lambda_j^d\in\Lambda_j^d \times 0^d} d_{\lambda_j^d} \psi_{\lambda_j^d}\bigg\|_{L^p}
        \leq K \sum_{j\geq 0} 2^{-js} \bigg\|\sum_{\lambda_j^d\in\Lambda_j^d \times 0^d} \frac{d_{\lambda_j^d}}{\kappa(\lambda_j^d)} \psi_{\lambda_j^d}\bigg\|_{L^p}.
    \end{equation}
    Each $\psi_{\lambda_j^d}$ is compactly supported, with support included in $2^{-j}[-M_{\psi},M_{\psi}]^d+2^{-j}k$. In addition, 
    \begin{equation*}
        C(\psi):=\sup_{x\in\R^d}\sum_{\lambda_j^d\in \Lambda_j^d \times 0^d} |\psi_{\lambda_j^d}(x)| \leq (4M_{\psi}+1)^D \|\psi\|_{L^{\infty}}.
    \end{equation*}
    When $p=\infty$, one sees that
    \begin{equation*}
        \bigg|\sum_{\lambda_j^d\in\Lambda_j^d \times 0^d} \frac{d_{\lambda_j^d}}{\kappa(\lambda_j^d)} \psi_{\lambda_j^d}(x)\bigg|
        \leq \sum_{\lambda_j^d\in\Lambda_j^d \times 0^d} \bigg| {\frac{d_{\lambda_j^d}}{\kappa(\lambda_j^d)}}\bigg| |\psi_{\lambda_j^d}(x)|
        \leq \bigg\| {\frac{d_{\lambda_j^d}}{\kappa(\lambda_j^d)}}\bigg\|_{\ell^{\infty}} \sum_{\lambda_j^d\in\Lambda_j^d \times 0^d} |\psi_{\lambda_j^d}(x)|,
    \end{equation*}
    which is smaller than $ C(\psi)~\widetilde{\varepsilon}^{\kappa,\infty}_{j}$. This induces that 
    \begin{equation}\label{eq:lp_norm_g_p1}
        \bigg\|\sum_{\lambda_j^d\in\Lambda_j^d \times 0^d} \frac{d_{\lambda_j^d}}{\kappa(\lambda_j^d)} \psi_{\lambda_j^d}\bigg\|_{L^{\infty}}
        \leq C(\psi)~\widetilde{\varepsilon}^{\kappa,\infty}_{j}.
    \end{equation}
    When $p\in[1,\infty)$, let $q\in(1,\infty]$ be such that $1/p+1/q=1$.
    The H{\"o}lder inequality gives  
    \begin{align*}
        \bigg|\sum_{\lambda_j^d\in\Lambda_j^d \times 0^d} \frac{d_{\lambda_j^d}}{\kappa(\lambda_j^d)} \psi_{\lambda_j^d}(x)\bigg|
        &\leq \bigg(\sum_{\lambda_j^d\in\Lambda_j^d \times 0^d} \bigg| {\frac{d_{\lambda_j^d}}{\kappa(\lambda_j^d)}}\bigg| ^p |\psi_{\lambda_j^d}(x)| \bigg)^{1/p} \bigg(\sum_{\lambda_j^d\in\Lambda_j^d \times 0^d} |\psi_{\lambda_j^d}(x)| \bigg)^{1/q} \\
        &\leq C(\psi)^{1/q} \bigg(\sum_{\lambda_j^d\in\Lambda_j^d \times 0^d} \bigg| {\frac{d_{\lambda_j^d}}{\kappa(\lambda_j^d)}}\bigg| ^p |\psi_{\lambda_j^d}(x)| \bigg)^{1/p}.
    \end{align*}
    One deduces that
    \begin{align}
        \bigg\|\sum_{\lambda_j^d\in\Lambda_j^d \times 0^d} \frac{d_{\lambda_j^d}}{\kappa(\lambda_j^d)} \psi_{\lambda_j^d}\bigg\|_{L^p}
        &\leq C(\psi)^{1/q} \bigg(\sum_{\lambda_j^d\in\Lambda_j^d \times 0^d} \bigg| {\frac{d_{\lambda_j^d}}{\kappa(\lambda_j^d)}}\bigg| ^{p} \int_{\R^d} |\psi_{\lambda_j^d}(x)| dx \bigg)^{1/p} \notag\\
        &\leq C(\psi)^{1/q} \|\psi\|_{L^1}^{1/p} \,\widetilde{\varepsilon}^{\kappa,p}_{j}. \label{eq:lp_norm_g_p2}
    \end{align}
    Since $|\widetilde{\varepsilon}_{j}^{\kappa,p}| \leq \big\|\widetilde{\varepsilon}_{j}^{\kappa,p}\big\|_{\ell^q(\N)}$, by combining \eqref{eq:lp_norm_g_p1}, \eqref{eq:lp_norm_g_p2} and \eqref{eq:lp_norm_g_upper_bound}, it gives that 
    \begin{equation*}
        \|g\|_{L^p}
        \leq K \sum_{j\geq 0} 2^{-js} C \widetilde{\varepsilon}^{\kappa,p}_{j}
        \leq K \sum_{j\geq 0} 2^{-js} C \big\| \widetilde{\varepsilon}_{j}^{\kappa,p} \big\|_{\ell^q(\N)}
        \leq C' \big\| \widetilde{\varepsilon}_{j}^{\kappa,p} \big\|_{\ell^q(\N)} < \infty
    \end{equation*} 
    and $g\in L^p([0,1]^d)$.
    \smallskip

\noindent We show that the seminorm \eqref{eq:snorm_inh_besov} of $g$ is finite. Applying \eqref{eq:wlt_dec_on_01} to $g\in L^p([0,1]^d)$,  
    \begin{equation}
        g = c _{0,0^d,0^d} \psi_{0,0^d,0^d} + \sum_{\lambda^d \in \Lambda^d\times L^d} c_{\lambda^d} \psi_{\lambda^d},
    \end{equation}
    where for $(J,K,l)\in \Lambda^d\times L^d$
    \begin{align*}
        c _{0,0^d,0^d} 
        &= \int_{\R^D} g(x) \psi_{(0,0^d,0^d)}(x)dx \\
        &= \int_{\R^D} \sum_{j\geq 0} \sum_{k\in \{0,1,\ldots,2^j-1\}^d} d_{j,k,0^d} \psi_{(j,k,0^d)}(x) \psi_{(0,0^d,0^d)}(x)dx, \\
        \textnormal{and}\quad
        c_{J,K,l} 
        &= 2^{dJ} \int_{\R^D} g(x) \psi_{(J,K,l)}(x)dx \\
        &= 2^{dJ} \int_{\R^D} \sum_{j>J} \sum_{k\in \{0,1,\ldots,2^j-1\}^d} d_{j,k,0^d} \psi_{(j,k,0^d)}(x) \psi_{(J,K,l)}(x)dx.
    \end{align*} 
    The sum goes over $j>J$ since $\psi_{(J,K,l)}$ is orthogonal to all $\psi_{(j,k,0^d)}$ with $j\leq J$. Also, only wavelets coefficients indexed with dyadic cubes in $[0,1]^d$ are considered, hence $k\in \{0,1,\ldots,2^j-1\}^d$.
    
    One sees that for any fixed $j$ and $J$, there is a finite number of $k\in \{0,1,\ldots,2^j-1\}^d$, such that the support of $\psi_{(j,k,0^d)}$ is included in the support of $\psi_{(J,K,l)}$. 
    Since $\psi^0$ and $\psi^1$ are compactly supported, their support is included in an interval $[-M_{\psi},M_{\psi}]$. 
    Denote, for every $j,J,K$,
    \begin{equation*}
        S_{j,J,K} = \big\{k\in \{0,1,\ldots,2^j-1\}^d~:~\operatorname{supp}(\psi_{(j,k,0^d)})~\cap~\operatorname{supp}(\psi_{(J,K,l)}) \neq \emptyset \big\}.
    \end{equation*}
    This set contains the dyadic cubes $\lambda_j^d\in\Lambda_j^d\times 0^d$ such that the associated $d_{\lambda_j^d}$ appears in the computation of $c_{\lambda_J^d}$ for $\lambda_J^d\in\Lambda_J^d \times L^d$ above. Observe that for every $j,J\in\N$, 
    \begin{equation}\label{eq:card_S_jJK}
        \#S_{j,J,K} \leq (2 M_{\psi}+1)^d~2^{d(j-J)}.
    \end{equation}

    To establish that $g\in B^{\kappa}_{p,q}([0,1]^d)$, we use \eqref{eq:snorm_inh_besov}. First, one has
    \begin{equation*}
        \bigg| \frac{c_{J,K,l}}{\kappa(\lambda^d_{J,K})} \bigg| 
        \leq 2^{dJ} \sum_{j>J} \sum_{k\in \{0,1,\ldots,2^j-1\}^d} \bigg|\frac{d_{j,k,0^d}}{\kappa(\lambda^d_{j,k})}\bigg| \bigg|\frac{\kappa(\lambda^d_{j,k})}{\kappa(\lambda^d_{J,K})}\bigg| \int_{\R^D} \big|\psi_{(j,k,0^d)}(x) \psi_{(J,K,l)}(x)\big| dx.
    \end{equation*}
    Hence, using the definition of $S_{j,J,K}$, for some constant $C_\psi$ depending on $\psi$ only, 
    \begin{align*}
        \bigg| \frac{c_{J,K,l}}{\kappa(\lambda^d_{J,K})} \bigg| 
        &\leq 2^{dJ} \sum_{j>J} \sum_{k\in S_{j,J,K}} \bigg|\frac{d_{j,k,0^d}}{\kappa(\lambda^d_{j,k})}\bigg| \bigg|\frac{\kappa(\lambda^d_{j,k})}{\kappa(\lambda^d_{J,K})}\bigg| \|\psi\|_{L^\infty}^d \int_{\R^D} \big|\psi_{(j,k,0^d)}(x)\big| dx \\
        &\leq C_{\psi} 2^{dJ} \sum_{j>J} 2^{-jd} \sum_{k\in S_{j,J,K}} \bigg|\frac{d_{j,k,0^d}}{\kappa(\lambda^d_{j,k})}\bigg| \bigg|\frac{\kappa(\lambda^d_{j,k})}{\kappa(\lambda^d_{J,K})}\bigg|.
    \end{align*}
    The H{\"o}lder inequality gives, for $\frac{1}{p}+\frac{1}{r}=1$,
    \begin{equation*}
        \bigg| \frac{c_{J,K,l}}{\kappa(\lambda^d_{J,K})} \bigg| 
        \leq C_{\psi} 2^{dJ} \sum_{j>J} 2^{-jd} \bigg (\sum_{k\in S_{j,J,K}} \bigg|\frac{d_{j,k,0^d}}{\kappa(\lambda^d_{j,k})}\bigg| ^p\bigg)^{1/p} \bigg(\sum_{k\in S_{j,J,K}} \bigg| {\frac{\kappa(\lambda^d_{j,k})}{\kappa(\lambda^d_{J,K})}}\bigg| ^r\bigg)^{1/r}.
    \end{equation*}
    Observe that for all $k\in S_{j,J,K}$, $\lambda^d_{j,k} \subset (4 M_{\psi}+1)\,\lambda^d_{J,K}$. Thus by the doubling inequality \eqref{eq:doub_meas_ball} for $\kappa$ with the constant $C_{\kappa}>0$, one has, for all $k\in S_{j,J,K}$, 
    \begin{equation}\label{eq:ratio_kappa}
        \frac{\kappa(\lambda^d_{j,k})}{\kappa (\lambda^d_{J,K} )} 
        = \frac{\kappa(\lambda^d_{j,k})}{\kappa ((4 M_{\psi}+1)\,\lambda^d_{J,K} )} \frac{\kappa ((4 M_{\psi}+1)\,\lambda^d_{J,K} )}{\kappa (\lambda^d_{J,K} )} 
        \leq C_{\kappa}^{\partent{\log_2(4 M_{\psi}+1)}+1}.
    \end{equation}
    Applying \eqref{eq:card_S_jJK} and \eqref{eq:ratio_kappa} to the second sum above, one gets
    \begin{align*}
        \bigg| \frac{c_{J,K,l}}{\kappa(\lambda^d_{J,K})} \bigg| 
        &\leq C_{\psi,\kappa} 2^{dJ} \sum_{j>J} 2^{-jd} \bigg(\sum_{k\in S_{j,J,K}} \bigg|{\frac{d_{j,k,0^d}}{\kappa(\lambda^d_{j,k})}}\bigg|^p\bigg)^{1/p} 2^{\frac{d}{r}(j-J)} \\
        &= C_{\psi,\kappa} \sum_{j>J} 2^{d(j-J)(-1/p)} \bigg(\sum_{k\in S_{j,J,K}} \bigg|{\frac{d_{j,k,0^d}}{\kappa(\lambda^d_{j,k})}}\bigg|^p\bigg)^{1/p}. 
    \end{align*}
    It is immediate that $\varepsilon^{\kappa,p}_{J} =\Big\| \Big( \Big| \frac{c_{J,K,l}}{\kappa(\lambda^d_{J,K})}\Big|\Big)_{(K,l)\in \{0,\ldots,2^J-1\}^d \times L^d}\Big\|_{\ell^p}$ satisfies
    \begin{align*}
        \varepsilon^{\kappa,p}_{J}  
        &\leq \bigg\|{\bigg( C \sum_{j>J} 2^{-\frac{d}{p}(j-J)} \bigg(\sum_{k\in S_{j,J,K}} \bigg|{\frac{d_{j,k,0^d}}{\kappa(\lambda^d_{j,k})}}\bigg|^p\bigg)^{1/p} \bigg)_{(K,l)\in \{0,\ldots,2^J-1\}^d \times L^d}}\bigg\|_{\ell^p}
    \end{align*}
    In the last norm, the elements do not depend on $l\in L^d$, hence 
    \begin{equation}\label{eq:up_bound_coef_g}
        \varepsilon^{\kappa,p}_{J}
        \leq C_{\psi,\kappa,d} \sum_{j>J} 2^{-\frac{d}{p}(j-J)} \bigg\| \bigg(\sum_{k\in S_{j,J,K}} \bigg|{\frac{d_{j,k,0^d}}{\kappa(\lambda^d_{j,k})}}\bigg|^p\bigg)^{1/p}_{K\in \{0,\ldots,2^J-1\}^d}\bigg\|_{\ell^p}.
    \end{equation}
    Recalling that $\psi^0$ and $\psi^1$ have their support in $[-M_{\psi},M_{\psi}]$, for a fixed $J\in\N$ and $l\in L^d$, a point $x\in[0,1]^d$ is included in the support of (at most) $(2 M_{\psi}+1)^d$ wavelets $\psi_{(J,K,l)}$. Thus, for $j>J$, a scaling function $\psi_{(j,k,0^d)}$ has a support intersecting that of $\psi_{(J,K,l)}$ for at most $(4 M_{\psi}+1)^d$ multi-integer $K$. One deduces that, for $j,J$ fixed with $j>J$, a $k\in\{0,\ldots,2^j-1\}^d$ is in at most $(4 M_{\psi}+1)^d$ sets $S_{j,J,K}$ for $K\in\{0,\ldots,2^J-1\}^d$. One sees that
    \begin{equation*}
        \sum_{K\in \{0,\ldots,2^J-1\}^d} \sum_{k\in S_{j,J,K}} \bigg|{\frac{d_{j,k,0^d}}{\kappa(\lambda^d_{j,k})}}\bigg|^p \leq C_{\psi,d} \sum_{k\in \{0,\ldots,2^j-1\}^d} \bigg|{\frac{d_{j,k,0^d}}{\kappa(\lambda^d_{j,k})}}\bigg|^p = C_{\psi,d} (\widetilde{\varepsilon}_{j}^{\kappa,p})^p.
    \end{equation*}
    So $\varepsilon^{\kappa,p}_{J}
        \leq C_{\psi,\kappa,d} \sum_{j>J} 2^{-\frac{d}{p}(j-J)} \widetilde{\varepsilon}_{j}^{\kappa,p} $, and by the H\"older inequality, for a constant $C$ that changes along the computations,
    \begin{equation*}
        |\varepsilon^{\kappa,p}_{J}|
        \leq C \Big(\sum_{j>J} 2^{-\frac{d}{p}(j-J)} |\widetilde{\varepsilon}_{j}^{\kappa,p}|^q\Big)^{1/q} \Big(\sum_{j>J} 2^{-\frac{d}{p}(j-J)}\Big)^{1/(1-1/q)} 
        \leq C \Big(\sum_{j>J} 2^{-\frac{d}{p}(j-J)} |\widetilde{\varepsilon}_{j}^{\kappa,p}|^q\Big)^{1/q}.
    \end{equation*}
    Hence, for $C$ depending on $\psi,\kappa,d$ and $q$,
    \begin{align*}
        \sum_{J\in\N} |\varepsilon^{\kappa,p}_{J}|^q
        &\leq C \sum_{J\in\N} \sum_{j>J} 2^{-\frac{d}{p}(j-J)} |\widetilde{\varepsilon}_{j}^{\kappa,p} |^q 
        \leq C \sum_{j\in\N} |\widetilde{\varepsilon}_{j}^{\kappa,p} |^q \sum_{J<j} 2^{-\frac{d}{p}(j-J)}
        \leq C \sum_{j\in\N} |\widetilde{\varepsilon}_{j}^{\kappa,p} |^q.
    \end{align*}
    One concludes by \eqref{eq:bound_coef_scal_fct} that $g\in B^{\kappa}_{p,q}([0,1]^d)$.
\end{proof} 

\subsection{Proof of Theorem \ref{theo:incl_trace}}

The product capacity $\xi = \mu \times \nu$ is fixed. Recall that $\Gamma^p_{\nu,r} = h^{r}_{\nu} +\frac{\dim(\nu_{r})}{p}$, with $\Gamma^{\infty}_{\nu,r} = h^{r}_{\nu}$.

\begin{prop}\label{prop:belong_trace_inter}
    Let $p\in [1,+\infty]$, $\Gamma<\Gamma^p_{\nu,r}$, $f\in \widetilde{B}^{\xi}_{p,q}([0,1]^{D})$ and $r \in \R$. Then for $\nu_{r}$-almost all $a\in[0,1]^{d'}$, $f_a \in B^{\mu^{(+\Gamma)}}_{p,1}([0,1]^d) $.
\end{prop}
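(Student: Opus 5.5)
We use the decomposition $f_a = G_a + F_a$ of \eqref{eq:dec_trace_G_a}--\eqref{eq:dec_trace_F_a}. By Proposition \ref{prop:G_a_belonging_besov}, applied with the doubling capacity $\kappa=\mu^{(+\Gamma)}$, it suffices to show that for $\nu_r$-almost every $a$,
\begin{equation*}
\sum_{j\ge0}\Big\|\Big(\frac{d_{\lambda^d_j}(a)}{\mu(\lambda^d_j)2^{-j\Gamma}}\Big)_{\lambda^d_j\in\Lambda^d_j\times\{0,1\}^d}\Big\|_{\ell^p}<\infty,
\end{equation*}
where $d_{\lambda_j^d}(a)$ denotes $d^G$ or $d^F$. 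Both coefficient families have the same structure: a sum over $\lambda^{d'}\in\Lambda^{d'}_j$ with $\psi_{\lambda^{d'}}(a)\neq0$, and there are at most $C_\psi$ such cubes, all satisfying $a\in K\lambda^{d'}$ for a fixed $K$ depending on the support of $\psi$. Hence
\[
|d_{\lambda_j^d}(a)|\le C\sum_{\lambda^{d'}:\,a\in K\lambda^{d'}}|c_{\lambda^d,\lambda^{d'}}|.
\]

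Fix $\Gamma<\Gamma^p_{\nu,r}$ and choose $\varepsilon>0$ and $m$ small/large enough that $\Gamma+\varepsilon+3/m<\Gamma^p_{\nu,r}$ (with $3/m$ multiplied by $\max(1,1/p)$ as needed). Since $f\in B^{\xi^{(-\varepsilon)}}_{p,q}$, we have $c_\lambda=\xi(\lambda)2^{j\varepsilon}e_\lambda$ with $\|(e_\lambda)_{\lambda\in\Lambda^D_j\times L^D}\|_{\ell^p}\le C$ uniformly in $j$ (as $\ell^q\subset\ell^\infty$).

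\textbf{Case $p=\infty$.} Take $a\in A^{\nu_r}_{n,m}$ from Proposition \ref{prop:bound_meas_dyadics}. Then $\nu(\lambda^{d'})\le2^{-j(h^r_\nu-1/m)}$ for every relevant $\lambda^{d'}$, so $|d_{\lambda_j^d}(a)|/(\mu(\lambda_j^d)2^{-j\Gamma})\le C2^{-j(h^r_\nu-1/m-\varepsilon-\Gamma)}$, which is summable in $j$. Since $\nu_r\big(\bigcup_nA^{\nu_r}_{n,m}\big)=1$, the conclusion holds $\nu_r$-a.e.

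\textbf{Case $p<\infty$.} This is the main point: $\dim(\nu_r)/p$ has to be gained by averaging in $a$. Integrate the $p$-th power against $\nu_r$ restricted to $A:=A^{\nu_r}_{n,m}$:
\[
\int_A\sum_{\lambda^d_j}\Big|\frac{d_{\lambda^d_j}(a)}{\mu(\lambda^d_j)2^{-j\Gamma}}\Big|^p d\nu_r(a)\le C\,2^{jp\Gamma}\sum_{\lambda^d_j}\sum_{\lambda^{d'}:\,K\lambda^{d'}\cap A\ne\emptyset}\frac{|c_{\lambda^d,\lambda^{d'}}|^p}{\mu(\lambda^d)^p}\,\nu_r(K\lambda^{d'}).
\]
On cubes meeting $A$, Proposition \ref{prop:bound_meas_dyadics} together with doubling gives $\nu_r(K\lambda^{d'})\le C2^{-j(\dim\nu_r-1/m)}$ and $\nu(\lambda^{d'})^p\le2^{-jp(h^r_\nu-1/m)}$. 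Writing $|c|^p/\mu^p=\nu^p2^{jp\varepsilon}|e|^p$, the right-hand side is at most
\[
C\,2^{-jp(h^r_\nu+\dim(\nu_r)/p-\Gamma-\varepsilon-2/m)}\sum|e_\lambda|^p\le C\,2^{-j\eta}
\]
for some $\eta>0$. The same bound holds if $\lambda^d$ ranges over $\{0,1\}^d$ and $\lambda^{d'}$ over $\{0,1\}^{d'}$, since those coefficients are still controlled by $\xi$-normalized $\ell^p$ norms (or one absorbs the $0$-indices via the $L^p$ part and the $j=0$ terms).

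By Markov/Borel--Cantelli, for $\nu_r$-a.e. $a\in A$ the $j$-th $\ell^p$ norm is $O(2^{-j\eta/(2p)})$, which is summable in $j$. Hence the $\ell^1$ condition holds, and letting $n\to\infty$ gives the full $\nu_r$-measure statement. The main technical obstacle is the careful bookkeeping of the scaling-function indices $l'=0^{d'}$ in $d^F$. One must check that the coefficients $c_{\lambda^d,\lambda^{d'}}$ with $l'=0$ but $l\ne0$ are genuine wavelet coefficients of $f$, which they are, since $(l,l')\in L^D$.
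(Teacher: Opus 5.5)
Your proposal is correct and follows essentially the same route as the paper. Both arguments split $f_a=G_a+F_a$ and use Proposition \ref{prop:G_a_belonging_besov} to absorb the scaling-function part. For $p=\infty$, both use the pointwise bound on $A^{\nu_r}_{n,m}$ from Proposition \ref{prop:bound_meas_dyadics}. For $p<\infty$, both integrate $S^p_j(a,\Gamma)$ against $\nu_r$ restricted to $A^{\nu_r}_{n,m}$ to gain the factor $2^{-j\dim(\nu_r)}$, then conclude with Markov and Borel--Cantelli. Your deviations are only cosmetic: a separate $\varepsilon$ instead of $1/m$, a geometric Markov threshold instead of the paper's $j^2$ factor, and dropping the superfluous Borel--Cantelli step when $p=\infty$.
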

\begin{proof} 
    Since $f$ is continuous, each trace $f_a$ \eqref{eq:def_trace} is also continuous and $f_a\in L^{p}([0,1]^d)$.
    
    We start with the case $ p<+\infty$. Consider $\Gamma<\Gamma^p_{\nu,r}$, $f\in \widetilde{B}^{\xi}_{p,q}([0,1]^{D})$ and $r \in \R$. To prove Proposition \ref{prop:belong_trace_inter}, one needs to show that $|f_a|_{\mu^{(+\Gamma)},p,1}<\infty$. Recalling \eqref{eq:snorm_inh_besov_mod}, and since $\mu$, $\nu$ and $f$ are fixed, we introduce a more convenient notation for the quantity $\varepsilon^{\mu^{(+\Gamma)},p}_{j}$ associated with $f_a$: we rewrite 
    \begin{equation}\label{eq:S_j_besov_func}
        S^{p}_{j}(a,\Gamma) := \sum_{\lambda^d\in \Lambda^{d}_j \times \{0,1\}^{d}} \bigg|\frac{d_{\lambda^d}(a)}{\mu(\lambda^d) 2^{-j\Gamma}}\bigg|^p \hspace{5mm} (=\varepsilon^{\mu^{(+\Gamma)},p}_{j}).
    \end{equation}
    In $S^{p}_{j}(a,\Gamma)$, the coefficients $d_{\lambda^d}(a)$ are either $d^G_{\lambda^d}(a)$ if $\lambda^d\in \Lambda^d\times 0^d$ or $d^F_{\lambda^d}(a)$ if $\lambda^d\in \Lambda^d\times L^d$. Let us show an upper bound for the $S^{p}_{j}(a,\Gamma)$.

    First, for $j\geq 1$, for the wavelet coefficient \eqref{eq:dec_trace_G_a} of $G_{a}$
    \begin{equation*}
        \sum_{\lambda^d\in \Lambda_j^d\times 0^d} \bigg|\frac{d^G_{\lambda^d}(a)}{\mu(\lambda^d) 2^{-j\Gamma}}\bigg|^p = \sum_{\lambda^d\in \Lambda_j^d\times 0^d} \bigg| {\frac{1}{\mu(\lambda^d) 2^{-j\Gamma}}}\bigg| ^p \cdot \Big| {\sum_{\lambda^{d'}\in\Lambda^{d'}_j \times L^{d'}} c_{\lambda^d,\lambda^{d'}} \psi_{\lambda^{d'}}(a)}\Big| ^p.
    \end{equation*}
    By the compactness of the supports of $\psi^{0}$ and $\psi^{1}$, for each $a$, there is a finite number (bounded independently of $j$ or $a$) of non-zero terms $\psi_{\lambda^{d'}}(a)$ in the sum on $\Lambda^{d'}_j \times L^{d'}$. Hence, for some constant $C_{p,\psi}>0$ depending on $p$, $\psi^{0}$ and $\psi^{1}$,
    \begin{align*}
        \Big| {\sum_{\lambda^{d'}\in\Lambda^{d'}_j \times L^{d'}} c_{\lambda^d,\lambda^{d'}} \psi_{\lambda^{d'}}(a)}\Big| ^p 
        &\leq C_{p,\psi} \sum_{\lambda^{d'}\in\Lambda^{d'}_j \times L^{d'}} \big|c_{\lambda^d,\lambda^{d'}} \psi_{\lambda^{d'}}(a)\big|^p \\
        &\leq C_{p,\psi} \sum_{\lambda^{d'}\in\Lambda^{d'}_j \times \{0,1\}^{d'}} \big|c_{\lambda^d,\lambda^{d'}} \psi_{\lambda^{d'}}(a)\big|^p.
    \end{align*}
    Using that $\xi(\lambda^d \times \lambda^{d'})=\mu(\lambda^d) \cdot \nu(\lambda^{d'})$, one deduces that
    \begin{align*}
        \sum_{\lambda^d\in \Lambda_j^d\times 0^d} \bigg|\frac{d^G_{\lambda^d}(a)}{\mu(\lambda^d) 2^{-j\Gamma}}\bigg|^p 
        &\leq C_{p,\psi} \sum_{\lambda^d\in \Lambda_j^d\times 0^d} \bigg|\frac{1}{\mu(\lambda^d) 2^{-j\Gamma}}\bigg|^p \sum_{\lambda^{d'}\in\Lambda^{d'}_j \times \{0,1\}^{d'}} \big|c_{\lambda^d,\lambda^{d'}} \psi_{\lambda^{d'}}(a)\big|^p\\
        &= C_{p,\psi} \sum_{\lambda^D \in \Lambda_j^D\times (0^d \times \{0,1\}^{d'})} \bigg|\frac{1}{\mu(\lambda^d) 2^{-j\Gamma}}\bigg|^p \big|c_{\lambda^D} \psi_{\lambda^{d'}}(a)\big|^p \\
        &= C_{p,\psi} \sum_{\lambda^D \in \Lambda_j^D\times (0^d \times \{0,1\}^{d'})} \bigg|\frac{c_{\lambda^D}}{\xi(\lambda^D)}\bigg|^p \bigg|\frac{\nu(\lambda^{d'})}{2^{-j\Gamma}}\bigg|^p \big|\psi_{\lambda^{d'}}(a)\big|^p. \numberthis \label{eq:maj_sum_coef_trace_1}
    \end{align*}
    When $j=0$, using that 
    \begin{equation*}
        \Big| {\sum_{\lambda^{d'}\in\Lambda^{d'}_0 \times \{0,1\}^{d'}} c_{\lambda^d,\lambda^{d'}} \psi_{\lambda^{d'}}(a)}\Big| ^p 
        \leq C_{p,\psi} \sum_{\lambda^{d'}\in\Lambda^{d'}_0 \times \{0,1\}^{d'}} \big|c_{\lambda^d,\lambda^{d'}} \psi_{\lambda^{d'}}(a)\big|^p,
    \end{equation*}
    the same computations give that 
    \begin{equation}\label{eq:maj_sum_coef_trace_2}
        \sum_{\lambda^d\in \Lambda_0^d\times 0^d} \bigg|\frac{d^G_{\lambda^d}(a)}{\mu(\lambda^d) 2^{-j\Gamma}}\bigg|^p 
        \leq C_{p,\psi} \sum_{\lambda^D \in \Lambda_0^D\times (0^d \times \{0,1\}^{d'})} \bigg|\frac{c_{\lambda^D}}{\xi(\lambda^D)}\bigg|^p \bigg|\frac{\nu(\lambda^{d'})}{2^{-j\Gamma}}\bigg|^p \big|\psi_{\lambda^{d'}}(a)\big|^p.
    \end{equation}
    The same computations hold for the wavelet coefficients \eqref{eq:dec_trace_F_a} of $F_a$ and, for every $j\in\N$,
    \begin{equation}\label{eq:maj_sum_coef_trace_3}
        \sum_{\lambda^d\in \Lambda_j^d\times L^d} \bigg|\frac{d^F_{\lambda^d}(a)}{\mu(\lambda^d) 2^{-j\Gamma}}\bigg|^p 
        \leq C_{p,\psi} \sum_{\lambda^D \in \Lambda_j^D\times (L^d \times \{0,1\}^{d'})} \bigg|\frac{c_{\lambda^D}}{\xi(\lambda^D)}\bigg|^p \bigg|\frac{\nu(\lambda^{d'})}{2^{-j\Gamma}}\bigg|^p \big|\psi_{\lambda^{d'}}(a)\big|^p.
    \end{equation}
    Noticing that $L^d \cup 0^d = \{0,1\}^d$, we obtain from \eqref{eq:maj_sum_coef_trace_1}, \eqref{eq:maj_sum_coef_trace_2} and \eqref{eq:maj_sum_coef_trace_3} that
    \begin{equation}\label{eq:maj_Sj}
        S^{p}_{j}(a,\Gamma) 
        \leq C_{p,\psi}\sum_{\lambda^D\in \Lambda^{D}_j \times \{0,1\}^{D}} \bigg|\frac{c_{\lambda^D}}{\xi(\lambda^D)}\bigg|^p \bigg|\frac{\nu(\lambda^{d'})}{2^{-j\Gamma}}\bigg|^p \big|\psi_{\lambda^{d'}}(a)\big|^p.
    \end{equation}
    
    Recall that as a Gibbs capacity, $\xi$ satisfies \eqref{eq:P1_ball} for some $s_1>0$.
    Let us choose $M_{\nu_{r}}$ large enough that for every $m\geq M_{\nu_{r}}$,
    \begin{equation}\label{eq:condition_m}
        m \geq \partent{ {1}/{s_1}}+1 ~~\mbox{and}~~ \Gamma < \Gamma^p_{\nu,r}- ({2p+2})/{pm}.
    \end{equation} 
    
    Let $n\in\N^*$, $m \geq M_{\nu_{r}}$ and $A^{\nu_r}_{n,m}$ be the set and $J_{n,m}$ the integer from Proposition \ref{prop:bound_meas_dyadics}.
    Using \eqref{eq:maj_Sj} and integrating $S^{p}_{j}(a,\Gamma)$ over $A^{\nu_r}_{n,m}$ yields
    \begin{align*}
        &\int_{A^{\nu_r}_{n,m}} S^{p}_{j}(a,\Gamma) d\nu_r(a) = \int_{[0,1]^{d'}} S^{p}_{j}(a,\Gamma) d\nu_{r\vert_{A^{\nu_r}_{n,m}}}(a) \\
        &\leq C_{p,\psi} \sum_{\lambda^D\in \Lambda^{D}_j \times \{0,1\}^{D}} \bigg|\frac{c_{\lambda^D}}{\xi(\lambda^D)}\bigg|^p \bigg|\frac{\nu(\lambda^{d'})}{2^{-j\Gamma}}\bigg|^p \int_{[0,1]^{d'}} \big|\psi_{\lambda^{d'}}(a)\big|^p d\nu_{r\vert_{A^{\nu_r}_{n,m}}}(a) \\
        &\leq C_{p,\psi} \sum_{\lambda^D\in \Lambda^{D}_j \times \{0,1\}^{D}} \bigg|\frac{c_{\lambda^D}}{\xi(\lambda^D)}\bigg|^p \bigg|\frac{\nu(\lambda^{d'})}{2^{-j\Gamma}}\bigg|^p \int_{[0,1]^{d'}} \|\psi\|_{\infty}^{d'p} \mathds{1}_{\widetilde{K}\lambda^{d'}}(a) d\nu_{r\vert_{A^{\nu_r}_{n,m}}}(a)
    \end{align*}
    with $\|\psi\|_{\infty} = \max\{\|\psi^{0}\|_{\infty}, \|\psi^{1}\|_{\infty} \}$.
    Hence there is $C>0$, depending also on $d$, such that
    \begin{align*}
        &\int_{A^{\nu_r}_{n,m}} S^{p}_{j}(a,\Gamma) d\nu_r(a) \\
        &\leq C \sum_{\lambda^D\in \Lambda^{D}_j \times \{0,1\}^{D}} \bigg|\frac{c_{\lambda^D}}{\xi(\lambda^D)}\bigg|^p \bigg|\frac{\nu(\lambda^{d'})}{2^{-j\Gamma}}\bigg|^p \nu_r \big( A^{\nu_r}_{n,m} \cap \widetilde{K}\lambda^{d'} \big) \\
        &\leq C \sum_{\lambda^D\in \Lambda^{D}_j \times \{0,1\}^{D}} \bigg|\frac{c_{\lambda^D}}{\xi(\lambda^D)}\bigg|^p \bigg|\frac{\nu(\lambda^{d'})}{2^{-j\Gamma}}\bigg|^p \nu_r \big( \widetilde{K}\lambda^{d'} \big) \mathds{1}_{A^{\nu_r}_{n,m} \cap \widetilde{K}\lambda^{d'} \neq \emptyset} \\
        &\leq C \sum_{\substack{\lambda^{d'} \in \Lambda^{d'}_j \times \{0,1\}^{d'} \\ A^{\nu_r}_{n,m} \cap \widetilde{K}\lambda^{d'} \neq \emptyset}} \bigg|\frac{\nu(\lambda^{d'})}{2^{-j\Gamma}}\bigg|^p \nu_r \big( \widetilde{K}\lambda^{d'} \big) \sum_{\lambda^{d} \in \Lambda^{d}_j \times \{0,1\}^{d}} \bigg|\frac{c_{\lambda^d,\lambda^{d'}}}{\xi(\lambda^d \times \lambda^{d'})}\bigg|^p \numberthis\label{eq:maj_int_S_j_over_Anm_sum}
    \end{align*}
    as $A^{\nu_r}_{n,m} \cap \widetilde{K}\lambda^{d'} \subset \widetilde{K}\lambda^{d'}$.
    Applying the doubling property \eqref{eq:doub_meas_ball} of $\nu_r$ to pass from $\nu_r(\widetilde{K}\lambda^{d'})$ to $\nu_r (\lambda^{d'} )$, the last inequality gives that, for $C>0$ depending on $\nu$ and $r$, that
    \begin{equation*}
        \int_{A^{\nu_r}_{n,m}} S^{p}_{j}(a,\Gamma)\ d\nu_{r}(a)
        \leq C \sum_{\substack{\lambda^{d'}\in \Lambda^{d'}_j \times \{0,1\}^{d'} \\ \widetilde{K}\lambda^{d'} \cap \ A^{\nu_r}_{n,m} \neq \emptyset}} \frac{\nu(\lambda^{d'})^{p} \ \nu_{r}(\lambda^{d'})}{2^{-jp\Gamma}} \sum_{\lambda^{d}\in \Lambda^{d}_j \times \{0,1\}^{d}} \bigg|\frac{c_{\lambda^d,\lambda^{d'}}}{\xi(\lambda^d \times \lambda^{d'})}\bigg|^p.
    \end{equation*}

    Recalling that $\Gamma^p_{\nu,r} = h^{r}_{\nu} +\frac{\dim(\nu_{r})}{p}$, \eqref{eq:bound_meas_alm_all_dyad} and \eqref{eq:bound_auxmeas_alm_all_dyad} give
    \begin{align*}
        &\int_{A^{\nu_r}_{n,m}} S^{p}_{j}(a,\Gamma)\ d\nu_{r}(a) \\
        &\leq C \sum_{\substack{\lambda^{d'}\in \Lambda^{d'}_j \times \{0,1\}^{d'} \\ \widetilde{K}\lambda^{d'} \cap \ A^{\nu_r}_{n,m} \neq \emptyset}} 2^{-j(p\Gamma_{\nu,r}-(p+1)/m-p\Gamma)} \sum_{\lambda^{d}\in \Lambda^{d}_j \times \{0,1\}^{d}} \bigg|\frac{c_{\lambda^d,\lambda^{d'}}}{\xi(\lambda^d \times \lambda^{d'})}\bigg|^p\\
        &\leq C\ 2^{-j(p\Gamma_{\nu,r}-(2p+1)/m-p\Gamma)} \sum_{\substack{\lambda^{d'}\in \Lambda^{d'}_j \times \{0,1\}^{d'} \\ \widetilde{K}\lambda^{d'} \cap \ A^{\nu_r}_{n,m} \neq \emptyset}} \sum_{\lambda^{d}\in \Lambda^{d}_j \times \{0,1\}^{d}} \bigg|\frac{c_{\lambda^d,\lambda^{d'}}}{\xi^{(-1/m)}(\lambda^d\times\lambda^{d'})}\bigg|^p.
    \end{align*} 
    By \eqref{eq:wlt_dec_on_01}, observe that:
    \begin{itemize}
        \item for $j\geq 1$, $f$ has no wavelet coefficient associated with the index $l^D=0^D$, hence the sum of $\lambda^D$ over $\Lambda^D_j \times \{0,1\}^D$ is the same as the sum over $\Lambda^D_j \times L^D$ and, by \eqref{eq:snorm_inh_besov},
        \begin{equation*}
            \sum_{\lambda^{d'}\in \Lambda^{d'}_j \times \{0,1\}^{d'}} \sum_{\lambda^{d}\in \Lambda^{d}_j \times \{0,1\}^{d}} \bigg|\frac{c_{\lambda^d,\lambda^{d'}}}{\xi^{(-1/m)}(\lambda^d \times \lambda^{d'})}\bigg|^p 
            = \sum_{\lambda^{D} \in \Lambda^{D}_j \times L^D} \bigg|\frac{c_{\lambda^{D}}}{\xi^{(-1/m)}(\lambda^{D})}\bigg|^p,
        \end{equation*}
        \item for $j=0$, since $ |c_{(0,0^D,0^D)} |<\infty$, 
        \begin{align*}
            \sum_{\lambda^{D} \in \Lambda^{D}_0 \times \{0,1\}^D} \bigg|\frac{c_{\lambda^{D}}}{\xi^{(-1/m)}(\lambda^{D})}\bigg|^p 
            &= \frac{1}{\xi^{(-1/m)}([0,1]^{D})} \bigg(\big|c_{(0,0^D,0^D)}\big|^p + \hspace{-10pt}\sum_{\lambda^{D} \in \Lambda^{D}_0 \times L^D} \big|c_{\lambda^{D}}\big|^p \bigg)\\
            &\leq C \big(\big|c_{(0,0^D,0^D)}\big|^p + \big(\varepsilon^{\xi^{(-1/m)}}_{0}\big)^p\big).
        \end{align*}
    \end{itemize}
    
    The choice \eqref{eq:condition_m} for $m$ ensures that $ {1}/{m}<s_1$ and $f\in \widetilde{B}^{\xi^{(-1/m)}}_{p,q}([0,1]^D)$. This implies that $\big(\varepsilon^{\xi^{(-1/m)}}_{j}\big)_{j\in\N}\in l^q(\N)$, and in particular that this sequence is bounded. Hence 
    \begin{equation}\label{eq:smaller_in_mean_of_Sj}
        \int_{A^{\nu_r}_{n,m}} S^{p}_{j}(a,\Gamma)~d\nu_{r}(a) \leq C~2^{-j(p\Gamma_{\nu,r}-(2p+1)/m-p\Gamma)}.
    \end{equation}
    Let 
    \begin{equation}\label{eq:A_p_j}
        A^{\nu_{r},p}_{n,m,j} := \big\{ a \in A^{\nu_{r}}_{n,m}\ : \ S^{p}_{j}(a,\Gamma) > C~j^2~2^{-j(p\Gamma_{\nu,r}-(2p+1)/m-p\Gamma)}\big\}.
    \end{equation}
    Markov's inequality using \eqref{eq:smaller_in_mean_of_Sj} gives that $\nu_{r}(A^{\nu_{r},p}_{n,m,j}) \leq \frac{2^{-j(p\Gamma_{\nu,r}-(2p+1)/m-p\Gamma)}}{j^2~2^{-j(p\Gamma_{\nu,r}-(2p+1)/m-p\Gamma)}} = j^{-2}$.
    So $\sum_{j\in\N} \nu_{r}(A^{\nu_{r},p}_{n,m,j})$ converges and the Borel-Cantelli lemma yields that 
    \begin{equation}\label{eq:limsup_A_j_zero}
        \nu_{r}(\limsup_{j\to\infty} A^{\nu_{r},p}_{n,m,j}) = 0.
    \end{equation}
    So, for every $a \in \widetilde{A}^{\nu_{r},p}_{n,m} := A^{\nu_r}_{n,m} \backslash \limsup_{j\to\infty} A^{\nu_{r},p}_{n,m,j}$, there is $J'_{n,m}\in \N$ such that for $j \geq J'_{n,m}$
    \begin{align*}
        S^{p}_{j}(a,\Gamma) 
        &\leq C j^{2} 2^{-j(p\Gamma_{\nu,r}-(2p+1)/m-p\Gamma)} 
        = C 2^{-j \big( p\Gamma_{\nu,r}-\frac{(2p+1)}{m}-2\frac{\log_2(j)}{j}-p\Gamma \big)} \\
        &\leq C 2^{-j \big( p\Gamma_{\nu,r}-\frac{(2p+2)}{m}-p\Gamma \big)},
    \end{align*}
    that last inequality being true for large integers $j$. Remark that for all $j \leq J'_{n,m}$, $S^{p}_{j}(a,\Gamma)$ is bounded. By our choice \eqref{eq:condition_m} for $m$, one has $\Gamma < \Gamma_{\nu,r} - \frac{2p+2}{pm}$, implying that the sum $\sum_{j\geq 1} S^{p}_{j}(a,\Gamma)$ converges and $f_a \in B^{\mu^{(+\Gamma)}}_{p,1}([0,1]^d)$ by the characterisation \eqref{eq:snorm_inh_besov_mod}.
    
    For every $n\in\N^*$, one has $\nu_r ( \widetilde{A}^{\nu_{r},p}_{n,m} )=1-\frac{1}{n}$ by \eqref{eq:limsup_A_j_zero}. So $\nu_r ( \bigcup_{n\in\N^*}\widetilde{A}^{\nu_{r},p}_{n,m} )=1$ and $\bigcup_{n\in\N^*}\widetilde{A}^{\nu_{r},p}_{n,m}$ has full $\nu_r$-measure. So, for every $a\in\bigcup_{n\in\N^*}\widetilde{A}^{\nu_{r},p}_{n,m}$, $f_a \in B^{\mu^{(+\Gamma)}}_{p,1}([0,1]^d)$, hence the conclusion.
    
    Let us now explain how to adapt the proof to the case $p=\infty$. Consider $\Gamma<\Gamma^{\infty}_{\nu,r} = h^{r}_{\nu}$, $f\in \widetilde{B}^{\xi}_{p,q}([0,1]^{D})$ and $r \in \R$.
    The inequality \eqref{eq:maj_Sj} with the infinity norm gives
    $S^{\infty}_{j}(a,\Gamma) := \sup_{\lambda^d\in \Lambda^{d}_j \times \{0,1\}^{d}} \big|\frac{d_{\lambda^d}(a)}{\mu(\lambda^d) 2^{-j\Gamma}}\big|$ is bounded above by $C_{\psi}\sup_{\substack{\lambda^{D}\in \Lambda^{D}_j \times \{0,1\}^{D} \\ \widetilde{K}\lambda^{d'} \cap \ A^{\nu_r}_{n,m} \neq \emptyset}} \big|\frac{c_{\lambda^D}}{\xi(\lambda^D)}\big| \big|\frac{\nu(\lambda^{d'})}{2^{-j\Gamma}}\big|$.
    By \eqref{eq:bound_meas_alm_all_dyad}, $S^{\infty}_{j}(a,\Gamma) \leq C_{\psi}~2^{-j(h_{\nu}^{r}-2/m-\Gamma)}~\sup_{\lambda^D\in \Lambda^{D}_j \times \{0,1\}^{D}} \big|\frac{c_{\lambda^D}}{\xi^{(-1/m)}(\lambda^D)}\big|$ for every $a\in A^{\nu_r}_{n,m}$.
    Then \eqref{eq:smaller_in_mean_of_Sj} is replaced by $S^{\infty}_{j}(a,\Gamma) \leq C~2^{-j(h_{\nu}^{r}-2/m-\Gamma)}$, which in turn changes \eqref{eq:A_p_j} into $A^{\nu_{r},\infty}_{n,m,j} := \big\{ a \in A^{\nu_{r}}_{n,m}~:~S^{p}_{j}(a,\Gamma) > C~j^{2}~2^{-j(h_{\nu}^{r}-2/m-\Gamma)}\big\}$.   
    Using $A^{\nu_{r},\infty}_{n,m,j}$ with $m$ large enough that $\Gamma < h_{\nu}^{r}-2/m$, the same Markov and Borel-Cantelli argument shows that for $\nu_{r}$-almost all $a\in[0,1]^{d'}$, $f_a \in B^{\mu^{(+\Gamma)}}_{\infty,1}([0,1]^d)$.
\end{proof}

We explain how Proposition \ref{prop:belong_trace_inter} implies Theorem \ref{theo:incl_trace}. Theorem \ref{theo:incl_trace} can be reworded as follows: for $f\in \widetilde{B}^{\xi}_{p,q}([0,1]^D)$ and $r\in\R$ fixed, there is a set $A_r(f)$ of full $\nu_r$-measure such that for every $a\in A_r(f)$,
\begin{equation*}
    \textnormal{for all } 0 \leq \Gamma < \Gamma^p_{\nu,r}, \quad f_a \in B^{\mu^{(+\Gamma)}}_{p,q}([0,1]^d).
\end{equation*}
    
\begin{proof}[Proof of Theorem \ref{theo:incl_trace}]
    Fix $p\in [1,+\infty]$ and $f\in\widetilde{B}^{\xi}_{p,q}([0,1]^{D})$. 
    For every $r\in\R$ and $n\in\N^*$, by Proposition \ref{prop:belong_trace_inter} for $\Gamma^{p}_{\nu,r} - 1/n$, call $A_{r,n} \subset [0,1]^{d'}$ a set of full $\nu_r$-measure such that for every $a\in A_{r,n}$, $f_a \in B^{\mu^{(+\Gamma^{p}_{\nu,r} - 1/n)}}_{p,1}([0,1]^d)$.
    Since the sets $B^{\mu^{(+\Gamma)}}_{p,1}([0,1]^d)$ are decreasing with $\Gamma$, taking the countable intersection $\bigcap_{n\in\N^*} A_{r,n}$ of full $\nu_r$-measure gives that for $\nu_{r}$-almost all $a\in[0,1]^{d'}$,
    \begin{equation*}
        f_a \in \bigcap_{0 \leq \Gamma < \Gamma^p_{\nu,r}} B^{\mu^{(+\Gamma)}}_{p,1}([0,1]^d) = \widetilde{B}^{\mu^{(+\Gamma^p_{\nu,r})}}_{p,1}([0,1]^d) \subset \widetilde{B}^{\mu^{(+\Gamma^p_{\nu,r})}}_{p,q}([0,1]^d) \subset \widetilde{B}^{\mu^{(+\Gamma^p_{\nu,r})}}_{p,\infty}([0,1]^d).
    \end{equation*}
\end{proof}

As a consequence of the previous proof, one gets the following proposition.
\begin{prop}\label{prop:inclusion_trace}
    For every $f\in \widetilde{B}^{\xi}_{p,q}([0,1]^{D})$, every $a\in[0,1]^{d'}$, $f_a \in \widetilde{B}^{\mu^{(+h^{\min}_{\nu})}}_{p,1}([0,1]^d)$.
\end{prop}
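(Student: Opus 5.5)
The plan is to rerun the argument of Proposition \ref{prop:belong_trace_inter} deterministically, without integrating against $\nu_r$ and without Markov or Borel--Cantelli. Uniform bounds on $\nu$ then control every point $a$ at once. Fix $f\in \widetilde{B}^{\xi}_{p,q}([0,1]^{D})$, $a\in[0,1]^{d'}$ and $\Gamma<h^{\min}_{\nu}$. By Remark \ref{rem:incl_besov_xi_eps} and the definition of $\widetilde{B}$, it suffices to show that $f_a\in B^{\mu^{(+\Gamma)}}_{p,1}([0,1]^d)$ for every such $\Gamma$. Continuity of $f$ gives $f_a\in L^p$, so only the seminorm has to be controlled. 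We use the characterisation \eqref{eq:snorm_inh_besov_mod}, which is justified by Proposition \ref{prop:G_a_belonging_besov} applied to $G_a$ with the doubling capacity $\kappa=\mu^{(+\Gamma)}$.

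The first step is to reuse the bound \eqref{eq:maj_Sj}, which holds for every $a$:
\[
S^p_j(a,\Gamma)\le C_{p,\psi}\sum_{\lambda^D\in\Lambda^D_j\times\{0,1\}^D}\Big|\frac{c_{\lambda^D}}{\xi(\lambda^D)}\Big|^p\Big|\frac{\nu(\lambda^{d'})}{2^{-j\Gamma}}\Big|^p|\psi_{\lambda^{d'}}(a)|^p .
\]
The second step is to bound $\nu(\lambda^{d'})$ uniformly. By Property \ref{propri:Gibbs_meas_prop}, $\nu$ satisfies the SMF with compact $\operatorname{dom}(\tau^*_\nu)=[h^{+\infty}_\nu,h^{-\infty}_\nu]$. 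Item \ref{item:bound_hmin_hmax} of Proposition \ref{prop:prop_capa_SMF} then gives $\nu(\lambda^{d'})\le 2^{-j(h^{\min}_\nu-\eta_j)}$ with $\eta_j\to0$, for every cube $\lambda^{d'}\in\Lambda^{d'}_j$. Fix $m$ with $1/m<s_1$ and $\Gamma<h^{\min}_\nu-3/m$; for $j$ large, $\eta_j\le 1/m$. Absorbing one factor $2^{-j/m}$ into $\xi^{(-1/m)}(\lambda^D)=\xi(\lambda^D)|\lambda^D|^{-1/m}$ (the diameter is comparable to $2^{-j}$), and bounding $|\psi_{\lambda^{d'}}(a)|\le\|\psi\|_\infty^{d'}$, we get
\[
S^p_j(a,\Gamma)\le C\,2^{-jp(h^{\min}_\nu-2/m-\Gamma)}\big(\varepsilon^{\xi^{(-1/m)},p}_j\big)^p ,
\]
with the $j=0$ term handled exactly as in the proof of Proposition \ref{prop:belong_trace_inter}. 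The third step concludes: since $f\in B^{\xi^{(-1/m)}}_{p,q}$, the sequence $\varepsilon^{\xi^{(-1/m)},p}_j$ is bounded. Hence $(S^p_j(a,\Gamma))^{1/p}$ decays geometrically and is summable in $j$, which gives $f_a\in B^{\mu^{(+\Gamma)}}_{p,1}$. For $p=\infty$ the same computation works with suprema in place of sums.

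The main subtlety is that, unlike the $\nu_r$-a.e.\ case, we cannot exploit the $\nu_r$-mass $\nu_r(\lambda^{d'})\approx 2^{-j\dim(\nu_r)}$. That mass is what produced the extra $\dim(\nu_r)/p$ in $\Gamma^p_{\nu,r}$; here the gain is only $h^{\min}_\nu$, which is exactly the claimed exponent. The remaining points are the uniform estimate \eqref{eq:bound_hmin_hmax}, which requires the compactness of the domain of $\tau^*_\nu$ supplied by Property \ref{propri:Gibbs_meas_prop}, and the fact that the number of $\lambda^{d'}$ with $\psi_{\lambda^{d'}}(a)\neq0$ is bounded uniformly in $j$ and $a$. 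Finally, taking the intersection over $\Gamma\uparrow h^{\min}_\nu$ gives $f_a\in\widetilde{B}^{\mu^{(+h^{\min}_\nu)}}_{p,1}([0,1]^d)$ for every $a$.
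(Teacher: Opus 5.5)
Your proposal is correct and follows the paper's proof essentially step for step. Like the paper, you reuse \eqref{eq:maj_Sj} pointwise in $a$, replace the $\nu_r$-a.e.\ control by the uniform bound \eqref{eq:bound_hmin_hmax} on $\nu(\lambda^{d'})$, pay a factor $2^{j/m}$ to pass from $\xi$ to $\xi^{(-1/m)}$, and conclude from the boundedness of $\varepsilon^{\xi^{(-1/m)},p}_j$. Your explicit margin $\Gamma<h^{\min}_\nu-3/m$ (together with $1/m<s_1$) is slightly cleaner than the paper's write-up: there, the stated condition $\eta_j<1/m$ only makes the factor $2^{-jp(1/m-\eta_j)}$ bounded, not summable, although this is trivially repaired since $\eta_j\to0$.
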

\begin{proof}
    Let $m > \partent{ {2}/{h^{\min}_{\nu}}}$. Using \eqref{eq:bound_hmin_hmax} and \eqref{eq:maj_Sj}, there exists a positive sequence $(\eta_j)_{j\N}$ tending to 0 such that
    \begin{align*}
        S^{p}_{j}(a,h^{\min}_{\nu} - 2/m)
        &\leq C_{p,\psi}\sum_{\lambda^D\in \Lambda^{D}_j \times \{0,1\}^{D}} \bigg|\frac{c_{\lambda^D}}{\xi^{(-1/m)}(\lambda^D)}\bigg|^p \big|2^{-j(1/m - \eta_j)}\big|^p \big|\psi_{\lambda^{d'}}(a)\big|^p \\
        &\leq C\ 2^{-jp(1/m - \eta_j)} (\varepsilon^{\xi^{(-1/m)}}_{j} )^p.
    \end{align*}
    Let $J_m$ be such that for all $j \geq J_{m}$, $\eta_j < 1/m$. As $f\in \widetilde{B}^{\xi}_{p,q}([0,1]^{D})$, the sequence $\big(\varepsilon^{\xi^{(-1/m)}}_{j}\big)_j$ belongs to $l^q(\N)$ and is bounded. Hence $(S_j(a,h^{\min}_{\nu} - 2/m)^{1/p})_{j}\in l^1(\N)$. One deduces that $f_a \in B^{\mu^{(+h^{\min}_{\nu} - 1/m)}}_{p,1}([0,1]^d)$. This holds for all $m$, so $f_a \in \widetilde{B}^{\mu^{(+h^{\min}_{\nu})}}_{p,1}([0,1]^d)$. 
\end{proof}

As a consequence, for every $f\in \widetilde{B}^{\xi}_{p,q}([0,1]^{D})$ every $a\in[0,1]^{d'}$, $f_a$ can be written 
\begin{equation}\label{eq:dec_trace_2}
    f_a = c_{0,0^d,0^d}(a) \psi_{0,0^d,0^d}~+ \sum_{j\geq 0} \sum_{\lambda^d_j \in \Lambda^d_j \times L^d} c_{\lambda^d_j}(a) \psi_{\lambda^d_j}.
\end{equation}

\begin{rem}[Link to the standard Besov spaces]
    Let $\mathcal{L}^{d}$ be the $d$-dimensional Lebesgue measure. Let $D=d+d'$, $0<p,s<\infty$, with $s> {D}/{p}$, and $q\in(0,p)$. Consider $\mu= \big( \mathcal{L}^{d} \big)^{ {s}/{D}- {1}/{p}}$ and $\nu= \big( \mathcal{L}^{d'} \big)^{ {s}/{D}- {1}/{p}}$. and $\xi = \mu \times \nu = \big( \mathcal{L}^{D} \big)^{ {s}/{D}- {1}/{p}}$. One immediately checks that $B^{\xi}_{p,q} = B^{s}_{p,q}$. We recover the result of \cite{Aubry-Maman-Seuret:2013:Traces_besov_results} by applying Theorem \ref{theo:incl_trace} to these capacities, i.e.:
    
    \textit{Let $D\in\N^*$, $0<p,s<\infty$, with $s- {D}/{p}>0$, and let $q\in(0,\infty]$. If $f\in B^{s}_{p,q}([0,1]^{D})$, then for Lebesgue-almost all $a\in [0,1]^{d'}$,} $f_a\in \widetilde{B}^{s}_{p,1}([0,1]^{d})=\bigcap_{0<\varepsilon< {d}/{p}} B^{s-\varepsilon}_{p,1}([0,1]^{d})$.

    Indeed, for all $r\in\N$, $\nu_r=\mathcal{L}^{d'}$ and $\Gamma_{\nu,r}=d' \big( {s}/{D}- {1}/{p} \big) + {d'}/{p}$. One can check that $\mu^{(+\Gamma_{\nu,r})} = \big( \mathcal{L}^{d} \big)^{ {s}/{d}- {1}/{p}}$ giving $\bigcap_{0<\Gamma<\Gamma_{\nu,r}} B^{\mu^{(+\Gamma)}}_{p,1}([0,1]^{d}) = \bigcap_{0<\varepsilon< {d}/{p}} B^{s-\varepsilon}_{p,1}([0,1]^{d})$.
\end{rem}

\section{Upper bound for the spectrum of traces of \texorpdfstring{$\widetilde{B}^{\xi}_{\infty,q}$}{Btilde xi inf,q} functions}\label{sec:upper_bound_spectrum}

Let $\mu\in\mathscr{E}_d$ on $[0,1]^d$ and $\nu\in\mathscr{E}_{d'}$ on $[0,1]^{d'}$ with $D=d+d'$. 
Let $\xi$ be the product capacity on $[0,1]^D$, $\xi := \mu \times \nu$. 
The local behavior of elements of $B^{\xi}_{p,q}([0,1]^D)$ and $\widetilde{B}^{\xi}_{p,q}([0,1]^D)$ is described via their pointwise H{\"o}lder exponent, characterised by wavelet leaders \cite{Jaffard:2007:Wavelet_leaders}.

\begin{defi}[Wavelet Leaders]\label{defi:wlt_lead}
    Let $(\psi^l)_{l\in L^{D}}$ be a family of wavelets that form an orthogonal basis of $L^2(\R^D)$. For $f\in L^p_{\operatorname{loc}}(\R^D)$ with $p\in[1,+\infty]$, call $(c_{\lambda^D})_{\lambda^D\in\Lambda^D \times L^D}$ the wavelet coefficients of $f$. The wavelet leader of $f$ associated with $\lambda^D\in\Lambda^D$ is defined as 
    \begin{equation}
        L^{f}_{\lambda^D} = \sup \{ |c_{\lambda'}| : \lambda'=(j,k,l)\in \Lambda^D \times L^D, \lambda_{j,k}^D \subset 3 \lambda^{D}\}.
    \end{equation}
\end{defi}
 
\begin{nota}
    Consider $f\in \widetilde{B}^{\xi}_{\infty,q}([0,1]^D)$ and, for $a\in[0,1]^{d'}$, the associated trace $f_a$ with wavelet coefficients denoted by $(c_{\lambda^{d}}(a))_{\lambda^{d}\in\Lambda^{d} \times L^{d}}$. Following Definition \ref{defi:wlt_lead}, we denote the wavelet leader of $f_a$, the trace of $f$ in $a$, associated with $\lambda^d\in\Lambda^d$ by (notice the dependance in $a$)
    \begin{equation}\label{eq:wlt_lead_trace}
        L^{f}_{\lambda^d}(a) = \sup \{|d_{\lambda}(a)| : \lambda=(j,k,l)\in \Lambda^d \times L^d, \lambda^{d}_{j,k} \subset 3 \lambda^d \}.
    \end{equation}
    The mention of $f$ on leaders is omitted when the function is obvious from the context.
\end{nota}

\noindent The pointwise H{\"o}lder exponent \eqref{eq:hold_exp} is characterised by wavelet leaders \cite[Prop. 4]{Jaffard:2007:Wavelet_leaders}.

\begin{prop}\label{prop:hold_exp_wlt}
    Let $N\in\N^*$ and assume that the wavelets belong to $C^{N}(\R^D)$ with at least $N+1$ vanishing moments. If $f \in\mathcal{C}^{\varepsilon}(R^D)$ for some $\varepsilon>0$, then for every $x_0\in\R^D$, $h_f(x_0)<N$ if and only if $\liminf_{j\to +\infty} \frac{\log L^{f}_{\lambda_j^D(x_0)}}{\log (2^{-j})} <N$, and in this case 
    \begin{equation*}
        h_{f}(x_0)= \liminf_{j\to +\infty} \frac{\log L^{f}_{\lambda_j^D(x_0)}}{\log (2^{-j})}.
    \end{equation*}
\end{prop}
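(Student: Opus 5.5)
The plan is to derive both inequalities between $h_f(x_0)$ and the leader exponent
\[
\underline{h}^L(x_0):=\liminf_{j\to+\infty} \frac{\log L^{f}_{\lambda_j^D(x_0)}}{\log (2^{-j})}
\]
from the pointwise wavelet criterion of Proposition \ref{prop:equi_hold_space_wlt_coef}. The leaders localise the coefficient bound \eqref{eq:equi_hold_space_wlt_coef} in the way we need. Throughout, $H$ ranges over $(0,N)$, so that $\partent{H+1}\le N$ and Proposition \ref{prop:equi_hold_space_wlt_coef} applies with the given wavelets. Once both inequalities are established, the ``iff'' statement follows at once.

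\emph{Step 1: $\underline{h}^L(x_0)\geq \min(h_f(x_0),N)$.} Take $H<\min(h_f(x_0),N)$, so that $f\in C^H(x_0)$. Proposition \ref{prop:equi_hold_space_wlt_coef} gives $|c_{\lambda'}|\le M2^{-j'H}(1+|2^{j'}x_0-k'|)^H$ for every $\lambda'=(j',k',l')$. Suppose $\lambda'_{j',k'}\subset 3\lambda_j^D(x_0)$. Then $j'\ge j$ and $|k'2^{-j'}-x_0|\le 2\cdot 2^{-j}$, so $|2^{j'}x_0-k'|\lesssim 2^{j'-j}$. Substituting,
\[
|c_{\lambda'}|\lesssim 2^{-j'H}\,2^{(j'-j)H}=2^{-jH},
\]
hence $L^f_{\lambda_j^D(x_0)}\le C2^{-jH}$ and $\underline{h}^L(x_0)\ge H$.

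\emph{Step 2: $h_f(x_0)\ge \underline{h}^L(x_0)$ whenever $\underline{h}^L(x_0)<N$.} Take $H<\underline{h}^L(x_0)$, so that $L^f_{\lambda_j^D(x_0)}\le C2^{-jH}$ for all $j$ (after enlarging $C$ to cover small $j$). Fix $\lambda=(j',k,l)$ and choose the scale $j\le j'$ as the largest integer with $\lambda_{j',k}\subset 3\lambda_j^D(x_0)$. There are two cases.
\begin{itemize}
\item If such $j\geq 0$ exists, then $2^{-j}\approx \max(2^{-j'},|k2^{-j'}-x_0|)$. Consequently $|c_\lambda|\le L^f_{\lambda_j^D(x_0)}\le C2^{-jH}\lesssim 2^{-j'H}(1+|2^{j'}x_0-k|)^H$.
\item If the cube is at distance $\gtrsim 1$ from $x_0$, the right-hand side of \eqref{eq:equi_hold_space_wlt_coef} is bounded below by a constant. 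The bound then follows from $|c_\lambda|\le \|f\|_\infty$-type estimates, or from the uniform $C^\varepsilon$ assumption.
\end{itemize}
Thus \eqref{eq:equi_hold_space_wlt_coef} holds with exponent $H$. Since $f\in C^\varepsilon(\R^D)$, the converse part of Proposition \ref{prop:equi_hold_space_wlt_coef} yields $f\in C^{H-\eta}(x_0)$ for all $\eta>0$, so $h_f(x_0)\ge H$.

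\emph{Step 3: conclusion.} Suppose first $h_f(x_0)<N$. Step 1 gives $\underline{h}^L(x_0)\ge h_f(x_0)$. If the inequality were strict, Step 2 applied with some $H\in(h_f(x_0),\min(\underline{h}^L(x_0),N))$ would contradict the definition of $h_f(x_0)$. Hence equality holds, and in particular $\underline{h}^L(x_0)<N$. Conversely, suppose $\underline{h}^L(x_0)<N$. If $h_f(x_0)\ge N$, Step 1 would give $\underline{h}^L(x_0)\ge N$, which is absurd; hence $h_f(x_0)<N$. The main obstacle is the geometric bookkeeping in Step 2: choosing the scale $j$ so that $\lambda'\subset 3\lambda_j^D(x_0)$ while keeping $2^{-j}$ comparable to $2^{-j'}(1+|2^{j'}x_0-k|)$. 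This is where the factor $3$ in the definition of leaders matters, and where boundary effects near $x_0$ at dyadic edges have to be absorbed into constants.
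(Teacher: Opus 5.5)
The paper does not prove this statement; it quotes it from \cite[Prop.~4]{Jaffard:2007:Wavelet_leaders}. Your argument is correct and is the standard one behind that result: the leader decay $L^f_{\lambda_j^D(x_0)}\le C2^{-jH}$ is equivalent, up to constants, to the pointwise bound \eqref{eq:equi_hold_space_wlt_coef}, so both inequalities reduce to the two directions of Proposition~\ref{prop:equi_hold_space_wlt_coef}, with $H<N$ ensuring the wavelets are regular enough. There is one harmless slip: $\lambda'_{j',k'}\subset 3\lambda_j^D(x_0)$ only forces $j'\ge j-1$, not $j'\ge j$, since a dyadic cube of generation $j-1$ fits inside $3\lambda_j^D(x_0)$; likewise the maximal $j$ in Step 2 can be $j'+1$, and both points change only constants.
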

\noindent Let us start with an intermediate result key for proving Theorem \ref{theo:up_bound_spect_all_fct} in Sections \ref{sec:sat_fct} and \ref{sec:proof_prev_spect}.

\begin{lemme}\label{lemma:maj_exp_besov_inf_inf_all_a}
    For all $f\in \widetilde{B}^{\xi}_{\infty,q}$, for every $x\in[0,1]^{d}$ and $a\in[0,1]^{d'}$, $h_{f_a}(x) \geq \underline{h}_{\mu}(x) + h_{\nu}^{\min}$.
\end{lemme}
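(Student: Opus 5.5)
The plan is to bound the wavelet leaders of $f_a$ from above and then apply Proposition \ref{prop:hold_exp_wlt}. We work with the decomposition \eqref{eq:dec_trace_2} of $f_a$, obtained from Proposition \ref{prop:inclusion_trace}. It gives $f_a \in \widetilde{B}^{\mu^{(+h^{\min}_{\nu})}}_{\infty,1}([0,1]^d)$ for every $a$.

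\textbf{Coefficient bound.} Fix $m$ large, with $1/m < \min(1,s_1)$. Since $p=\infty$, the proof of Proposition \ref{prop:inclusion_trace} yields, for every $a\in[0,1]^{d'}$, every $j$ and every $\lambda^d\in\Lambda^d_j\times\{0,1\}^d$,
\begin{equation*}
|d_{\lambda^d}(a)| \leq C_m\, \mu(\lambda^d)\, 2^{-j(h^{\min}_{\nu}-2/m)} .
\end{equation*}
Here $C_m$ does not depend on $a$ or $\lambda^d$. The ingredients are the following.
\begin{itemize}
\item The bound \eqref{eq:maj_Sj} with the sup norm.
\item The inequality $\nu(\lambda^{d'})\le 2^{-j(h^{\min}_\nu-\eta_j)}$ from \eqref{eq:bound_hmin_hmax}.
\item Boundedness of $\varepsilon^{\xi^{(-1/m)},\infty}_j$, which holds since $f\in \widetilde{B}^{\xi}_{\infty,q}$.
\end{itemize}
One subtlety has to be handled. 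The coefficients of the scaling part $G_a$ are not wavelet coefficients of $f_a$. The true wavelet coefficients $c_{\lambda^d}(a)$ in \eqref{eq:dec_trace_2} are obtained by re-expanding $G_a$, as in the proof of Proposition \ref{prop:G_a_belonging_besov}. There each $c_{J,K,l}$ is a sum over $j>J$ of scaling coefficients at nearby positions, i.e.\ positions in $S_{j,J,K}$, weighted by $2^{-d(j-J)}\#S_{j,J,K}=O(1)$. Combined with the doubling ratio \eqref{eq:ratio_kappa} for $\kappa=\mu^{(+\Gamma)}$, this gives the same bound $|c_{\lambda^d}(a)|\le C\,\mu(\lambda^d)2^{-J(h^{\min}_\nu-2/m)}$. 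Indeed, in the $p=\infty$ case the geometric factor in $j-J$ is compensated by $\kappa(\lambda_{j,k})/\kappa(\lambda_{J,K})\le C2^{-(j-J)s'}$, which follows from the H\"older/quasi-Bernoulli property. I expect this to be the main technical point: one must check that the $p=\infty$ version of that re-expansion still sums over $j>J$ without losing more than a constant.

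\textbf{Leader bound.} Fix $x\in[0,1]^d$ and $j$. Every $\lambda'\subset 3\lambda^d_j(x)$ at generation $j'\ge j$ satisfies
\begin{equation*}
\mu(\lambda')\le\mu(3\lambda^d_j(x))\le C_\mu^{2}\,\mu(\lambda^d_j(x)) ,
\end{equation*}
by monotonicity and the doubling property \eqref{eq:doub_meas_ball}. Moreover $2^{-j'(h^{\min}_\nu-2/m)}\le 2^{-j(h^{\min}_\nu-2/m)}$. Hence
\begin{equation*}
L^f_{\lambda^d_j(x)}(a)\le C\,\mu(\lambda^d_j(x))\,2^{-j(h^{\min}_\nu-2/m)} .
\end{equation*}
Taking $\log/\log 2^{-j}$ and the $\liminf$ gives
\begin{equation*}
\liminf_{j\to+\infty}\frac{\log L^f_{\lambda^d_j(x)}(a)}{\log 2^{-j}}\ \ge\ \underline{h}_\mu(x)+h^{\min}_\nu-2/m .
\end{equation*}
Here the dyadic version of $\underline h_\mu$ is legitimate by Remark \ref{rem:different_local_dimension_def_ball_dyadic}.

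\textbf{Conclusion.} The function $f_a$ is uniformly H\"older, by \eqref{eq:embedding_besov_xi_product_measure} restricted to $\mathcal{C}_a$. The wavelets are $C^n$ with $n>s_2$, and the local dimensions of $\xi$, hence of $\mu$ and $\nu$, are at most $s_2$, so $\underline h_\mu(x)+h^{\min}_\nu \le s_2 < n$. Proposition \ref{prop:hold_exp_wlt} therefore applies. If $h_{f_a}(x)\ge N$ there is nothing to prove. Otherwise $h_{f_a}(x)$ equals the $\liminf$ above, so $h_{f_a}(x)\ge \underline h_\mu(x)+h^{\min}_\nu-2/m$. Letting $m\to\infty$ concludes.
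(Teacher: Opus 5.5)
Your proposal is correct and follows essentially the same route as the paper: a uniform coefficient bound for $f_a$ from Proposition~\ref{prop:inclusion_trace}, a leader bound via monotonicity and the doubling property of $\mu$, then the $\liminf$ through Proposition~\ref{prop:hold_exp_wlt} and $m\to\infty$. You also fill in two points the paper leaves implicit, and both are handled soundly: the re-expansion of the scaling part $G_a$ when $p=\infty$ (summability over $j>J$ comes from the factor $2^{-(j-J)\Gamma}$ in $\mu^{(+\Gamma)}$, with $\Gamma=h^{\min}_\nu-2/m>0$, instead of from $d/p$), and the check of the hypothesis $h_{f_a}(x)<N$ in Proposition~\ref{prop:hold_exp_wlt}.
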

\begin{proof}
    Let $f\in \widetilde{B}^{\xi}_{\infty,q}$ and $a\in[0,1]^{d'}$. Consider the wavelet decomposition \eqref{eq:dec_trace_2} of $f_a$
    \begin{equation}
        f_a = c_{0,0^d,0^d}(a) \psi_{0,0^d,0^d} + \sum_{\lambda^d \in \Lambda^d\times L^d} c_{\lambda^d}(a) \psi_{\lambda^d}.
    \end{equation}
    Pay attention that $c_{\lambda^d}(a)$ does not have an explicit formula like for $d^G_{\lambda^d}(a)$ in \eqref{eq:dec_trace_G_a} and $d^F_{\lambda^d}(a)$ in \eqref{eq:dec_trace_F_a}.
    Using Proposition \ref{prop:inclusion_trace}, one has $f_a \in \widetilde{B}^{\mu^{(+h^{\min}_{\nu})}}_{\infty,1}([0,1]^d)$. 
    Then, for every $m \geq \partent{ {1}/{h^{\min}_{\nu}}}$, there is $C>0$ such that $\sup_{j\in\N}\sup_{\lambda^d \in \Lambda_j^d\times L^d} \Big| {\frac{c_{\lambda^d}(a)}{\mu^{(+h^{\min}_{\nu}-1/m)}(\lambda^d)}} \Big| \leq C. $
    As a consequence, for every $x \in [0,1]^d$,
    \begin{align*}
        L^f_{\lambda^{d}_{j}(x)}(a)
        &= \sup_{\lambda \subset 3 \lambda^{d}_{j}(x)} \big|c_{\lambda}(a)\big|
        \leq C\sup_{\lambda_{j'} \subset 3 \lambda^{d}_{j}(x)} \big|\mu(\lambda_{j'})\ 2^{-j' (h^{\min}_{\nu}-\frac{1}{m})}\big| \\
        &\leq C \big|\mu(3\lambda^{d}_{j}(x))\ 2^{-j (h^{\min}_{\nu}-\frac{1}{m})}\big|.
    \end{align*}
     The doubling property \eqref{eq:doub_meas_ball} of $\mu$ yields, for a constant $C>0$,
    \begin{equation}\label{eq:maj_L_ja}
        L^f_{\lambda^{d}_{j}(x)}(a) \leq C \big|\mu(\lambda^{d}_{j}(x))\ 2^{-j (h^{\min}_{\nu}-\frac{1}{m})}\big|.
    \end{equation}
    Hence, for all $j\in\N$, $
        \log L^f_{\lambda^{d}_{j}(x)}(a) \leq \log C + \log \mu(\lambda^{d}_{j}(x)) + \log\big(2^{-j (h^{\min}_{\nu}- {1}/{m})}\big)$. Dividing by $\log(2^{-j})$ and taking the $\liminf$ on both sides gives $h_{f_a}(x) \geq \underline{h}_{\mu}(x) + h^{\min}_{\nu} - 1/m$.
    This holds for every large $m$, hence the conclusion.
\end{proof}

\begin{lemme}\label{lemma:maj_exp_besov_inf_inf_alm_all_a}
    Let $r\in\R$. For all $f\in \widetilde{B}^{\xi}_{\infty,q}$, for every $x\in[0,1]^{d}$ and $\nu_r$-almost all $a\in[0,1]^{d'}$, $h_{f_a}(x) \geq \underline{h}_{\mu}(x) + h^{r}_{\nu}$.
\end{lemme}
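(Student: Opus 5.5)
The plan mirrors the proof of Lemma \ref{lemma:maj_exp_besov_inf_inf_all_a}, replacing the universal membership of Proposition \ref{prop:inclusion_trace} by the almost-sure membership of Theorem \ref{theo:incl_trace} in the case $p=\infty$. Fix $r\in\R$ and $f\in\widetilde{B}^{\xi}_{\infty,q}$. By Theorem \ref{theo:incl_trace} with $p=\infty$, where $\Gamma^{\infty}_{\nu,r}=h^r_\nu$, there is a Borel set $A_r(f)\subset[0,1]^{d'}$ of full $\nu_r$-measure such that for every $a\in A_r(f)$,
\begin{equation*}
    f_a\in \widetilde{B}^{\mu^{(+h^r_\nu)}}_{\infty,1}([0,1]^d)=\bigcap_{0\le\Gamma<h^r_\nu} B^{\mu^{(+\Gamma)}}_{\infty,1}([0,1]^d).
\end{equation*}
The point to stress is that $A_r(f)$ depends only on $f$ and $r$, not on $x$. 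We therefore obtain the stronger statement: for $\nu_r$-a.e. $a$, for every $x$. This resolves the order of quantifiers in the statement without extra work.

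Fix $a\in A_r(f)$ and $m$ large, so that $h^r_\nu-1/m>0$. The membership $f_a\in B^{\mu^{(+h^r_\nu-1/m)}}_{\infty,1}$, in the form \eqref{eq:snorm_inh_besov_mod} (which also controls the scaling coefficients), gives a constant $C$ with
\begin{equation*}
    |c_{\lambda^d}(a)|\le C\,\mu(\lambda^d)\,2^{-j(h^r_\nu-1/m)}\quad\text{for all } \lambda^d\in\Lambda^d_j\times L^d,\ j\in\N.
\end{equation*}
Here the $c_{\lambda^d}(a)$ are the genuine wavelet coefficients of $f_a$ from \eqref{eq:dec_trace_2}. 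Passing to the leaders \eqref{eq:wlt_lead_trace}, every $\lambda_{j'}\subset 3\lambda^d_j(x)$ has $j'\ge j$. Monotonicity of $\mu$ gives $\mu(\lambda_{j'})\le\mu(3\lambda^d_j(x))$, and $2^{-j'(\cdot)}\le 2^{-j(\cdot)}$ since the exponent is positive. The doubling property \eqref{eq:doub_meas_ball} then yields
\begin{equation*}
    L^f_{\lambda^d_j(x)}(a)\le C\,\mu(\lambda^d_j(x))\,2^{-j(h^r_\nu-1/m)}.
\end{equation*}

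Taking logarithms, dividing by $\log 2^{-j}$ and passing to the $\liminf$, Proposition \ref{prop:hold_exp_wlt} gives $h_{f_a}(x)\ge \underline{h}_\mu(x)+h^r_\nu-1/m$. The applicability of Proposition \ref{prop:hold_exp_wlt} needs a comment. First, $f_a$ is uniformly Hölder, since $f\in C^s$ by \eqref{eq:embedding_besov_xi_product_measure}. Second, the wavelets are regular enough: if the $\liminf$ of leaders is at least the regularity threshold $N$, then $h_{f_a}(x)\ge N$ and the bound holds trivially, as the wavelets were chosen with $n>s_2$. Remark \ref{rem:different_local_dimension_def_ball_dyadic} allows us to use dyadic cubes for $\underline{h}_\mu$. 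Letting $m\to\infty$ along integers concludes.

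The only delicate step is the first one: checking that Theorem \ref{theo:incl_trace} indeed yields a single full-measure set valid for all $\Gamma<h^r_\nu$ simultaneously. This is handled by the countable intersection over $\Gamma=h^r_\nu-1/n$ in its proof. The rest is a verbatim repetition of the argument for Lemma \ref{lemma:maj_exp_besov_inf_inf_all_a}.
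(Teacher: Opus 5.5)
Your proposal is correct and follows essentially the same route as the paper. You invoke Theorem \ref{theo:incl_trace} with $p=\infty$ to get $f_a\in\widetilde{B}^{\mu^{(+h^r_\nu)}}_{\infty,1}([0,1]^d)$ on a full $\nu_r$-measure set independent of $x$, bound the leaders by $C\,\mu(\lambda^d_j(x))\,2^{-j(h^r_\nu-1/m)}$ exactly as in Lemma \ref{lemma:maj_exp_besov_inf_inf_all_a}, and let $m\to\infty$; your remarks on the order of quantifiers and on the applicability of Proposition \ref{prop:hold_exp_wlt} make explicit details the paper leaves implicit (one harmless nitpick: a cube $\lambda_{j'}\subset 3\lambda^d_j(x)$ can have $j'=j-1$, not only $j'\ge j$, which merely changes the constant).
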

\begin{proof}
    The arguments are identical to those of Lemma \ref{lemma:maj_exp_besov_inf_inf_all_a}. Theorem \ref{theo:incl_trace} gives for $\nu_{r}$-almost all $a\in[0,1]^{d'}$, $f_a \in \widetilde{B}^{\mu^{(+h^{r}_{\nu})}}_{\infty,1}([0,1]^d)$.
    Then, for every $m \geq \partent{ {1}/{h^{r}_{\nu}}}$, there exists a constant $C>0$ such that
    \begin{equation*}
        \sup_{j\in\N}\sup_{\lambda^d \in \Lambda_j^d\times L^d} \bigg|\frac{c_{\lambda^d}(a)}{\mu^{( +(h^{r}_{\nu}-1/m) )}(\lambda^d)}\bigg| \leq C.
    \end{equation*}
    Thus, for all $x \in [0,1]^d$, as in \eqref{eq:maj_L_ja},
    \begin{equation*}
        L^f_{\lambda^{d}_{j}(x)}(a)
        = \sup_{\lambda \subset 3 \lambda^{d}_{j}(x)} \big|c_{\lambda}(a)\big|
        \leq C \hspace{-5pt}\sup_{\lambda_{j'} \subset 3 \lambda^{d}_{j}(x)} \big|\mu(\lambda_{j'}) 2^{-j' (h^{r}_{\nu}-\frac{1}{m})}\big|
        \leq C \big|\mu(\lambda^{d}_{j}(x)) 2^{-j (h^{r}_{\nu}-\frac{1}{m}))}\big|.
    \end{equation*}
    As previously, one concludes that for $\nu_r$-almost all $a\in[0,1]^{d'}$, $h_{f_a}(x) \geq \underline{h}_{\mu}(x) + h^{r}_{\nu}$.
\end{proof}

We are now ready to prove Theorem \ref{theo:up_bound_spect_all_fct}.

\begin{proof}[Proof of Theorem \ref{theo:up_bound_spect_all_fct}]
    Considering a function in $\widetilde{B}^{\xi}_{\infty,q}$ without any further hypothesis, one can only get an upper bound of the spectrum of the trace of such a function. Having both Lemma \ref{lemma:maj_exp_besov_inf_inf_all_a} and \ref{lemma:maj_exp_besov_inf_inf_alm_all_a}, we now prove the upper bound for $\sigma_{f_a}$.
    
    Let us start by \eqref{eq:theo_up_b_spect:all_a}. Using Lemma \ref{lemma:maj_exp_besov_inf_inf_all_a}, one has that, for $a\in[0,1]^{d'}$ and $h\in\R^+$, $\{x \in \R^{d}: h_{f_a}(x) \leq h\} \subset \{x\in \R^{d}: \underline{h}_{\mu}(x) \leq h - h_{\nu}^{\min}\}$. 
    Hence
    \begin{equation*}
        E_{f_a}(h) \subset \{x \in \R^{d}: h_{f_a}(x) \leq h\} \subset \{x\in \R^{d}: \underline{h}_{\mu}(x) \leq h - h_{\nu}^{\min}\} = \underline{E}_{\mu}^{\leq}(h- h^{\min}_{\nu}).
    \end{equation*}
    When $h\leq h^{0}_{\nu}+h^{\min}_{\nu}$, item \ref{item:dim_E_leq} of Proposition \ref{prop:prop_capa_SMF} gives $\dimH\big(\underline{E}_{\mu}^{\leq}(h- h^{\min}_{\nu})\big) = \tau_{\mu}^{*}(h-h^{\min}_{\nu})$.
    As $\mu$ verifies the SMF, for every $h\leq h^{0}_{\nu}+h^{\min}_{\nu}$, one has $\tau_{\mu}^{*}(h-h^{\min}_{\nu}) = \sigma_{\mu}(h-h^{\min}_{\nu})$. 
    Moreover, for every $h > h^{0}_{\nu}+h^{\min}_{\nu}$, $\{x\in \R^{d}: \underline{h}_{\mu}(x) \leq h - h^{\min}_{\nu}\}$ has dimension $d$.
    This induces the result as $\sigma_{f_a}(h) = \dimH(E_{f_a}(h))$.

    Let us finish with the proof of \eqref{eq:theo_up_b_spect:almost_a}. 
    Let $f\in \widetilde{B}^{\xi}_{\infty,q}$ and $r\in\R$. By Lemma \ref{lemma:maj_exp_besov_inf_inf_alm_all_a}, for $\nu_r$-almost all $a\in[0,1]^{d'}$, one has, for all $h\in\R^+$,
    \begin{equation*}
        E_{f_a}(h) \subset \{x \in \R^{d}: h_{f_a}(x) \leq h\} \subset \{x\in \R^{d}: \underline{h}_{\mu}(x) \leq h - h^{r}_{\nu}\} = \underline{E}_{\mu}^{\leq}(h- h^{r}_{\nu}).
    \end{equation*}
    Then one concludes in the same way as for \eqref{eq:theo_up_b_spect:all_a}.
\end{proof}

\section{A saturating function in \texorpdfstring{$\widetilde{B}^{\mu}_{\infty,q}$}{B mu inf,q}}\label{sec:sat_fct}

From now on, we focus on $p=+\infty$.
For the rest of the paper, the wavelet family considered will be constructed as in Section \ref{sec:mr_wlt_ana} with $\psi^{1}$ verifying the property $(R)$ of Definition \ref{defi:prop_R}.

In order to compute the singularity spectrum of $\mathscr{G}_a$, this property (R) of the wavelet $\psi$ is necessary. Indeed, in \cite{Rible-Seuret-1}, the following theorem is proved.

\begin{theo}\label{theo:low_bound_1per_wlt}
    Let $\psi:\R\to\R $ satisfy property $(R)$, with $\operatorname{supp}(\psi)=[0,K_{\psi}]$. Let
    \begin{equation}\label{eq:defi_G}
        G:x\in\R \mapsto \sum_{n\in\Z}\psi(x-nK_{\psi}).
    \end{equation}
    There exists $\alpha>0$ such that , for every $d'\in \N^*$, there exists an integer $N(d')$ and a sequence of multi-integer $(\overline{p_j}=(p_{j,1},\ldots,p_{j,d'}))_{j\in\N}\in(\{0,\ldots,K_{\psi}-1\}^{d'})^{\N}$ enjoying the following property: for every $J\equiv 0 \pmod{N(d')}$ and $x\in\R^{d'}$, there is an integer $\ell\in\{0,\ldots,N(d')-1\}$ that verifies $\prod_{i=1}^{d'} |G(2^{J+\ell} x_i - p_{J+\ell,i})| \geq \alpha^{d'}$.
\end{theo}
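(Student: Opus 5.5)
The plan is to reduce the statement to a compactness argument on a torus, so that the constant $\alpha$ comes from the minimum of a continuous positive function, and then to choose the shifts $\overline{p_j}$ by a combinatorial argument based on $(R_2)$ and $(R_3)$.

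\textbf{Step 1: reduction to the torus.} The function $G$ in \eqref{eq:defi_G} is continuous (by $(R_1)$) and $K_\psi$-periodic, and it coincides with $\psi$ on $[0,K_\psi]$. I will take the sequence $(\overline{p_j})_j$ to be $N(d')$-periodic, i.e.\ $\overline{p_{J+\ell}}=\overline{q_\ell}$ for $J\equiv 0 \pmod{N(d')}$ and $0\le\ell<N(d')$, with $\overline{q_0},\dots,\overline{q_{N(d')-1}}$ to be chosen. Since $2^{\ell}$ is an integer, for $y=2^J x$ reduced modulo $K_\psi\Z^{d'}$ one has $G(2^{J+\ell}x_i-q_{\ell,i})=G(2^\ell y_i-q_{\ell,i})$. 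Hence it suffices to bound from below
\[
\Phi(y):=\max_{0\le \ell<N(d')}\ \prod_{i=1}^{d'}\big|G(2^\ell y_i-q_{\ell,i})\big|, \qquad y\in \mathbb{T}:=(\R/K_\psi\Z)^{d'},
\]
uniformly in $y$, and this bound is automatically uniform in $J$. The function $\Phi$ is continuous on the compact torus $\mathbb{T}$. So if $\Phi>0$ everywhere, then $\min\Phi=:\beta>0$ and we may set $\alpha:=\min(1,\beta)^{1/d'}$. Strictly speaking this gives an $\alpha$ depending on $d'$, whereas the statement asks for one $\alpha$ for all $d'$. To obtain that, I will apply the same compactness argument to the one-dimensional block functions and then combine coordinates as in Step 3.

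\textbf{Step 2: the one-dimensional combinatorics.} Let $Z:=\mathcal{Z}(0,0)$, which is finite by $(R_2)$. The zero set of $G$ modulo $K_\psi$ is then $Z$ (mod $K_\psi$). For a real $t$, call the scale $\ell$ \emph{bad} for $t$ if $2^\ell t-q_\ell\in Z+K_\psi\Z$. The key claim to prove is the following: one can choose shifts $q_\ell\in\{0,\dots,K_\psi-1\}$, periodic in blocks, such that every $t$ has at most $M$ bad scales in every block of length $N_1$, where $M<N_1$ are fixed integers.

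\begin{itemize}
\item A point $t$ with many bad scales has doubling orbit $2^\ell t \bmod K_\psi$ returning many times to the finite set $Z+\{q_\ell\}$. This forces $t$ to be one of finitely many rational points whose orbits are eventually periodic through $Z$, since a non-preperiodic orbit meets a finite set only finitely often.
\item Assumption $(R_3)$ handles these exceptional points. It says that for every $y$ there is $p\in\{0,\dots,K_\psi-1\}$ with $G(y-p)=\psi(\{y\}+p')\neq 0$ for a suitable $p'$. Letting $q_\ell$ run cyclically through all values $0,\dots,K_\psi-1$ inside each sub-block along the (finite) period of such an orbit guarantees that some shift is good at those points, in particular at the integers, where $\psi(0)=0$ is unavoidable.
\end{itemize}

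I would first try the explicit choice in which $q_\ell$ cycles through $\{0,\dots,K_\psi-1\}$ and each value is repeated over a window longer than all the finitely many relevant periods. I would then verify the bound on the number of bad scales directly. Once the number of bad scales is bounded, compactness in dimension one upgrades ``not a zero'' to ``$|G|\ge\alpha$'' with an $\alpha$ independent of $d'$. For this, apply the same compactness argument to the set of $t$ having at least $N_1-M$ scales with $|G|\ge\alpha$.

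\textbf{Step 3: combining coordinates by pigeonhole.} Take $N(d'):=d'M+1$ blocks' worth of scales, with coordinate $i$ using the one-dimensional shift sequence from Step 2. For each coordinate $x_i$, at most $M$ scales are bad, that is, at most $M$ scales give $|G(2^{J+\ell}x_i-p_{J+\ell,i})|<\alpha$. Since $N(d')>d'M$, some $\ell$ is good for all $d'$ coordinates simultaneously. This yields the product bound $\alpha^{d'}$.

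\textbf{Main obstacle.} I expect the hardest step to be Step 2, namely proving the uniform bound $M$ on the number of bad scales. Zeros of $\psi$ at integers, or at rational points with periodic doubling orbits, are persistent under $y\mapsto 2y$, and only the freedom in choosing $q_\ell$ together with $(R_3)$ breaks these cycles.
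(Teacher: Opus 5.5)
The paper does not prove this theorem; it quotes it from \cite{Rible-Seuret-1}, so there is no in-paper argument to compare with. Judged on its own merits, your Step 1 is fine: the periodic shifts, the reduction to $(\R/K_\psi\Z)^{d'}$ and compactness for fixed $d'$ all work. Step 3, however, fails, and the failure is structural.

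\textbf{Why Step 3 fails.} Whichever way you read $N(d')$, the pigeonhole needs the proportion of bad scales of each coordinate to be below $1/d'$. Step 2 gives at best a fixed density $M/N_1<1$. No choice of shifts can push that density below $1/K_\psi$.

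To see this, look at integer points. There $G(2^\ell y-q)=\psi\big((2^\ell y-q)\bmod K_\psi\big)$, and $\psi(0)=0$.
\begin{itemize}
\item If $K_\psi$ is odd, $2^\ell$ is invertible mod $K_\psi$. So at each scale $\ell$ exactly one residue of $y_i=2^Jx_i$ satisfies $2^\ell y_i\equiv p_{J+\ell,i}\pmod{K_\psi}$, and for it the $i$-th factor vanishes.
\item Averaging over the $K_\psi$ residues, some integer $x_i$ has at least $N(d')/K_\psi$ bad scales. Hence the pigeonhole cannot close once $d'\ge K_\psi$.
\item If, as your wording suggests, all coordinates use the same one-dimensional sequence, the construction is simply false. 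Take $x=(0,1,\ldots,K_\psi-1)$, padded with zeros. At every scale the numbers $2^{J+\ell}x_i-q_{\ell}$ run through all residues mod $K_\psi$, so one factor equals $\psi(0)=0$ for every $\ell$.
\end{itemize}

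More decisively, (R) allows $\psi$ to vanish at every integer of $[0,K_\psi]$ except one, say $p_0$. Then at an integer vector $y$ the product at scale $\ell$ is nonzero only if $2^\ell y_i\equiv p_{J+\ell,i}+p_0$ for all $i$. This determines $y$ mod $K_\psi$ uniquely. Each scale therefore serves exactly one of the $K_\psi^{d'}$ residue vectors, and necessarily $N(d')\ge K_\psi^{d'}$. Any argument that controls each coordinate separately and combines them by counting gives $N(d')$ linear in $d'$, so it cannot prove the statement.

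\textbf{What is missing.} First, the shift vectors $\overline{p_j}$ must be chosen jointly across coordinates, running through exponentially many combinations within each block. Second, you must control what each coordinate does during such long blocks. Points merely close to the finitely many periodic orbits of $t\mapsto 2t$ inside $\bigcup_{q}(Z+q)$ can stay bad, in the sense $|G|<\alpha$, for arbitrarily many consecutive scales. They also enter and leave these regimes at different scales in different coordinates. Your Step 2 reasoning (``finitely many rational points'') concerns exact zeros only. Its compactness upgrade is valid for a fixed block length $N_1$, not uniformly as the block length grows with $d'$. Third, obtaining $\alpha$ independent of $d'$ requires a bound $|G|\ge\alpha$ in every coordinate at one common scale. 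That simultaneous control is the actual content of the theorem, and your proposal does not supply it.
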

In other words, (R) ensures that dilated and translated transforms of tensor products of periodised versions of $\psi$ cannot all vanish at the same point, and even more, they are non-zero very regularly. This is key to prove that the traces of well-chosen \textit{saturating} functions cannot be zero too often, and is key is our proof of the prevalence Theorem \ref{theo:prev_spect_inh_besov_inf_q}.

Consider the integer $N(d')$ and the sequence $(\overline{p_j}=(p_{j,1},\ldots,p_{j,d'}))_{j\in\N}$ given by Theorem \ref{theo:low_bound_1per_wlt}.
\begin{defi}\label{defi:Gd}
    The map $G_{j}^{d'}:\R^{d'} \to \R$ is given by
    \begin{equation}\label{eq:Gd}
        \textnormal{for every } x=(x_1,...,x_{d'})\in \R^{d'},~~ G_{j}^{d'}(x) = \prod_{i=1}^{d'} G(2^j x_i - p_{j,i}).
    \end{equation}
\end{defi}

To establish generic or prevalent properties of traces of function in $\widetilde{B}^{\xi}_{\infty,q}([0,1]^D)$, a classical approach consists in perturbing a set of functions by a \textit{saturating function} in $\widetilde{B}^{\xi}_{\infty,q}([0,1]^D)$ whose multifractal structure exhibits the property claimed to be generic. 
\subsection{Construction of the saturating function \texorpdfstring{$\mathscr{G}_q$}{Gq}}\label{subsec:generic_spectrum:definition_saturationg_function}

\begin{defi}\label{defi:saturating_func_besov_inf_inf}
    Let us define, for every $\lambda_0^D\in\Lambda^D_0 \times \{0,1\}^D$, $g_{\lambda_0^D}=0$, and for every $\lambda_j^D=(j,(k,k'),(l,l'))\in\Lambda^D_j \times L^D$ with $j\geq1$,
    \begin{equation}\label{eq:coef_saturating_func}
        g_{\lambda_j^D} = \begin{cases}
            \displaystyle {j^{-\frac{2}{q}}}~\xi(\lambda^{d}_{j,k}\times\lambda^{d'}_{j,k'}) \sum_{n\in\Z^{d'}} \delta^{k'}_{p_{j}+nK_{\psi}} &\text{if } l\neq 0^d \text{ and } l'=1^{d'},\\
            0 &\text{if } l= 0^d \text{ or } l'\neq1^{d'},
        \end{cases}
    \end{equation}
    with the convention $ {2}/{\infty}=0$.
    The saturating function is $\mathscr{G}_q := \sum_{\lambda^D \in\Lambda^D\times L^D} g_{\lambda^D} \psi_{\lambda^D}$.
\end{defi}
 
The idea is that $\mathscr{G}_q$ has the largest possible wavelet coefficients to maximise $\varepsilon^{\xi,\infty}_{j}$ in \eqref{eq:snorm_inh_besov}, while ensuring that the non-zero wavelet coefficients are sparse and well-separated, to easily compute the wavelet coefficients of its traces.

\begin{prop}
    The function $\mathscr{G}_q$ belongs to $\widetilde{B}^{\xi}_{\infty,q}([0,1]^D)$.
\end{prop}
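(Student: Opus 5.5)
By \eqref{eq:besov_tilde}, it suffices to show that for every fixed $0<\varepsilon<\min(1,s_1)$ we have $\mathscr{G}_q\in B^{\xi^{(-\varepsilon)}}_{\infty,q}([0,1]^D)$. This requires two things: that $\mathscr{G}_q\in L^\infty([0,1]^D)$, and that the seminorm $|\mathscr{G}_q|_{\xi^{(-\varepsilon)},\infty,q}$ is finite. The plan is to read both off directly from the explicit coefficients \eqref{eq:coef_saturating_func}, since everything is defined through the wavelet expansion.

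First I bound $\varepsilon^{\xi^{(-\varepsilon)},\infty}_j$. For $j\ge1$ and $\lambda^D_j$ with $g_{\lambda_j^D}\neq0$, the definition gives
\begin{equation*}
    \frac{|g_{\lambda_j^D}|}{\xi^{(-\varepsilon)}(\lambda_j^D)} = j^{-2/q}\,|\lambda_j^D|^{\varepsilon} \le C\, j^{-2/q}\,2^{-j\varepsilon}.
\end{equation*}
The Kronecker sum $\sum_{n}\delta^{k'}_{p_j+nK_\psi}$ contributes at most $1$ for each $k'$, because the points $p_j+nK_\psi$ are distinct. All other coefficients vanish. Hence $\varepsilon^{\xi^{(-\varepsilon)},\infty}_j\le C j^{-2/q}2^{-j\varepsilon}$, and this is summable in $\ell^q$ for every $q\in[1,\infty]$: the geometric factor $2^{-j\varepsilon}$ alone suffices, and with $2/\infty=0$ the case $q=\infty$ is immediate. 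The $j=0$ coefficients are zero.

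Second, I check that $\mathscr{G}_q\in L^\infty$, so that the series defines a genuine function. For each $j$, a point $x$ lies in the supports of at most a bounded number (depending only on $\psi$ and $D$) of the $\psi_{\lambda^D_j}$. By Lemma~\ref{lemma:B_xi_s-holder}, $\xi(\lambda_j^D)\le C2^{-js}$ with $s=s_\mu+s_\nu>0$. Therefore
\begin{equation*}
    \Big\|\sum_{\lambda_j^D}g_{\lambda^D_j}\psi_{\lambda^D_j}\Big\|_\infty\le C2^{-js}.
\end{equation*}
Summing over $j$ gives normal convergence, so $\mathscr{G}_q$ is continuous, and in particular bounded on $[0,1]^D$.

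I expect no real obstacle here. The only subtle point is that the wavelet coefficients of $\mathscr{G}_q$ are exactly the $g_{\lambda}$. This follows by orthonormality once the series converges in $L^2$, which is guaranteed by the uniform convergence above. It is also worth noting the remark that the $j^{-2/q}$ factor is what makes the stronger statement $\mathscr{G}_q\in B^{\xi}_{\infty,q}$ itself hold, since $\sum_j j^{-2}<\infty$. That refinement is not needed for the tilde space, but it confirms the choice of normalisation.
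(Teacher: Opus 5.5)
Your proposal is correct and follows essentially the same route as the paper: the same $L^\infty$ bound via $|g_{\lambda}|\le \xi(\lambda)\le C2^{-js}$ and bounded overlap of supports, and the same reading-off of $\varepsilon_j$ from the explicit coefficients \eqref{eq:coef_saturating_func}. The only minor difference is in how the tilde space is reached. The paper bounds $\varepsilon^{\xi,\infty}_j\le j^{-2/q}$, obtains the stronger statement $\mathscr{G}_q\in B^{\xi}_{\infty,q}$, and then uses $B^{\xi}_{\infty,q}\subset \widetilde{B}^{\xi}_{\infty,q}$ (since $\xi^{(-\varepsilon)}\ge\xi$ on small cubes). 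You instead verify each $B^{\xi^{(-\varepsilon)}}_{\infty,q}$ directly through the extra factor $2^{-j\varepsilon}$, which, as you note, makes the $j^{-2/q}$ normalisation unnecessary for the tilde space.
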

\begin{proof}
    First, since $\xi$ is $s$-H{\"o}lder, one has
    \begin{equation*}
        \big\|\mathscr{G}_{q}\big\|_{L^{\infty}}
        \leq \sum_{j\geq 1} \bigg\|\sum_{\lambda_j^d\in \Lambda_j^D\times L^D} g_{\lambda_j^D} \psi_{\lambda_j^D}\bigg\|_{L^{\infty}}
        \leq C \sum_{j\geq 1} 2^{-js}\bigg\|\bigg(\frac{g_{\lambda_j^D}}{\xi(\lambda_j^D)}\bigg)\bigg\|_{\ell^{\infty}}
        \leq C \sum_{j\geq 1} 2^{-js} \varepsilon_{j}^{\xi,\infty}.
    \end{equation*}
    Hence $\mathscr{G}_q$ is a bounded function. Next, recall \eqref{eq:snorm_inh_besov}. As for every $j\geq1$ and $\lambda_j^D\in\Lambda^D_j \times L^D$, $g_{\lambda_j^D}\leq j^{- {2}/{q}} \xi(\lambda^D)$, one has $\varepsilon^{\xi,\infty}_{j} = \Big\|\Big(\frac{g_{\lambda^D}}{\xi(\lambda^D)}\Big)_{\lambda^D}\Big\|_{\ell^{\infty}(\Lambda^D_j \times L^D)}\leq j^{-2/q}$.
    This implies that $\big\|\mathscr{G}_q\big\|_{B^{\xi}_{\infty,q}} < \infty$ and $\mathscr{G}_q \in B^{\xi}_{\infty,q}([0,1]^D) \subset \widetilde{B}^{\xi}_{\infty,q}([0,1]^D)$.
\end{proof}

For $a=(a_1,\ldots,a_{d'})\in[0,1]^{d'}$, the trace $\mathscr{G}_{q,a}:x\in[0,1]^d \mapsto \mathscr{G}_q(x,a) \in \R$ has a convenient wavelet decomposition. Indeed, because of our choice \eqref{eq:coef_saturating_func}, for the trace $\mathscr{G}_{q,a}$, all the coefficients \eqref{eq:dec_trace_G_a} are zero since $g_{\lambda_j^D}=0$ when $\lambda_j^D=\lambda_j^d\times\lambda_j^{d'}$ with $\lambda_j^d\in \Lambda_j^d\times \textbf{0}^d$. Hence, only the coefficients \eqref{eq:dec_trace_F_a} remain. Writing then
\begin{equation*}
    \mathscr{G}_{q,a}(x) = \sum_{j\geq 0} \sum_{\lambda_j^{d}\in \Lambda^d_j\times L^d} e_{\lambda_j^d}(a) \psi_{\lambda_j^{d}}(x) ,
\end{equation*}
the coefficients of \eqref{eq:dec_trace_F_a} can be written (recall \eqref{eq:dec_trace_F_a}) as
\begin{equation}\label{eq:def-coef-trace-1}
  e_{\lambda_j^d}(a)= \sum_{ {\lambda^{d'}\in\Lambda^{d'}_j\times \mathbf{1^{d'}} }} g_{\lambda_j^ d\times\lambda_j ^{d'}}\psi_{\lambda^{d'}_j}(a).
\end{equation}
In addition, these remaining coefficients have a fairly simple expression. To see this, for every $\lambda_j^d=(j,k_d,l_d)\in\Lambda_j^d\times L^d$, consider the integer $k_j(a)$ defined by  
\begin{equation}\label{eq:def_kja}
    k_j(a) := \Big(p_{j,1}+\Big\lfloor{\frac{2^j a_1-p_{j,1}}{K_{\psi}}}\Big\rfloor K_{\psi},\ldots,p_{j,d'}+\Big\lfloor{{\frac{2^j a_{d'}-p_{j,d'}}{K_{\psi}}}}\Big\rfloor K_{\psi} \Big) .
\end{equation}
Since $\psi_{\lambda_j^{d'}}$ has a support of length $N_\psi 2^{-j}$, and since the coefficients $g_{\lambda_{j,k}^ d\times\lambda_{j,k'} ^{d'}}$ are zero except when $k'=p_j \mod N_\psi$, the only non-zero wavelet coefficient $g_{\lambda_j^D}$ in the sum \eqref{eq:def-coef-trace-1} is the one at $\lambda_j^D=(j,(k_d,k_j(a)),(l_d,1^{d'}))$. Observe that as soon as $j$ is so large that $N_\psi2^{-j} \leq a/2$, then $ k_j(a) $ has strictly positive entries. One concludes that $a\in (0,1]^{d'}$ being fixed, when $j$ is large, one has  
\begin{equation}\label{eq:definition_trace_func_saturation}
     e_{\lambda_j^d}(a) := {j^{- {2}/{q}}}\mu(\lambda_j^d) \nu\big(\lambda_{j,k_j(a)}^{d'}\big) G^{d'}_{j}(a).   
\end{equation}
This formula is key for the rest of the paper, and justifies our choice for the saturation function $\mathcal{G}_q$. One can now understand the relevance of Theorem \ref{theo:low_bound_1per_wlt}, which allows us to to ensure that the coefficients $|e_{\lambda_j^d}(a)|$ are not too small - this will be used to bound by above the pointwise H\"older exponent of the traces. 

Regarding the multifractal properties, Theorem \ref{theo:incl_trace} applied to $\mathscr{G}_q$ gives:
\begin{coro}
    For every $r\in\R$, for $\nu_{r}$-almost all $a\in[0,1]^{d'}$, $\mathscr{G}_{q,a} \in \widetilde{B}^{\mu^{(+h_{\nu}^{r})}}_{\infty,q}([0,1]^d)$.
\end{coro}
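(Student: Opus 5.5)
The plan is to apply Theorem \ref{theo:incl_trace} with $p=\infty$ to the function $\mathscr{G}_q$, and then to upgrade the third index of the target space from $1$ to $q$. First I verify the hypothesis $\mathscr{G}_q\in\widetilde{B}^{\xi}_{\infty,q}([0,1]^D)$, which is the preceding proposition. Theorem \ref{theo:incl_trace}, with $\Gamma^\infty_{\nu,r}=h^r_\nu$ since $\alpha/\infty=0$, then gives, for every $r\in\R$ and $\nu_r$-almost every $a$,
\begin{equation*}
\mathscr{G}_{q,a}\in \widetilde{B}^{\mu^{(+h^r_\nu)}}_{\infty,1}([0,1]^d).
\end{equation*}
The remaining step is the embedding $\widetilde{B}^{\kappa}_{\infty,1}\hookrightarrow\widetilde{B}^{\kappa}_{\infty,q}$ for $\kappa=\mu^{(+h^r_\nu)}$. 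For each $\varepsilon$, $\ell^1(\N)\subset\ell^q(\N)$ gives $B^{\kappa^{(-\varepsilon)}}_{\infty,1}\subset B^{\kappa^{(-\varepsilon)}}_{\infty,q}$, and intersecting over $\varepsilon$ yields the embedding. This is exactly the last chain of inclusions in the proof of Theorem \ref{theo:incl_trace}.

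One point needs care. The intersection defining $\widetilde{B}^{\mu^{(+h^r_\nu)}}$ runs over $\varepsilon$ below $\min(1,s_1(\kappa))$, where $s_1(\kappa)$ is the lower H\"older exponent of $\kappa$. Meanwhile, Proposition \ref{prop:belong_trace_inter} produces membership in $B^{\mu^{(+\Gamma)}}_{\infty,1}$ for all $\Gamma<h^r_\nu$. I will check that these coincide, using $\mu^{(+\Gamma)}=(\mu^{(+h^r_\nu)})^{(-(h^r_\nu-\Gamma))}$ together with Remark \ref{rem:incl_besov_xi_eps}, which says only small $\varepsilon$ matter. This is bookkeeping rather than a real obstacle.

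A direct alternative uses formula \eqref{eq:definition_trace_func_saturation}. On the set $A^{\nu_r}_{n,m}$ of Proposition \ref{prop:bound_meas_dyadics}, that formula together with \eqref{eq:bound_meas_alm_all_dyad} and the boundedness of $G^{d'}_j$ gives
\begin{equation*}
|e_{\lambda^d_j}(a)|\le C\, j^{-2/q}\,\mu(\lambda^d_j)\,2^{-j(h^r_\nu-1/m)}.
\end{equation*}
This bound holds for every $a\in A^{\nu_r}_{n,m}$, with no Borel--Cantelli step. It yields directly a $j^{-2/q}$ decay, hence $\ell^q$-summability with respect to $\mu^{(+(h^r_\nu-2/m))}$. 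Taking the union over $n$ and letting $m\to\infty$ then concludes. I would present the first route and mention the second as a check.
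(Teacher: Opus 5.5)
Your proposal is correct and follows the paper's own argument: the paper obtains this corollary directly by applying Theorem \ref{theo:incl_trace} with $p=\infty$ (so $\Gamma^\infty_{\nu,r}=h^r_\nu$) to $\mathscr{G}_q\in\widetilde{B}^{\xi}_{\infty,q}([0,1]^D)$, then using the inclusion $\widetilde{B}^{\mu^{(+h^r_\nu)}}_{\infty,1}\subset\widetilde{B}^{\mu^{(+h^r_\nu)}}_{\infty,q}$ from the end of that theorem's proof. Your alternative check via \eqref{eq:definition_trace_func_saturation} and \eqref{eq:bound_meas_alm_all_dyad} is also valid, provided $K$ is taken of order $2K_\psi+1$ so that the cube $\lambda^{d'}_{j,k_j(a)}$ is covered, and it avoids the Borel--Cantelli step, but it is not needed.
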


\subsection{Singularity spectrum of the traces \texorpdfstring{$\mathscr{G}_{q,a}$}{G(q-a)} of the saturating function}\label{subsubsec:spectrum_sat_fct}

\begin{prop}\label{prop:equality_exponent_and_spectrum_sat_fct_Besov_inf_inf}
    For every $r\in\R$, for every $x\in[0,1]^d$ and for $\nu_{r}$-almost all $a\in(0,1]^{d'}$,
    \begin{equation}\label{eq:prop:equality_exponent_saturating_func}
        h_{\mathscr{G}_{a}}(x) = \underline{h}_{\mu}(x) + h^{r}_{\nu} = \underline{h}_{\mu^{(+h^{r}_{\nu})}}(x).
    \end{equation}
    Hence
    \begin{equation}\label{eq:prop:equality_spectrum_saturating_func}
        \sigma_{\mathscr{G}_{a}} = \tau^*_{\mu^{(+h^{r}_{\nu})},\infty} = \tau^*_{\mu}(\cdot-h^{r}_{\nu}).
    \end{equation}
\end{prop}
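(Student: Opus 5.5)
The proof of \eqref{eq:prop:equality_exponent_saturating_func} splits into two inequalities for the exponent; the spectrum \eqref{eq:prop:equality_spectrum_saturating_func} is then read off from the SMF for $\mu$. The lower bound $h_{\mathscr{G}_a}(x) \geq \underline{h}_{\mu}(x) + h^{r}_{\nu}$ for $\nu_r$-almost every $a$ is exactly Lemma \ref{lemma:maj_exp_besov_inf_inf_alm_all_a} applied to $f=\mathscr{G}_q$. It can also be checked directly from \eqref{eq:definition_trace_func_saturation}. On the set $A^{\nu_r}_{n,m}$ of Proposition \ref{prop:bound_meas_dyadics}, with $K$ chosen larger than $K_\psi$ so that $a\in K\lambda^{d'}_{j,k_j(a)}$, one has $\nu(\lambda^{d'}_{j,k_j(a)})\leq 2^{-j(h^r_\nu-1/m)}$. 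Hence $|e_{\lambda^d_j}(a)|\leq C\mu(\lambda_j^d)2^{-j(h^r_\nu-1/m)}$. The leader $L_{\lambda^d_j(x)}(a)$ is then bounded by $C\mu(\lambda^d_j(x))2^{-j(h^r_\nu-1/m)}$ through the doubling property, as in \eqref{eq:maj_L_ja}, and Proposition \ref{prop:hold_exp_wlt} concludes once we take the union over $n$ and let $m\to\infty$.

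The upper bound $h_{\mathscr{G}_a}(x)\leq \underline{h}_\mu(x)+h^r_\nu$ is the substantive part. Fix $a\in \bigcup_n A^{\nu_r}_{n,m}$, so that for $j$ large both $\nu(\lambda^{d'}_{j,k_j(a)})\geq 2^{-j(h^r_\nu+1/m)}$ and \eqref{eq:definition_trace_func_saturation} hold. Take a sequence $j_i\to\infty$ realising $\underline{h}_\mu(x)$ along dyadic cubes $\lambda^d_{j_i}(x)$. Write $j_i = J + t$ with $J\equiv 0 \pmod{N(d')}$ and $0\leq t<N(d')$. Theorem \ref{theo:low_bound_1per_wlt} gives some $\ell\in\{0,\dots,N(d')-1\}$ with $|G^{d'}_{J+\ell}(a)|\geq\alpha^{d'}$.

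Set $j'=J+\ell$, which satisfies $|j'-j_i|\leq N(d')$. Pick a child or ancestor cube $\lambda'\in\Lambda^d_{j'}$, with $l\neq 0^d$, contained in $3\lambda^d_{j_i-N(d')}(x)$. Then
\[
L_{\lambda^d_{j_i-N(d')}(x)}(a)\geq |e_{\lambda'}(a)| \geq (j')^{-2/q}\mu(\lambda')\,2^{-j'(h^r_\nu+1/m)}\alpha^{d'} .
\]
By the doubling and quasi-Bernoulli properties, $\mu(\lambda')\geq C^{-1}\mu(\lambda^d_{j_i}(x))$ with $C$ depending only on $N(d')$. Taking logarithms shows that $\liminf_j \log L_{\lambda^d_j(x)}(a)/\log 2^{-j}\leq \underline{h}_\mu(x)+h^r_\nu+1/m$. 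The polynomial factor $j^{-2/q}$ and the bounded shift $N(d')$ are negligible at this scale. Proposition \ref{prop:hold_exp_wlt} applies, since $\mathscr{G}_{q}$ is uniformly Hölder by \eqref{eq:embedding_besov_xi_product_measure} and the wavelet regularity $n$ exceeds $s_2$. Letting $m\to\infty$ along a countable sequence gives the equality. The identity $\underline{h}_\mu(x)+h^r_\nu=\underline{h}_{\mu^{(+h^r_\nu)}}(x)$ is immediate from the definition of $\mu^{(+s)}$.

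For the spectrum, $E_{\mathscr{G}_a}(h)=\underline{E}_\mu(h-h^r_\nu)$ for every $h$, so $\sigma_{\mathscr{G}_a}(h)=\sigma_\mu(h-h^r_\nu)=\tau_\mu^*(h-h^r_\nu)$ by Proposition \ref{prop:prop_capa_SMF}(i). This also equals the Legendre transform of $\tau_{\mu^{(+h^r_\nu)}}(q)=\tau_\mu(q)+qh^r_\nu$.

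The main obstacle is the quantifier order. The exceptional $\nu_r$-null set of $a$ must not depend on $x$. This holds because the sets $A^{\nu_r}_{n,m}$ depend only on $a$, and Theorem \ref{theo:low_bound_1per_wlt} is uniform in $a$. I also need to check carefully that the cube $\lambda'$ at generation $j'$ lies in $3\lambda_{j}(x)$ for a $j$ comparable to $j_i$.
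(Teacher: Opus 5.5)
Your proposal is correct and follows essentially the paper's argument: two-sided bounds on the leaders of $\mathscr{G}_{q,a}$ obtained from \eqref{eq:definition_trace_func_saturation}, using the sets $A^{\nu_r}_{n,m}$ of Proposition \ref{prop:bound_meas_dyadics}, Theorem \ref{theo:low_bound_1per_wlt} and the doubling property of $\mu$, then intersecting over $m$ and reading the spectrum off the SMF of $\mu$. The differences are cosmetic: you run the upper bound along a subsequence realising $\underline{h}_\mu(x)$ rather than using the monotonicity of leaders over $j\equiv 0 \pmod{N(d')}$. One small fix: to guarantee $a\in K\lambda^{d'}_{j,k_j(a)}$ you need $K\geq 2K_\psi-1$ (the paper takes $K=2K_\psi$), not merely $K>K_\psi$, because $a$ can lie almost $K_\psi 2^{-j}$ to the right of the left edge of that cube.
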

\begin{proof}
    Let $m,n\in\N^*$ be such that $m \geq M_{\nu_{r}} := \partent{ {1}/{h^{r}_{\nu}}}+1$. 
    Let $A^{\nu_r}_{n,m}$ and $J_{n,m}$ be the set and integer obtained in Proposition \ref{prop:bound_meas_dyadics}. Let $a\in A^{\nu_r}_{n,m}\cap (0,1]^{d'}$. By \eqref{eq:bound_meas_alm_all_dyad}, one has $2^{-j(h^{r}_{\nu}+1/m)} \leq \nu(\lambda_j^{d'}) \leq 2^{-j(h^{r}_{\nu}-1/m)}$ for all $\lambda_j^{d'}\in\Lambda_j^{d'}$ such that $a\in (2 K_{\psi}) \lambda_j^{d'}$ and $j\geq J_{n,m}$.
    
    We first bound above $L_{\lambda^{d}}(a)$ \eqref{eq:wlt_lead_trace} of $\mathscr{G}_{q,a}$.
    Consider $j\equiv 0 \pmod{N(d')}$. Using \eqref{eq:definition_trace_func_saturation}, for $\lambda_j^{d}=(j,k)\in\Lambda_j^{d}$, and that $|G^{d'}_{j'}(a)| \leq \max\{\|\psi\|_{L^{\infty}}^{d'},1\}$, one has
    \begin{align*}
        L_{\lambda^{d}_j}(a) 
        &= \sup_{j'\geq j} \sup_{ {\Lambda_{j'}^{d}\ni \lambda^{d}_{j'}\subset 3 \lambda^d_j }} \bigg|\frac{1}{(j')^{2/q}}\mu(\lambda_{j'}^{d})\nu\big(\lambda^{d'}_{j',k_{j'}(a)}\big) G^{d'}_{j'}(a)\bigg| \\
        &\leq \sup_{j'\geq j} \sup_{ {\Lambda_{j'}^{d}\ni \lambda^{d}_{j'}\subset 3 \lambda^d_j }} \big|{\mu(\lambda^{d}_{j'})} 2^{-j'(h^{r}_{\nu}-1/m)}\big| \max\{\|\psi\|_{L^{\infty}}^{d'},1\} \\
        &\leq C 2^{-j(h^{r}_{\nu}-1/m)} \big|{\mu(3\lambda^{d}_{j})}\big|.  
    \end{align*}
    For every $x\in\R^d$, since $3\lambda^{d}_{j}(x) \subset B(x,2\times2^{-j})$, one has that $L_{\lambda^{d}_{j}(x)}(a) \leq C 2^{-j(h^{r}_{\nu}-1/m)} \allowbreak \mu(B(x,2\times2^{-j}))$.
    The doubling property \eqref{eq:doub_meas_ball} of $\mu$ then yields for some $C>0$ depending on $\mu,\psi$ and $d'$
    \begin{equation}\label{eq:leader_saturation_function_upper_bound}
        L_{\lambda^{d}_{j}(x)}(a) \leq C 2^{-j(h^{r}_{\nu}-1/m)} \mu(B(x,2^{-j})).
    \end{equation}
    We now bound by below $L_{\lambda^{d}}(a)$. Consider $j$ so large that $k_j(a)$ contains only positive integers. 
    Let $x\in[0,1]^d$. By Theorem \ref{theo:low_bound_1per_wlt}, for every $j\equiv 0 \mod N(d')$, there is $J\in\{0,\ldots,N(d')-1\}$ such that $\big|G_{j+J}^{d'}(x)\big| \geq \alpha^{d'}$. By \eqref{eq:definition_trace_func_saturation} and \eqref{eq:bound_meas_alm_all_dyad} one deduces that
    \begin{align*}
        L_{\lambda^{d}_{j}(x)}(a) 
        \geq \frac{1}{(j+J)^{2/q}} \mu(\lambda^{d}_{j+J}(x)) \nu (\lambda^{d'}_{j+J,k_{j+J}(a)} ) \big|G^{d'}_{j+J}(a)\big|\\
        \geq 2^{-2\log_2(j+J)/q} \alpha^{d'} 2^{-(j+J)(h^{r}_{\nu}+1/m)} \mu(\lambda^{d}_{j+J}(x)).
    \end{align*}
    Denoting by $y_{\lambda_{j}}$ the center of the cube $\lambda_{j}$, observe that $\lambda^{d}_{j+J}(x) = B (y_{\lambda^{d}_{j+J}(x)},2^{-(j+J+1)} )$. Hence, using the constant $C_{\mu}$ appearing in \eqref{eq:doub_meas_ball}, one gets
    \begin{align*}
        L_{\lambda^{d}_{j}(x)}(a)
        &\geq 2^{-2\log_2(j+N(d'))/q} \alpha^{d'} 2^{-N(d')(h^{r}_{\nu}+1/m)} 2^{-j(h^{r}_{\nu}+1/m)} \mu\big(B(y_{\lambda_{j+J}(x)},2^{-(j+J+1)})\big) \\
        &\geq C 2^{-2\log_2(j+N(d'))/q} 2^{-j(h^{r}_{\nu}+1/m)} \frac{1}{C_{\mu}^{J+2}} \mu\big(B(y_{\lambda_{j+J}(x)},2^{-j+1})\big) \\
        &\geq C 2^{-j \big(h^{r}_{\nu}+\frac{2}{q}\frac{\log_2(j+N(d'))}{j}+1/m \big)} \mu\big(B(x,2^{-j})\big). \numberthis\label{eq:leader_saturation_function_lower_bound}
    \end{align*}
    Last equation \eqref{eq:leader_saturation_function_lower_bound} holds only when $j=0 \mod N(d')$. Since the wavelet leaders $L_{\lambda^{d}_{j}(x)}(a)$ are decreasing in $j$, the liminf of the above quantities when $j\to +\infty$ equals the liminf restricted to $j=0 \mod N(d')$. Hence, Proposition \ref{prop:hold_exp_wlt} combined with \eqref{eq:leader_saturation_function_upper_bound}, \eqref{eq:leader_saturation_function_lower_bound} and the lower local dimension $\underline{h}_{\mu}(x)$ together yield that
    \begin{equation*}
        (h^{r}_{\nu}-1/m) + \underline{h}_{\mu}(x) \leq h_{\mathscr{G}_{a}}(x) \leq (h^{r}_{\nu}+1/m) + \underline{h}_{\mu}(x).
    \end{equation*}
    These inequalities hold for $a\in A^{\nu_r}_{n,m}\cap(0,1]^{d'}$ and remain true for every $a\in \bigcup_{n\in\N^*} A^{\nu_r}_{n,m}\cap(0,1]^{d'}$ and $m \geq M_{\nu_{r}}$ as they do not depend on $n$. Using that the sets $\bigcup_{n\in\N^*} A^{\nu_r}_{n,m}$ are of full $\nu_r$-measure for all $m$, $\big(\bigcap_{m \geq M_{\nu_r}} \bigcup_{n\in\N^*} A^{\nu_r}_{n,m}\big)\cap(0,1]^{d'}$ is of full $\nu_r$-measure as a countable intersection of full $\nu_r$-measure sets. Hence for every $a\in \big(\bigcap_{m \geq M_{\nu_r}} \bigcup_{n\in\N^*} A^{\nu_r}_{n,m}\big)\cap(0,1]^{d'}$,
    \begin{equation}
        h_{\mathscr{G}_{a}}(x) = \underline{h}_{\mu}(x) + h^{r}_{\nu}.
    \end{equation}
    Then by definition of $\mu^{(+h^{r}_{\nu})}$, $\underline{h}_{\mu^{(+h^{r}_{\nu})}}(x) = \underline{h}_{\mu}(x) + h^{r}_{\nu}$ giving \eqref{eq:prop:equality_exponent_saturating_func}. 
    Since $\mu$ satisfies the SMF, one concludes that for $\nu_{r}$-almost all $a\in(0,1]^{d'}$, $\sigma_{\mathscr{G}_{a}} = \sigma_{\mu^{(+h^{r}_{\nu})}} = \tau^*_{\mu}(\cdot-h^{r}_{\nu})$.
\end{proof}

\section{Prevalent singularity spectrum of traces}\label{sec:proof_prev_spect}

\subsection{Prevalence property of an auxiliary set}

The key result to obtain the prevalent value of the singularity spectrum of functions in $\widetilde{B}^{\xi}_{\infty,q}([0,1]^D)$ in Theorem \ref{theo:prev_spect_inh_besov_inf_q} is the following:

\begin{prop}\label{prop:prevalent_set_in_B_xi_inf_q} 
    Let $r\in \R$. The set
    \begin{multline}
        \mathcal{F}^{r} = \big\{ f\in \widetilde{B}^{\xi}_{\infty,q}([0,1]^D) : \exists A(f) \text{ of full $\nu_r$-measure in } (0,1]^{d'} \big. \\
        \big. \text{such that } a\in A(f) \implies \forall x \in [0,1]^{d},~h_{f_{a}}(x) \leq \overline{h}_{\mu}(x) + h^{r}_{\nu} \big\}
    \end{multline}
    is prevalent in $\widetilde{B}^{\xi}_{\infty,q}([0,1]^D)$.
\end{prop}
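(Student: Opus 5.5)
We use the classical transverse-measure scheme, in the spirit of Fraysse--Jaffard and Aubry--Maman--Seuret, with the saturating function $\mathscr{G}_q$ of Definition \ref{defi:saturating_func_besov_inf_inf} as probe. The transverse measure $\vartheta$ is the law of the random function $\beta\,\mathscr{G}_q$, with $\beta$ uniform in $[0,1]$. It is supported on the compact segment $\{\beta\mathscr{G}_q:\beta\in[0,1]\}$ of $\widetilde{B}^{\xi}_{\infty,q}$. We must show that for every $f\in\widetilde{B}^{\xi}_{\infty,q}([0,1]^D)$ the set of $\beta$ with $f+\beta\mathscr{G}_q\notin\mathcal{F}^r$ is Lebesgue-null. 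Separately, we must check that $(\mathcal{F}^r)^{\complement}$ is universally measurable. For this we write $\mathcal{F}^r$ through countable operations on conditions of the form ``$\nu_r(\{a : \dots\})=1$'' with Borel conditions in $(f,a,x)$. This makes the complement analytic, hence universally measurable.

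\textbf{Key step (linear dichotomy).} Fix $f$ and $a$ in the full-measure set of Proposition \ref{prop:equality_exponent_and_spectrum_sat_fct_Besov_inf_inf}, intersected with $\bigcap_m\bigcup_n A^{\nu_r}_{n,m}$. Let $c_{\lambda^d}(a)$ be the trace coefficients of $f_a$. The trace coefficients of $(f+\beta\mathscr{G}_q)_a$ are $c_{\lambda^d}(a)+\beta e_{\lambda^d}(a)$, with $e_{\lambda^d}(a)$ given by \eqref{eq:definition_trace_func_saturation}. By Theorem \ref{theo:low_bound_1per_wlt}, for $j\equiv0 \pmod{N(d')}$ and any $x$ there is $\ell<N(d')$ with
\[
|e_{\lambda^d_{j+\ell}(x)}(a)|\ge C\,(j+\ell)^{-2/q}\,2^{-(j+\ell)(h^r_\nu+1/m)}\,\mu(\lambda^d_{j+\ell}(x)).
\]
For two values $\beta\neq\beta'$, the coefficient at $\lambda^d_{j+\ell}(x)$ of at least one of $(f+\beta\mathscr{G}_q)_a$ and $(f+\beta'\mathscr{G}_q)_a$ is at least $\tfrac{|\beta-\beta'|}{2}|e_{\lambda^d_{j+\ell}(x)}(a)|$. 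Hence for at most one value of $\beta$ (up to a quantitative margin) can the leaders be much smaller than $2^{-j(h^r_\nu+2/m)}\mu(\lambda^d_j(x))$ for infinitely many $j$. By Proposition \ref{prop:hold_exp_wlt} and $\mu(\lambda_j^d(x))\ge 2^{-j(\overline{h}_\mu(x)+1/m)}$ for large $j$, the exponent satisfies $h\le \overline{h}_\mu(x)+h^r_\nu+O(1/m)$ unless the leaders are small along a subsequence. Pointwise one only needs one good scale per block, and the bad event at $x$ concerns all large $j$. So a single exceptional $\beta$ per $x$ does not suffice directly, and we need uniformity in $x$.

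\textbf{Uniformization (main obstacle).} The exceptional $\beta$ may depend on $(a,x)$, and there are uncountably many $x$. We plan a discretization. For each $j$ and each dyadic cube $\lambda^d\in\Lambda^d_j$, consider the bad set
\[
B_{j,\lambda}(a)=\Bigl\{\beta : \max_{\ell<N(d')}\ \max_{\lambda'\subset\lambda,\,\lambda'\in\Lambda^d_{j+\ell}} |c_{\lambda'}(a)+\beta e_{\lambda'}(a)|\ \le\ j^{-2}2^{-(j+\ell)(h^r_\nu+1/m)}\mu(\lambda')\Bigr\}.
\]
Here we choose $\lambda'$ to be the cube $\lambda^d_{j+\ell}(x)$ given by Theorem \ref{theo:low_bound_1per_wlt}. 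Since that theorem's index $\ell$ depends only on $a$ through $G^{d'}$, it is in fact uniform in $x$: the factor $G^{d'}_{j+\ell}(a)$ is independent of $\lambda^d$. Using $\lambda'$ with $|e_{\lambda'}|$ large, $B_{j,\lambda}(a)$ is an interval of length $O(j^{-2+2/q})$. Because the good $\ell$ is uniform in $\lambda$, the intervals for different $\lambda$ do not need to be unioned naively. Instead, fix $\ell=\ell_j(a)$ and observe that the ratio $e_{\lambda'}/\mu(\lambda')$ is independent of $\lambda'$. Then $\beta$ is bad at $\lambda$ iff $\beta\in -c_{\lambda'}/e_{\lambda'}+O(j^{-2})$, with the threshold adjusted so the width is summable.

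Summing over $j$ still requires controlling the number of distinct centers; this is where we expect real difficulty. We would try replacing pointwise-in-$x$ control by controlling $\limsup_j$ along the set of $x$. We would use Fubini in $(\beta,a)$, with $\nu_r\otimes\mathcal{L}^1$, together with Borel--Cantelli as in the proof of Proposition \ref{prop:belong_trace_inter}. The goal is that for $\mathcal{L}^1$-a.e.\ $\beta$ and $\nu_r$-a.e.\ $a$, for all large $j$ and all $\lambda\in\Lambda^d_j$, $\beta\notin B_{j,\lambda}(a)$. Finally we intersect over $m$ and apply Fubini to swap ``a.e.\ $\beta$, a.e.\ $a$'', which yields the set $A(f+\beta\mathscr{G}_q)$ required in $\mathcal{F}^r$.
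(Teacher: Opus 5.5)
\textbf{The gap: a one-dimensional probe cannot be transverse.}
Your local mechanism is right, and it is exactly what the paper uses. Two parameters that both keep the trace coefficients small at a scale where $|G^{d'}_{j+\ell}(a)|\ge\alpha^{d'}$ must be close. The good $\ell$ depends only on $a$, so it is uniform in $x$. However, the step you flag as the main obstacle is a genuine gap, and it cannot be closed with the one-dimensional probe $\{\beta\mathscr{G}_q\}$.

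At scale $j$ there are $2^{dj}$ cubes, each with its own bad interval centred at $-c_{\lambda'}(a)/e_{\lambda'}(a)$. With your threshold these intervals have length $O(j^{-2+2/q})$, which does not even shrink when $q=1$. Suppose you build in a quantitative gap as the paper does: a hypothetical exponent $\gamma\ge\overline{h}_\mu(x)+h^r_\nu+2/p$ together with $\mu(\lambda_j(x))\ge 2^{-j(\gamma-h^r_\nu-1/p)}$. The length is then at best $2^{-j(1/p-1/m)}$. The union bound $2^{dj}2^{-j(1/p-1/m)}$ is still never summable, since $1/p\le 1\le d$. Averaging over $a$ does not help either, because the centres can be independent of $a$.

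In fact the segment is not transverse to $(\mathcal{F}^r)^\complement$. Let $f$ have coefficients $c_{\lambda^D}=-t_{\lambda^d}\,g_{\lambda^D}$, where $t_{\lambda^d}\in[0,1)$ is the first coordinate of the lower corner of $\lambda^d$. Then:
\begin{itemize}
\item $f\in B^{\xi}_{\infty,q}$, and the traces of $f+\beta\mathscr{G}_q$ have coefficients $(\beta-t_{\lambda^d})e_{\lambda^d}(a)$.
\item At every $x$ with $x_1=\beta$, for $\nu_r$-typical $a$ and large $j$, one gets $L_{\lambda^d_j(x)}(a)\lesssim 2^{-j}2^{-j(h^r_\nu-1/m)}\mu(\lambda^d_j(x))$.
\item Hence $h_{(f+\beta\mathscr{G}_q)_a}(x)\ge 1+\underline{h}_\mu(x)+h^r_\nu-1/m>\overline{h}_\mu(x)+h^r_\nu$ wherever $h_\mu(x)$ exists. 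For a.e. $\beta$ the slice $\{x_1=\beta\}$ contains such points, since they have full Lebesgue measure.
\end{itemize}
So $f+\beta\mathscr{G}_q\notin\mathcal{F}^r$ for a.e. $\beta$. Your target statement, ``for a.e. $\beta$, for all large $j$ and all $\lambda$, $\beta\notin B_{j,\lambda}(a)$'', fails for this $f$ for every $\beta$.

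\textbf{The missing idea: a high-dimensional probe.}
The probe must have enough dimensions that the volume of each bad set beats the $2^{dj_0}$ count of cubes. The paper first reduces $(\mathcal{F}^r)^\complement$ to the countable family $\mathcal{O}^{p,\gamma,M}_{n,m}$, which encodes the gap $1/p$. It then uses the probe $\mathcal{P}=\operatorname{span}(\mathscr{G}^{(1)},\dots,\mathscr{G}^{(d_1)})$ with $d_1=p(d+1)$. Here $\mathscr{G}^{(i)}$ keeps the coefficients of $\mathscr{G}_q$ only on the $i$-th block of $N(d')$ scales modulo $d_1N(d')$.

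Suppose $\beta$ and $\widetilde\beta$ both produce a bad point in a cube $\lambda_0$ of scale $j_0$. Applying your dichotomy once in each of the $d_1$ blocks preceding $j_0$ gives $|\beta_i-\widetilde\beta_i|\lesssim j_0^{2/q}2^{-j_0(1/p-1/m)}$ for every $i$. Hence $\mathcal{L}_{d_1}(\mathcal{B}_{a,\lambda_0})\lesssim 2^{-j_0d_1(1/p-1/m)}$ up to polynomial factors. The series $\sum_{j_0}2^{dj_0}2^{-j_0d_1(1/p-1/m)}$ converges because $d_1/p=d+1$ and $d_1/m<1$. Borel--Cantelli in $j_0$ then gives $\mathcal{L}_{d_1}(\mathcal{B}_a)=0$ for each $a\in A^{\nu_r}_{n,m}$, and Fubini in $(a,\beta)$ concludes.
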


The proof of Proposition \ref{prop:prevalent_set_in_B_xi_inf_q} is postponed to Sections \ref{subsec:analyticity_O_nm} and \ref{subsec:shyness_O_nm}. For the moment, we admit it and explain how to deduce Theorem \ref{theo:prev_spect_inh_besov_inf_q}. 

\begin{proof}[Proof of Theorem \ref{theo:prev_spect_inh_besov_inf_q}]
    Using Lemma \ref{lemma:maj_exp_besov_inf_inf_alm_all_a} and Proposition \ref{prop:prevalent_set_in_B_xi_inf_q}, for all $f\in\mathcal{F}^{r}$, there is a set $A(f)$ of full $\nu_r$-measure in $(0,1]^{d'}$ such that for every $x\in[0,1]^{d}$, 
    \begin{equation}\label{eq:theo:prevalent_spectrum:bound_h_f}
        \underline{h}_{\mu}(x) + h^{r}_{\nu} \leq h_{f_{a}}(x) \leq \overline{h}_{\mu}(x) + h^{r}_{\nu}.
    \end{equation}
    
    Let us begin with the upper bound $\sigma_{f_{a}} \leq \sigma_{\mu}(\cdot - h^{r}_{\nu})$. 
    For all $h \in\R_{+}$,
    \begin{align*}
        E_{f_{a}}(h) &\subset \{x \in \R^{d}~:~h_{f_{a}}(x) \leq h\} \subset \{x \in \R^{d}~:~\underline{h}_{\mu}(x) + h^{r}_{\nu} \leq h\} = \underline{E}_{\mu}^{\leq}(h - h^{r}_{\nu}), \\
        E_{f_{a}}(h) &\subset \{x \in \R^{d}~:~h_{f_{a}}(x) \geq h\} \subset \{x \in \R^{d}~:~\overline{h}_{\mu}(x) + h^{r}_{\nu} \geq h\} = \overline{E}_{\mu}^{\geq}(h - h^{r}_{\nu}).
    \end{align*}
    Items \ref{item:dim_E_leq} and \ref{item:dim_E_geq} of Proposition \ref{prop:prop_capa_SMF} give:
    \begin{itemize}[noitemsep]
        \item for every $h \leq \tau'_{\mu}(0) + h^{r}_{\nu}$, $\dimH\big(\underline{E}_{\mu}^{\leq}(h - h^{r}_{\nu})\big) = \tau_{\mu}^{*}(h - h^{r}_{\nu})$,
        \item for every $h \geq \tau'_{\mu}(0) + h^{r}_{\nu}$, $\dimH\big(\overline{E}_{\mu}^{\geq}(h - h^{r}_{\nu})\big) = \tau_{\mu}^{*}(h - h^{r}_{\nu})$.
    \end{itemize}
    Additionally, $\mu$ verifies the SMF, so for all $h \in\R_{+}$, $\tau_{\mu}^{*}(h) = \sigma_{\mu}(h)$. Hence, $\sigma_{f_{a}} \leq \sigma_{\mu}(\cdot - h^{r}_{\nu})$.

    To prove the lower bound $\sigma_{f_{a}} \geq \sigma_{\mu}(\cdot - h^{r}_{\nu})$, consider $h \in\R_{+}$ and $x\in E_{\mu}(h)$. 
    Recalling Definition \ref{defi:local_dimension_iso_set_and_spectrum}, $\underline{h}_{\mu}(x) = \overline{h}_{\mu}(x) = h_{\mu}(x)$, and \eqref{eq:theo:prevalent_spectrum:bound_h_f} implies that $h_{\mu}(x) = h_{f_{a}}(x) - h^{r}_{\nu}$.
    So, for every $h \in\R_{+}$, $E_{\mu}(h) \subset E_{f_{a}}(h + h^{r}_{\nu})$ and item \ref{item:sigma_is_tau_star} of Proposition \ref{prop:prop_capa_SMF} gives $\sigma_{f_{a}}(h) = \dimH(E_{f_{a}}(h)) \geq \dimH(E_{\mu}(h - h^{r}_{\nu})) = \sigma_{\mu}(h - h^{r}_{\nu})$.

    This holds for a given $r\in \R$. Now, given any countable sequence $(r_n)_{n\in \N}$, the set $\mathcal{F}^{r_n}$ is prevalent in $\widetilde{B}^{\xi}_{\infty,q}([0,1]^D) $, and so is their intersection $\bigcap_{n\in \N}\mathcal{F}^{r_n}$. This concludes the proof of Theorem \ref{theo:prev_spect_inh_besov_inf_q}, since for every $n\in\N$, for $\nu_{r_n}$-a.e $a\in(0,1]^{d'}$, $\sigma_{f_a}(h) = \sigma_{\mu}(h-h^{r_n}_{\nu})$.
\end{proof}

\subsection{A useful decomposition of \texorpdfstring{$\mathcal{F}^{r}$}{F of xi}}\label{subsec:decomposition_of_F_xi}

The objective is to show that, $r$ being fixed, $(\mathcal{F}^{r})^\complement = \widetilde{B}^{\xi}_{\infty,q}([0,1]^d) \backslash \mathcal{F}^{r}$ is shy.
For this, we include $(\mathcal{F}^{r})^\complement $ in a countable union of simpler ancillary sets.
For $n,m,p,M \in \N^{*}$ and $\gamma>0$, let 
\begin{equation*}
    \mathcal{O}_{n,m}^{p,\gamma,M} = \big\{
        f\in \widetilde{B}^{\mu}_{\infty,q}([0,1]^d)~:~\nu_r\big( \mathcal{A}_{n,m}^{p,\gamma,M}(f) \big) > 0 \big\}.
\end{equation*}
with
\begin{equation}\label{eq:A_nm_pgammaM}
    \mathcal{A}_{n,m}^{p,\gamma,M}(f) = \left\{ a\in A^{\nu_r}_{n,m}{\cap(0,1]^{d'}}:
    \begin{array}{c}
        \exists x \in [0,1]^d, \forall \lambda_j^d = (j,k,l)\in\Lambda^d\times L^d,\\
        \big|c^{f}_{\lambda^d_j}(a)\big| \leq M_1\ 2^{-j\gamma} \big(1+|2^j x-k|\big)^{\gamma}\\
        \forall j>M_2,~\mu(\lambda_j(x)) \geq 2^{-j(\gamma-h^{r}_{\nu}-1/p)}
    \end{array}\right\}.
\end{equation}
The sets $\mathcal{O}_{n,m}^{p,\gamma,M}$ contains those functions $f\in \widetilde{B}^{\xi}_{\infty,q}([0,1]^D)$ such that, for a set of $a\in(0,1]^{d'}$ of $\nu_r$-positive measure, for a given $x\in [0,1]^d$, $f_a \in C^{\gamma}(x)$ and simultaneously the $\mu^{(+h^{r}_{\nu})}$-mass of all small dyadic cubes $\lambda^d_j(x)$ is bounded by below. These two properties will allow us to control the ratio $\frac{|c_{\lambda^d}(a)|}{\mu^{(+h^{r}_{\nu})}(\lambda^{d}(x))}$.

\begin{prop}\label{prop:inclusion_complement_F_xi_in_O}
    We have $(\mathcal{F}^{r})^\complement \subset \bigcup_{p\in\N^*} \bigcap_{m\in\N^*} \bigcup_{n\in\N^*} \bigcup_{\gamma\in\Q_{+}} \bigcup_{M\in(\N^*)^2} \mathcal{O}_{n,m}^{p,\gamma,M}$.
\end{prop}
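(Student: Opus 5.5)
We argue by contraposition on a single $f\in(\mathcal{F}^{r})^\complement$ and produce indices $p$, then for every $m$ indices $n,\gamma,M$ with $f\in\mathcal{O}_{n,m}^{p,\gamma,M}$.

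Fix $f\notin\mathcal{F}^{r}$. Then the set
\begin{equation*}
B(f)=\big\{a\in(0,1]^{d'}:\exists x\in[0,1]^d,\ h_{f_a}(x)>\overline{h}_{\mu}(x)+h^{r}_{\nu}\big\}
\end{equation*}
is not $\nu_r$-null; more precisely, it has positive outer $\nu_r$-measure, since otherwise its complement would contain a set $A(f)$ of full measure. We take $B(f)$ measurable here; this analyticity issue is exactly what Section \ref{subsec:analyticity_O_nm} handles. We then quantify the gap. For $a\in B(f)$, pick $x$ with $h_{f_a}(x)>\overline{h}_{\mu}(x)+h^r_\nu$ and choose $p\in\N^*$ with $h_{f_a}(x)>\overline{h}_{\mu}(x)+h^r_\nu+3/p$. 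Writing $B(f)=\bigcup_p B_p(f)$, the sets increase with $p$, so some $B_p(f)$ has positive $\nu_r$-measure. This fixes $p$, and it does not depend on $m$, which is consistent with the order of quantifiers in the statement.

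Now fix $m\in\N^*$ and $a\in B_p(f)$ with its point $x$. Choose a rational $\gamma$ with $\overline{h}_{\mu}(x)+h^r_\nu+2/p<\gamma<h_{f_a}(x)$, taking $\gamma$ below $N$, the wavelet regularity. Two facts follow.

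First, $f_a\in C^{\gamma}(x)$, because $\gamma<h_{f_a}(x)$. By Proposition \ref{prop:equi_hold_space_wlt_coef}, applied to the decomposition \eqref{eq:dec_trace_2} of $f_a$, there is $M_1$ with $|c^f_{\lambda_j^d}(a)|\le M_1 2^{-j\gamma}(1+|2^jx-k|)^\gamma$ for all $\lambda^d_j$.

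Second, $\overline{h}_\mu(x)<\gamma-h^r_\nu-2/p$. By the definition of the upper local dimension (via dyadic cubes, Remark \ref{rem:different_local_dimension_def_ball_dyadic}), there is $M_2$ with $\mu(\lambda_j(x))\ge 2^{-j(\gamma-h^r_\nu-1/p)}$ for all $j>M_2$.

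Hence $a\in\mathcal{A}^{p,\gamma,M}_{n,m}(f)$ with $M=(M_1,M_2)$, provided also $a\in A^{\nu_r}_{n,m}$. Since $\nu_r(\bigcup_n A^{\nu_r}_{n,m})=1$ by Proposition \ref{prop:bound_meas_dyadics}, we get
\begin{equation*}
B_p(f)\subset\bigcup_{n}\bigcup_{\gamma\in\Q_+}\bigcup_{M\in(\N^*)^2}\mathcal{A}^{p,\gamma,M}_{n,m}(f)
\end{equation*}
up to a $\nu_r$-null set. This is a countable union, so one of its members has positive $\nu_r$-measure, which means $f\in\mathcal{O}^{p,\gamma,M}_{n,m}$ for some $n,\gamma,M$. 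As $m$ was arbitrary, $f$ belongs to the intersection over $m$ for this fixed $p$, which proves the inclusion.

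The main obstacle is measure-theoretic. Positivity of $\nu_r$ on $B_p(f)$, and on the sets $\mathcal{A}$, requires them to be measurable, or at least the use of outer measure together with countable subadditivity. I would first try to run the argument with outer measure only, since countable subadditivity of outer measure suffices to extract a member of positive outer measure. The definition of $\mathcal{O}$ uses $\nu_r$ itself, so I would then rely on the analyticity of $\mathcal{A}^{p,\gamma,M}_{n,m}(f)$, established in Section \ref{subsec:analyticity_O_nm}, to identify outer and actual measure. A minor point is that Proposition \ref{prop:equi_hold_space_wlt_coef} requires $\gamma<N$; this holds because the exponents are bounded by the $C^n$ regularity of the wavelets.
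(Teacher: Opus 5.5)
Your proposal is correct and follows essentially the same route as the paper. You fix $p$ from the size of the gap between $h_{f_a}(x)$ and $\overline{h}_\mu(x)+h^r_\nu$; then, for each $m$, you use that $\bigcup_n A^{\nu_r}_{n,m}$ has full $\nu_r$-measure, extract a rational $\gamma$ and $M=(M_1,M_2)$ from Proposition \ref{prop:equi_hold_space_wlt_coef} and the definition of $\overline{h}_\mu$, and conclude by countable subadditivity. Your extra care on two points the paper passes over silently is welcome: you argue with outer measure together with the universal measurability of the sections $\mathcal{A}^{p,\gamma,M}_{n,m}(f)$, and you keep $\gamma$ below the wavelet regularity so that Proposition \ref{prop:equi_hold_space_wlt_coef} applies.
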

\begin{proof}
    We begin by observing that $(\mathcal{F}^{r})^{\complement}$ can be written as 
    \begin{multline*}
        (\mathcal{F}^{r})^{\complement} = \big\{ f\in \widetilde{B}^{\xi}_{\infty,q}([0,1]^D) : \forall A(f) \text{ of full $\nu_r$-measure}, \big. \\ \big.\exists a\in A(f),\exists x \in [0,1]^d, h_{f_{a}}(x) > \overline{h}_{\mu}(x) + h^{r}_{\nu} \big\}.
    \end{multline*}
    We write $(\mathcal{F}^{r})^{\complement} = \bigcup_{p\in\N^*} (\mathcal{F}_{p}^{r})^{\complement}$, where 
    \begin{multline*}
        \mathcal{F}_{p}^{r} = \big\{ f\in \widetilde{B}^{\xi}_{\infty,q}([0,1]^D) : \exists A(f) \text{ of full $\nu_r$-measure}, \big.\\ \big.\forall a\in A(f), \forall x \in [0,1]^d, h_{f_{a}}(x) - \big(\overline{h}_{\mu}(x) + h^{r}_{\nu}\big) \leq {2}/{p} \big\}.
    \end{multline*}
    Suppose that $f\in (\mathcal{F}^{r})^{\complement} $. There exists $p\in \N^*$ such that $f\notin \mathcal{F}_{p}^{r}$. The set $\mathcal{N}=\{a\in[0,1]^{d'} : \exists x\in [0,1]^{d},~h_{f_{a}}(x) - \big(\overline{h}_{\mu}(x) + h^{r}_{\nu}\big) > \frac{2}{p} \}$ has positive $\nu_r$-measure.
    Recall from Proposition \ref{prop:bound_meas_dyadics} that $\bigcap_{m\in\N^*} \bigcup_{n\in\N^*} A^{\nu_r}_{n,m}$ has full $\nu_r$-measure in $(0,1]^{d'}$.
    Considering
    \begin{equation*}
        \mathcal{N}_{n,m} := \{a\in A^{\nu_r}_{n,m}\cap(0,1]^{d'} : \exists x\in [0,1]^{d}, h_{f_{a}}(x) - \big(\overline{h}_{\mu}(x) + h^{r}_{\nu}\big) > 2/p \},
    \end{equation*}
    one has the following decomposition
    \begin{multline*}
        \Big ( \bigcap\limits_{m\in\N^*} \bigcup\limits_{n\in\N^*} \mathcal{N}_{n,m} \Big )^{\complement}
        =\Big ( \bigcap\limits_{m\in\N^*} \bigcup\limits_{n\in\N^*} A^{\nu_r}_{n,m} \Big )^{\complement} \\
        \bigcup \Big \{a\in \bigcap\limits_{m\in\N^*} \bigcup\limits_{n\in\N^*} A^{\nu_r}_{n,m} \cap (0,1]^{d'} : \forall x\in[0,1]^{d}, h_{f_{a}}(x) - \big(\overline{h}_{\mu}(x) + h^{r}_{\nu}\big) \leq {2}/{p}\Big \}.
    \end{multline*}
    Since $(\bigcap_{m\in\N^*} \bigcup_{n\in\N^*} A^{\nu_r}_{n,m})^{\complement}$ has zero $\nu_r$-measure, $f\notin \mathcal{F}_{p}^{r}$ implies that the set
    \begin{equation*}
        \Big \{a\in \bigcap\limits_{m\in\N^*} \bigcup\limits_{n\in\N^*} A^{\nu_r}_{n,m} \cap (0,1]^{d'} : \forall x\in [0,1]^{d},~h_{f_{a}}(x) - \big(\overline{h}_{\mu}(x) + h^{r}_{\nu}\big) \leq {2}/{p} \Big \}
    \end{equation*}
    cannot have full $\nu_r$-measure. Hence, if $f\notin \mathcal{F}_{p}^{r}$, then $\bigcap_{m\in\N^*} \bigcup_{n\in\N^*} \mathcal{N}_{n,m}$ must have positive $\nu_r$-measure. Thus, for every $m\in\N^*$, $\bigcup_{n\in\N^*} \mathcal{N}_{n,m}$ must have positive $\nu_r$-measure, and there exists $n_m \in\N^*$ such that $\mathcal{N}_{n_m,m}$ has positive $\nu_r$-measure.
    
    Suppose $a\in \mathcal{N}_{n_m,m}$, there exists $\gamma \in \Q_{+}$ and $x \in [0,1]^d$ such that $h_{f_a}(x) > \gamma$ and $\overline{h}_{\mu}(x) + h^{r}_{\nu} \leq \gamma- 2/p$. Using \eqref{eq:equi_hold_space_wlt_coef} of Proposition \ref{prop:equi_hold_space_wlt_coef}, if $h_{f_a}(x) > \gamma$, then $f_a \in C^{\gamma}(x)$ , which implies that
    \begin{equation*}
        \exists M_{1}\in\N, \forall \lambda_j = (j,k,l)\in\Lambda\times L^d, \big|c^{f}_{\lambda^d_j}(a)\big| \leq M_{1} 2^{-j\gamma} \big(1+|2^j x-k|\big)^{\gamma}.
    \end{equation*}
    Using Definition \ref{defi:local_dimension_iso_set_and_spectrum}, $\overline{h}_{\mu}(x) + h^{r}_{\nu} \leq \gamma - 2/p$ yields that $\sup_{j>M_2} \frac{\log \mu(\lambda_j(x))}{\log 2^{-j}} \leq \gamma-h^{r}_{\nu}-1/p$ for some integer $M_2$ large enough. This implies that
    \begin{equation*}
        \exists M_2\in\N, \forall j>M_2, \mu(\lambda_j(x)) \geq 2^{-j(\gamma-h^{r}_{\nu}-1/p)}.
    \end{equation*}
    This precisely means that $a$ belongs to the set $\mathcal{A}_{n_m,m}^{p,\gamma,M}(f)$ given in \eqref{eq:A_nm_pgammaM} with $M=(M_1,M_2)$. So $\mathcal{N}_{n_m,m} \subset \bigcup_{\gamma\in\Q_{+}} \bigcup_{M\in(\N^*)^2} \mathcal{A}_{n_m,m}^{p,\gamma,M}(f)$ and since $\nu_r(\mathcal{N}_{n,m})>0$, one of the $\mathcal{A}_{n_m,m}^{p,\gamma,M}(f)$ must have positive $\nu_r$-measure.
    One concludes that $f\in \mathcal{O}_{n_m,m}^{p,\gamma,M}$, for every integer $m$.
    
    To conclude, every $f\in (\mathcal{F}^{r})^{\complement}$ belongs to $\bigcap_{m\in \N} \bigcup_{n\in\N^*} \bigcup_{\gamma\in\Q_{+}} \bigcup_{M\in(\N^*)^2} \mathcal{O}_{n,m}^{p,\gamma,M}$ for some $p>0$. This proves the claim.
\end{proof}

\subsection{Analyticity of \texorpdfstring{$\mathcal{O}^{p,\gamma,M}_{n,m}$}{O p,gamma,M n,m}}\label{subsec:analyticity_O_nm}

To show that $\mathcal{F}^{r}$ is prevalent, it remains to prove the universal measurability of $\mathcal{O}^{p,\gamma,M}_{n,m}$ and that these sets are all shy. This is the purpose of the rest of the article.

We will actually prove that $\mathcal{O}^{p,\gamma,M}_{n,m}$ is an analytic set, which implies its universal measurability.
Analytic sets in Polish spaces are defined as continuous images of Borel sets. Unfortunately, as we work with $\widetilde{B}^{\xi}_{\infty,q}([0,1]^d)$ which is not separable, we cannot use the Polish spaces characterisation of analytic sets and have to consider a more general definition valid for any Hausdorff topological space $X$ endowed with a Borel $\sigma$-algebra $\mathcal{B}(X)$. 
More precisely, Choquet's capacitability theorem \cite[Theorem 30.1]{Choquet:1954:theory_of_capacities} with the following definition of analyticity \cite[Theorem 5.1]{Choquet:1954:theory_of_capacities} offers the right setting for our situation.
\begin{defi}\label{defi:analyticity_hausdorff_space}
    For a compact topological space $K$, the collection of its closed subsets is written $\mathcal{K}$. Let $(\mathcal{B}(X) \times \mathcal{K})_{\sigma \delta}$ be the collection of countable intersections of countable unions of sets that are Cartesian products of a Borel set in $X$ and a closed set in $K$.
    
    Call $\pi:X\times K \to X$ the canonical projection map $\pi(x,y)=x$.
    A set $A\subset X$ is analytic when there is a compact space $K$ and $\mathcal{T}\in(\mathcal{B}(X)\times\mathcal{K})_{\sigma \delta}$ such that $A= \pi(\mathcal{T})$.
\end{defi}
The famous Choquet's capacitability theorem is the following.
\begin{theo}\label{theo:analyticity_to_univ_measurable}
    An analytic set of a Hausdorff topological space is universally measurable.
\end{theo}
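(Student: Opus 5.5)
The plan is to reduce the statement to Choquet's capacitability principle, applied to a set function built from the outer measure. Fix a finite (completed) Borel measure $\vartheta$ on $X$, with outer measure $\vartheta^*$. Let $A=\pi(\mathcal{T})$ be analytic, where $K$ is compact and $\mathcal{T}\in(\mathcal{B}(X)\times\mathcal{K})_{\sigma\delta}$ as in Definition \ref{defi:analyticity_hausdorff_space}. Let $\mathcal{P}$ be the lattice of finite unions of finite intersections of rectangles $B\times F$, with $B\in\mathcal{B}(X)$ and $F\in\mathcal{K}$. It is closed under finite unions and intersections, and $\mathcal{T}\in\mathcal{P}_{\sigma\delta}$. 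On all subsets $E\subset X\times K$, define
\[
I(E):=\vartheta^*(\pi(E)).
\]
The goal is to produce Borel sets $B_k\subset A$ with $\vartheta(B_k)\to\vartheta^*(A)$. Choosing a Borel hull $H\supset A$ with $\vartheta(H)=\vartheta^*(A)$, we then get $A\setminus\bigcup_k B_k\subset H\setminus\bigcup_k B_k$, which is $\vartheta$-null. So $A$ is $\vartheta$-measurable, and since $\vartheta$ is arbitrary, $A$ is universally measurable.

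\textbf{Step 1: $I$ is a $\mathcal{P}$-capacity.} Monotonicity is clear. Continuity along increasing sequences $E_n\uparrow E$ follows because $\pi(E_n)\uparrow\pi(E)$ and outer measures of finite measures are continuous along increasing sequences of arbitrary sets (take measurable hulls). The essential point is the decreasing case. For $P_n\in\mathcal{P}$ decreasing, we need
\[
\pi\Big(\bigcap_n P_n\Big)=\bigcap_n\pi(P_n).
\]
Fix $x\in\bigcap_n\pi(P_n)$. The fibres $(P_n)_x=\{y\in K:(x,y)\in P_n\}$ are finite unions of finite intersections of the closed sets $F$ attached to rectangles whose Borel side contains $x$. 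Hence they are closed, nonempty and decreasing in the compact space $K$, so their intersection is nonempty. Moreover each $\pi(P)$, for $P\in\mathcal{P}$, is Borel: writing $P$ as a finite union of sets $\bigcap_i(B_i\times F_i)$, the projection of each piece is $\bigcap_i B_i$ if $\bigcap_i F_i\neq\emptyset$ and $\emptyset$ otherwise. Hence $I(\bigcap_n P_n)=\lim_n\vartheta(\pi(P_n))$.

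\textbf{Step 2: Choquet's argument.} We show that for $\mathcal{T}=\bigcap_p\bigcup_n P_{p,n}$, with $\bigcup_n P_{p,n}$ increasing in $n$, and every $t<I(\mathcal{T})$, there is $C\in\mathcal{P}_\delta$ with $C\subset\mathcal{T}$ and $I(C)\ge t$. The construction is inductive. By increasing continuity, choose $n_1$ with
\[
I\big(\mathcal{T}\cap P_{1,n_1}\big)>t,
\]
then $n_2$ with $I(\mathcal{T}\cap P_{1,n_1}\cap P_{2,n_2})>t$, and so on. Set $Q_k=P_{1,n_1}\cap\dots\cap P_{k,n_k}\in\mathcal{P}$, a decreasing sequence with $I(Q_k)>t$. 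By Step 1, $C=\bigcap_k Q_k$ satisfies $I(C)=\lim_k I(Q_k)\ge t$, and $C\subset\mathcal{T}$. Its projection $\pi(C)=\bigcap_k\pi(Q_k)$ is Borel and contained in $A$, with $\vartheta(\pi(C))\ge t$. Taking $t\uparrow\vartheta^*(A)=I(\mathcal{T})$ gives the sets $B_k$ required above.

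The main obstacle is Step 1's decreasing-sequence property. This is exactly where compactness of $K$ and closedness of the sets in $\mathcal{K}$ enter, replacing the separability (Polish) assumptions that are unavailable in the non-separable space $\widetilde{B}^{\xi}_{\infty,q}$. One must also check that the lattice $\mathcal{P}$ is rich enough: it must contain the sets used in $\mathcal{T}$, have Borel projections, and have closed fibres.
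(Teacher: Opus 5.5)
Your argument is correct, and it is exactly the Choquet capacitability argument that the paper invokes only by citation (Choquet's Theorem 30.1); you make it self-contained by applying it to the capacity $E\mapsto\vartheta^*(\pi(E))$ on the paving of finite unions of rectangles $B\times F$, where compactness of $K$ gives the key identity $\pi(\bigcap_n P_n)=\bigcap_n\pi(P_n)$. Restricting to finite $\vartheta$ is the right reading of ``universally measurable'': the $\sigma$-finite case follows by restricting to Borel pieces of finite mass, and for non-$\sigma$-finite measures such as counting measure on $\R$ the statement would actually fail.
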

We finally prove the analyticity of $\mathcal{O}^{p,\gamma,M}_{n,m}$, and so its universal measurability.
\begin{prop}\label{prop:universally_measurable_O}
    Let $n,m,p\in\N^*$, $M\in(\N^{*})^2$ and $\gamma\in\Q_{+}$. The set $\mathcal{O}^{p,\gamma,M}_{n,m}$ is universally measurable in $\widetilde{B}^{\xi}_{\infty,q}([0,1]^D)$.
\end{prop}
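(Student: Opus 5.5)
We show that $\mathcal{O}^{p,\gamma,M}_{n,m}$ is analytic in the sense of Definition \ref{defi:analyticity_hausdorff_space}, and then invoke Theorem \ref{theo:analyticity_to_univ_measurable}. The compact space will be $K=[0,1]^{d'}\times[0,1]^d$, the pairs $(a,x)$. The natural object is
\begin{equation*}
\mathcal{T}=\big\{(f,a,x)\in \widetilde{B}^{\xi}_{\infty,q}\times K : a\in \overline{A^{\nu_r}_{n,m}},\ \text{the coefficient and mass conditions of \eqref{eq:A_nm_pgammaM} hold at }(a,x)\big\}.
\end{equation*}
For each fixed $\lambda^d_j$, the map $(f,a)\mapsto c^f_{\lambda^d_j}(a)$ is continuous. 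Indeed, $c^f_{\lambda^d_j}(a)=2^{dj}\int f(y,a)\psi_{\lambda^d_j}(y)\,dy$. By \eqref{eq:embedding_besov_xi_product_measure}, convergence in the Fréchet metric \eqref{eq:metric_B_xi_tilde} implies uniform convergence, and $f$ is uniformly continuous in $a$. Hence the coefficient condition at a fixed $\lambda^d_j$ is a closed condition in $(f,a,x)$ jointly.

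The mass condition $\mu(\lambda_j(x))\ge 2^{-j(\gamma-h^r_\nu-1/p)}$ depends only on $x$. For fixed $j$, it defines a finite union of dyadic cubes, which is Borel but not closed because the cubes are half-open. This is the first difficulty. I would first try to replace it by a closed condition using $\overline{\lambda}$ or $3\lambda$: by doubling, these change $\mu$ only by constants, which can be absorbed after slightly modifying $\gamma$ and $M$. Since the sets only need to cover $(\mathcal{F}^r)^\complement$ in Proposition \ref{prop:inclusion_complement_F_xi_in_O}, such a modification is harmless. Alternatively, one can split according to which cube contains $x$, giving a countable union of products of a Borel set in $X$ with a closed cube in $K$. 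Either way, $\mathcal{T}$ becomes a countable intersection over $\lambda$ and $j$ of countable unions of products (Borel in $f$) $\times$ (closed in $K$). Here I treat the joint closed conditions via the standard fact that closed subsets of $X\times K$, with $K$ compact metric, lie in $(\mathcal{B}(X)\times\mathcal{K})_{\sigma\delta}$: cover $K$ by countably many closed balls of radius $1/k$ and intersect over $k$. Likewise, $A^{\nu_r}_{n,m}\cap(0,1]^{d'}$ is Borel by \eqref{eq:proof:def_A_n_m_nu_r} and is handled similarly.

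The main obstacle is that $\mathcal{O}$ is not merely a projection. It requires $\nu_r(\mathcal{A}(f))>0$, where $\mathcal{A}(f)$ is itself the projection onto $a$ of the $f$-section of $\mathcal{T}$. I would handle this in three steps.
\begin{enumerate}
\item For fixed $f$, the section $\mathcal{T}_f$ is closed in $K$ up to the Borel constraint on $a$, so its projection $\mathcal{A}(f)$ is analytic, hence $\nu_r$-measurable.
\item Use the measurable-selection/capacity fact that $\{f:\nu_r(\pi_a(\mathcal{T}_f))>t\}$ is analytic when $\mathcal{T}$ is in $(\mathcal{B}\times\mathcal{K})_{\sigma\delta}$. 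Concretely, I would show that $f\mapsto \nu_r(\pi_a(\mathcal{T}_f\cap C))$ is upper semicontinuous for compact pieces $C$ (closedness plus compactness of $K$). I would then approximate $\nu_r(\mathcal{A}(f))$ from inside by compact sets, via Choquet capacitability applied to the capacity $E\mapsto\nu_r^*(\pi_a(E))$.
\item Write $\mathcal{O}=\bigcup_{t\in\Q_+}\{f:\nu_r(\mathcal{A}(f))>t\}$.
\end{enumerate}
If step 2 proves too delicate, my fallback is to build $\mathcal{O}$ directly as a projection. Take $K'=K\times\mathcal{P}$, where $\mathcal{P}$ is the weak-$*$ compact set of subprobabilities $\eta\le\nu_r$ with mass $\ge t$. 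Require that $\operatorname{supp}$-type conditions hold $\eta$-almost surely, i.e. $\eta(\pi_a\mathcal{T}_f)=\eta(\mathbf{1})$, with selection of $x$ done measurably. This again yields a $(\mathcal{B}\times\mathcal{K})_{\sigma\delta}$ set whose projection is $\mathcal{O}$. Theorem \ref{theo:analyticity_to_univ_measurable} then concludes.
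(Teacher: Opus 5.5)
\textbf{There is a genuine gap, at the step you yourself call the main obstacle.}

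Your construction of $\mathcal{T}$ is sound and close to the paper's Step~2. Your remark that a closed subset of $X\times K$, with $K$ compact metric, lies in $(\mathcal{B}(X)\times\mathcal{K})_{\sigma\delta}$ is even cleaner than the paper's decomposition into $F_{\lambda^d_j}(n,m)\times X_{\lambda^d_j}(n,m)$. The problem is the passage to the condition $\nu_r(\mathcal{A}^{p,\gamma,M}_{n,m}(f))>0$. Your step~2 is asserted as a ``fact'', and the sketch does not establish it.

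\begin{enumerate}
\item The map $f\mapsto\nu_r(\pi_a(\mathcal{T}_f\cap C))$ is upper semicontinuous only if $\mathcal{T}\cap(X\times C)$ is closed. This fails because of the non-closed constraints ($a\in A^{\nu_r}_{n,m}\cap(0,1]^{d'}$, and the half-open cubes in the mass condition), unless $C$ is chosen inside them.
\item More seriously, capacitability applied for each fixed $f$ produces a compact piece that depends on $f$. You then get $\nu_r(\mathcal{A}(f))$ as a supremum of an uncountable family of u.s.c.\ functions of $f$, which yields neither analyticity nor measurability in $f$.
\end{enumerate}

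To make this route work, you would have to let $C$ range over the compact hyperspace of compact subsets of $K$. You would then need to show that $\{(f,C): C\subseteq\mathcal{T}_f,\ \nu_r(\pi_a C)>t\}$ lies in the $(\mathcal{B}\times\mathcal{K})_{\sigma\delta}$ class of $X$ times that hyperspace. This is the theorem on measures of sections of analytic sets, classical for Polish spaces. It can in fact be carried out here, because $\mathcal{T}$ depends on $f$ only through closed conditions and its constraints in $(a,x)$ are $G_\delta$. None of this is in your proposal.

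The fallback is circular. The requirement $\eta(\pi_a\mathcal{T}_f)=\eta(\mathbf{1})$ presupposes the joint measurability of $(f,\eta)\mapsto\eta(\pi_a\mathcal{T}_f)$, which is exactly what you are trying to prove. Also, measurable selection of $x$ only gives universally measurable selectors, not sets of the required product form.

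\textbf{The missing idea.} The statement asks only for universal measurability, not analyticity of $\mathcal{O}^{p,\gamma,M}_{n,m}$, and this comes cheaply if you keep $a$ in the base space. The paper takes $X=\widetilde{B}^{\xi}_{\infty,q}([0,1]^D)\times(0,1]^{d'}$ and $K=[0,1]^d$, so that only $x$ is projected out.
\begin{itemize}
\item Then $\pi(\mathcal{T})\subseteq X$ is analytic. Hence it is measurable for the completion of $\beta\otimes\nu_r$, for every finite Borel measure $\beta$; the Borel $\sigma$-algebra of $X$ is the product one because $(0,1]^{d'}$ is second countable.
\item Fubini's theorem for complete product measures then gives that $f\mapsto\int_{A^{\nu_r}_{n,m}}\mathds{1}_{\pi(\mathcal{T})}(f,a)\,d\nu_r(a)$ is $\beta$-measurable. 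This integral is $\nu_r(\mathcal{A}^{p,\gamma,M}_{n,m}(f))$.
\item $\mathcal{O}^{p,\gamma,M}_{n,m}$ is the set where this function is positive, so it is $\beta$-measurable.
\end{itemize}

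\textbf{A smaller point.} Replacing $A^{\nu_r}_{n,m}$ by its closure, or the dyadic cubes by closed ones with modified $\gamma,M$, proves something about a different set from the one in the statement. Note that $\nu_r(\overline{A^{\nu_r}_{n,m}}\setminus A^{\nu_r}_{n,m})$ may well be positive. The change is also unnecessary. Finite unions of half-open dyadic cubes and $A^{\nu_r}_{n,m}$ are $G_\delta$, hence $F_{\sigma\delta}$ in a compact metric space, and that is all the $(\mathcal{B}\times\mathcal{K})_{\sigma\delta}$ class requires.
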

\begin{proof}
    We are going to prove the analyticity of $\mathcal{O}^{p,\gamma,M}_{n,m}$ by taking the right sets $X$, $K$ and $\mathcal{T}$ in Definition \ref{defi:analyticity_hausdorff_space}, and then writing it as $\pi(\mathcal{T})$.
    {Denote $X:= (\widetilde{B}^{\xi}_{\infty,q}([0,1]^D) \times (0,1]^{d'},\mathscr{D}(\cdot,\cdot)+\|\cdot\|_{\infty})$ with $\mathscr{D}$ defined in \eqref{eq:metric_B_xi_tilde} and $K:=([0,1]^{d},\|\cdot\|_{\infty})$.} The proof follows some ideas of \cite{Aubry-Maman-Seuret:2013:Traces_besov_results}.
    \medskip

    \underline{Step 1:} For every $\lambda^d_j=(j,k,l)\in\Lambda^d \times L^d$, every $c^{f}_{\lambda^d_j}(a)$ \eqref{eq:dec_trace_2}, let us introduce the maps \\$\phi_{\lambda^d_j}, \phi : X\times K \to \R$ defined as
    \begin{align*}
        \phi_{\lambda^d_j} : (f,a,x) &\mapsto M_1 2^{-j\gamma} \big(1+|2^j x-k|\big)^{\gamma} - \big|c^{f}_{\lambda^d_j}(a)\big| \\
        \phi : (f,a,x) &\mapsto \inf_{\lambda\in\Lambda^d\times L^d} \phi_{\lambda} (f,a,x).
    \end{align*}
    We introduce the set of dyadic cubes
    \begin{align}
        \mathcal{D}^{d}_{\mu}(j,\gamma-h^{r}_{\nu}-1/p) &:= \big\{ \lambda_j\in\Lambda^{d}_{j} ~:~ \frac{\log\mu(\lambda_j)}{\log 2^{-j}} \leq \gamma - h^{r}_{\nu} - {1}/{p}\big\} \nonumber \\ 
        \textnormal{and}\quad \mathcal{D}^{\geq}_{\mu}(\gamma-h^{r}_{\nu}-1/p) &:= \bigcap_{j\geq M_2} \bigcup_{\lambda\in\mathcal{D}^{d}_{\mu}(j,\gamma-h^{r}_{\nu}-1/p)} \lambda. \label{eq:def_D_mu_sup} 
    \end{align}
    Also, let
    \begin{equation*}
        \widetilde{\phi} : f \mapsto \nu_{r}\big( \big\{ a\in A^{\nu_r}_{n,m}\cap (0,1]^{d'} : \exists x\in \mathcal{D}^{\geq}_{\mu}(\gamma-h^{r}_{\nu}-1/p), \ \phi(f,a,x) \geq 0 \big\} \big).
    \end{equation*}
    With these definitions, one sees that $\mathcal{O}^{p,\gamma,M}_{n,m}=\widetilde{\phi}^{-1}\big( (0,+\infty) \big)$.
 
    \begin{lemme}\label{lemma:continuity_trace_func}
        The map $(f,a)\in X \longmapsto f_a\in (C^0([0,1]^d),\|\cdot\|_{L^{\infty}})$ is continuous. 
    \end{lemme}
 \begin{proof}
        Let $\varepsilon>0$, and fix $f,g \in X$ such that $\mathscr{D}(f,g) < \varepsilon$. 
        By \eqref{eq:metric_B_xi_tilde}, for $n:= \partent{\max(1,s_1^{-1})}+1$, one has $2^{-n} \frac{\|f-g\|_{B^{\xi^{(-1/n)}}_{\infty,q}}}{1+\|f-g\|_{B^{\xi^{(-1/n)}}_{\infty,q}}} < \varepsilon.$ This impies $\|f-g\|_{B^{\xi^{(-1/n)}}_{\infty,q}} < 2^{n} \varepsilon$, hence $\|f-g\|_{L^{\infty}} < 2^{n} \varepsilon$.
        
        Then, the uniform continuity of $f$ on $[0,1]^D$ shows the existence of $\delta>0$ such that if $a,b\in [0,1]^{d'}$ verify $\|a-b\|_{\infty}\leq \delta$, then $\|f(\cdot,a)-f(\cdot,b)\|_{L^{\infty}}\leq \varepsilon$.
        One concludes by the triangular inequality that $\|f(\cdot,a)-g(\cdot,b)\|_{L^{\infty}} \leq \|f(\cdot,a)-f(\cdot,b)\|_{L^{\infty}} + \|f-g\|_{L^{\infty}} \leq (C 2^n + 1) \varepsilon$.
    \end{proof}
    \begin{lemme}
        Each $\phi_{\lambda^d_j}$ is continuous on $X\times K$. 
    \end{lemme}
    \begin{proof}
        Indeed, we deal with continuous functions $f$ and Lemma \ref{lemma:continuity_trace_func} shows that $(f,a) \mapsto f_a$ is continuous and $f_a \mapsto c^{f}_{\lambda^d_j}(a)$ is continuous as a scalar product between $f_a$ and a fixed wavelet $\psi_{\lambda^d_j}$. So as a composition of continuous functions, the mapping $(f,a,x) \mapsto c^{f}_{\lambda^d_j}(a)$ is continuous and so if $\phi_{\lambda^d_j}$ as a sum of continuous functions.
    \end{proof}
    
    {\underline{Step 2:} Let us introduce 
    \begin{equation}\label{eq:def-T}
         \mathcal{T} := \phi^{-1} ([0,+\infty)) \cap \big(X \times \mathcal{D}^{\geq}_{\mu}(\gamma-h^{r}_{\nu}-1/p) \big).
    \end{equation}
    
    \begin{prop}\label{lemma:T_in_BXK}
        The set $\mathcal{T}$ belongs to $(\mathcal{B}(X) \times \mathcal{K})_{\sigma \delta}$.
    \end{prop}
    \begin{proof}
        $\bullet$ One first shows that the set $ \phi^{-1} ([0,+\infty))\in (\mathcal{B}(X) \times \mathcal{K})_{\sigma \delta}$. Observe that, as composition of continuous mappings, the mapping 
        \begin{equation*}
            \begin{array}{ccccc}
                X & \longrightarrow & (C^0([0,1]^d),\|\cdot\|_{L^{\infty}}) & \longrightarrow & \R \\
                \big(f,a \big) & \longmapsto & f_a & \longmapsto & c_{\lambda^d_j}(a)
            \end{array}
        \end{equation*}
        is continuous. So, for $n\in\N$ and $m\in \Z$, the set
        \begin{equation*}
                F_{\lambda_j^d}(n,m) := \big \{(f,a)\in X :c^f_{\lambda^d_j}(a) \in 2^{-n}[m,m+1]\big\}.
        \end{equation*}
        belongs to $\mathcal{B}(X)$ as the inverse of a Borel set by a continuous mapping. Let us then define
        \begin{equation*}
            X_{\lambda_j^d}(n,m)
            :=\big\{ x\in[0,1]^d : \sup_{\big(f,a\big)\in F_{\lambda_j^d}(n,m)} \phi_{\lambda_j^d} \big(f,a,x \big) \geq 0 \big\}.
        \end{equation*}
        Observe that when $f$ ranges in $\widetilde{B}^{\xi}_{\infty,q}([0,1]^D)$ and $a$ in $(0,1]^{d'}$, all possible values of $c^f_{\lambda_j^d}(a) \in 2^{-n} [m, m+1]$ are reached. Hence, one can rewrite $X_{\lambda_j^d}(n,m)$ as
        \begin{align*}
            X_{\lambda_j^d}(n,m) 
            &= \Big\{ x\in[0,1]^d :\max_{c\in 2^{-n} [m, m+1]} M_1\,2^{-j\gamma} \big(1+|2^j x-k|\big)^{\gamma} - |c| \geq 0 \Big\}\\
            &= \big\{ x\in[0,1]^d :\exists\, c\in 2^{-n} [m, m+1],~ M_1\,2^{-j\gamma} \big(1+|2^j x-k|\big)^{\gamma} - |c| \geq 0 \big\}.
        \end{align*}
        Using the mappings $v: (x,c) \in\R^d\times\R \mapsto M_1\,2^{-j\gamma} \big(1+|2^j x-k|\big)^{\gamma} - |c| \in \R$ and the projection on the first coordinate $\widetilde{\pi}(x,c)=x$, one sees that                
        \begin{equation*}
            X_{\lambda_j^d}(n,m) 
            = \widetilde{\pi} \big( v^{-1}(\R_{+}) \cap ([0,1]^d\times 2^{-n} [m, m+1]) \big) \cap [0,1]^d.
        \end{equation*}
        Since $v$ and $\widetilde{\pi}$ are continuous, $X_{\lambda_j^d}(n,m)$ is thus compact.
        \begin{lemme}\label{lemma:phi_is_prod}
            One has $\phi_{\lambda_j^d}^{-1} ([0,+\infty)) = \bigcap_{n\in\N} \bigcup_{m\in \Z} F_{\lambda_j^d}(n,m) \times X_{\lambda_j^d}(n,m)$.
        \end{lemme}
        \begin{proof}
            First, let us prove that $\phi_{\lambda_j^d}^{-1} ([0,+\infty)) \subset \bigcap_{n\in\N} \bigcup_{m\in \Z} F_{\lambda_j^d}(n,m) \times X_{\lambda_j^d}(n,m)$.

            Let $(f,a,x)\in \phi_{\lambda_j^d}^{-1} ([0,+\infty))$. Then for every $n\in\N$, there is an integer $m\in\Z$ such that $c^f_{\lambda_j^d}(a) \in 2^{-n} [m, m+1]$ and $(f,a)\in F_{\lambda_j^d}(n,m)$. In addition, $\phi_{\lambda_j^d}\big(f,a,x \big) \geq 0$, so we have $\sup_{(g,b)\in F_{\lambda_j^d}(n,m)} \phi_{\lambda_j^d}\big(g,b,x \big) \geq 0$ and $x\in X_{\lambda_j^d}(n,m)$.
            Hence $(f,a,x) \in F_{\lambda_j^d}(n,m) \times X_{\lambda_j^d}(n,m)$.
            \medskip

            Next, we show the inclusion $\phi_{\lambda_j^d}^{-1} ([0,+\infty)) \supset \bigcap_{n\in\N} \bigcup_{m\in \Z} F_{\lambda_j^d}(n,m) \times X_{\lambda_j^d}(n,m)$.

            Let $(f,a,x)\in \bigcap_{n\in\N} \bigcup_{m\in \Z} F_{\lambda_j^d}(n,m) \times X_{\lambda_j^d}(n,m)$. For a fixed $n\in\N$, there is $m\in\Z$ such that $(f,a)\in F_{\lambda_j^d}(n,m)$ and $x\in X_{\lambda_j^d}(n,m)$, meaning that $c^f_{\lambda_j^d}(a) \in 2^{-n} [m, m+1]$ and $\sup_{(g,b)\in F_{\lambda_j^d}(n,m)} \phi_{\lambda_j^d}\big(g,b,x \big) \geq 0$. One deduces that
            \begin{align*}
                \phi_{\lambda_j^d}\big(f,a,x \big)  
                &\geq \sup_{(g,b)\in F_{\lambda_j^d}(n,m)} \phi_{\lambda_j^d}\big(g,b,x \big) - \Big|\sup_{(g,b)\in F_{\lambda_j^d}(n,m)} \phi_{\lambda_j^d}\big(g,b,x \big) - \phi_{\lambda_j^d}\big(f,a,x \big)\Big|\\
                &\geq - \sup_{(g,b)\in F_{\lambda_j^d}(n,m)} \big|\phi_{\lambda_j^d}\big(g,b,x \big) - \phi_{\lambda_j^d}\big(f,a,x \big)\big| \\
                &\geq - \sup_{(g,b)\in F_{\lambda_j^d}(n,m)} \big|\big|c^{g}_{\lambda_j^d}(b) \big|- \big|c^{f}_{\lambda_j^d}(a)\big|\big| 
                \geq -2^{-n}.
            \end{align*}
            Since this holds for all $n\in\N$, one deduces that $\phi_{\lambda_j^d}\big(f,a,x \big) \geq 0$, hence the result.
        \end{proof}
    Lemma \ref{lemma:phi_is_prod} ensures that $\phi_{\lambda^d_j}^{-1} ([0,+\infty)) \in (\mathcal{B}(X) \times \mathcal{K})_{\sigma \delta}$, which implies that  
    \begin{equation*}
        \phi^{-1} ([0,+\infty)) = \bigcap_{\lambda^d\in\Lambda^d \times L^d} \phi_{\lambda^d}^{-1} ([0,+\infty)) \in (\mathcal{B}(X) \times \mathcal{K})_{\sigma \delta}.
    \end{equation*}
    $\bullet$ To prove Proposition \ref{lemma:T_in_BXK}, it remains to show that in \eqref{eq:def-T}, the second term defining $\mathcal{T} $ also belongs to $(\mathcal{B}(X) \times \mathcal{K})_{\sigma \delta}$. By \eqref{eq:def_D_mu_sup}, the set $\mathcal{D}^{\geq}_{\mu}(\gamma-h^{r}_{\nu}-1/p)$ is closed in $[0,1]^d$ as countable intersection of closed sets. Thus $X \times \mathcal{D}^{\geq}_{\mu}(\gamma-h^{r}_{\nu}-1/p)$ is a countable intersection of countable unions of closed sets, and so belongs to $(\mathcal{B}(X) \times \mathcal{K})_{\sigma \delta}$.
    
    \smallskip
    
    \noindent$\bullet$
    Finally, $\mathcal{T}$ is the intersection of two sets belonging to $(\mathcal{B}(X) \times \mathcal{K})_{\sigma \delta}$, hence the conclusion.
    \end{proof}}

    \underline{Step 3:} By Definition \ref{defi:analyticity_hausdorff_space}, the canonical projection map $\pi:(x,y)\in X\times K \to x\in X$ gives that 
    \begin{equation*}
        \pi(\mathcal{T}) := \big\{ (f,a)\in X : \exists x\in \mathcal{D}^{\geq}_{\mu}(\gamma-h^{r}_{\nu}-1/p), (f,a,x)\in\phi^{-1} ([0,+\infty)) \big\}
    \end{equation*}
    is analytic in the space $(X,\mathcal{B}(X))$. 
 
    \medskip

    \underline{Step 4:} To conclude, notice that $\widetilde{\phi}:\widetilde{B}^{\xi}_{\infty,q}([0,1]^D) \longrightarrow [0,1] $ can be rewritten as 
    \begin{align*}
        \widetilde{\phi}(f)
        &= \nu_{r}\big( \big\{ a\in A^{\nu_r}_{n,m} \cap (0,1]^{d'} :\exists x\in \mathcal{D}^{\geq}_{\mu}(\gamma-h^{r}_{\nu}-1/p), \phi(f,a,x) \geq 0 \big\} \big) \\
        &= \int_{A^{\nu_r}_{n,m}} \mathds{1}_{\pi(\mathcal{T})} (f,a) d\nu_r(a).
    \end{align*}
    Since $\pi(\mathcal{T})$ is then $\beta\otimes\nu_r$-measurable, Fubini's theorem gives that $\widetilde{\phi}$ is $\beta$-measurable, for any complete Borel measure $\beta$ on $\widetilde{B}^{\xi}_{\infty,q}([0,1]^D)$. This yields that $\mathcal{O}^{p,\gamma,M}_{n,m}=\widetilde{\phi}^{-1}\big( (0,+\infty) \big)$ is also $\beta$-measurable, for any complete Borel measure $\beta$ on $\widetilde{B}^{\xi}_{\infty,q}([0,1]^D)$.
\end{proof}

\subsection{Shyness of \texorpdfstring{$\mathcal{O}^{p,\gamma,M}_{n,m}$}{O p,gamma,M n,m}}\label{subsec:shyness_O_nm}

The standard method to prove the shyness of a set is to focus on a finite dimensional \textit{probe space} $\mathcal{P}\subset\widetilde{B}^{\xi}_{\infty,q}([0,1]^D)$ and the Lebesgue measure restricted to $\mathcal{P}$ as a transverse measure.
 
 Recall the integer $N(d')$ and the sequence of multi-integer $(\overline{p_j}=(p_{j,1},\ldots,p_{j,d'}))_{j\in\N}$ obtained in Theorem \ref{theo:low_bound_1per_wlt}.
Let $m \geq p(d+2)$ and $d_1$ be the integer given by 
\begin{equation}\label{eq:def_d_1}
    d_1 = p(d+1).
\end{equation}
\begin{defi}\label{defi:saturating_func_on_multi_dim_subspace}
    The probe space $\mathcal{P}$ is the subspace of $\widetilde{B}^{\xi}_{\infty,q}([0,1]^D)$ spanned by the functions $\mathscr{G}^{(i)} := \sum_{\lambda^D \in\Lambda^D\times L^D} g^{(i)}_{\lambda^D} \psi_{\lambda^D}$, for $i\in \{1,\ldots,d_1\}$, whose wavelet coefficients $g^{(i)}_{\lambda^D}$ are defined in the following way: for every $\lambda^D=(j,k,l)\in \Lambda^D\times \{0,1\}^D$,
    \begin{equation*}
        g^{(i)}_{\lambda^D} = 
        \begin{cases}
            g_{\lambda^D} &\text{if } j\pmod{d_1 N(d')} \in \{0,\ldots,N(d')-1\}+(i-1)\times N(d'),\\
            0 &\text{else}.
        \end{cases}
    \end{equation*}
 \end{defi}
Observe that $\sum_{i=1}^{d_1} \mathscr{G}^{(i)} = \mathscr{G}_q$. The intuition is that $\mathscr{G}_q$ is sampled into $d_1-1$ functions $\mathscr{G}^{(i)}$ whose wavelet coefficients are non-zero only on a regular subsequence of generation $j$. 
More precisely, for $a\in[0,1]^{d'}$, by \eqref{eq:coef_saturating_func} and \eqref{eq:definition_trace_func_saturation}, the trace $\mathscr{G}^{(i)}_{a}$ can be expressed as
\begin{equation}\label{eq:definition_trace_func_saturation_d1_dimensional}
    \mathscr{G}^{(i)}_{a}(x) = \sum_{j\geq 0} \sum_{\lambda_j^{d}\in \Lambda^d_j\times L^d} e^{(i)}_{\lambda_j^d}(a) \psi_{\lambda_j^{d}}(x)
\end{equation}
where, recalling \eqref{eq:definition_trace_func_saturation}
, for every $\lambda^d=(j,k,l)\in\Lambda^d$,
\begin{equation}\label{eq:formula_coefficient_trace_saturating_func_multi_dim_subspace} 
    e^{(i)}_{\lambda_j^d}(a) :=
    \begin{cases}
        e_{\lambda_j^d}(a) &\text{if } j \!\!\! \mod {d_1 N(d')} \in \{0,\ldots,N(d')-1\}+(i-1) N(d'),\\
        0 &\text{else}.
    \end{cases}
\end{equation}

For any arbitrary $f\in \widetilde{B}^{\xi}_{\infty,q}([0,1]^D)$, define
\begin{equation}
    \begin{array}{ccc}
        \R^{d_1} & \to & f+\mathcal{P} \\
        \beta & \mapsto & f^{\beta} = f+ \sum_{i=1}^{d_1} \beta_i \cdot \mathscr{G}^{(i)}.
    \end{array}
\end{equation}

The trace of $f^{\beta}$ is denoted $f^{\beta}_{a}$, the wavelet coefficients of the trace and the associated wavelet leaders are respectively $c_{\lambda^d}^{\beta}(a)$ and $L_{\lambda^d}^{\beta}(a)$.
Similarly, $c_{\lambda^d}(a)$ and $L_{\lambda^d}(a)$ are the wavelet coefficient and the associated leader of $f_a$.
For $a\in (0,1]^{d'}$, let also
\begin{equation*}
    \mathcal{B}_a := \{\beta\in\R^{d_1} : \mathcal{A}_{n,m}^{p,\gamma,M}(f^{\beta})\}
\end{equation*}
 
\begin{lemme}\label{lemma:b_a_measurable}
    The mapping $\bigg\{ \begin{array}{ccc}
    [0,1]^{d'}\times \R^{d_1} & \longrightarrow & \R \\
    (a,\beta) & \longmapsto & \mathds{1}_{\mathcal{B}_a}(\beta)
    \end{array} \big.$ is measurable.
\end{lemme}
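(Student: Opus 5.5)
The plan is to reduce measurability of $(a,\beta)\mapsto \mathds{1}_{\mathcal{B}_a}(\beta)$ to the analyticity machinery of Proposition \ref{prop:universally_measurable_O}, transported along the continuous affine map $\beta\mapsto f^\beta$. Here $\mathcal{B}_a$ is read as the set of $\beta$ with $a\in \mathcal{A}_{n,m}^{p,\gamma,M}(f^{\beta})$. Thus
\[
\{(a,\beta): \beta\in\mathcal{B}_a\} = \{(a,\beta)\in (A^{\nu_r}_{n,m}\cap(0,1]^{d'})\times\R^{d_1} : (f^\beta,a)\in \pi(\mathcal{T})\},
\]
with $\mathcal{T}$ the set \eqref{eq:def-T} built in Step 2.

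First, I would check that $\Theta:(a,\beta)\mapsto (f^\beta,a)\in X$ is continuous. Indeed $\beta\mapsto f+\sum_i\beta_i\mathscr{G}^{(i)}$ is affine from $\R^{d_1}$ into the Fréchet space $\widetilde{B}^{\xi}_{\infty,q}$, and each $\mathscr{G}^{(i)}$ belongs to that space because its coefficients are dominated by those of $\mathscr{G}_q$. Continuity with respect to $\mathscr{D}$ then follows from the seminorm bound $\|\sum\beta_i\mathscr{G}^{(i)}\|_{B^{\xi^{(-1/n)}}}\le C_n|\beta|$.

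Next, rather than pulling back the analytic set $\pi(\mathcal{T})$ abstractly, which only gives universal measurability in $X$, I would redo the construction directly in the Polish space $Y:=(0,1]^{d'}\times\R^{d_1}$. Set $\Phi(a,\beta,x):=\phi(f^\beta,a,x)=\inf_\lambda \phi_\lambda(f^\beta,a,x)$. Each $\phi_\lambda\circ(\Theta\times\mathrm{id})$ is continuous on $Y\times[0,1]^d$, by the lemma that $\phi_{\lambda^d_j}$ is continuous together with Lemma \ref{lemma:continuity_trace_func}. Hence
\[
\mathcal{T}':=\{(a,\beta,x): \Phi(a,\beta,x)\ge0,\ x\in\mathcal{D}^{\geq}_{\mu}(\gamma-h^r_\nu-1/p)\}
\]
is a countable intersection of closed sets intersected with a closed set, so it is closed in $Y\times[0,1]^d$. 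Since $[0,1]^d$ is compact, the projection of $\mathcal{T}'$ onto $Y$ is closed: it is $\sigma$-compact, and in fact closed by a standard tube-lemma argument. Intersecting with the Borel set $A^{\nu_r}_{n,m}\times\R^{d_1}$, which is Borel by \eqref{eq:proof:def_A_n_m_nu_r}, shows that $\{(a,\beta):\beta\in\mathcal{B}_a\}$ is Borel. Its indicator is therefore Borel measurable, and in particular measurable for $\nu_r\otimes\mathcal{L}^{d_1}$, which is what Fubini needs in the shyness argument.

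The main obstacle is the claim that the projection is closed. This uses compactness of $K=[0,1]^d$ and that $\mathcal{T}'$ is closed, so the delicate point is closedness of $\{\Phi\ge0\}$. That holds because $\Phi$ is an infimum of continuous functions and hence upper semicontinuous. If the domain $(0,1]^{d'}$ causes trouble with local compactness in $a$, I would pass to $[\delta,1]^{d'}$ and take a countable union over $\delta=1/k$.
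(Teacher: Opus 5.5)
Your argument is the paper's up to the last step. The paper also sets $\Phi(a,\beta,x)=\inf_{\lambda}\phi_{\lambda}(f^{\beta},a,x)$ and writes $\{(a,\beta):\beta\in\mathcal{B}_a\}$ as the projection onto the $(a,\beta)$-coordinates of $\Phi^{-1}([0,+\infty))\cap\big((0,1]^{d'}\times\R^{d_1}\times\mathcal{D}^{\geq}_{\mu}(\gamma-h^{r}_{\nu}-1/p)\big)$. The difference is how you handle the projection:
\begin{itemize}
\item The paper only uses that this set is Borel. Its projection is then analytic, hence universally measurable by Theorem~\ref{theo:analyticity_to_univ_measurable}.
\item You note that $\{\Phi\geq 0\}=\bigcap_{\lambda}\{\phi_\lambda(f^\beta,a,x)\geq 0\}$ is closed, and that projecting along the compact factor $[0,1]^d$ is a closed map. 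This holds for any $Y$, so your worry about $(0,1]^{d'}$ is unnecessary.
\end{itemize}
When it applies, your route gives a closed, hence Borel, set without analytic sets or Choquet's theorem. That is a genuinely sharper conclusion.

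The cost is that you need $\mathcal{D}^{\geq}_{\mu}(\gamma-h^{r}_{\nu}-1/p)$ to be \emph{closed}, whereas the paper's proof of this lemma only needs it to be Borel. The paper does assert closedness in Step 2 of Proposition~\ref{prop:universally_measurable_O}. However, the dyadic cubes fixed in Section~\ref{subsec:capacity_and_formalism} are half-open, and a finite union of half-open cubes is not closed. The condition $\mu(\lambda^d_j(x))\geq 2^{-j(\gamma-h^{r}_{\nu}-1/p)}$ for all $j>M_2$ can hold along a sequence $x_n\to y$ and fail at a dyadic point $y$, because the two cubes meeting at $y$ have comparable but in general unequal masses. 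So $\mathcal{T}'$ need not be closed, and your tube-lemma step is not justified as written.

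There are two easy repairs:
\begin{itemize}
\item Define $\mathcal{D}^{\geq}_{\mu}$ using closed cubes. By the doubling property of $\mu$, this changes the later uses only by constants.
\item Fall back on the paper's step: $\mathcal{T}'$ is a Borel subset of the Polish space $Y\times[0,1]^d$, so its projection is analytic and therefore $\nu_r\otimes\mathcal{L}_{d_1}$-measurable. That is all Fubini needs in Proposition~\ref{prop:beta_in_Xi_measure_0}.
\end{itemize}

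Finally, the detour through $\Theta$ and the metric $\mathscr{D}$ is not needed. Continuity of $(a,\beta,x)\mapsto\phi_{\lambda}(f^{\beta},a,x)$ follows directly from $c^{\beta}_{\lambda}(a)=c_{\lambda}(a)+\sum_{i}\beta_i e^{(i)}_{\lambda}(a)$ and the continuity in $a$ of each trace coefficient.
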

\begin{proof}
    Let $\Phi : (a,\beta,x) \mapsto \inf_{\lambda^d_j \in\Lambda^d\times L^d} \phi_{\lambda^d_j}(f^{\beta},a,x)$. The map $\Phi$ is Borel on $[0,1]^{d'}\times \R^{d_1} \times [0,1]^{d}$ as an infimum of countably many continuous functions. Writing explicitly $\mathcal{B}_a$ under the form
    \begin{equation*}
        \mathcal{B}_a := 
        \left\{\beta\in\R^{d_1} :
        \begin{array}{c}
            \exists x \in [0,1]^d, \forall \lambda^d_j = (j,k,l)\in\Lambda^d\times L^d, \\
            \big|{c^{\beta}_{\lambda^d_j}(a)} \big|\leq M_1~2^{-j\gamma} \big(1+|2^j x-k|\big)^{\gamma}\\
            \text{and} ~\forall j>M_2,~\mu(\lambda^d_j(x)) \geq 2^{-j(\gamma-h^{r}_{\nu}-1/p)}
        \end{array}\right\},
    \end{equation*}
    one sees that $\mathds{1}_{\mathcal{B}_a}(\beta) = \mathds{1}_{\mathcal{G}}(a,\beta)$, where
    \begin{equation*}
        \mathcal{G}= \big\{ (a,\beta)\in (0,1]^{d'}\times\R^{d_1} \ :\ \exists x\in \mathcal{D}^{\geq}_{\mu}(\gamma-h^{r}_{\nu}-1/p),\ \Phi(a,\beta,x)\geq 0 \big\}.
    \end{equation*}
    This set can be rewritten as
    \begin{equation}\label{eq:measurability_projection_borel_set}
        \mathcal{G}= \widetilde{\pi}\big( \Phi^{-1}([0,+\infty)) \bigcap \big( (0,1]^{d'}\times\R^{d_1} \times \mathcal{D}^{\geq}_{\mu}(\gamma-h^{r}_{\nu}-1/p) \big) \big),
    \end{equation}
    where $\widetilde{\pi}(a,\beta,x)=(a,\beta)$ is the canonical projection on the first two coordinates. The set between brackets in \eqref{eq:measurability_projection_borel_set} being a Borel set, $\mathcal{G}$ is analytic by Definition \ref{defi:analyticity_hausdorff_space}. By Theorem \ref{theo:analyticity_to_univ_measurable}, it is also Lebesgue-measurable. Since $\mathds{1}_{\mathcal{B}_a}(\beta) = \mathds{1}_{\mathcal{G}}(a,\beta)$, the conclusion follows.
\end{proof}

\begin{lemme}\label{lemma:B_a_0_lebesgue_measure}
    For $a\in A^{\nu_r}_{n,m}\cap (0,1]^{d'}$, the $d_1$-dimensional Lebesgue measure $\mathcal{L}_{d_1}$ of $\mathcal{B}_a$ is zero.
\end{lemme}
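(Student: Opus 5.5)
Fix $a\in A^{\nu_r}_{n,m}\cap(0,1]^{d'}$, and suppose by contradiction that $\mathcal{L}_{d_1}(\mathcal{B}_a)>0$. The plan follows the classical probe-space argument of Fraysse--Jaffard and Aubry--Maman--Seuret: show that whenever $\beta,\beta'\in\mathcal{B}_a$, the difference $\beta-\beta'$ must lie in a set whose Lebesgue measure can be made arbitrarily small, namely a union of small balls around $0$. A set $\mathcal{B}$ with $\mathcal{B}-\mathcal{B}$ contained in sets of arbitrarily small measure has measure zero, by Steinhaus' theorem, which says $\mathcal{B}-\mathcal{B}$ contains a neighbourhood of $0$ whenever $\mathcal{L}_{d_1}(\mathcal{B})>0$.

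\textbf{Setting up the two constraints.} Take $\beta,\beta'\in\mathcal{B}_a$, with associated points $x,x'\in[0,1]^d$ from \eqref{eq:A_nm_pgammaM}. By linearity of the trace, $c^{\beta}_{\lambda}(a)-c^{\beta'}_{\lambda}(a)=\sum_i(\beta_i-\beta'_i)e^{(i)}_{\lambda}(a)$. By \eqref{eq:formula_coefficient_trace_saturating_func_multi_dim_subspace}, at a generation $j$ in the $i$-th block only $e^{(i)}$ survives. Hence, for such $j$, $|\beta_i-\beta'_i|\,|e_{\lambda_j^d}(a)|\le 2M_1 2^{-j\gamma}(1+|2^jx-k|)^\gamma+$ (the same with $x'$).

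Next, fix a scale $J$ and work with cubes $\lambda^d_j$ close to $x$, that is, $|2^jx-k|\le 1$. By \eqref{eq:definition_trace_func_saturation}, \eqref{eq:bound_meas_alm_all_dyad} and Theorem \ref{theo:low_bound_1per_wlt}, inside each block of length $N(d')$ there is some $j$ with
$|e_{\lambda_j^d(x)}(a)|\ge C j^{-2/q}\alpha^{d'}2^{-j(h^r_\nu+1/m)}\mu(\lambda_j^d(x))$.
The second condition of \eqref{eq:A_nm_pgammaM} gives $\mu(\lambda_j(x))\ge 2^{-j(\gamma-h^r_\nu-1/p)}$. Combining, we get
$|e|\ge C2^{-j(\gamma-1/p+1/m+o(1))}$.

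\textbf{Extracting the bound on $\beta-\beta'$.} The main difficulty is that $x$ and $x'$ differ. I would handle it by discretisation. At a scale $j_0$ located in the block structure of period $d_1N(d')$, cover $[0,1]^d$ by the $2^{dj_0}$ dyadic cubes $\lambda_{j_0}$. For each such cube, consider the set $\mathcal{B}_a(\lambda_{j_0})$ of $\beta$ admitting a witness $x\in\lambda_{j_0}$. For two parameters $\beta,\beta'$ in the same $\mathcal{B}_a(\lambda_{j_0})$, we have $|2^jx'-k|\le C$ for every $j$ in the $d_1$ consecutive blocks following $j_0$. For each $i$ we thus obtain
\[|\beta_i-\beta'_i|\le C2^{-j\gamma}2^{j(\gamma-1/p+2/m)}=C2^{-j(1/p-2/m)}.\]
Consequently $\mathcal{B}_a(\lambda_{j_0})$ lies in a cube of side $C2^{-j_0(1/p-2/m)}$, and
\[\mathcal{L}_{d_1}(\mathcal{B}_a)\le \sum_{\lambda_{j_0}}\mathcal{L}_{d_1}(\mathcal{B}_a(\lambda_{j_0}))\le C2^{dj_0}2^{-j_0d_1(1/p-2/m)}.\]
The blocks extend the generation by at most $d_1N(d')$, a bounded factor that only affects constants. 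With $d_1=p(d+1)$, the exponent is $d-(d+1)+2p(d+1)/m$. This is negative because $m\ge p(d+2)$ makes $2p(d+1)/m<1$. Letting $j_0\to\infty$ yields $\mathcal{L}_{d_1}(\mathcal{B}_a)=0$. This direct covering argument in fact avoids Steinhaus entirely.

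The step I expect to be most delicate is the uniformity of constants. The lower bound on $|e|$ requires $j\ge J^{\nu_r}_{n,m}$ and $j>M_2$ (fine for large $j_0$) and $k_j(a)$ positive (fine since $a\in(0,1]^{d'}$). Theorem \ref{theo:low_bound_1per_wlt} has to be applied at $a$ (the factor $G^{d'}_{j}(a)$), not at $x$. One must also check that the chosen good level $j$ in block $i$ is the same for $\beta$ and $\beta'$. It is, since it depends only on $a$. The slack $j^{-2/q}$ and the $o(1)$ terms are absorbed into $2/m$.
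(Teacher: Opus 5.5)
Your strategy is the same as the paper's. You discretise the witness $x$ at scale $j_0$ and show that the parameters $\beta$ admitting a witness in a fixed cube $\lambda_{j_0}$ form a set of small diameter, using one good level per block that depends only on $a$. You then sum over the $2^{dj_0}$ cubes. The only difference in route is cosmetic. The paper uses the $d_1N(d')$ generations just \emph{below} $j_0$, where both witnesses lie in the same dyadic cube $\widetilde\lambda_j$. You use the generations just above $j_0$ and pay the bounded factor $(1+|2^jx'-k|)^\gamma\le(2+2^{d_1N(d')})^\gamma$. Both choices work.

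However, your final step fails as written. Replacing the loss $1/m+o(1)$ by $2/m$ gives the exponent $-1+2p(d+1)/m$. The standing hypothesis $m\ge p(d+2)$ only yields $2p(d+1)/m\le 2(d+1)/(d+2)$, which exceeds $1$ for every $d\ge1$. For instance, at $m=p(d+2)$ your bound becomes $C2^{j_0d/(d+2)}\to\infty$.

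The rounding is simply too crude. Combine $\mu(\lambda_j(x))\ge 2^{-j(\gamma-h^r_\nu-1/p)}$, \eqref{eq:bound_meas_alm_all_dyad} and $|G^{d'}_j(a)|\ge\alpha^{d'}$. This gives exactly $|e_{\lambda_j(x)}(a)|\ge\alpha^{d'}j^{-2/q}2^{-j(\gamma-1/p+1/m)}$, with no further exponential loss. Hence $|\beta_i-\beta'_i|\le Cj_0^{2/q}2^{-j_0(1/p-1/m)}$, and the total bound is $Cj_0^{2d_1/q}2^{-j_0(1-p(d+1)/m)}$. This tends to $0$ because $p(d+1)/m\le(d+1)/(d+2)<1$, and the polynomial factor is harmless. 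This is what the paper does.

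Alternatively, you could require $m>2p(d+1)$. That would suffice for the prevalence argument, since Proposition \ref{prop:inclusion_complement_F_xi_in_O} involves $\bigcap_m$, but it does not prove the lemma under the hypothesis $m\ge p(d+2)$. With the exponent corrected, your argument is complete, and the opening appeal to Steinhaus' theorem can be dropped.
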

\begin{proof}
    For any $\lambda_0 := (j_0,k_0,l_0)\in\Lambda^d$, denoting
    \begin{equation*}
        \mathcal{B}_{a,\lambda_0} := 
        \left\{\beta\in\R^{d_1} :
        \begin{array}{c}
            \exists x \in \lambda_0, \forall \lambda^d_j = (j,k,l)\in\Lambda^d\times L^d, \\
             \big| {c^{\beta}_{\lambda^d_j}(a)} \big| \leq M_1~2^{-j\gamma} \big(1+|2^j x-k|\big)^{\gamma}\\
            \text{and} ~\forall j>M_2,~\mu(\lambda^d_j(x)) \geq 2^{-j(\gamma-h^{r}_{\nu}-1/p)}
        \end{array}\right\},
    \end{equation*}
    one has
    \begin{equation}\label{eq:lemma:B_a_0_lebesgue_measure:proof:decomposition_B_a}
        \mathcal{B}_{a} = \limsup_{j_0\to \infty} \bigcup_{(j_0,k_0,l_0)\in\Lambda_{j_0}^d} \mathcal{B}_{a,\lambda_0} = \bigcap_{J\in\N^*} \bigcup_{j_0\geq J} \bigcup_{(j_0,k_0,l_0)\in\Lambda_{j_0}^d} \mathcal{B}_{a,\lambda_0}.
    \end{equation}
    We show that $\mathcal{L}_{d_1}(\mathcal{B}_a) = 0$ by bounding above each $\mathcal{L}_{d_1}\big(\mathcal{B}_{a,\lambda_0}\big)$, and conclude with the Borel-Cantelli lemma.
    
    One can ensure that $j_0$ is large enough that $j_0 \geq M_2 + d_1 N(d')$, $j_0-d_1 N(d') \equiv 0 \pmod{d_1 N(d')}$ and $k_j(a)>0$ (recall \eqref{eq:def_kja}). 
    Suppose $\beta$ and $\widetilde{\beta}$ belong to a same $\mathcal{B}_{a,\lambda_0}$. 
    There exist $x_{\beta}, x_{\widetilde{\beta}}\in \lambda_0 \subset \R^d$ such that for every $j\in\N^*$, if $k_{j,\beta}$ and $k_{j,\widetilde{\beta}}$ is the multi-integer associated to $\lambda^d_{j}(x_{\beta})$, \begin{equation}\label{eq:majoration_d_j_beta}
         \big| {c^{\beta}_{\lambda_{j}(x_{\beta})}(a)} \big|\leq M_1 \ 2^{-j\gamma} \big(1+|2^j x_{\beta}-k_{j,\beta}|\big)^{\gamma} 
     \end{equation} 
    and similarly for $c^{\widetilde{\beta}}_{\lambda_{j}(x_{\widetilde{\beta}})}(a)$ with its associated $k_{j,\widetilde{\beta}}$.
    
    For every $j\in\{j_0-d_1 N(d'),\ldots,j_0\}$, one has $\lambda^d_{j}(x_{\beta}) = \lambda^d_{j}(x_{\widetilde{\beta}})$: call $\widetilde{\lambda}_{j}$ this cube, for short. 
    For every $i\in \{1,\ldots,d_1\}$, set $j^{(i)}\equiv (i-1)N(d') \pmod{d_1 N(d')}$ with $j^{(i)}\in\{j_0-d_1 N(d'),\ldots,j_0\}$ (remark that $j^{(1)}=j_0-d_1 N(d')$).
    We make use of Theorem \ref{theo:low_bound_1per_wlt}. For every $i\in \{1,\ldots,d_1\}$, for every $a\in\R^{d'}$, there exists an integer $J^{(i)}\in\{0,\ldots,N(d')-1\}$ such that $G_{j^{(i)}+J^{(i)}}^{d'}(a) \geq \alpha^{d'} >0$. One sees that $j^{(i)}+J^{(i)}\in\{j_0-d_1 N(d'),\ldots,j_0\}$.
    Using $x_{\beta}, x_{\widetilde{\beta}}\in \lambda_0$ and \eqref{eq:majoration_d_j_beta}, one deduces that there is a constant $C>0$ depending on $M_1, j_0, d_1, \psi, \gamma$ and $D$ such that, for every $j\in\{j_0-d_1 N(d'),\ldots,j_0\}$, $\big|c^{\beta}_{\widetilde{\lambda}_{j}}(a)\big|\leq C 2^{-j\gamma}$ and $\big|c^{\widetilde{\beta}}_{\widetilde{\lambda}_{j}}(a)\big|\leq C 2^{-j\gamma}$.
    From all this, one deduces that
    \begin{align*}
        &\big|(\beta_i-\widetilde{\beta}_i) \ e_{\widetilde{\lambda}_{j^{(i)}+J^{(i)}}} \!\! (a)\big|= \big|c_{\widetilde{\lambda}_{j^{(i)}+J^{(i)}}} \!\! (a) + \beta_i \ e_{\widetilde{\lambda}_{j^{(i)}+J^{(i)}}} \!\! (a) - \big (c_{\widetilde{\lambda}_{j^{(i)}+J^{(i)}}} \!\! (a) + \widetilde{\beta}_i \ e_{\widetilde{\lambda}_{j^{(i)}+J^{(i)}}} \!\! (a) \big)\big| \\
        &\leq \big|c^{\beta}_{\widetilde{\lambda}_{j^{(i)}+J^{(i)}}}(a)\big| + \big|c^{\widetilde{\beta}}_{\widetilde{\lambda}_{j^{(i)}+J^{(i)}}}(a)\big| \leq 2C\cdot 2^{-(j_0-d_1 N(d'))\gamma}.
    \end{align*}
    So, recalling \eqref{eq:definition_trace_func_saturation}, one has
    \begin{equation*}
        \big|\beta_i-\widetilde{\beta}_i\big| 
        \leq 2C j^{2/q} \frac{2^{-(j_0-d_1 N(d'))\gamma}}{\mu\big(\widetilde{\lambda}_{j^{(i)}+J^{(i)}}\big)\nu\big(\lambda^{d'}_{j^{(i)}+J^{(i)},k_{j^{(i)}+J^{(i)}}(a)}\big) G^{d'}_{j^{(i)}+J^{(i)}}(a) }. 
    \end{equation*}
    When $j\in\{j_0-d_1 N(d'),\ldots,j_0\}$, $j>M_2$, so $\mu(\widetilde{\lambda}_{j}) \geq 2^{-j(\gamma-h^{r}_{\nu}-1/p)}$, from which we deduce that $\mu\big(\widetilde{\lambda}_{j}\big) \geq 2^{-j_0(\gamma-h^{r}_{\nu}-1/p)}$.
    As $a\in A^{\nu_r}_{n,m}$, this gives 
    \begin{equation*}
        \nu\big(\lambda^{d'}_{j^{(i)}+J^{(i)},k_{j^{(i)}+J^{(i)}}(a)}\big) \geq 2^{-(j^{(i)}+J^{(i)})(h^{r}_{\nu}+1/m)} \geq 2^{-j_0(h^{r}_{\nu}+1/m)}.
    \end{equation*}
    So for every $i\in \{1,\ldots,d_1\}$, recall that $G_{j^{(i)}+J^{(i)}}^{d'}(a) \geq \alpha^{d'}$, and one has
    \begin{equation*}
        \big|\beta_i-\widetilde{\beta}_i\big|
        \leq 2C j^{2/q} \frac{2^{-(j_0-d_1 N(d'))\gamma}}{2^{-j_0(\gamma - h^{r}_{\nu} - 1/p)} ~2^{-j_0(h^{r}_{\nu}+1/m)} \alpha^{d'} } 
        \leq C j_0^{2/q} 2^{-j_0(1/p - 1/m)}.
    \end{equation*}
    The distance between two elements of $\mathcal{B}_{a,\lambda_0}$ is at most $ j_0^{2/q} 2^{-j_0(1/p - 1/m)}$ up to a constant.
    Hence $\mathcal{L}_{d_1}(\mathcal{B}_{a,\lambda_0}) \leq C\ j_0^{2d/q} 2^{-j_0\cdot d_1(1/p - 1/m)}$.
    Summing over all the $2^{d j_0}$ dyadic cubes for $k_0 \in \Z^d_{j_0}$ at scale $j_0$ gives $\mathcal{L}_{d_1}\big( \bigcup_{k_0 \in \Z^d_{j_0}} \mathcal{B}_{a,\lambda_0} \big) \leq C j_0^{2d/q} 2^{j_0(d+d_1/m-d_1/p)}$.
    Because of our choice \eqref{eq:def_d_1} and $d_1/m \leq \frac{p+1}{p+2} \leq 1$, $d+d_1/m-d_1/p<0$. The Borel-Cantelli lemma implies $\mathcal{L}_{d_1}\big( \mathcal{B}_{a} \big) = \mathcal{L}_{d_1}\Big( \limsup_{j_0\to \infty} \bigcup_{k_0 \in \Z^d_{j_0}} \mathcal{B}_{a,\lambda_0} \Big) = 0$, which proves the claim.
\end{proof}

\begin{prop}\label{prop:beta_in_Xi_measure_0}
    For any $f\in \widetilde{B}^{\xi}_{\infty,q}([0,1]^D)$, the set $\big\{ \beta\in\R^{d_1} :  f^{\beta} \in \mathcal{O}_{n,m}^{p,\gamma,M} \big\}$ has $\mathcal{L}_{d_1}$-Lebesgue measure 0.
\end{prop}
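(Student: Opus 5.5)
The plan is a Fubini argument on the product $A^{\nu_r}_{n,m}\times\R^{d_1}$, using Lemma \ref{lemma:b_a_measurable} for joint measurability and Lemma \ref{lemma:B_a_0_lebesgue_measure} for the vanishing of the fibres. First I unwind the definitions. By definition of $\mathcal{O}_{n,m}^{p,\gamma,M}$, one has $f^{\beta}\in\mathcal{O}_{n,m}^{p,\gamma,M}$ if and only if $\nu_r\big(\mathcal{A}_{n,m}^{p,\gamma,M}(f^\beta)\big)>0$. By definition of $\mathcal{B}_a$ (read as $\beta\in\mathcal{B}_a \iff a\in\mathcal{A}_{n,m}^{p,\gamma,M}(f^\beta)$), this gives
\begin{equation*}
    \nu_r\big(\mathcal{A}_{n,m}^{p,\gamma,M}(f^\beta)\big)=\int_{A^{\nu_r}_{n,m}\cap(0,1]^{d'}} \mathds{1}_{\mathcal{B}_a}(\beta)\,d\nu_r(a).
\end{equation*}
Hence the set $E_f:=\{\beta\in\R^{d_1}: f^{\beta}\in\mathcal{O}_{n,m}^{p,\gamma,M}\}$ is exactly the set of $\beta$ for which the function $a\mapsto \mathds{1}_{\mathcal{B}_a}(\beta)$ has positive $\nu_r$-integral over $A^{\nu_r}_{n,m}\cap(0,1]^{d'}$.

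Next I consider the function $(a,\beta)\mapsto \mathds{1}_{\mathcal{B}_a}(\beta)$ on $\big(A^{\nu_r}_{n,m}\cap(0,1]^{d'}\big)\times\R^{d_1}$. I equip this product with $\nu_r\otimes\mathcal{L}_{d_1}$, restricting $\beta$ to a ball $B(0,R)$ so that the measures are finite, and let $R\to\infty$ at the end. By Lemma \ref{lemma:b_a_measurable}, this function is measurable for the completed product measure; the set $\mathcal{G}$ there is analytic, hence universally measurable by Theorem \ref{theo:analyticity_to_univ_measurable}, so it is measurable for $\nu_r\otimes\mathcal{L}_{d_1}$ as well. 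Fubini–Tonelli then allows the order of integration to be exchanged:
\begin{equation*}
    \int_{B(0,R)} \nu_r\big(\mathcal{A}_{n,m}^{p,\gamma,M}(f^\beta)\big)\,d\mathcal{L}_{d_1}(\beta) = \int_{A^{\nu_r}_{n,m}\cap(0,1]^{d'}} \mathcal{L}_{d_1}\big(\mathcal{B}_a\cap B(0,R)\big)\,d\nu_r(a).
\end{equation*}

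By Lemma \ref{lemma:B_a_0_lebesgue_measure}, $\mathcal{L}_{d_1}(\mathcal{B}_a)=0$ for every $a\in A^{\nu_r}_{n,m}\cap(0,1]^{d'}$, so the right-hand side vanishes. Consequently, for $\mathcal{L}_{d_1}$-a.e. $\beta\in B(0,R)$, the nonnegative function $\beta\mapsto\nu_r(\mathcal{A}_{n,m}^{p,\gamma,M}(f^\beta))$ is zero. This says exactly that $\mathcal{L}_{d_1}(E_f\cap B(0,R))=0$, where $E_f$ is measurable as a superlevel set of a measurable function. Taking a countable union over $R\in\N$ gives $\mathcal{L}_{d_1}(E_f)=0$.

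The main obstacle is the measurability needed to apply Fubini. One has to check that the completion issues are handled: Tonelli applies to functions measurable with respect to the completed product $\sigma$-algebra, with sections measurable almost everywhere. This is provided by the universal measurability of the analytic set $\mathcal{G}$, rather than by Borel measurability, which is not available here. Everything else is a direct combination of the two preceding lemmas.
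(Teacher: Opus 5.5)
Your proposal is correct and follows the paper's own proof. You exchange the order of integration over $\big(A^{\nu_r}_{n,m}\cap(0,1]^{d'}\big)\times\R^{d_1}$, with Lemma~\ref{lemma:b_a_measurable} giving joint measurability and Lemma~\ref{lemma:B_a_0_lebesgue_measure} making every fibre $\mathcal{B}_a$ Lebesgue-null, to get $\nu_r\big(\mathcal{A}_{n,m}^{p,\gamma,M}(f^\beta)\big)=0$ for $\mathcal{L}_{d_1}$-a.e.\ $\beta$. Your extra care about completed measures and the truncation to balls $B(0,R)$ is harmless but unnecessary, since Lebesgue measure is already $\sigma$-finite, and it states the final step more cleanly than the paper does.
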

\begin{proof}
    By Lemma \ref{lemma:B_a_0_lebesgue_measure}, $\int_{a\in A_{n,m}^{\nu_r}\cap(0,1]^{d'}}\int_{\beta\in\R^{d_1}} \mathds{1}_{\mathcal{B}_a}(\beta) \ d\beta d\nu_r(a) = 0.$ 
    Applying Fubini's theorem (measurability being guaranteed by Lemma \ref{lemma:b_a_measurable}), 
    \begin{equation*}
        \int_{\beta\in\R^{d_1}} \int_{a\in A_{n,m}^{\nu_r}\cap(0,1]^{d'}} \mathds{1}_{\mathcal{B}_a}(\beta) \ d\nu_r(a) d\beta = 0.
    \end{equation*}
    In other words, for Lebesgue-almost any $\beta\in\R^{d_1}$, there exists a set $A(f^{\beta})\subset A_{n,m}^{\nu_r}$ with same $\nu_r$-measure as $ A_{n,m}^{\nu_r}$, such that $A(f^{\beta})\cap \mathcal{A}_{n,m}^{p,\gamma,M}(f)=\emptyset $. Recalling that $\mathcal{A}_{n,m}^{p,\gamma,M}(f)\subset A_{n,m}^{\nu_r}$, this implies that $\nu_r(\mathcal{A}_{n,m}^{p,\gamma,M}(f^{\beta}))=0$, and $f^{\beta}\notin \mathcal{O}_{n,m}^{p,\gamma,M}$ which yields the result by taking complement.
\end{proof}
Finally, we are now ready to prove the shyness of the sets $\mathcal{O}_{n,m}^{p,\gamma,M}$.
\begin{prop}
    For every $n,m\in \N$ with $m \geq p(d+2)$, $M=(M_1,M_2)\in(\N^*)^2$ and $\gamma>h + h^{r}_{\nu}$, $\mathcal{O}_{n,m}^{p,\gamma,M}$ is shy.
\end{prop}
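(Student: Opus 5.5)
The approach is to check the definition of a Haar-null set directly, using as transverse measure the normalized Lebesgue measure on the unit cube of the probe space $\mathcal{P}$ of Definition \ref{defi:saturating_func_on_multi_dim_subspace}. Universal measurability of $\mathcal{O}_{n,m}^{p,\gamma,M}$ is already given by Proposition \ref{prop:universally_measurable_O}. It therefore remains to produce a finite measure $\vartheta$ on $\widetilde{B}^{\xi}_{\infty,q}([0,1]^D)$ that is positive on some compact set $K$ and satisfies $\vartheta(\mathcal{O}_{n,m}^{p,\gamma,M}+f)=0$ for every $f$.

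\textbf{Constructing $\vartheta$ and $K$.} Consider the linear map $\Theta:\beta\in\R^{d_1}\mapsto\sum_{i=1}^{d_1}\beta_i\mathscr{G}^{(i)}$.

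First I show that each $\mathscr{G}^{(i)}$ lies in $\widetilde{B}^{\xi}_{\infty,q}([0,1]^D)$. Its wavelet coefficients are a subset of those of $\mathscr{G}_q$, so the bound $\varepsilon^{\xi,\infty}_j\le j^{-2/q}$ obtained for $\mathscr{G}_q$ still holds. Hence $\mathscr{G}^{(i)}\in B^{\xi}_{\infty,q}\subset \widetilde{B}^{\xi}_{\infty,q}$.

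Next, $\Theta$ is continuous into the Fréchet space, because it is a finite-dimensional linear map into a topological vector space. Set
\[
\vartheta:=\Theta_*\big(\mathcal{L}_{d_1}|_{[0,1]^{d_1}}\big),
\]
which is a Borel probability measure. Take $K:=\Theta([0,1]^{d_1})$. It is compact as a continuous image of a compact set, and $\vartheta(K)=1>0$.

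\textbf{Translation invariance of the null set.} Fix any $f\in \widetilde{B}^{\xi}_{\infty,q}([0,1]^D)$. Then
\[
\vartheta\big(\mathcal{O}_{n,m}^{p,\gamma,M}-f\big)\le \mathcal{L}_{d_1}\big(\{\beta\in\R^{d_1}: (-f)^{\beta}\in \mathcal{O}_{n,m}^{p,\gamma,M}\}\big),
\]
with $(-f)^\beta=-f+\Theta(\beta)$ as in the notation above. Proposition \ref{prop:beta_in_Xi_measure_0} applies to an arbitrary function of the space, in particular to $-f$. It gives that the right-hand side vanishes. Since $f$ is arbitrary, every translate of $\mathcal{O}_{n,m}^{p,\gamma,M}$ is $\vartheta$-null, so $\mathcal{O}_{n,m}^{p,\gamma,M}$ is Haar-null.

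The hypotheses $m\ge p(d+2)$ and $\gamma>h+h^r_\nu$ enter only through Lemma \ref{lemma:B_a_0_lebesgue_measure}. There they ensure the exponent $d+d_1/m-d_1/p<0$ in the Borel–Cantelli estimate, with $d_1=p(d+1)$.

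\textbf{Main obstacle.} Assembling the argument is routine; the delicate point is the Fubini step inside Proposition \ref{prop:beta_in_Xi_measure_0}. One needs joint measurability of $(a,\beta)\mapsto \mathbb{1}_{\mathcal{B}_a}(\beta)$ for $(-f)^\beta$, not just for $f^\beta$. Lemma \ref{lemma:b_a_measurable} covers this, since its proof uses only the continuity of $(a,\beta,x)\mapsto\phi_{\lambda}(g+\Theta(\beta),a,x)$ for a fixed $g$.

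There is also a bookkeeping issue. The conclusion of Lemma \ref{lemma:B_a_0_lebesgue_measure} must hold for every $a$ in $A^{\nu_r}_{n,m}\cap(0,1]^{d'}$ uniformly in the base function. I would check this explicitly: the constant $C$ in that lemma depends only on $M_1,j_0,\psi,\gamma$ and not on the base function, because the base coefficients cancel in the difference $\beta-\widetilde\beta$.

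Finally, shyness of the countable combination in Proposition \ref{prop:inclusion_complement_F_xi_in_O} follows from stability of Haar-null sets under countable unions, together with the fact that an intersection over $m$ is contained in any single term, which can be taken with $m\ge p(d+2)$.
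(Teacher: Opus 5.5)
Your proposal is correct and follows essentially the paper's own proof: push forward $\mathcal{L}_{d_1}$ restricted to $[0,1]^{d_1}$ onto the probe space $\mathcal{P}$ via $\beta\mapsto\sum_i\beta_i\mathscr{G}^{(i)}$, note that this measure is carried by a compact set, and apply Proposition \ref{prop:beta_in_Xi_measure_0} to an arbitrary base function to conclude that every translate of $\mathcal{O}_{n,m}^{p,\gamma,M}$ is $\vartheta$-null. Two small remarks: the sign in your displayed inequality is flipped, since $\vartheta(\mathcal{O}_{n,m}^{p,\gamma,M}-f)$ is controlled by $f^{\beta}$ rather than $(-f)^{\beta}$, which is harmless because $f$ is arbitrary; and the hypothesis on $\gamma$ is not actually used in Lemma \ref{lemma:B_a_0_lebesgue_measure}, where only $m\ge p(d+2)$ matters.
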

\begin{proof}
    Let $\vartheta = G_{\#}\mathcal{L}_{d_1}$ be the push-forward measure of the $\mathcal{L}_{d_1}$ onto $\mathcal{P}$ by 
    \begin{equation}
        G:\left\{\begin{array}{ccc}
            [0,1]^{d_1} & \to & \mathcal{P} \\
            \beta & \mapsto & \sum_{i=1}^{d_1} \beta_i \cdot \mathscr{G}^{(i)}.
        \end{array}\right.
    \end{equation}
    This measure is supported by a compact in $\mathcal{P} \subset \widetilde{B}^{\xi}_{\infty,q}([0,1]^D)$.
    Fix any $f\in \widetilde{B}^{\xi}_{\infty,q}([0,1]^D)$. Using Proposition \ref{prop:beta_in_Xi_measure_0}, for $\vartheta$-almost all $F\in\mathcal{P}$, $f+F \notin \mathcal{O}_{n,m}^{p,\gamma,M}$. Hence for any $f\in \widetilde{B}^{\xi}_{\infty,q}([0,1]^D)$, the set $\{f+\mathcal{O}_{n,m}^{p,\gamma,M}\}$ has $\vartheta$-measure equal to 0, i.e., $\vartheta(\{f+\mathcal{O}_{n,m}^{p,\gamma,M}\})=0$.
    Since it is true for any $f\in \widetilde{B}^{\xi}_{\infty,q}([0,1]^D)$, the set $\mathcal{O}_{n,m}^{p,\gamma,M}$ is shy.
\end{proof}
Using Proposition \ref{prop:inclusion_complement_F_xi_in_O}, the set $(\mathcal{F}^{r})^\complement$ is shy as it is a subset of countable unions and intersections of shy sets. This yields that $\mathcal{F}^{r}$ is prevalent, proving Proposition \ref{prop:prevalent_set_in_B_xi_inf_q}. 

\section{Further investigations}

A first natural extension consists in investigating the case where $p < \infty$, for which new and interesting phenomena may arise, as was observed in \cite{Barral-Seuret:2023:Besov_Space_Part_2}.
One would then like to explore situations where $\xi$ is not a product of measures, or, as a preliminary step, to consider capacities that are not Gibbs capacities associated with $\mathscr{E}_d$. For instance, a natural extension consists in taking for $\xi$ either 2- or 3-dimensional Mandelbrot cascades or Castaing cascade models, and to see whether the same conclusion hold. However, this scenario will be significantly more intricate and will likely depend on the behavior of the marginals of $\xi$ on $\R^{d}$ and $\R^{d'}$.

Finally, in all cases, it would be valuable to study traces not only on affine subspaces but also on general submanifolds.

\bibliographystyle{abbrv}
{\small
\bibliography{biblio}}

\begin{thebibliography}{10}

\bibitem{Abry-Ciuciu-Dumeur-Jaffard-Saes:2024:Multifract_analysis_neuroscience}
P.~Abry, P.~Ciuciu, M.~Dumeur, S.~Jaffard, and G.~Sa{\"e}s.
\newblock {Multifractal analysis based on the weak scaling exponent and
  applications to MEG recordings in neuroscience}.
\newblock Preprint 2024.

\bibitem{Ansorena-Blasco:1995:weighted_besov_spaces}
J.~L. Ansorena and O.~Blasco.
\newblock {Characterization of Weighted Besov Spaces}.
\newblock {\em Mathematische Nachrichten}, 171(1):5--17, 1995.

\bibitem{Aubry-Bastin-Dispa:2007:prevalence_sobolev}
J.-M. Aubry, F.~Bastin, and S.~Dispa.
\newblock {Prevalence of multifractal functions in {$S^{V}$} spaces}.
\newblock {\em Journal of Fourier analysis and applications}, 13:175--185,
  2007.

\bibitem{Aubry-Maman-Seuret:2013:Traces_besov_results}
J.-M. Aubry, D.~Maman, and S.~Seuret.
\newblock {Local behavior of traces of Besov functions: Prevalent results}.
\newblock {\em Journal of Functional Analysis}, 264(3):631--660, 2013.

\bibitem{Balka-Darji-Elekes:2016:dimension_graph_continous_maps}
R.~Balka, U.~Darji, and M.~Elekes.
\newblock {Hausdorff and packing dimension of fibers and graphs of prevalent
  continuous maps}.
\newblock {\em Advances in Mathematics}, 293:221--274, 2016.

\bibitem{Barral:2015:inverse-problems}
J.~Barral.
\newblock {Inverse problems in multifractal analysis of measures}.
\newblock {\em Annales Scientifiques de l'{\'E}cole Normale Sup{\'e}rieure},
  48:1457--1510, 2015.

\bibitem{Barral-Ben_Nasr-Peyriere:2003:multifract-formalism}
J.~Barral, F.~Ben~Nasr, and J.~Peyriere.
\newblock {Comparing multifractal formalisms: The neighboring boxes condition}.
\newblock {\em Asian Journal of Mathematics}, 7(2):149--166, 2003.

\bibitem{Barral-Seuret:2023:Besov_Space_Part_2}
J.~Barral and S.~Seuret.
\newblock {The Frisch-Parisi conjecture II: Besov spaces in multifractal
  environment, and a full solution}.
\newblock {\em J. Math. Pures Appl.}, 175:281--329, 2023.

\bibitem{Bayart-Heurteaux:2013:Hausdorff_Dimension_Graphs_Prevalent_Continuous_Functions}
F.~Bayart and Y.~Heurteaux.
\newblock {\em {On the Hausdorff dimension of graphs of prevalent continuous
  functions on compact sets}}, pages 25--34.
\newblock Birkh{\"a}user Boston, 2013.

\bibitem{Besoy-Haroske-Triebel:2022:traces_weighted_fct_spaces}
B.~F. Besoy, D.~D. Haroske, and H.~Triebel.
\newblock Traces of some weighted function spaces and related non-standard real
  interpolation of {B}esov spaces.
\newblock {\em Math. Nach.}, 295(9):1669--1689, 2022.

\bibitem{Brown_Michon_Peyriere:1992:Multifractal-analysis-measures}
G.~Brown, G.~Michon, and J.~Peyri{\`e}re.
\newblock {On the multifractal analysis of measures}.
\newblock {\em Journal of Statistical Physics}, 66(3--4):775--790, 1992.

\bibitem{Caetano-Haroske:2016:traces_besov_fractal}
A.~M. Caetano and D.~D. Haroske.
\newblock Traces of {B}esov spaces on fractal $h$-sets and dichotomy results.
\newblock {\em Studia Mathematica}, page 1–31, 2016.

\bibitem{Chamizo-Ubis:2014:Multifractal_behavior_polynomial_Fourier_Series}
F.~Chamizo and A.~Ubis.
\newblock {Multifractal behavior of polynomial Fourier series}.
\newblock {\em Advances in Mathematics}, 250:1--34, 2014.

\bibitem{Choquet:1954:theory_of_capacities}
G.~Choquet.
\newblock {Theory of capacities}.
\newblock {\em Annales de l'institut Fourier}, 5:131--295, 1954.

\bibitem{Christensen:1972:sets_Haar_measure_zero}
J.~P.~R. Christensen.
\newblock {On sets of Haar measure zero in abelian polish groups}.
\newblock {\em Israel Journal of Mathematics}, 13(3):255--260, 1972.

\bibitem{Daubechies:1988:compactly_supported_wavelets}
I.~Daubechies.
\newblock {Orthonormal bases of compactly supported wavelets}.
\newblock {\em Comm. Pure and App. Math.}, 41(7):909--996, 1988.

\bibitem{Daubechies-Lagarias:1991:regularity_multiresolution_analysis}
I.~Daubechies and J.~C. Lagarias.
\newblock {Two-Scale Difference Equations. I. Existence and Global Regularity
  of Solutions}.
\newblock {\em SIAM Journal on Mathematical Analysis}, 22(5):1388--1410, 1991.

\bibitem{Diening-Harjulehto-Hasto-Ruzicka:2011:variable_exponent_lebesgue_sobolev}
L.~Diening, P.~Harjulehto, P.~H{\"a}st{\"o}, and M.~Ruzicka.
\newblock {\em {Lebesgue and Sobolev Spaces with Variable Exponents}}, volume
  2017 of {\em Lecture Notes in Mathematics}.
\newblock Springer, 2011.

\bibitem{Farkas-Leopold:2006:characterisations_generalised_smoothness_Besov}
W.~Farkas and H.-G. Leopold.
\newblock {Characterizations of function spaces of generalised smoothness}.
\newblock {\em Annali di Matematica Pura ed Applicata}, 185:1--62, 2006.

\bibitem{Feng:2007:Gibbs_properties_conformal_measures}
D.-J. Feng.
\newblock {Gibbs properties of self-conformal measures and the multifractal
  formalism}.
\newblock {\em Ergodic Theory and Dynamical Systems}, 27(3):787--812, 2007.

\bibitem{Fraysse:2007:prevalent_validity_multifractal_formalism}
A.~Fraysse.
\newblock {Generic Validity of the Multifractal Formalism}.
\newblock {\em SIAM Journal on Mathematical Analysis}, 39(2):593--607, 2007.

\bibitem{Fraysse-Jaffard:2006:Almost_all_function_sobolev}
A.~Fraysse and S.~Jaffard.
\newblock {How smooth is almost every function in a Sobolev space?}
\newblock {\em Revista Matem{\'a}tica Iberoamericana}, 22(2):663--682, 2006.

\bibitem{Fraysse-Jaffard-Kahane:2005:propriete_analyse}
A.~Fraysse, S.~Jaffard, and J.-P. Kahane.
\newblock {Quelques propri{\'e}t{\'e}s g{\'e}n{\'e}riques en analyse}.
\newblock {\em Comptes Rendus Mathematique}, 340(9):645--651, 2005.

\bibitem{Frisch-Parisi:1985:Multifractal-turbulence}
U.~Frisch and G.~Parisi.
\newblock {On the singularity structure of fully developed turbulence}.
\newblock In {\em Turbulence and Predictability in Geophysical Fluid Dynamics
  and Climate Dynamics}, volume~88 of {\em {Proc. Int. School of Physic Enrico
  Fermi}}, pages 84--88. North-Holland, 1985.

\bibitem{Gaczkowski-Gorka-Pons:2016:variable_exponent_sobolev}
M.~Gaczkowski, P.~G{\'o}rka, and D.~J. Pons.
\newblock {Sobolev spaces with variable exponents on complete manifolds}.
\newblock {\em Journal of Functional Analysis}, 270(4):1379--1415, 2016.

\bibitem{Halsey:1987:fractal-measure-singularities}
T.~C. Halsey, M.~H. Jensen, L.~P. Kadanoff, I.~Procaccia, and B.~I. Shraiman.
\newblock {Fractal measures and their singularities: The characterization of
  strange sets}.
\newblock {\em Nuclear Physics B - Proceedings Supplements}, 2:501--511, 1987.

\bibitem{Heurteaux:2007:Dimension-measure}
Y.~Heurteaux.
\newblock {Dimension of measures: The probabilistic approach}.
\newblock {\em Publicacions Matem{\`a}tiques}, 51(2):243--290, 2007.

\bibitem{Hunt:1994:prevalence_continuous_functions}
B.~R. Hunt.
\newblock {The prevalence of continuous nowhere differentiable functions}.
\newblock {\em Proceedings of the American Mathematical Society},
  122(3):711--717, 1994.

\bibitem{Hunt-Sauer-Yorke:1992:prevalence}
B.~R. Hunt, T.~Sauer, and J.~A. Yorke.
\newblock {Prevalence: a translation-invariant ``almost every'' on
  infinite-dimensional spaces}.
\newblock {\em Bull. AMS}, 27(2):217--238, 1992.

\bibitem{Jaerisch-Sumi:2020:multi-frac-formalism-gibbs-measures}
J.~Jaerisch and H.~Sumi.
\newblock {Multifractal formalism for generalised local dimension spectra of
  Gibbs measures on the real line}.
\newblock {\em J. Math. Anal. Appl.}, 491(2):124246, 2020.

\bibitem{Jaffard:1989:Holder_exponent_wavelet_coef}
S.~Jaffard.
\newblock {Exposants de H{\"o}lder en des points donn{\'e}s et coefficients
  d'ondelettes}.
\newblock {\em CRAS}, 308:79--81, 1989.

\bibitem{Jaffard:1995:traces_negative_dim}
S.~Jaffard.
\newblock {Th{\'e}or{\`e}mes de traces et dimensions n{\'e}gatives}.
\newblock {\em CRAS}, 320:409--413, 1995.

\bibitem{Jaffard:1996:Riemann_function_singularities}
S.~Jaffard.
\newblock {The spectrum of singularities of Riemann's function}.
\newblock {\em Revista Matem{\'a}tica Iberoamericana}, 12(2):441--460, 1996.

\bibitem{Jaffard:1997:multifractal_formalism_all_functions}
S.~Jaffard.
\newblock {Multifractal formalism for functions part I: Results valid for all
  functions}.
\newblock {\em SIAM Journal on Mathematical Analysis}, 28(4):944--970, 1997.

\bibitem{Jaffard:2000:Frisch-Parisi-conjecture}
S.~Jaffard.
\newblock {On the Frisch--Parisi conjecture}.
\newblock {\em J. Math. Pures Appl.}, 79(6):525--552, 2000.

\bibitem{Jaffard:2004:Wavelet_techniques}
S.~Jaffard.
\newblock {Wavelet techniques in multifractal analysis}.
\newblock {\em Proc. Symp. Pure Math.}, 72, 2004.

\bibitem{Jaffard:2007:Wavelet_leaders}
S.~Jaffard, B.~Lashermes, and P.~Abry.
\newblock {Wavelet leaders in multifractal analysis}.
\newblock In {\em {Wavelet Analysis and Applications}}, pages 201--246.
  Birkh{\"a}user, 2007.

\bibitem{Jaffard-Meyer:2000:regularity_critical_besov_spaces}
S.~Jaffard and Y.~Meyer.
\newblock {On the pointwise regularity of functions in critical Besov spaces}.
\newblock {\em Journal of Functional Analysis}, 175(2):415--434, 2000.

\bibitem{Billat-Jaffard-Nasr-Saes:2023:Marathon-physiological-analysis}
S.~Jaffard, G.~Sa{\"e}s, W.~Ben~Nasr, F.~Palacin, and V.~Billat.
\newblock {A review of univariate and multivariate multifractal analysis
  illustrated by the analysis of marathon runners Physiological Data}.
\newblock In {\em {Analysis, Applications, and Computations}}, pages 3--60.
  Springer, 2023.

\bibitem{Kreit-Nicolay:2013:characterisation_generalised_Holder-Zygmund_spaces}
D.~Kreit and S.~Nicolay.
\newblock {Characterizations of the elements of generalized H{\"o}lder--Zygmund
  spaces by means of their representation}.
\newblock {\em J. Approx. Th.}, 172:23--36, 2013.

\bibitem{LevyVehel-Vojak:1998:Choquet-capacities}
J.~L{\'e}vy~V{\'e}hel and R.~Vojak.
\newblock {Multifractal Analysis of Choquet Capacities}.
\newblock {\em Advances in Applied Mathematics}, 20(1):1--43, 1998.

\bibitem{Loosveldt-Nicolay:2019:equivalent_definition_generalised_smoothness_besov}
L.~Loosveldt and S.~Nicolay.
\newblock {Some equivalent definitions of Besov spaces of generalized
  smoothness}.
\newblock {\em Mathematische Nachrichten}, 292(10):2262--2282, 2019.

\bibitem{Maman:2013:Genericity_Thesis}
D.~Maman.
\newblock {\em {G{\'e}n{\'e}ricit{\'e} et pr{\'e}valence des propri{\'e}t{\'e}s
  multifractales de traces de fonctions}}.
\newblock Phd thesis, Université Paris-Est Créteil, 2013.

\bibitem{Mattila99}
P.~Mattila.
\newblock {\em Geometry of Sets and Measures in Euclidean Spaces: Fractals and
  Rectifiability}.
\newblock Cambridge Studies in Advanced Mathematics, 1999.

\bibitem{Meyer:1990:Ondelette_operateur}
Y.~Meyer.
\newblock {\em {Ondelettes et op{\'e}rateurs I: Ondelettes}}.
\newblock Hermann, 1990.

\bibitem{Moura:2007:characterisation_generalised_smoothness_besov}
S.~D. Moura.
\newblock {On some characterizations of Besov spaces of generalized
  smoothness}.
\newblock {\em Mathematische Nachrichten}, 280(9-10):1190--1199, 2007.

\bibitem{Olsen:1995:Multifractal-formalism-measure}
L.~Olsen.
\newblock {A multifractal formalism}.
\newblock {\em Advances in Mathematics}, 116(1):82--196, 1995.

\bibitem{Olsen:2010:multifractal_dimension_prevalent_measure}
L.~Olsen.
\newblock {Fractal and multifractal dimensions of prevalent measures}.
\newblock {\em Indiana Univ. Math. J.}, 59:661--690, 2010.

\bibitem{Olsen:2010:Lq_dimension_prevalent_measure}
L.~Olsen.
\newblock {Prevalent {$L^q$}-dimensions of measures}.
\newblock {\em Mathematical Proceedings of the Cambridge Philosophical
  Society}, 149(3):553--571, 2010.

\bibitem{Parry-pollicott:1990:zeta_function}
W.~Parry and M.~Pollicott.
\newblock {\em {Zeta functions and the periodic orbit structure of hyperbolic
  dynamics}}, volume 187--188 of {\em Ast{\'e}risque}.
\newblock Soci{\'e}t{\'e} Math{\'e}matique de France, 1990.

\bibitem{Patzschke:1997:self-conformal-measures}
N.~Patzschke.
\newblock {Self-conformal multifractal measures}.
\newblock {\em Adv. Appl. Math.}, 19(4):486--513, 1997.

\bibitem{Pesin:1997:dimension_theory}
Y.~Pesin.
\newblock {\em {Dimension theory in dynamical systems contemporary views and
  applications}}.
\newblock University of Chicago Press, 1997.

\bibitem{Pesin-Weiss:1997:multi-frac-anal-gibbs-measures}
Y.~Pesin and H.~Weiss.
\newblock {The multifractal analysis of Gibbs measures: Motivation,
  mathematical Foundation, and examples}.
\newblock {\em Chaos}, pages 89--106, 1997.

\bibitem{Rible:2025:Prevalence_inhomogeneous_Besov}
Q.~Rible.
\newblock {Inhomogeneous Sobolev and Besov Spaces: Embeddings and prevalent
  smoothness}.
\newblock Preprint, 2025.

\bibitem{Rible-Seuret-1}
Q.~Rible and S.~Seuret.
\newblock A non-vanishing property for tensor products of wavelets.
\newblock {\em Preprint}, 2026.

\bibitem{Seuret:2018:Inhomogeneous-random-covering-Markov-Shifts}
S.~Seuret.
\newblock {Inhomogeneous random coverings of topological Markov shifts}.
\newblock {\em Mathematical Proceedings of the Cambridge Philosophical
  Society}, 165(2):341--357, 2018.

\bibitem{Seuret-Ubis:2017:Riemann_series}
S.~Seuret and A.~Ubis.
\newblock {Local $L^2$-regularity of {Riemann{\textquoteright}s} {Fourier}
  series}.
\newblock {\em Annales de l'Institut Fourier}, 67(5):2237--2264, 2017.

\bibitem{Shmerkin:2005:multifractal_formalism_self-similar_measures}
P.~Shmerkin.
\newblock {A Modified Multifractal Formalism for a Class of Self-similar
  Measures with Overlap}.
\newblock {\em Asian Journal of Mathematics}, 9(3):323 -- 348, 2005.

\bibitem{Shmerkin-Solomyak:2016:absolute_continuity_measures}
P.~Shmerkin and B.~Solomyak.
\newblock {Absolute continuity of self-similar measures, their projections and
  convolutions}.
\newblock {\em Trans. AMS}, 9368(7):5125 -- 5151, 2016.

\bibitem{Triebel:1983:theory_function_spaces_1}
H.~Triebel.
\newblock {\em {Theory of Function Spaces}}.
\newblock Modern Birkh{\"a}user Classics. Birkh{\"a}user, 1983.

\bibitem{Abry-Jaffard-Wendt:2009:wavelet_leader_texture_classification}
H.~Wendt, P.~Abry, S.~Jaffard, H.~Ji, and Z.~Shen.
\newblock Wavelet {L}eader multifractal analysis for texture classification.
\newblock In {\em 16th IEEE Int. Conf. Image Proc.}, pages 3829--3832, 2009.

\bibitem{Abry-Jaffard-Roux-Wendt:2009:Wavelet_Bootstrap}
H.~Wendt, S.~G. Roux, P.~Abry, and S.~Jaffard.
\newblock {W}avelet leaders and bootstrap for multifractal analysis of images.
\newblock {\em Signal Processing}, 89(6):1100--1114, 2009.

\end{thebibliography}

\end{document}